\title{\hspace{1.3in} {\it {\normalsize Dedicated to Igor Frenkel on the occasion of
his sixtieth birthday.}} \\
 \hspace{1in}  \\
{\bf An approach to categorification of some small quantum groups}}
\author{Mikhail Khovanov, You Qi}
\date{May 20, 2013}
\newcommand{\oplusop}[1]{{\mathop{\oplus}\limits_{#1}}}
\theoremstyle{plain}
\newtheorem{thm}{Theorem}[section]
\newtheorem{prop}[thm]{Proposition}
\newtheorem{lemma}[thm]{Lemma}
\newtheorem{cor}[thm]{Corollary}
\newtheorem{conj}[thm]{Conjecture}
\theoremstyle{definition}
\newtheorem{defn}[thm]{Definition}
\newtheorem{eg}[thm]{Example}
\newtheorem{rmk}[thm]{Remark}
\newtheorem{ntn}[thm]{Notation}
\def\Q{\mathbb Q}
\def\Z{\mathbb Z}
\def\N{\mathbb N}
\def\C{\mathbb C}
\def\F{\mathbb F}
\def\o{\otimes}
\def\lra{\longrightarrow}
\def\RHOM{\mathbf{R}\mathrm{HOM}}
\def\Id{\mathrm{Id}}
\def\mc{\mathcal}
\def\mf{\mathfrak}
\newcommand{\umod}{\!-\!\underline{\mathrm{mod}}}
\newcommand{\udmod}{\!-\!\underline{\mathrm{mod}}}  
\newcommand{\ufmod}{\!-\!\underline{\mathrm{fmod}}}  
\newcommand{\dmod}{\!-\!\mathrm{mod}}
\newcommand{\dfmod}{\!-\!\mathrm{fmod}}
\newcommand{\dif}{\partial}   
\newcommand{\mker}{\mathrm{Ker}}
\newcommand{\mim}{\mathrm{Im}}
\newcommand{\mH}{\mathrm{H}}  
\newcommand{\NH}{\mathrm{NH}} 
\newcommand{\pol}{\mathrm{Pol}}
\newcommand{\bpol}{\mathcal{P}}
\newcommand{\sym}{\mathrm{Sym}}
\newcommand{\Hom}{{\rm Hom}}
\newcommand{\HOM}{{\rm HOM}}
\newcommand{\Ind}{{\rm Ind}}
\newcommand{\Res}{{\rm Res}}
\newcommand{\drawing}[1]{
\begin{center}{\psfig{figure=fig/#1}}\end{center}}
\newcommand{\Mn}{\mathrm{M}(n,\Bbbk)} 
\begin{document}
\maketitle

\baselineskip 14pt

\tableofcontents

\begin{abstract}
We categorify one half of the small quantum sl(2) at a prime root of unity.
An extension of this construction to an arbitrary simply-laced case is proposed.
\end{abstract}


\psfrag{Z1}{$\partial_a $}
\psfrag{Z2}{$(a+1)$}
\psfrag{Z3}{$(a-1)$}
\psfrag{Z4}{$a$}
\psfrag{Z5}{$\mH_{/ 0}(\widetilde{V})$}
\psfrag{Z6}{$\mH_{/ p-2}(\widetilde{V})$}
\psfrag{T1}{$R\dmod$}
\psfrag{T2}{$A_{\dif}\dmod$}


\section{Introduction}

The equation $d^2=0$, together with the related notion of a chain
complex and its (co)homology, plays a fundamental role in modern
mathematics and physics. Shortly after the discovery of simplicial
homology, Mayer~\cite{May1, May2} suggested to remove the minus
signs in the definition of the differential in the chain complex of
a simplicial complex while working in characteristic $p$. The Mayer
differential satisfies $d^p=0$ and gives rise to Mayer homology
groups. Spanier~\cite{Spa} showed that the Mayer homology of a
topological space can be expressed via the usual homology groups.
Spanier's result closed the topic for almost half a century. The
subject was brought back to life by  Kapranov~\cite{Kap},
Sarkaria~\cite{Sar1, Sar2}, Dubois-Violette~\cite{DV1, DV2,DV3},
Kerner~\cite{DVK} and others, also see~\cite{AD, CSW, Jon, KW, Wa} and the
references therein. These papers study a characteristic zero analogue
of Mayer's generalized complexes, called $N$-complexes,
with the generalized differential
satisfying $d^N=0$. Defining such differentials often requires the
use of a primitive $N$th root of unity $q$ to twist classical
constructions of the homological algebra. The subject is sometimes
referred to as $q$-homological algebra.

\vspace{0.06in}

Complexes of vector spaces constitute a symmetric monoidal category.
A complex of vector spaces can be thought of as a graded module over
the exterior algebra on one generator. The existence of a natural
tensor product on complexes can be explained by the Hopf algebra
structure of the exterior algebra, when viewed as an algebra in the
category of super-vector spaces. Algebraic structures in the super
world can be thickened using Majid's bosonisation procedure
\cite{Maj} to produce corresponding structures in the category of vector
spaces. Grading (ubiquitous in homological algebra)
can be interpreted in the
language of comodules: a comodule over the group ring $\Bbbk[G]$ is
the same as a $G$-graded $\Bbbk$-vector space.

\vspace{0.06in}

Furthermore, as originally observed by Pareigis~\cite{Par}, the
category of complexes is equivalent to the category of comodules
over a suitable Hopf algebra,  equipped with a cotriangular
structure. This approach explains the symmetric monoidal tensor
product of complexes via the Hopf algebra framework.
Bichon~\cite{Bi} characterized $N$-complexes as comodules over a
suitable Hopf algebra (the Borel subalgebra of the small quantum
$\mf{sl}_2$ at an $N$-th root of unity), generalizing Pareigis' work.

\vspace{0.06in}

Given a  Frobenius algebra $H$ over a field $\Bbbk$, its stable
category $H\udmod$ is triangulated~\cite{Hap} (a morphism of $H$-modules
is zero in the stable category if it factors through a projective module). Any
finite-dimensional Hopf algebra is a Frobenius algebra, and its
stable category is triangulated monoidal. Several years ago one of
us suggested~\cite{Kh} to study module-categories over $H\udmod$ for
suitable finite-dimensional Hopf algebras $H$. This was motivated by
the observation that the Grothendieck ring of the stable category of
finite-dimensional graded modules over the Hopf algebra
$H=\Bbbk[x]/(x^p)$ with $\Delta(x) = x\otimes 1 + 1\otimes x$,
$\deg(x)=1$ for a field $\Bbbk$ of characteristic $p$ is naturally
isomorphic to the ring of integers $\mathcal{O}_p$ in the cyclotomic field
$\Q[\zeta_p]$ for the $p$-th primitive root of unity $\zeta_p=e^{\frac{2\pi
i }{p}}$:
\begin{equation}\label{ring-iso}
 K_0(H\udmod) \cong \mathcal{O}_p\cong \Z[q]/(1+q+\dots + q^{p-1}),
\end{equation}
where $q$ is a formal variable (the ring of integers $\mathcal{O}_N$ in
$\Q[\zeta_N]$ is generated by $\zeta_N=e^{\frac{2\pi
i }{N}}$ over $\Z$).
 Witten-Reshetikhin-Turaev invariants of 3-manifolds take
values in the ring of cyclotomic integers $\mathcal{O}_N$
(in favorable cases; in general one
needs to extend the ring to $\mathcal{O}_N[\frac{1}{N}]$). Categorified WRT
invariants, if exist, might take values in a tensor triangulated
category with the Grothendieck ring $\mathcal{O}_N$ or, even better,
$\mathcal{O}_N[\frac{1}{N}].$

\vspace{0.06in}

We do not know any instances of monoidal triangulated categories whose
Grothen\-dieck ring contains $\Z[\frac{1}{N}]$ as a subring (it is an exciting
problem to find such examples), and so should restrict to
a simpler problem of categorifying $\mathcal{O}_N$.
The ring $\mathcal{O}_N$ is isomorphic to $ \Z[q]/(\Psi_N(q))$, where
$q$ is a formal variable and $\Psi_N$ the $N$-th cyclotomic polynomial. It seems that
examples of monoidal triangulated categories with Grothendieck ring isomorphic to
$\mathcal{O}_N$ are only known for $N$ a prime
power $p^r$ or $2p^r$. The cyclotomic
polynomials for these $N$ are
\begin{equation}\label{eq-psi-pr}
\Psi_{p^r}(q) = 1 + q^{p^{r-1}} + q^{2p^{r-1}} + \dots +
q^{(p-1)p^{r-1}},
\end{equation}
and, for odd $p$,
\begin{equation}\label{eq-psi-2pr}
\Psi_{2p^r}(q) = \Psi_{p^r}(-q)=
1 + (-q)^{p^{r-1}} + (-q)^{2p^{r-1}} + \dots + (-q)^{(p-1)p^{r-1}}.
\end{equation}
The fields $\Q[\zeta_{p^r}]$ and $\Q[\zeta_{2p^r}]$ coincide, for odd $p$, and thus
share the same ring of integers. The above example of categorification of $\mathcal{O}_p$
via the Hopf algebra $H$ can be easily modified, by changing the grading of generator
$x$ from $1$ to $p^{r-1}$, to a categorification of $\mathcal{O}_{p^r}$.

\vspace{0.06in}

It would be thrilling to lift
this example to characteristic $0$ and to get rid of the $p^r$ restriction.
There is a problem with doing it via
$N$-complexes - for $N>2$ the base category of $N$-complexes modulo chain
homotopies is neither symmetric nor braided,
and its Grothendieck ring is isomorphic to $\Z[q]/(1+q+\dots + q^{N-1})$ rather
than $\Z[q]/(\Psi_N(q))$. We do not know how to circumvent these difficulties
and construct a triangulated monoidal  category
with Grothendieck ring $\Z[q]/(\Psi_N(q))$. For this reason we work in
characteristic $p$, with Mayer's $p$-complexes, and so restrict our categorification
attempts to $p$-th and $2p$-th roots of unity.

\vspace{0.06in}

To do this $p$-homological algebra, we need a supply of module-categories over
the base symmetric triangulated category $H\udmod$. Given a graded $\Bbbk$-algebra
$A$ with a degree one derivation $\partial$ (so that $\partial(ab)=\partial(a)b +
a \partial(b)$) subject to the condition $\partial^p=0$, one can form
the category $(A,\partial)\dmod$ of $p$-DG modules over $A$, by analogy
with the category of DG-modules over a
DG-algebra, then pass to the homotopy category and, finally, localize by
quasi-isomorphisms to get the analogue $\mc{D}(A,\partial)$
of the derived category. To have an
interesting Grothendieck group, one can restrict to the subcategory $\mc{D}^c(A,\partial)$
of compact modules. For several reasons (mostly compatibility with prior work
on categorification of quantum groups at generic $q$)
we make our derivations have degree two. This results in the Grothendieck group
of  $\mc{D}^c(A,\partial)$ being a module over the ground ring
\begin{equation}\label{eq-gr-ring}
\mathbb{O}_p=\Z[q]/(\Psi_{p}(q^2)),
\end{equation}
the Grothendieck ring of the stable
category of finitely-dimensional $H$-modules. We denote the Grothendieck group
by $K_0(A,\partial)$ or simply by $K_0(A)$. Derivation being of degree two
translates into having $q^2$ in the formula. This polynomial is reducible,
$\Psi_{p}(q^2)= \Psi_p(q) \Psi_{2p}(q)$, so that $\mathbb{O}_p$ is not an
integral domain, although it is a subring of the product $\mc{O}_p\times \mc{O}_{2p}$
of two integral domains.

\vspace{0.06in}

Categorification of tensor products of quantum group representations was obtained
geometrically by Zheng~\cite{Zheng}, utilizing earlier ideas and constructions of
Nakajima~\cite{Na1,Na2} and Malkin~\cite{Mal}, and by Webster~\cite{Web,Web2}
in an algebro-combinatorial fashion, who also extended it to a categorification of
Reshetikhin-Turaev link and tangle invariants~\cite{RT}.
One of the inputs in Webster's construction is a categorification of positive halves of
quantum groups via diagrammatically described KLR rings~\cite{KL1,KL2,Rou}.
To test whether $p$-complexes are relevant to categorification at prime roots of unity, we
can try to integrate them with the categorification of quantum groups and their
representations. A first step would be to look for nilpotent ($\partial^p=0$)
derivations with interesting properties on KLR rings $R(\nu)$.

\vspace{0.06in}

We start with the $\mf{sl}_2$ case, when the KLR rings $R(\nu)$ specialize to nilHecke
algebras $\NH_n$. Categorification of multiplication and comultiplication of
the positive half of quantum $\mf{sl}_2$ at generic $q$ is achieved via
induction and restriction functors for inclusions of algebras $\NH_n\otimes \NH_m
\subset \NH_{n+m}$, see~\cite{KL1, Lau1}. The nilHecke algebra $\NH_n$ is isomorphic
to the matrix algebra of size $n!$ with coefficients in the ring of symmetric
functions $\mathrm{Sym}_n$ in $n$ variables, and the $n$-th divided power
$E^{(n)}= \frac{E^n}{[n]!}$ is categorified
by the symbol of an indecomposable
projective module $\bpol_n$ over $\NH_n$. The latter comes from realizing $\NH_n$
as the endomorphism ring of the space $\pol_n$ of polynomials in $n$ variables
over the subring $\sym_n$ of symmetric polynomials, and $\bpol_n$ is identified
with $\pol_n$, up to a grading shift.

\vspace{0.06in}

Working over a field $\Bbbk$ of characteristic $p$, we construct
a family of derivations $\partial_a$, parametrized by $a\in \F_p$, i.e., by
residues modulo $p$. These satisfy $\partial_a^p=0$ and have a
local description, given on generators by

\vspace{0.04in}

\[
\dif_a~\left(
\begin{DGCpicture}
\DGCstrand(1,0)(1,1) \DGCdot{0.5}
\end{DGCpicture} \right)
~ =~
\begin{DGCpicture}
\DGCstrand(1,0)(1,1) \DGCdot{0.5}[ur]{$2$}
\end{DGCpicture}
\ , \ \ \ \
\dif_a\left(~
\begin{DGCpicture}
\DGCstrand(1,0)(0,1) \DGCstrand(0,0)(1,1)
\end{DGCpicture}~\right)
= a~
\begin{DGCpicture}
\DGCstrand(0,0)(0,1) \DGCstrand(1,0)(1,1)
\end{DGCpicture}
-(a+1)
\begin{DGCpicture}
\DGCstrand(1,0)(0,1)\DGCdot{0.75} \DGCstrand(0,0)(1,1)
\end{DGCpicture}
+(a-1)
\begin{DGCpicture}
\DGCstrand(1,0)(0,1) \DGCstrand(0,0)(1,1)\DGCdot{0.75}
\end{DGCpicture}.
\]

\vspace{0.03in}

The derivation $\dif_a$ on $\NH_n$ comes from a suitable derivation $\dif_{\alpha}$ on
the polynomial space $\pol_n$. Inclusions
\begin{equation}\label{eq-inclusions}
\NH_n\otimes \NH_m \subset \NH_{n+m}
\end{equation}
 commute with
$\partial_a$ and give induction and restriction functors
between categories of $p$-DG modules over these algebras and corresponding
derived categories.

The positive half of the small quantum $\mf{sl}_2$ has one generator $E$ and
one defining relation $E^p=0$.
In our categorification $E$ becomes the image in the Grothendieck group of an
object $\mc{P}$,
and the hom spaces $\HOM(\mc{P}^{\otimes n},\mc{P}^{\otimes m})=0$ for $n\not= m$, while
$\HOM(\mc{P}^{\otimes n},\mc{P}^{\otimes n}) =\mathrm{END}(\mc{P}^{\otimes n})= \NH_n,$
equipped with derivation $\partial_a$.
We show that when $n=p$ and $a\not= 0$, the endomorphism algebra
$\mathrm{END}(\mc{P}^{\otimes p})=\NH_p$ is acyclic, as an $H$-module,
and its derived category is trivial, implying that the object $\mc{P}^{\otimes p}$
is isomorphic to the zero object in the derived category.
We view this result as a categorification of the defining relation $E^p=0$ in
the small quantum group.
When $a=0$, the $p$-DG algebra $\NH_p$ is not acyclic,
preventing the derived category from being trivial and leading us to exclude this
case by restricting to $a\in \F_p^{\ast}$.

\vspace{0.06in}

A twisted version of the restriction functor for the inclusion (\ref{eq-inclusions}) of $p$-DG algebras
induces a homomorphism of Grothendieck groups
\begin{equation}\label{eq-homo-NH}
K_0( \NH_{n+m} ) \lra K_0 (\NH_n \otimes \NH_m)
\end{equation}
(we write $K_0(\NH_{n+m})$ instead of $K_0(\NH_{n+m},\dif_a)$, etc.)
For this map to categorify the comultiplication in a bialgebra requires an
isomorphism
\begin{equation}\label{eq-req-isom}
K_0 (\NH_n \otimes \NH_m) \cong K_0 (\NH_n) \otimes_{\mathbb{O}_p} K_0(\NH_m),
\end{equation}
where $\mathbb{O}_p$ is the ground ring (the Grothendieck ring of the stable
category of finite-dimensional $H$-modules).

\vspace{0.06in}

Only for $a=\pm 1$ are we able to establish this isomorphism as well as to
show that the Grothendieck group of the $p$-DG algebra $\NH_n$ (for $n<p$)
equipped with derivation $\dif_a$ is a free rank one $\mathbb{O}_p$-module.
This $\mathbb{O}_p$-module is generated
by the symbol $[\bpol_n(\alpha)]$ of the indecomposable projective $\NH_n$-module
carrying a derivation $\dif_{\alpha}$.
The $p$-DG-module $\bpol_n(\alpha)$ categorifies
the $n$-th divided power $E^{(n)} = \frac{E^n}{[n]!}$ of the generator $E$.
We do not know the structure of
$K_0(\NH_n,\dif_a)$ for other values of $a$.

\vspace{0.06in}

For $a= \pm 1$ induction and twisted restriction functors lead to a
$q$-bialgebra (or twisted bialgebra) structure on
the direct sum of Grothendieck groups $K_0(\NH_n,\partial_a)$.
We identify this $q$-bialgebra
with an integral form of the positive half of the small quantum
$\mf{sl}_2$ at a $2p$-th root of unity
(Theorem \ref{thm-p-DG-nilHecke-categorifies-small-sl2}):

\vspace{0.04in}

\[ K_0(\NH, \dif_{\pm 1}) \cong u^+_{\mathbb{O}_p}(\mf{sl}_2).  \]

\vspace{0.04in}

The integral form is a free $\mathbb{O}_p$-module with basis elements
$E^{(n)}$ for $0\le n\le p-1$ and the usual multiplication and comultiplication
rules, see Section~\ref{subsec-gr-ring}.

\vspace{0.06in}

To an oriented graph $\Gamma$ there is assigned~\cite{KL1,KL2,Rou}
a family of graded $\Bbbk$-algebras
$R(\nu)$, over positive integral weights $\nu$. The direct sum of Grothendieck groups
of finitely-generated projective graded $R(\nu)$-modules, over all $\nu$, can be canonically
identified~\cite{KL1,KL2,VV}
with an integral form of the positive half of the quantum group associated to $\Gamma$:
\begin{equation*}
 \mathrm{U}^+_{q,\Z}(\Gamma) \cong \oplusop{\nu \in \N^I} K_0(R(\nu)).
\end{equation*}
In particular, Serre relations hold on the categorical level, lifting to isomorphisms
between certain direct sums of projective $R(\nu)$-modules.

\vspace{0.06in}

Each vertex $i$ of $\Gamma$ generates nilHecke algebras $R(ni)\cong \NH_n$ as
 $R(\nu)$ for $\nu= ni$. Assigning $a_i\in \F_p^*$ to a vertex $i$ gives us derivations
$\partial_{a_i}$ on $R(ni)$. These derivations extend in a multi-parameter way,
 written down in Section~\ref{sec-KLR},
to derivations $\partial$ on $R(\nu)$ for all weights $\nu$.
We show in Section~\ref{sec-KLR} that having Serre relations on the categorical level
requires $a_i = a_j$ for $i,j$ in the same connected component of $\Gamma$
and $a_i\in \{ 1, -1\}$ for each $i$. Assuming $\Gamma$ is connected, this
forces all $a_i$ to be equal $1$ or $-1$. We can restrict to $a_i=1,$ for all $i$;
the opposite case follows by applying an automorphism to rings $R(\nu)$.
Furthermore, having Serre relations on the Grothendieck group level
requires a unique choice for all other parameters
as well, once one of the two cases ($1$ or $-1$) is chosen for the $a_i$'s.
We work with the case $a_i=1, $  $i\in I$, and obtain a canonical derivation
$\partial_1$ on all $R(\nu)$, given in Definition~\ref{def-correct-differential-on-KLR}
and extending derivation $\partial_1$ of $\NH_n= R(ni)$.
Our results on  categorification of the Serre relations are summarized in
Theorem~\ref{thm-QSR-holds-only-for-special-paramters}. Conjecture~\ref{conj-cat}
conveys our hopes that $p$-DG rings $R(\nu)$, over all weights $\nu$,
categorify a positive half of the
small quantum group associated with the graph $\Gamma$.

\vspace{0.06in}

Other algebraic objects that appear in categorification also carry $p$-nilpotent
derivations and have a potential to categorify corresponding quantum structures at prime
roots of unity. For instance, in~\cite{EQ}, it is shown
that Lauda's two-category ${\mc{U}}$ admits a $p$-nilpotent derivation
leading to a categorification of an integral form of the small dotted quantum group
$\dot{u}_{\zeta_{2p}}(\mf{sl}_2)$. For another example, Webster's algebras~\cite{Web},
which categorify tensor products of simple modules over quantum Kac-Moody
algebras, carry a multi-parameter family of nilpotent derivations; the
$\mathfrak{sl}_2$ case is briefly discussed at the end of this paper.
Thick calculus of~\cite{KLMS} coupled with suitable derivations promises
a categorification of the big quantum $\mathfrak{sl}_2$ at a prime root of unity~\cite{EQ2}.

\vspace{0.06in}

In 1994 Louis Crane and Igor Frenkel conjectured~\cite{CF} that there exists a
categorification of the small quantum group $\mathfrak{sl}_2$ at roots of unity
and that a categorical lift of Kuperberg's 3-manifold invariant~\cite{Ku} should give
a quantum invariant of four-manifolds. Igor Frenkel developed the ideas of categorification
much further, in unpublished notes on structural constraints in categorified
quantum $\mathfrak{sl}_2$,
in many of his follow-up joint papers on categorification and geometrization of
representation theory,  and while advising several graduate students who went on to
further grow the categorification program, including the senior author of the present
paper. The junior author is a second-generation student of Igor Frenkel.
We are delighted to dedicate this work to Igor Frenkel, on the occasion of his sixtieth birthday.

\vspace{0.1in}

{\bf Acknowledgments.} We are grateful to Ben Elias, Alexander Ellis, Aaron Lauda, Joshua Sussan and the
anonymous referees for carefully reading early
drafts of the paper and suggesting many corrections.
Krzysztof Putyra wrote a latex package which allowed us to produce all diagrams
in Sections \ref{sec-nilHecke} and \ref{sec-KLR}. The authors were partially supported by
NSF grants DMS-1005750 and DMS-0739392 while working on the paper.


\section{Homological algebra of \texorpdfstring{$p$}{p}-derivations} \label{sec-pder}


\subsection{The base category}\label{sec-base-category}
\paragraph{The Hopf algebra $H$ and its category of modules.} We
work over a field $\Bbbk$ of prime characteristic $p$, so that there is
a natural inclusion of fields $\F_p =\Z/(p)\subset \Bbbk$. The algebra
$H= \Bbbk[\dif]/(\dif^p)$ is a Hopf algebra, with comultiplication
$\Delta(\dif)= 1\otimes \dif + \dif \otimes 1$, antipode $S(\dif) =
-\dif$, and counit $\epsilon(\dif^i)= \delta_{0,i}$. We make $H$
into a $\Z$-graded Hopf algebra via $\deg(\dif)=2$.
Let $H\dmod$ be the category of graded $H$-modules
(morphisms are grading-preserving module maps) and $H\dfmod$ the
subcategory of finite-dimensional graded $H$-modules. We also call
objects of $H\dmod$ \emph{$p$-complexes} and denote the vector space of
(grading-preserving) homomorphisms between $p$-complexes $U$ and $W$ by
$\Hom_H(U,W)$.

Any indecomposable object of $H\dmod$ is
isomorphic, up to a grading shift,  to $V_i := H/(\dif^{i+1})$ for some
$0\le i \le p-1$. Define the balanced indecomposable by
$\widetilde{V}_i = V_i\{ -i\}$.  It has a copy of the ground field in degrees
$-i, -i+2, \dots, i$, and we denote the basis elements by $\tilde{v}_k$.
so that $\widetilde{V}_i=\oplus_{k=0}^i\Bbbk\tilde{v}_k$,
with $\dif(\tilde{v}_k)=\tilde{v}_{k+1}$ for $k<i$ and $\dif(\tilde{v}_i)=0$.

The categories $H\dmod$ and $H\dfmod$ have the
Krull-Schmidt unique decomposition property. The cocommutative Hopf algebra
structure of $H$ turns $H\dmod$ and $H\dfmod$ into symmetric monoidal
categories, with $\dif$ acting on $U\otimes W$ via $\dif(u\otimes w) =
\dif (u)\otimes w + u \otimes \dif (w).$ The tensor product of indecomposable
objects is given by
\begin{equation} \label{eqn-product-rule-for-balanced-indecomposables}
\widetilde{V}_i \otimes \widetilde{V}_j \cong (\oplusop{m\in I} \widetilde{V}_m)
 \oplus \widetilde{V}_{p-1}^{[\max(0,i+j+2-p)]} ,
\end{equation}
where $I=\{ |i-j|, |i-j|+2, \dots, \mathrm{min}(i+j, 2p-i-j-4)\}$
and $[n] = q^{n-1} + q^{n-3}+\dots + q^{1-n}\in \N[q,q^{-1}]$ for
$n=\max(0,i+j+2-p)$ is the multiplicity, with grading shifts, of the
free module $\widetilde{V}_{p-1}$ in the tensor product. Notice that
$I$ is the empty set if $i=p-1$ or $j=p-1$, since the tensor product
of a free module over a Hopf algebra with any module is free. We
have
$$ V_i \otimes V_{p-1} \cong V_{p-1}\oplus V_{p-1}\{2\}\oplus \dots \oplus V_{p-1}
 \{2i\}.$$

 The antipode $S$ gives us a contravariant functor on $H\dmod$ taking $U$ to
$U^{\ast} =\oplus_{i \in \Z} \Hom_{\Bbbk}(U^i,\Bbbk)$ with $(\dif f)(u) = - f(\dif (u))$, where
$U^i$ stands for the homogeneous degree $i$ part of $U$.
This functor restricts to a contravariant involutive self-equivalence on
$H\dfmod$.  Internal homs in $H\dmod$ are given by
$${\HOM}_\Bbbk(U,W)  := \oplus_{i,j\in \Z}\Hom_\Bbbk(U^i, W^j),
\ \ \ \  (\dif f)(u) = \dif (f(u))  - f (\dif (u)).$$
Internal homs are graded whose homogeneous terms are given by
$${\HOM}_{\Bbbk}^k(U,W) := \oplus_{j-i=k}\Hom_\Bbbk(U^i,W^j)$$
for any $k\in \Z$. In the smaller category $H\dfmod$ the above direct sum coincides with
$\oplus_{i\in Z}\Hom_{\Bbbk}(U\{i\},W)$. We also define
$$\HOM_{H}(U,W):= \oplusop{i\in \Z} \Hom_H(U\{ i\}, W) .$$

Balanced modules are self-dual: $\widetilde{V}_i \cong
\HOM_\Bbbk(\widetilde{V}_i,\Bbbk)$.

The Grothendieck ring $K_0(H\dfmod)$ of the symmetric monoidal abelian category
$H\dfmod$ is naturally isomorphic to the ring of Laurent polynomials
$\Z[q,q^{-1}]$, with $1=[V_0]$,
grading shift corresponding to multiplication by $q$:
$[U\{m\}] = q^m [U]$, and the symbol $[U]$ of any $p$-complex $U$ given
by its graded dimension. In particular,
$$ [V_i] = 1+ q^2 +\dots + q^{2(i-1)}, \quad [\widetilde{V}_i] = [i]:=
 q^{i-1}+q^{i-3}+\dots +q^{1-i}.$$

\paragraph{Stable category and triangular structure.} Any graded projective
$H$-module is a free graded $H$-module, isomorphic to a direct sum of
copies of $H$ with shifts. Taking the graded dual
$\widetilde{V}_{p-1}^*\cong \widetilde{V}_{p-1}$, we see that
projective modules are also injective and vice versa. Therefore the
category $H\dmod$ is Frobenius (\cite{Hap}). Define the stable
category $H\umod$ to have the same objects as $H\dmod$ and morphisms
$\Hom_{H\umod}(U,W)$ to be the quotient of the space of morphisms
$\Hom(U,W)$ in $H\dmod$ by the subspace of morphisms that factor
through a projective object. Define $H\ufmod$ and $\HOM_{H\ufmod}(U,W)$
likewise.

A morphism $f:U\lra W$ factors through a projective object if and only
if it factors through the canonical $H$-module map
\begin{equation}\label{eqn-canonical-embedding}
\lambda_U:U{\lra} H\{2-2p\}\otimes U, \ \ \ \ \lambda_U(u) =
\dif^{p-1}\otimes u,
\end{equation}
see~\cite[Lemma 1]{Kh}. A map $g:
H\{2-2p\}\otimes U\lra W$ is determined by $h: U \{2-2p\} \lra W$,
where $h(u) := g(1 \otimes u).$ Define $g_i(u)  := g(\dif^i
\o u).$ Then $h(u) = g_0(u)$ and $ g_{i+1}(u) = \dif (g_i
(u)) - g_i(\dif (u)),$ implying\footnote{Here we use ``$\circ$''
to denote the composition of maps. In what follows, to avoid
potential confusion, we will use ``$\cdot$'' to denote the $H$-action on
$\Hom$-spaces when necessary.}
$$ g_i  =  \sum_{j=0}^i (-1)^j {i \choose j} \dif^{i-j} \circ g_0 \circ \dif^j .$$
In particular ,
$$ g_{p-1} =  \sum_{j=0}^{p-1} \dif^{p-1-j} \circ h  \circ \dif^j, $$
since
$$ (-1)^j {p-1 \choose  j} \equiv 1 \ (\mathrm{mod}~p).$$
Therefore, $f: U\lra W$ factors through a projective $p$-complex if
and only if there is $\Bbbk$-linear map $h: U\lra W$ of degree
$2-2p$ such that
\begin{equation}
f = \sum_{j=0}^{p-1} \dif_W^{p-1-j} \circ h  \circ \dif_U^j .
\end{equation}
Here ``$\circ$'' denotes composition of $\Bbbk$-linear maps.
The following diagram illustrates an $f$ and $h$ of the formula. One
should compare this with the usual notion of a null-homotopic map between
complexes.
$$\xymatrix@C=1.5em
{\cdots\ar[r]^-{\partial_U} & U^{i-2(p-1)}\ar[r]^-{\partial_U}
 \ar[d]_{f} & U^{i-2(p-2)}
\ar[d]_{f}\ar[r]^-{\partial_U} &\cdots\ar[r]^-{\partial_U} &
U^i\ar[dlll]|*+<1ex,1ex>{\scriptstyle h}
\ar[d]_{f}\ar[r]^-{\partial_U}
&U^{i+2}\ar[dlll]|*+<1ex,1ex>{\scriptstyle h}
\ar[d]_{f}\ar[r]^-{\partial_U}& \cdots \ar[r]^-{\partial_U} &
U^{i+2(p-1)}\ar[dlll]|*+<1ex,1ex>{\scriptstyle h}
\ar[d]_{f}\ar[r]^-{\partial_U} & \cdots
\\
\cdots\ar[r]_-{\partial_W} & W^{i-2(p-1)}\ar[r]_-{\partial_W} &
W^{i-2(p-2)}\ar[r]_-{\partial_W} &\cdots\ar[r]_-{\partial_W}
&W^i\ar[r]_-{\partial_W} &W^{i+2}\ar[r]_-{\partial_W}& \cdots
\ar[r]_-{\partial_W} & W^{i+2(p-1)}\ar[r]_-{\partial_W} & \cdots .}
$$

\begin{defn}\label{def-contractible-p-complex}
We call a $p$-complex $U$ \emph{contractible} if it is isomorphic to
the zero complex in $H\umod$.
\end{defn}

Equivalently, from the discussion above, $U$ is contractible if and
only if the identity morphism $\Id_U:U\lra U$ factors through the
canonical injection (\ref{eqn-canonical-embedding}). We summarize the
definition and the above equational characterization into the
following proposition.

\begin{prop} \label{prop-contractible-H-module-condition}
The following conditions on a p-complex $U$ are equivalent:
\begin{enumerate}
\item[i)] $U$ is contractible.
\item[ii)] $U$ is a graded projective $H$-module.
\item[iii)] $U$ is a graded injective $H$-module.
\item[iv)] $U$ is isomorphic to a direct sum of graded shifts $V_{p-1}\{i\}$ of the free
rank one module $V_{p-1}$.
\item[v)] For any $x\in U$ with $\dif (x)=0$ there exists an $y\in U$ such that
$\dif^{p-1}(y) =x$.
\item[vi)] There exists a linear endomorphism $h$ of $U$ of degree $2-2p$ such
that
$$ \Id_U = \sum_{j=0}^{p-1} \dif^{p-1-j} \circ h \circ \dif^j .$$
\end{enumerate}
\end{prop}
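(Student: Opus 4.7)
The plan is to close the six conditions into a short set of implications, using three ingredients already available: (a) the explicit homotopy formula for $g_{p-1}$ derived in the excerpt just before the proposition, (b) the Frobenius structure on $H$, so that projectives coincide with injectives, and (c) the Krull-Schmidt property of $H\dmod$ stated earlier in the excerpt, which forces every graded $H$-module to decompose as a direct sum of grading-shifted copies of the indecomposables $V_0,\dots,V_{p-1}$.

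First I would treat (ii) $\Leftrightarrow$ (iii) $\Leftrightarrow$ (iv). The algebra $H$ is local, with $V_{p-1}=H$ its unique indecomposable graded projective up to shift, and self-injective by the Frobenius structure (together with the self-duality $\widetilde{V}_{p-1}^*\cong \widetilde{V}_{p-1}$ recorded above). Combining with Krull-Schmidt, a graded $H$-module is projective if and only if injective, if and only if every indecomposable summand is a shift of $V_{p-1}$.

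Next I would establish (i) $\Leftrightarrow$ (vi). The computation just above the proposition shows that $f:U\to W$ factors through the canonical embedding $\lambda_U$ precisely when $f=\sum_{j=0}^{p-1}\partial_W^{p-1-j}\circ h\circ \partial_U^j$ for some $\Bbbk$-linear map $h$ of degree $2-2p$. Specializing to $W=U$ and $f=\mathrm{Id}_U$ identifies (vi) with the assertion that $\mathrm{Id}_U$ factors through the free $H$-module $H\{2-2p\}\otimes U$. Since that module is projective, (vi) $\Rightarrow$ (i) is immediate. Conversely, if $\mathrm{Id}_U$ factors through some projective $P$, then $U$ is a direct summand of $P$ (yielding (i) $\Rightarrow$ (ii) as a byproduct); using (ii) $\Leftrightarrow$ (iv), $U$ is a direct sum of shifts of $H$, and on each summand $H\{m\}$ the map $h$ defined by $h(\partial^{p-1})=1$ and $h(\partial^i)=0$ for $i<p-1$ satisfies (vi); summing over summands produces the desired $h$ on $U$.

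Finally, (iv) $\Leftrightarrow$ (v) is handled summand by summand: the forward direction is obvious, while for (v) $\Rightarrow$ (iv), if some Krull-Schmidt summand were $V_i\{m\}$ with $i<p-1$, its socle generator would be a $\partial$-cycle not lying in the image of $\partial^{p-1}$ in all of $U$, since $\partial^{p-1}$ preserves the direct-sum decomposition and annihilates every $V_j$ with $j<p-1$, contradicting (v). The only genuinely subtle point is the bookkeeping match between the abstract ``factor through a projective'' definition of contractibility and the concrete formula in (vi); this is precisely what the calculation preceding the proposition accomplishes, after which the remaining equivalences reduce to straightforward applications of Krull-Schmidt and the Frobenius property.
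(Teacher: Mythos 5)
Your proof is correct and matches the approach the paper intends. The paper itself labels the proof ``obvious from the definition and discussion above,'' leaving the details implicit, and your write-up fills those in faithfully: the equivalence (i) $\Leftrightarrow$ (vi) comes straight from the preceding computation showing that factoring through $\lambda_U$ is equivalent to the displayed homotopy formula; (ii) $\Leftrightarrow$ (iii) $\Leftrightarrow$ (iv) is exactly the Frobenius plus Krull-Schmidt argument the paper sets up (projectives equal injectives, and every indecomposable is a shift of some $V_i$ with $V_{p-1}$ the only free one); and your (iv) $\Leftrightarrow$ (v) argument via the socle of $V_i\{m\}$ being annihilated by $\partial^{p-1}$, which preserves the summands since $\partial^{p-1}$ is multiplication by a central element of $H$, is the natural way to close the loop. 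The explicit construction of $h$ on a summand $H\{m\}$ by sending $\partial^{p-1}\mapsto 1$ and the lower powers to $0$ verifies the formula directly and is the right elementary check. No gaps.
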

\begin{proof} Obvious from the definition and discussion above.
\end{proof}

Once and for all,
we fix an inclusion of balanced indecomposable modules
\begin{equation} \label{eqn-fixed-p-2}
\tilde{\iota}: \widetilde{V}_0 \lra \widetilde{V}_{p-2}\o \widetilde{V}_{p-2}, \ \ \ \
\tilde{\iota}(\tilde{v}_0):=\sum_{j=0}^{p-2}(-1)^{j}\tilde{v}_{j}\o \tilde{v}_{p-2-j}.
\end{equation}
The decomposition (\ref{eqn-product-rule-for-balanced-indecomposables})
shows that $\tilde{\iota}$ induces an isomorphism in the stable category
$H\udmod$.

We define the shift functor $[1]$ on $H\umod$ and its full
subcategory $H\ufmod$ as follows. For any $U \in H\umod$, let
\begin{equation}\label{eq-shift}
U[1]:= (H\o U)\{2-2p\}/(\textrm{Im}\lambda_U)=\widetilde{V}_{p-2}\{-p\}\o U.
\end{equation}
Then $[1]$ is invertible and its inverse functor is given by
\begin{equation}\label{eq-minus-shift}
[-1]\ : \  U \mapsto \widetilde{V}_{p-2}\{p\}\o U = V_{p-2}\{2\} \o U .
\end{equation}
The map $\tilde{\iota}$ fixes functor isomorphisms $\mathrm{Id}\cong [-1][1]$ and $\{-2p\} \cong [2]$.

The class of exact triangles in $H\umod$ (resp. $H\ufmod$) is
constructed as follows. Let $\underline{f}: U\lra V$ be a morphism
in $H\udmod$ and let $f$ be a lift of $\underline{f}$ in $H\dmod$.
Consider the push-out of $\lambda_U$ and $f$, which fits into
the following commutative diagram:
$$\xymatrix{ 0 \ar[r]\ar[d] & U \ar[r]^-{\lambda_U}\ar[d]_{f} & H\o U
\ar[r]^{\overline{\lambda_U}}\ar[d] & U[1] \ar[r]\ar@{=}[d]& 0\ar[d]\\
0 \ar[r] & V\ar[r]^g & C_f \ar[r]^h & U[1]\ar[r] & 0 .}$$ Then we
declare the sextuple $\xymatrix{U \ar[r]^{\underline{f}}& V
\ar[r]^{\underline{g}} & C_f \ar[r]^{\underline{h}} & U[1]}$ to be a
\emph{standard exact triangle}, where $\underline{g}$,
$\underline{h}$ denotes the image of $g$ and $h$ in $H\umod$.
Moreover, any sextuple $\xymatrix{U \ar[r]^{\underline{f}}& V
\ar[r]^{\underline{g}} & W \ar[r]^{\underline{h}} & U[1]}$ that is
isomorphic in $H\umod$ to a standard exact triangle is called an \emph{exact
triangle}. The notation $\underline{f}$, $\overline{\lambda_U}$,
etc. is taken from Happel \cite{Hap}.

\begin{prop}\label{thm-p-complexes-triangulated}The category of
$p$-complexes $H\umod$ (resp. $H\ufmod$) equipped with the shift
functor $[1]$ and the class of exact triangles as above is
triangulated.
\end{prop}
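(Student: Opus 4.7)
The proof follows the standard template of Happel \cite{Hap}: once one knows that $H\dmod$ is a Frobenius category whose projective-injective objects coincide with the graded free $H$-modules, the triangulated structure on the stable category is automatic. So the plan is to reduce the statement to Happel's theorem after checking the ambient Frobenius structure, and then to verify that the shift functor and exact triangles described in the paper coincide with Happel's.

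First I would record that $H\dmod$ is an abelian category, and by Proposition~\ref{prop-contractible-H-module-condition} an object is projective if and only if it is injective if and only if it is a direct sum of shifts of $V_{p-1}$. In particular $H\dmod$ has enough projectives and enough injectives, and the two classes coincide, so $H\dmod$ is a Frobenius (exact) category in the sense of Quillen, with admissible short exact sequences being the ordinary short exact sequences of graded $H$-modules. The same applies to $H\dfmod$ since the class of projective-injectives intersected with $H\dfmod$ still agrees. By Happel's theorem, the stable categories $H\umod$ and $H\ufmod$ are naturally triangulated.

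Next I would check that the shift $[1]$ defined in (\ref{eq-shift}) matches Happel's suspension. For any $U$, the canonical embedding $\lambda_U$ from (\ref{eqn-canonical-embedding}) realizes $U$ as a subobject of the projective-injective $H\{2-2p\}\otimes U$, and the cokernel is by definition $U[1]$; this is exactly Happel's construction. Invertibility of $[1]$, with inverse $[-1]$ as in (\ref{eq-minus-shift}), is a separate point that does not hold in general Frobenius categories but does here, and I would justify it using the fixed isomorphism $\tilde{\iota}$ of (\ref{eqn-fixed-p-2}): tensoring with $\widetilde{V}_{p-2}\{-p\}$ followed by tensoring with $\widetilde{V}_{p-2}\{p\}$ becomes, up to the contractible summand $\widetilde{V}_{p-1}^{[\,\cdot\,]}$ arising from (\ref{eqn-product-rule-for-balanced-indecomposables}), tensoring with $\widetilde{V}_0$, which is naturally isomorphic to $\mathrm{Id}$ in $H\umod$. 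Thus $[1]$ and $[-1]$ are mutually inverse autoequivalences of the stable category.

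Finally I would verify that the class of exact triangles defined via pushouts of $\lambda_U$ along lifts $f$ of $\underline{f}$ is well-defined (independent of the choice of lift, up to isomorphism in $H\umod$) and agrees with Happel's distinguished triangles. Independence of the lift is standard: if $f$ and $f'$ are two lifts of $\underline{f}$, their difference factors through a projective, and a diagram chase produces an isomorphism $C_f\simeq C_{f'}$ in $H\umod$ compatible with $g$ and $h$. Having matched the data, the four axioms (TR1)--(TR4) for $H\umod$ (and its full triangulated subcategory $H\ufmod$) follow from Happel's theorem verbatim. The only slightly subtle point, which I expect to be the main nuisance rather than a real obstacle, is checking that the triangulated subcategory $H\ufmod$ really is closed under the shift and under cones: closure under $[1]$ and $[-1]$ is immediate from (\ref{eq-shift})--(\ref{eq-minus-shift}) since $\widetilde{V}_{p-2}$ is finite-dimensional, and closure under cones follows from the short exact sequence $0\to V\to C_f\to U[1]\to 0$ together with the finite-dimensionality of $U$ and $V$.
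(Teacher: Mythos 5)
Your proposal is correct and takes essentially the same route as the paper, which simply cites Happel's Theorem 2.6 without elaboration: both arguments reduce the statement to the general fact that the stable category of a Frobenius category is triangulated, after observing via Proposition~\ref{prop-contractible-H-module-condition} that projectives and injectives in $H\dmod$ coincide. Your version fills in the details the paper omits (matching the shift and triangle data to Happel's construction, well-definedness of the cone, and closure of $H\ufmod$), all of which are standard and correct.
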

\begin{proof}Omitted. This is a special case of Theorem 2.6 in
\cite[Section 1.2]{Hap}
\end{proof}

The shift $U[1]$ of an object $U$ can be computed more economically
by embedding $U$ into a contractible complex and taking the quotient.
For instance,
the balanced indecomposable $\widetilde{V}_i$ is a submodule of the contractible module
$\widetilde{V}_{p-1}\{i+1-p\}$, and $\widetilde{V}_i[1]\cong
\widetilde{V}_{p-1}\{i+1-p\}/\widetilde{V}_{i}\cong \widetilde{V}_{p-2-i}\{-p\}$. The
picture below illustrates this for $p=5$ and $i=2$. Each one-dimensional weight
space of is denoted by a dot and the $\dif$
action by horizontal arrows. The degree zero spaces are labeled by black dots.
$$\xymatrix{\widetilde{V}_i:\ar[d]& & & \circ \ar[r] \ar[d] & \bullet \ar[r]\ar[d] &
\circ\ar[d] \\
 \widetilde{V}_{p-1}\{i+1-p\}:\ar[d] & \circ \ar[r]\ar[d]& \circ\ar[r]\ar[d]& \circ\ar[r]
& \bullet\ar[r] &\circ \\
\widetilde{V}_i[1]: & \circ \ar[r] & \circ & & &.
 }$$
Reading these diagrams bottom-up with an appropriate grading shift is a convenient way
to realize the functor $[-1]$ on balanced indecomposable $H$-modules.

We also record the following useful result on how short exact
sequences of $H$-modules give rise to exact triangles in $H\umod$.
Suppose $0\lra U \stackrel{f}{\lra} V \stackrel{g}{\lra} W \lra 0$
is a short exact sequence of $H$-modules. Consider the diagram
$$\xymatrix{& 0\ar[d] & 0\ar[d] & & \\
0\ar[r] & U\ar[r]^-{\lambda_U}\ar[d]_f & H \o U \ar[r]
\ar[d]_{\overline{f}} & U[1]\ar[r]\ar@{=}[d]&0\\
0\ar[r]& V\ar[r]^-{h}\ar[d]_{g} & C_f \ar[r]^{e}
\ar[d]_{\overline{g}} & U[1]\ar[r] & 0\\
& W\ar[d]\ar@{=}[r] & W\ar[d] & & \\
& 0 & 0 & &. }$$ Here $C_f$ is the push-out of $f$ and $\lambda_U$,
and the rest of the diagram is completed by the push-out property.
We claim the $\overline{g}:C_f \lra W$ is an isomorphism in
$H\umod$. In fact, the submodule $H\o U$ is injective, and thus the
middle vertical exact sequence splits and gives us $C_f\cong (H\o U)\oplus W$.
The claim follows.

\begin{lemma} \label{lemma-ses-lead-to-dt-H-mod-case}
In the above notation,
$$\xymatrix{
U\ar[r]^{\underline{f}}& V\ar[r]^{\underline{h}}
&W\ar[r]^{\underline{e}\cdot\overline{g}^{-1}}& U[1] }$$ is an exact
triangle. Conversely any exact triangle in $H\umod$ is isomorphic to
one of this form.
\end{lemma}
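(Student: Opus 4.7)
The forward implication is essentially completed in the push-out diagram preceding the lemma; the one point to nail down is that $\overline{g}: C_f \lra W$ is an isomorphism in $H\umod$. For this, I would read the middle column of the push-out diagram as a short exact sequence $0 \lra H \o U \lra C_f \lra W \lra 0$ in $H\dmod$. By Proposition \ref{prop-contractible-H-module-condition}, $H \o U$ is projective, so this sequence splits, yielding $C_f \cong (H \o U) \oplus W$ in $H\dmod$. The projective summand dies in $H\umod$, so $\overline{g}$ becomes invertible there. The commutativity relation $\overline{g} \circ h = g$ supplied by the push-out then provides an isomorphism of triangles between the standard triangle $U \xrightarrow{\underline{f}} V \xrightarrow{\underline{h}} C_f \xrightarrow{\underline{e}} U[1]$ and the triangle of the lemma.

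For the converse, let $T: X \xrightarrow{\underline{\alpha}} Y \xrightarrow{\underline{\beta}} Z \xrightarrow{\underline{\gamma}} X[1]$ be an arbitrary exact triangle. By definition, $T$ is isomorphic in $H\umod$ to a standard triangle $U \xrightarrow{\underline{f}} V \xrightarrow{\underline{h}} C_f \xrightarrow{\underline{e}} U[1]$ built from some lift $f: U \lra V$ in $H\dmod$. The plan is to replace $f$ by an honest injection representing the same morphism in $H\umod$. Put
\[
\widetilde{f} \;:=\; \binom{f}{\lambda_U}: U \lra V \oplus (H \o U),
\]
which is injective because $\lambda_U$ is. The projection $V \oplus (H \o U) \twoheadrightarrow V$ is an isomorphism in $H\umod$ (the kernel $H \o U$ is contractible) and composing it with $\widetilde{f}$ recovers $f$. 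Hence $\widetilde{f}$ and $f$ represent the same morphism in $H\umod$, and their standard triangles are isomorphic there. Since $\widetilde{f}$ is injective, we obtain a short exact sequence
\[
0 \lra U \xrightarrow{\widetilde{f}} V \oplus (H \o U) \lra W \lra 0,
\]
with $W := \mathrm{coker}(\widetilde{f})$, to which the already-proved forward direction applies, producing an exact triangle isomorphic to the standard triangle of $\widetilde{f}$, hence to $T$. This exhibits $T$ as coming from a short exact sequence of $H$-modules in the form of the lemma.

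The principal obstacle is the replacement step: one must justify that the standard-triangle construction descends to $H\umod$, i.e.\ that two lifts of the same stable morphism produce isomorphic standard triangles, so that passing from $f$ to $\widetilde{f}$ does not change the triangle up to isomorphism in $H\umod$. This is a general property of the triangulated structure on the stable category of a Frobenius category, encoded in \cite[Section 1.2, Theorem 2.6]{Hap}; once invoked, the remainder of the argument amounts to routine bookkeeping with push-out squares.
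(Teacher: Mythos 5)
Your proof is correct and takes essentially the approach the paper has in mind. The forward direction matches the paper's argument verbatim: injectivity of $H\o U$ splits the middle column of the push-out diagram, so $\overline{g}$ is a stable isomorphism carrying the standard triangle to the one in the lemma. For the converse, the paper simply writes ``The result follows,'' and your stabilization step---replacing the lift $f$ by the injection $\binom{f}{\lambda_U}$, which has the same image in $H\umod$, and then applying the forward direction to the resulting short exact sequence---is the natural way to make that terse remark precise; your appeal to \cite[Section 1.2, Theorem 2.6]{Hap} for the independence of the standard triangle on the chosen lift is exactly the right justification.
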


\begin{proof}By the remarks made before the lemma, this
sextuple is isomorphic to the standard one
$$\xymatrix{U \ar[r]^{\underline{f}} & V \ar[r]^{\underline{h}} &
C_f \ar[r]^{\underline{e}}  & U[1]}.$$ The result follows.
\end{proof}

\paragraph{The Grothendieck ring of the stable category.}
Since the tensor product of any $p$-complex with a contractible
$p$-complex is contractible, the tensor product bifunctor $\o :H\dmod
\times H\dmod \lra H\dmod$ descends to the stable categories
$H\umod$ and $H\ufmod$ and turns them into symmetric monoidal
categories.

The tensor product of balanced indecomposables in the stable category is described by
$$ \tilde{V}_i \otimes \tilde{V}_j \cong  \oplusop{m\in I} \widetilde{V}_m$$
where $I=\{ |i-j|, |i-j|+2, \dots, \mathrm{min}(i+j, 2p-i-j-4)\}$, see earlier.
Notice that $U\otimes W\cong 0$ in $H\umod$ if and only if either $U\cong 0$ or
$W\cong 0$.

\begin{lemma}\label{lemma-compatibility-monoidal-triangulated-structures}
The symmetric monoidal structure $\o: H\umod\times H\umod\lra
H\umod$ is compatible with the triangulated structure on $H\umod$,
in the sense that for any $U,~W \in H\umod$,
$$(U\o W)[1]\cong U[1]\o W \cong U \o (W[1]).$$
\end{lemma}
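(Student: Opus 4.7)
The plan is to observe that by equation (\ref{eq-shift}), the shift functor $[1]$ on $H\umod$ is canonically isomorphic to the endofunctor $T\otimes(-)$, where $T:=\widetilde{V}_{p-2}\{-p\}$. Once this identification is in hand, both claimed isomorphisms become purely formal consequences of the symmetric monoidal structure on $H\umod$ recorded in the preceding paragraph.

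For the first isomorphism I would simply apply the associator of $\otimes$:
$$(U\otimes W)[1] \;=\; T\otimes (U\otimes W) \;\cong\; (T\otimes U)\otimes W \;=\; U[1]\otimes W.$$
For the second isomorphism I would insert the symmetry isomorphism $\sigma_{T,U}:T\otimes U\cong U\otimes T$ (available because $H\umod$ is symmetric monoidal, not merely monoidal) and follow it by the associator:
$$U[1]\otimes W \;=\; (T\otimes U)\otimes W \;\stackrel{\sigma_{T,U}\otimes \mathrm{id}_W}{\cong}\; (U\otimes T)\otimes W \;\cong\; U\otimes (T\otimes W) \;=\; U\otimes W[1].$$

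I do not anticipate any serious obstacle: the realization of $[1]$ as tensoring with $T$, together with the descent of the symmetric monoidal structure from $H\dmod$ to $H\umod$, have already been established in the excerpt. The only mild hygienic point worth checking in a full write-up is that the identification $(H\otimes U)\{2-2p\}/\mathrm{Im}(\lambda_U)\cong T\otimes U$ from (\ref{eq-shift}) is natural in $U$, so that $[1]$ and $T\otimes(-)$ genuinely agree as \emph{functors} rather than merely pointwise; this is transparent from the explicit quotient description but deserves a one-line justification. Beyond that, the lemma is essentially the tautological observation that whenever a shift is realized as tensoring with a fixed object in a symmetric monoidal category, it commutes with the tensor product on either side up to coherent isomorphism.
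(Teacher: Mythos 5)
Your proof is correct and follows the same route as the paper: the paper's one-line proof likewise uses the identification $U[1]=\widetilde{V}_{p-2}\{-p\}\otimes U$ together with cocommutativity of $H$ (i.e.\ the symmetric monoidal structure), and the rest is exactly the associator-plus-symmetry bookkeeping you spell out.
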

\begin{proof} Immediate, since $U[1]=\widetilde{V}_{p-2}\{-p\}\o U$ and $H$ is
cocommutative.
\end{proof}

Later we will see that the natural inclusion $H\ufmod \subset H\umod$
induces an equivalence of $H\ufmod$ with the subcategory $H\udmod^c$ consisting of
\emph{compact} objects.

\begin{defn}\label{def-Grothendieck-ring-H-stable-fmod} We define
the Grothendieck group $K_0(H\ufmod)$ to be the abelian group
generated by the symbols $[U]$ of isomorphism classes of objects
$U\in H\ufmod$, subject to the relations that $[V]=[U]+[W]$ whenever
$U\lra V\lra W \lra U[1]$ is an exact triangle.
\end{defn}

The exact triangle $U \lra 0 \lra
U[1]\xrightarrow{-\Id_{U[1]}} U[1]$ gives the relation $[U[1]]=-[U]$
in $K_0(H\ufmod)$. The grading on $H\ufmod$ makes $K_0(H\ufmod)$
into a $\Z[q,q^{-1}]$-module by setting $q[U]:=[U\{1\}]$, while the
exact symmetric monoidal structure on $H\ufmod$ equips
$K_0(H\ufmod)$ with a commutative ring structure, $1:=[V_0]$ being the
multiplicative unit.

Clearly, $K_0(H\ufmod)$ is generated as a $\Z[q,q^{-1}]$-module by the symbol
$[V_0]$, subject to the only relation that
$(1+q^2+\cdots+q^{2(p-1)})[V_0]=0$. Thus, we identify
it with the ring $\Z[q]/(1+q^2 + \cdots + q^{2(p-1)})$, also denoted $\mathbb{O}_{p}$:
\begin{equation}\label{def-mathbb-O-p}
\mathbb{O}_p:=\Z[q]/(1+q^2 + \cdots + q^{2(p-1)}) \cong K_0(H\ufmod).
\end{equation}
For $p\neq 2$, the
polynomial $1+q^2 + \cdots + q^{2(p-1)}$ decomposes into the product
of the $p$-th and $2p$-th cyclotomic polynomials:
\begin{equation}\label{eqn-decomp-cyclomtic-polynomial}
1+q^2+\cdots+q^{2(p-1)}=\Psi_{p}(q)\Psi_{2p}(q).
\end{equation}
This is readily seen from the relation $(1-q^2)\Psi_{p}(q)\Psi_{2p}(q)=1-q^{2p}$, which
in turn also shows that
\begin{equation}
\Psi_{2p}(q)=\sum_{i=0}^{p-1}(-q)^i=\Psi_p(-q).
\end{equation}
It follows that $\Z[q]/(\Psi_p(q))\cong \Z[q]/(\Psi_{2p}(q))$. These rings,
which are also known as the rings of the $p$-th and $2p$-th cyclotomic integers, are
isomorphic to each other and usually denoted by $\mc{O}_p$ and $\mc{O}_{2p}$:
\begin{equation}
\mc{O}_p:=\Z[q]/(\Psi_p(q)), \ \ \ \ \mc{O}_{2p}:=\Z[q]/(\Psi_{2p}(q)), \ \ \ \ \mc{O}_p\cong \mc{O}_{2p}.
\end{equation}
By mapping $q$ to a primitive $p$-th root of unity $\zeta_p\in \C$ (resp. a primitive $2p$-th
root of unity $\zeta_{2p}\in \C$),
we identify $\mc{O}_p\cong \Z[\zeta_p]$ (resp. $\mc{O}_{2p} \cong \Z[\zeta_{2p}]$), and
the corresponding fields of fractions $\Q[q]/(\Psi_p(q))\cong \Q[\zeta_p]$ (resp.
$\Q[q]/(\Psi_{2p}(q))\cong \Q[\zeta_{2p}]$). Equation (\ref{eqn-decomp-cyclomtic-polynomial})
induces quotient maps
\[
\xymatrix{ & \mathbb{O}_p = \Z[q]/(\Psi_{p}(q^2)) \ar @{->>}[dl]\ar @{->>}[dr]&\\
\mc{O}_p= \Z[q]/(\Psi_p(q)) & & \mc{O}_{2p}= \Z[q]/(\Psi_{2p}(q)),}
\]
with the injective product map $\mathbb{O}_p\lra \mc{O}_p\times\mc{O}_{2p}$.
Inverting $2\in \Z$, the product map induces an isomorphism
$\mathbb{O}_p[\frac{1}{2}]\cong \mc{O}_p[\frac{1}{2}]\times \mc{O}_{2p}[\frac{1}{2}].$
We have
$\mathbb{O}_{p} \o_{\Z[q,q^{-1}]}\mathcal{O}_{p}\cong \mathcal{O}_{p}$ and
$\mathbb{O}_{p} \o_{\Z[q,q^{-1}]}\mathcal{O}_{2p}\cong \mathcal{O}_{2p}$.
Likewise, $\mathbb{O}_{p}\o_{\Z[q,q^{-1}]}\F_p \cong \F_p$, via the
homomorphism $\Z[q,q^{-1}]\lra \F_p$ taking $q$ to $1$.

\begin{ntn} \label{notation-symbol-as-O-dim}In what follows, we
will refer to the symbol $[U]\in K_0(H\ufmod)\cong \mathbb{O}_p$ of an
object $U\in H\ufmod$ as the \emph{$\mathbb{O}_p$-dimension}
of $U$, while $[U]_p:=[U]~(\mathrm{mod}~p)\in \F_p$ as its
\emph{$\mathbb{F}_p$-dimension}.
\end{ntn}

Our choice of having $\deg(\dif)=2$ rather than $1$ is mainly due to
the standard grading of the KLR algebras $R(\nu)$ (see Section 4)
and other objects that categorify quantum
groups and their representations. This results in a slight annoyance that
$K_0(H\ufmod)$ is no longer an integral domain, as shown by equation (\ref{eqn-decomp-cyclomtic-polynomial})
(c.f. \cite[Section 3]{Kh}). However, this choice of degree has an additional, if rather modest, merit,
as we will see shortly
that the existence of balanced indecomposable
$H$-modules makes the cohomology theories more symmetric with respect to duality.


\paragraph{Cohomology of a $p$-complex.} When $p=2$, a graded $H$-module
$U$ is just a complex of $\Bbbk$-vector spaces with a degree two
differential, and has associated cohomology groups $\mH(U) =
\mathrm{ker}\dif / \mathrm{im}\dif$.

One possible analogue of these groups for larger $p$'s is given by the
following construction. For each $0\le k \le p-2$ form
the graded vector space
$$ \mH_{/ k}(U) = \mker(\dif^{k+1})/(\mim  (\dif^{p-k-1})+\mker (\dif^{k})).$$
The original $\Z$-grading of $U$ gives a decomposition
$$ \mH_{/ k}(U) = \bigoplus_{i\in \Z} \mH_{/ k}^i(U) .$$
The differential $\dif$ induces a map $\mH_{/ k} \lra \mH_{/k-1}$,
also denoted $\dif$, which takes  $\mH_{/ k}^i(U)$ to $\mH_{/
k-1}^{i+2}(U)$. Define the \emph{slash} cohomology of $U$ as
\begin{equation}
 \mH_{/ }(U) =  \bigoplus_{k=0}^{p-2} \mH_{/ k}(U).
\end{equation}
Let also
$$\mH^i_{/}(U) :=
\bigoplus_{k=0}^{p-2}\mH_k^i(U).$$
We have the decompositions
\begin{equation}
\mH_{/}(U) = \bigoplus_{i\in \Z} \mH^i_{/}(U)  = \bigoplus_{k=0}^{p-2}
\mH_{/k}(U)=\bigoplus_{i\in \Z}\bigoplus_{k=0}^{p-2} \mH^i_{/ k}(U).
\end{equation}
$\mH_{/}(U)$ is a bigraded $\Bbbk$-vector space, equipped with an operator
$\dif$ of bidegree $(2,-1)$, $\dif:\mH^i_{/k}\lra \mH^{i+2}_{/k-1}$.
The $/$-grading (slash-grading) is nontrivial only in degrees between $0$ and $p-2$.
Necessarily $\dif^{p-1}=0$.

Forgetting about the $/$-grading gives us a graded vector space
 $\mH_{/}(U)$ with the differential $\dif$, which we can view as a graded $H$-module.
$\mH_{/}(U)$ is isomorphic to $U$ in the stable category $H\umod$, and
we can decompose
\begin{equation}\label{eqn-decomp-of-M-into-cohomology-and-free}
U\cong \mH_{/}(U)\oplus P(U)
\end{equation}
in the abelian category of $H$-modules,
where $P(U)$ is a maximal projective direct summand of $U$. The cohomology group $\mH_{/}(U)$, viewed
as an $H$-module, does not contain any direct summand isomorphic to a free $H$-module.

This assignment $U\mapsto \mH_{/}(U)$ is functorial in $U$ and can be viewed as a functor
$H\dmod \lra H\umod$ or as a functor $H\umod\lra H\umod$; the latter
functor is isomorphic to the identity functor.

\begin{eg} When $p=3$,
$$ \mH(U) = (\mker (\dif^2)/(\mim\dif + \mker \dif)) \ \oplus \
   \mker (\dif) / \mim (\dif^2) ,$$
with the differential going from the first summand to the second.
\end{eg}

The following way to think about these cohomology groups is helpful.
Since a $p$-complex splits into a direct sum of indecomposable
$p$-complexes $V_i\{j\}$, we can reduce our consideration to
the cohomology of these complexes.
We depict $V_i$ by an oriented chain with $i+1$ vertices and
$i$ oriented edges; each vertex denotes a basis vector in the
corresponding weight space of $V_i$ and oriented edges -- the
action of $\dif$.

$$\overset{0}{\circ}\lra  \overset{1}{\circ} {\lra}  \circ
\overset{\cdots}{\lra}  \circ \lra  \overset{i}{\circ}$$

\vspace{0.1in}


\vspace{0.1in}

For an even simpler picture, keep vertices only and switch to
balanced indecomposables. Assemble diagrams for
$\widetilde{V}_{p-1}, \widetilde{V}_{p-2}, \dots, \widetilde{V}_0$
into a column to form a triangle with $p$ points on
each side, and let $\widetilde{V}=\widetilde{V}_{p-1}\oplus
\widetilde{V}_{p-2}\oplus \dots \oplus \widetilde{V}_0$.


\vspace{0.1in}

\drawing{triangle1.eps}

\vspace{0.1in}

We think of points in the triangle diagram as basis elements of $\widetilde{V}$, with
the differential acting one step to the right.
The $k$-th slash cohomology groups $\mH_{/ k}(\widetilde{V})$ for $0\le k \le p-2$
pick out the terms in the $k$-th diagonal in the triangle, except for the vertex in
the top row (the top row corresponds to the unique projective indecomposable
module in the direct sum), for the total of $p-1-k$ vertices.

\vspace{0.1in}

\drawing{triangle2a.eps}

\vspace{0.1in}

The zeroth cohomology $\mH_{/ 0}(\widetilde{V})$ collects vertices on the
southwest-northeast edge of the triangle save for the rightmost top one.
The last nonzero term $\mH_{/ p-2}(\widetilde{V})$ picks out
only one vertex, below and to the right of the upper left one.

\vspace{0.1in}
\drawing{triangle2b.eps}

\vspace{0.1in}

Our diagram of dots in a triangle has a $y$-axis symmetry, and
it is natural to introduce a second family of cohomology groups,
obtained by ``reflecting'' the original groups about the symmetry
axis. We call these the \emph{backslash cohomology} and define them
by
\begin{equation}
\mH_{\backslash k} = (\mim (\dif ^k)\cap \mker (\dif^{p-k-1}))/\mim (\dif^{k+1}).
\end{equation}
The dotted curve below surrounds basis elements of $\mH_{\backslash k}(\widetilde{V})$.

\vspace{0.1in}
\drawing{triangle3b.eps}
\vspace{0.1in}

We have
$$\mH_{/ k}(U)^{\ast} \cong \mH_{\backslash k}(U^{\ast}), \quad \quad
    \mH_{\backslash k}(U)^{\ast} \cong \mH_{/k}(U^{\ast}),$$
where ``$\ast$'' denotes the graded dual $H$-module.

\vspace{0.1in}

In comparison, the original cohomology groups of Mayer~\cite{May1, May2}
assigned to a $p$-complex and generalized by Kapranov~\cite{Kap}
and Sarkaria~\cite{Sar1,Sar2}, see also Dubois-Violette \cite{DV2},
are
\begin{equation}
{}_k\mH \ = \ \mker{\dif^k}/\mim{\dif^{p-k} }
\end{equation}
for $1\le k \le p-1$.

Of course ${}_1 \mH(U) = \mH_{/ 0}(U)$  for any $p$-complex $U$, but
the two types of groups are otherwise different.
Diagrammatically, the $k$-th Mayer cohomology ${}_k\mH$ picks out
a parallelogram with $k$ vertices in the northwest direction and $p-k$ in the
northeast direction:

\vspace{0.1in}

\drawing{triangle3.1.eps}

\vspace{0.1in}

Thus, $\mH_{/ k-1}$ is the cokernel of the natural homomorphism
$ {}_{k-1}\mH \lra {}_k \mH$, while the kernel is precisely the backslash
cohomology group $\mH_{\backslash  p-k}$. See the picture below.

\vspace{0.1in}
\drawing{triangle4.1.eps}
\vspace{0.1in}

Overall, the various cohomology groups fit into exact sequences
\begin{equation}
0 \lra \mH_{\backslash  p-k}(U) \lra  {}_{k-1}\mH(U) \lra {}_k \mH(U)
\lra \mH_{/ k-1}(U) \lra 0
\end{equation}
functorially in $U$.

The two types of groups together with connecting homomorphisms carry
the same amount of information; either one determines the isomorphism type of
the $p$-complex in the stable category. They differ only by packaging.
The new packaging via (back)slash cohomology describes a  minimal
(unique up to isomorphism) representative of a $p$-complex $U$ in the
stable category as $\mH_{ / }(U)$.
Jonsson's ``train cohomology'' groups~\cite{Jon} pick out single vertices
on the longest anti-diagonal (northwest-southeast direction) in the triangle diagram.

\subsection{\texorpdfstring{$p$}{p}-DG algebras and \texorpdfstring{$p$}{p}-DG modules}\label{subsec-p-DG-algebra-and-module}

\paragraph{$p$-DG algebras.}
By analogy with DG algebras and DG modules \cite[Section 10.1]{BL}, we now define
their counterparts: $p$-DG algebras and $p$-DG modules. Recall that $\Bbbk$ is a
field of characteristic $p>0$.

\begin{defn} A $p$-DG algebra $A$ is graded $\Bbbk$-algebra $A$
equipped with a $\Bbbk$-linear derivation (also called a
differential) $\dif_A: A\lra A$ of degree $2$ (i.e. $\dif_A:A^k\lra
A^{k+2}$ for all $k\in \Z$), which satisfies the $p$-nilpotency condition
\begin{equation}
\partial_A^p=0
\end{equation}
and the Leibniz rule
\begin{equation}
\partial_A(ab) = \partial_A(a) b + a \partial_A(b),
\end{equation}
for any elements $a,~b\in A$ .
\end{defn}

When the algebra $A$ is generated by a set of elements $\{a_i|i\in
I\}$, to check that $\dif^p=0$ on the whole algebra, it is enough to
check that $\dif^p(a_i)=0$ for all $i\in I$, since
$\dif^p(ab)=\dif^p(a)b + a\dif^p(b)$ (in characteristic $p$, $\dif^p$ is
a derivation if $\dif$ is). A $\dif$-stable graded two-sided
ideal $I$ of $A$ gives rise to a $p$-DG algebra $A/I$. The direct
product $A\times B$ and the tensor product $A\o B$ of $p$-DG algebras $A$ and $B$ are again
$p$-DG algebras with the obvious gradings and differentials.

\begin{defn}\label{p-DG-module} A \emph{left
$p$-DG module} $(M,\partial_M)$ over a $p$-DG algebra $A$ is a graded
left $A$-module $M$ with a $\Bbbk$-linear endomorphism $\dif_M$ of
degree $2$ (i.e.~$\dif_M:M^k\lra M^{k+2}$ for all $k\in \Z$) such
that it is $p$-nilpotent:
\begin{equation}
\partial_M^{p}=0,
\end{equation}
and for any $a\in A$, $m\in M$
\begin{equation}
\partial_M(am)=\partial_A(a)m+a\partial_M(m).
\end{equation}
A morphism of (left) $p$-DG-modules is a morphism of (left) $A$-modules of degree
zero which commutes with the differentials. We will denote the
category of left $p$-DG modules by $A_\dif\dmod$, or, alternatively, by
$(A,\dif)\dmod$. Likewise, \emph{a right $p$-DG
module} $N$ over $A$ consists of a right $A$-module $N$ equipped with a
$\Bbbk$-linear endomorphism $\dif_N$ of degree $2$ such that:
\begin{equation}
\partial_N^{p}=0,
\end{equation}
and for any $a\in A$, $n\in N$
\begin{equation}
\partial_N(na)=\partial_N(n)a+n\partial_A(a).
\end{equation}
\end{defn}

We will omit the subscripts in $\dif_A$, $\dif_M$, $\dif_N$ whenever no confusion can arise.

\begin{rmk}[$p$-DG algebra as $H$-module algebra]
\label{rmk-p-DG-as-hopfological} A $p$-DG algebra is a
graded $H$-module algebra and can also be described as $H^\prime$-comodule
algebra, for a suitable infinite-dimensional Hopf algebra $H^\prime$ related to
the dual of $H$. The $p$-derivation on
$A$ allows us to define the smash product ring $A_{\dif}$ as
follows. As a $\Bbbk$-vector space, $A_{\dif}\cong A\o H$, and
the multiplication on $A_\dif$ is given by $(a_1\o 1)\cdot(a_2\o 1)=(a_1a_2)\o
1$, $(1\o h_1)\cdot (1\o h_2)=1\o(h_1h_2)$, $(a_1\o 1)(1\o
h_1)=a_1\o h_1$ for any $a_1,~a_2\in A$, $h_1, h_2 \in H$, and by the
rule for commuting elements of the form $a\o 1$, $1\o \dif$:
$$(1\o \dif)(a\o 1)=(a\o 1)(1\o \dif)+\dif(a)\o 1.$$
Note that $1\o H \subset A_\dif$ is a subalgebra. Moreover, since
$A$, $H$ are compatibly graded, i.e.
$\mathrm{deg}(\dif(a))=\textrm{deg}(a)+2$ for any homogeneous $a\in
A$, the commutator equation is homogeneous so that $A_\dif$ is
graded. The category of left (resp. right) $p$-DG modules is equivalent
to the category of graded left (resp. right) $A_\dif$-modules. This
explains our choice of notation for the category of left (resp. right)
$p$-DG modules above and shows that it is an abelian category with
arbitrary coproducts.

The algebras $A_\dif$ are examples of smash product algebras, see~\cite{Mo, Qi}
for instance.
\end{rmk}

If $A=\Bbbk$ is the ground field, a $p$-DG
module over $A$ is just a $p$-complex of $\Bbbk$-vector spaces. A
$2$-DG algebra is the same as a
differential graded algebra over a field of characteristic two, with
the differential of degree two rather than one.

\paragraph{The homotopy category.}

\begin{defn}\label{def-null-homotopy-and-homotopy-category}
Two morphisms of $p$-DG modules $f,g:M\lra N$ in $A_\dif\dmod$ are
said to be \emph{homotopic} if there exists a morphism $h$ of
$A$-modules of degree
$2-2p$ such that:
\begin{equation}
f-g=\sum_{i=0}^{p-1}\partial_N^{p-1-i}\circ h \circ \partial_M^{i}.
\end{equation}
It is readily seen that the collection of all morphisms that are homotopic to
zero (\emph{null-homotopic morphisms}) forms an ideal
in $A_\dif\dmod$. A $p$-DG module $M$ is called \emph{contractible} if $\Id_M$
is null-homotopic.
The homotopy category $\mc{C}(A,\dif)$ of $p$-DG complexes is the
categorical quotient of $A_\dif\dmod$ by the ideal of
null-homotopic morphisms.
\end{defn}

We often abbreviate $\mc{C}(A,\dif)$ to $\mc{C}(A)$ when the context is clear.

The tensor product of a $p$-complex $V$ and $p$-DG module $M$
is naturally a $p$-DG module. The $A$ action on
$M$ is extended to the whole of $V\o M$ by letting it act trivially
on $V$, and $\dif$ acts by $\dif(x\o m):=\dif(x)\o m+x\o \dif(m)$ for any
$x\in V$ and $m\in M$. This gives a bifunctor
$$\o:H\dmod\times A_\dif\dmod \lra A_\dif\dmod, \ \ \ \ (V,~M)\mapsto V\o M.$$
The tensor product descends to homotopy categories:
$$\o: H\umod\times \mc{C}(A) \lra \mc{C}(A)$$
and equips $\mc{C}(A)$ with a
``triangulated module-category'' structure over $H\umod$. The shift
functor on $\mc{C}(A)$ is inherited from $H\umod$.

\begin{defn}\label{def-shift-functor-p-DG-module} The \emph{shift
endo-functor} $[1]$ on $\mc{C}(A)$ is defined by
$$ M\mapsto \widetilde{V}_{p-2}\{-p\}\o M.$$
\end{defn}
The inverse of $[1]$ is
$[-1] \ : \ M\mapsto \widetilde{V}_{p-2}\{p\}\o M$, and moreover
$[2]:=[1]\circ[1]\cong \{-2p\}$, with the isomorphisms coming from
the map $\tilde{\iota}$ of (\ref{eqn-fixed-p-2}).

\begin{defn}\label{def-exact-triangle-homotopy-category} Let
$\underline{u}: M\lra N$ be a morphism in $\mc{C}(A)$, and $u:M\lra
N$ be a lift of $\underline{u}$ in $A_\dif\dmod$. Let $C_u$ be the
push-out of $u: M \lra N$ and $\lambda_M: M \lra H\o M$, so that it
fits into the following commutative diagram:
$$\xymatrix{0\ar[r]\ar[d] & M\ar[r]^-{\lambda_M}\ar[d]_u & H \o M
\ar[r]^{\overline{\lambda_M}}\ar[d]_{\overline{u}} & M[1]\ar[r]\ar@{=}
[d]&0\ar[d]\\
0\ar[r]& N \ar[r]^v & C_u \ar[r]^w & M[1]\ar[r] & 0.}$$
The \emph{standard exact triangle} associated with $\underline{u}$
in $\mc{C}(A)$ is the sextuple
$$\xymatrix{M \ar[r]^{\underline{u}}&
N\ar[r]^{\underline{v}} & C_u \ar[r]^{\underline{w}}& M[1]}$$ where
the underlined morphisms denote the image of the corresponding
un-underlined ones in $\mc{C}(A)$. We say that a sextuple
$M\stackrel{u}{\lra}N\stackrel{v}{\lra}W\stackrel{w}{\lra}M[1]$ is
an \emph{exact triangle} in $\mc{C}(A)$ if it is isomorphic to a
standard one in $\mc{C}(A)$.
\end{defn}

\begin{prop}\label{prop-homotopy-category-triangulated} The homotopy
category $\mc{C}(A)$ together with the shift functor $[1]$ and the
class of exact triangles described above is triangulated.
\hfill$\square$
\end{prop}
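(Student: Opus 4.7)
The plan is to mimic the proof of Proposition~\ref{thm-p-complexes-triangulated} and deduce the result from Happel's theorem on stable categories of Frobenius categories. By Remark~\ref{rmk-p-DG-as-hopfological}, $A_\dif\dmod$ is equivalent to the category of graded modules over the smash product algebra $A_\dif\cong A\otimes H$, which is an (ungraded-Frobenius) abelian category with arbitrary (co)products. First I would equip $A_\dif\dmod$ with a Frobenius exact structure whose distinguished projective-injective objects are those $p$-DG modules $P$ that are free when regarded as graded $H$-modules via the subalgebra inclusion $H\hookrightarrow A_\dif$; equivalently, $P$ is of the form $H\otimes N$ for a graded $A$-module $N$ (with $\dif$ acting only on the $H$ factor and $A$ acting only on the $N$ factor). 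Because $H$ is Frobenius and the restriction functor along $H\hookrightarrow A_\dif$ has the induction $A_\dif\otimes_H(-)$ as both a left and a right adjoint, these objects really are simultaneously projective and injective in $A_\dif\dmod$, matching Proposition~\ref{prop-contractible-H-module-condition}(ii)(iii) in the $A=\Bbbk$ case.

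The central lemma is the $p$-DG analogue of Proposition~\ref{prop-contractible-H-module-condition}(vi): a morphism $f:M\lra N$ in $A_\dif\dmod$ factors through a projective-injective $p$-DG module if and only if it is null-homotopic in the sense of Definition~\ref{def-null-homotopy-and-homotopy-category}. The canonical embedding $\lambda_M: M\hookrightarrow H\{2-2p\}\otimes M$, $\lambda_M(m)=\dif^{p-1}\otimes m$, is readily checked to be a morphism of $p$-DG modules, and any $p$-DG map $g:H\{2-2p\}\otimes M \lra N$ is determined by the $A$-linear map $h(m):=g(1\otimes m)$ of degree $2-2p$. Writing $g_i(m):=g(\dif^i\otimes m)$ and using the Leibniz rule, one obtains the same recursion as in the proof of Proposition~\ref{prop-contractible-H-module-condition},
\[
g_{p-1}(m)\;=\;\sum_{j=0}^{p-1}(-1)^j\binom{p-1}{j}\dif_N^{p-1-j}\circ h \circ \dif_M^j(m)\;=\;\sum_{j=0}^{p-1}\dif_N^{p-1-j}\circ h \circ \dif_M^j(m),
\]
because $(-1)^j\binom{p-1}{j}\equiv 1\pmod{p}$. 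Since $H\{2-2p\}\otimes M$ is projective-injective and $f=g\circ\lambda_M$ for such a $g$ iff $f$ has the displayed form, the two notions coincide.

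Having established the Frobenius structure and the identification of null-homotopic morphisms with stable-zero morphisms, Happel's theorem \cite[Section~1.2, Theorem~2.6]{Hap} gives a triangulated structure on $\mc{C}(A)$. It remains to identify his shift and triangles with ours. The cokernel of $\lambda_M$ is exactly $(H\{2-2p\}\otimes M)/\mathrm{Im}(\lambda_M)\cong \widetilde{V}_{p-2}\{-p\}\otimes M = M[1]$ of Definition~\ref{def-shift-functor-p-DG-module}, and the push-out construction of Definition~\ref{def-exact-triangle-homotopy-category} is literally Happel's construction of a standard triangle associated to a morphism $\underline{u}:M\lra N$. Thus both the shift functor and the class of exact triangles match, completing the proof.

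The main obstacle is verifying the Frobenius property: one must check that projectivity and injectivity in $A_\dif\dmod$ coincide, in spite of $A_\dif$ itself generally being neither finite-dimensional nor Frobenius as a graded algebra. The right way around this is to work with the Frobenius structure inherited from $H$ through the adjunction $(\Ind_H^{A_\dif},\Res^{A_\dif}_H)$ and the fact that $A_\dif\cong A\otimes H$ splits as a free $H$-module on both sides. Once this is secured, everything else is a routine transfer of the argument used for $H\umod$, justifying the ``omitted'' proof of the author.
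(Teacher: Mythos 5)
Your overall architecture---a Frobenius exact structure on $A_\dif\dmod$ to which Happel's theorem applies, with the identification of stably-zero morphisms with null-homotopic ones via $\lambda_M$ and the congruence $(-1)^j\binom{p-1}{j}\equiv 1\ (\mathrm{mod}\ p)$---is a reasonable route to the statement, and the computation of $g_{p-1}$ and of the cokernel of $\lambda_M$ are correct. However, the step you flag as the ``main obstacle'' is precisely where your argument fails, and the fix you propose does not repair it.

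The problem is your identification of the projective-injective objects. You offer several characterizations as if they were interchangeable: (a) $p$-DG modules free as graded $H$-modules; (b) modules of the form $H\otimes N$ with $\dif$ acting only on $H$ and $A$ only on a graded $A$-module $N$; (c) the class arising from the ambidextrous adjunction $(\Ind_H^{A_\dif},\Res_H^{A_\dif})$. None of these is the class required. For Happel's theorem to reproduce $\mc{C}(A)$, the projective-injectives must be exactly the \emph{contractible} modules of Definition~\ref{def-null-homotopy-and-homotopy-category} (those whose identity is null-homotopic by an $A$-linear $h$); and the paper explicitly warns, just after Definition~\ref{def-quasi-isomorphism}, that ``a contractible $p$-DG module is automatically acyclic but not vice versa.'' Acyclic is the same as being free upon restriction to $H$, so your (a) conflates exactly the two notions the paper tells you to keep apart. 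Concretely, take $A=\Bbbk[x]$ with $\dif(x)=x^2$ and $M=xA\cong A\{-2\}$. By the decomposition of $\Bbbk[x]$ recalled at the start of Section~\ref{subsec-p-der-on-NH}, $M$ is free as a graded $H$-module. But $M$ is not contractible: a contraction would be an $A$-linear endomorphism of $M$ of degree $2-2p<0$, and $\HOM_A^{2-2p}(M,M)\cong A^{2-2p}=0$ because $A$ is non-negatively graded. Under your (a), $M$ would be projective-injective, hence zero in the stable category, which is false. The same $M$ also separates (a) from (b): $H\otimes N$ is $N^{\oplus p}$ with shifts as a graded $A$-module, while $A\{-2\}$ is indecomposable over the graded local ring $\Bbbk[x]$, so $M$ is not of the split form. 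As for (c), the adjunction with $H$ controls an exact structure where short exact sequences are required to split after restriction to $H$; but $\lambda_M:M\to H\{2-2p\}\otimes M$ is not $H$-split when $M$ is not $H$-free, so that structure would not even make $\lambda_M$ an admissible mono, and your ``main lemma'' would not go through. The exact structure consistent with both your own computation (where $h$ is $A$-linear) and Definition~\ref{def-null-homotopy-and-homotopy-category} is the one whose admissible sequences split as graded $A$-modules; the projective-injectives for that structure are the contractible modules, namely direct summands of $H\otimes V$ with the tensor-product $p$-DG structure for $V$ an arbitrary $p$-DG module, not the modules in any of your classes (a)--(c). With that correction the Happel argument does close, matching the construction cited from \cite[Theorem~1]{Kh}; as written, however, the proposal has a genuine gap.
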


\begin{proof} See~\cite[Theorem 1]{Kh}.  \end{proof}

The class of exact triangles in $\mc{C}(A)$ can also be
characterized as consisting of the images of some short exact sequences
of $A_\dif$-modules.

\begin{lemma} \label{lemma-ses-lead-to-dt-homotopy-category}
Let $0\lra M\stackrel{u}{\lra}N \stackrel{v}{\lra} W \lra 0$ be a
short exact sequence of $A_\dif$-modules which splits
as a sequence of $A$-modules. Then associated with it there is an exact triangle
$$\xymatrix{M \ar[r]^{\underline{u}}&
N\ar[r]^-{\underline{v}} & W \ar[r]^-{\underline{w}}& M[1]}$$ in the
homotopy category.
\end{lemma}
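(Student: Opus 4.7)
The plan is to realize our sextuple as isomorphic to the standard exact triangle associated to $\underline u$ via Definition \ref{def-exact-triangle-homotopy-category}. I would let $C_u$ be the pushout of $u\colon M\to N$ and $\lambda_M\colon M\to H\o M$, giving the standard triangle $M\stackrel{\underline u}{\lra}N\stackrel{\underline{v'}}{\lra}C_u\stackrel{\underline{w'}}{\lra}M[1]$. Since $vu=0$, the universal property of the pushout produces a $p$-DG morphism $\phi\colon C_u\to W$ extending $v$ on $N$ and the zero map on $H\o M$. The map $\phi$ is surjective, and injectivity of $u$ forces its kernel to be exactly the image of $\iota\colon H\o M\hookrightarrow C_u$, yielding a short exact sequence of $p$-DG modules
\[
0\lra H\o M\stackrel{\iota}{\lra}C_u\stackrel{\phi}{\lra}W\lra 0.
\]
The given $A$-module splitting $s\colon W\to N$ of $v$ descends to an $A$-module splitting of $\phi$ by $w\mapsto[s(w),0]\in C_u$. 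It then suffices to show $\underline\phi$ is an isomorphism in $\mc{C}(A)$, since then $\underline w:=\underline{w'}\circ\underline\phi^{-1}$ identifies our sextuple with the standard one.

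To verify this I would first check that $H\o M$ is contractible as a $p$-DG $A$-module: the $A$-linear endomorphism $\eta$ of degree $2-2p$ defined by $\eta(\dif^{p-1}\o m):=1\o m$ and $\eta(\dif^i\o m):=0$ for $i<p-1$ satisfies $\sum_{i=0}^{p-1}\dif^{p-1-i}\circ\eta\circ\dif^i=\Id$, by a direct binomial computation in characteristic $p$. Using the $A$-splitting one identifies $C_u\cong(H\o M)\oplus W$ as $A$-modules, with a twisted differential $\dif_{C_u}(x,w)=(\dif x+\gamma(w),\dif w)$ for some $A$-linear $\gamma\colon W\to H\o M$ of degree $2$; the identity $\dif_{C_u}^p=0$ then translates into $D^{p-1}(\gamma)=0$ in the $H$-module $\HOM_A(W,H\o M)$ endowed with the commutator differential $D(f):=\dif\circ f-f\circ\dif$.

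The crux is to extract from $\gamma$ a homogeneous degree-$0$ element $\rho\in\HOM_A(W,H\o M)$ with $D\rho=-\gamma$. For this I would show $\HOM_A(W,H\o M)$ is itself contractible as an $H$-module: for any cocycle $f\colon W\to H\o M$, the Leibniz rule for $D$ together with $D^{p-1}(\eta)=\Id$ in $\End_A(H\o M)$ yields $D^{p-1}(\eta\circ f)=f$, verifying condition (v) of Proposition \ref{prop-contractible-H-module-condition} for $\HOM_A(W,H\o M)$. In such a contractible $H$-module $\mker(D^{p-1})=\mim(D)$ in each internal degree, so the required $\rho$ exists. Then $\sigma(w):=(\rho(w),w)$ defines a genuine $p$-DG section of $\phi$, while $\Id_{C_u}-\sigma\phi$ equals $\iota\circ\tau$ with $\tau\colon C_u\to H\o M$, $(x,w)\mapsto x-\rho(w)$, a $p$-DG morphism; since this factors through the contractible module $H\o M$, it is null-homotopic, so $\underline\sigma$ is a two-sided inverse of $\underline\phi$ in $\mc{C}(A)$. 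The main obstacle is precisely this cohomological lift: we only know $D^{p-1}\gamma=0$ rather than $D\gamma=0$, so $\rho$ cannot simply be taken as $\eta\circ\gamma$, and one must bootstrap the $p$-DG contractibility of $H\o M$ up to $H$-module contractibility of the hom-space before the primitive $\rho$ becomes available.
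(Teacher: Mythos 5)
Your argument is correct. The paper does not prove this lemma itself — it cites Lemma 4.3 of \cite{Qi} — so there is no in-paper proof to match against, but your route is sound and essentially the natural one. Every step checks out: the pushout's universal property yields the $p$-DG surjection $\phi$ with kernel $\iota(H\o M)$ (using injectivity of $u$); the $A$-splitting identifies $C_u\cong(H\o M)\oplus W$ as $A$-modules with differential twisted by an $A$-linear $\gamma$ of degree two; $\dif_{C_u}^p=0$ unpacks in block form to $\sum_{a+b=p-1}\dif^a\gamma\dif^b=D^{p-1}\gamma=0$; and you rightly observe that $\gamma$ is only a $D^{p-1}$-cocycle, so $\eta\circ\gamma$ cannot serve as $\rho$, and close this gap by verifying that $\HOM_A(W,H\o M)$ satisfies condition (v) of Proposition~\ref{prop-contractible-H-module-condition}, hence is contractible, hence has $\mker D^{p-1}=\mim D$ in each degree. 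Two minor streamlinings: you need not exhibit the explicit $\eta$, since $H\o M$ is a free $H$-module (tensoring anything with $H$ gives a free module) so Proposition~\ref{prop-contractible-H-module-condition}(vi) supplies a homotopy abstractly — though the specific $\eta$ you wrote does satisfy $\sum_{i}\dif^{p-1-i}\circ\eta\circ\dif^{i}=\Id$, by a Vandermonde-type identity in $\F_p$; and once $\sigma$ is built, the cleanest conclusion is that $(\iota,\sigma)\colon(H\o M)\oplus W\to C_u$ is an isomorphism of $A_\dif$-modules, from which $\underline\phi$ being invertible in $\mc{C}(A)$ is immediate. Your proof also correctly identifies why this is harder than the in-paper $H$-module analogue, Lemma~\ref{lemma-ses-lead-to-dt-H-mod-case}: there $H\o U$ is injective over the Frobenius algebra $H$ and the middle column splits automatically, whereas for general $A$ the $p$-DG splitting must be manufactured by hand from the $A$-splitting, exactly as you do.
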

\begin{proof} See \cite[Lemma 4.3]{Qi}.    \end{proof}

\paragraph{The derived category.}Since $A_\dif$ contains $H$ as a
subalgebra, the natural forgetful functor from the
category of $p$-DG modules to the base category of $p$-complexes allows
us to define the cohomologies $\mH_{\backslash}(M)$, $\mH_{/}(M)$ of a
$p$-DG module $M$ to be the cohomologies of the $p$-complex $M$. A morphism
$u:M\lra N$ of $p$-DG modules naturally induces a morphism of the
underlying $p$-complexes, and gives rise to morphisms on
cohomologies:
$$u_{\backslash}: \mH_{\backslash}(M)\lra \mH_{\backslash}(N),
\ \ \ \ u_/: \mH_{/}(M)\lra \mH_{/}(N).$$
This allows us to define the notion of quasi-isomorphisms.

\begin{defn}\label{def-quasi-isomorphism} A morphism of $p$-DG modules
$u:M\lra N$ is called a \emph{quasi-isomorphism} if it induces an
isomorphism of the underlying $p$-complexes up to homotopy. This is
equivalent to either of the following conditions on cohomology:
\begin{itemize}
 \item[a)] $u_{\backslash}:
\mH_{\backslash}(M)\cong \mH_{\backslash}(N)$,
 \item[b)] $u_/:
\mH_{/}(M)\cong \mH_{/}(N)$.
\end{itemize}
A $p$-DG module $M$ is called \emph{acyclic} if $0\lra M$, or
equivalently $M \lra 0$, is a quasi-isomorphism.
\end{defn}
Notice that a contractible $p$-DG module is automatically acyclic
but not vice versa. However, these notions coincide for the ground
field $\Bbbk$ viewed as a $p$-DG algebra.

It follows from the definition that a morphism of $p$-DG modules
which is homotopic to a quasi-isomorphism is also a
quasi-isomorphism, leading to a well-defined notion of quasi-isomorphism
in the homotopy category $\mc{C}(A)$.

\begin{prop}\label{prop-qis-localizing-class}Quasi-isomorphisms in
$\mc{C}(A)$ constitute a localizing class.
\end{prop}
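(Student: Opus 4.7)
The plan is to reduce the verification of the Gabriel--Zisman axioms to the general principle that in a triangulated category, the class of morphisms whose mapping cone lies in a fixed thick triangulated subcategory is a localizing class compatible with the triangulation. Two ingredients are needed: a cone characterization of quasi-isomorphisms, and thickness of the full subcategory $\mc{N} \subset \mc{C}(A)$ of acyclic $p$-DG modules.

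First I would show that the forgetful functor $\Phi \colon \mc{C}(A) \lra H\umod$ is triangulated. Commutation with the shift is immediate, since $[1]$ is defined on both sides by tensoring with $\widetilde{V}_{p-2}\{-p\}$. Compatibility with exact triangles reduces to the observation that the pushout of $u$ and $\lambda_M \colon M \lra H \o M$ defining $C_u$ in $\mc{C}(A)$ agrees, after forgetting the $A$-action, with the corresponding pushout in $H\dmod$. For this one checks that $\lambda_M$ is a morphism of $p$-DG modules: it is $A$-linear because $A$ acts trivially on the $H$ factor, and it commutes with $\dif$ because $\dif^p = 0$ on $H$. Exactness of the restriction functor $A_\dif\dmod \lra H\dmod$ then guarantees that pushouts are preserved. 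By the decomposition (\ref{eqn-decomp-of-M-into-cohomology-and-free}), $\Phi(M) \cong \mH_{/}(M)$ in $H\umod$, so a morphism $u$ is a quasi-isomorphism if and only if $\Phi(u)$ is an isomorphism in $H\umod$, equivalently $\Phi(C_u) \cong 0$, equivalently $C_u \in \mc{N}$.

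For the second ingredient, $\mc{N}$ is closed under $[1]$ (acyclicity is a property of $\Phi$, which commutes with shift), closed under cones (in an exact triangle $M \lra N \lra W \lra M[1]$ with $M, N \in \mc{N}$, the image under $\Phi$ is an exact triangle in $H\umod$ with two zero vertices, so $\Phi(W) \cong 0$), and closed under direct summands (since $\mH_{/}$ is additive). Thus $\mc{N}$ is a thick null system, and the standard Verdier construction yields the localizing class. For example, the right Ore condition for a quasi-isomorphism $s \colon X' \lra X$ and an arbitrary $f \colon Y \lra X$ is verified by completing $s$ to an exact triangle $X' \lra X \lra C_s \lra X'[1]$ with $C_s \in \mc{N}$, then completing the composite $Y \lra X \lra C_s$ to an exact triangle $Y' \lra Y \lra C_s \lra Y'[1]$; the map $Y' \lra Y$ has cone $C_s \in \mc{N}$, hence is a quasi-isomorphism, and the morphism-of-triangles axiom yields the required map $Y' \lra X'$. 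The left Ore condition and the cancellation axioms are handled by symmetric arguments.

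The main obstacle is the verification that $\Phi$ is triangulated; once this is in place, everything reduces to a formal argument in an abstract triangulated category. A direct approach that bypasses $\Phi$ would require hand-constructing a long exact sequence of slash cohomologies from an $A$-split short exact sequence of $p$-DG modules, which would be considerably more laborious given the presence of $p-1$ different slash degrees.
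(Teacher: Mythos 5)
The paper does not give a proof of its own; it cites \cite[Section 1, Proposition 4]{Kh} and \cite[Section III.2]{GM}, so there is no in-paper argument to compare against. Your proof is correct and follows the standard Verdier strategy: characterize quasi-isomorphisms as the morphisms whose cone lies in the triangulated subcategory $\mc{N}$ of acyclic modules, then invoke the general fact that morphisms with cone in a fixed triangulated subcategory form a localizing class compatible with the triangulation. Packaging the cone characterization by exhibiting the forgetful functor $\Phi \colon \mc{C}(A) \to H\umod$ as triangulated is a clean organizing device; the content is that $\Phi$ commutes with $[1]$ (both are $\widetilde{V}_{p-2}\{-p\}\o -$) and preserves the defining pushouts because restriction $A_\dif\dmod \to H\dmod$ is exact. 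One point worth making explicit in your write-up is why $\Phi$ descends to homotopy categories at all: an $A$-linear homotopy $h$ of degree $2-2p$ between $p$-DG morphisms restricts to a $\Bbbk$-linear map of the same degree, which is exactly condition (vi) of Proposition~\ref{prop-contractible-H-module-condition} applied to the internal hom $p$-complex, so null-homotopic maps go to null-homotopic maps. A minor economy: for the localizing-class conclusion you only need $\mc{N}$ closed under shifts and cones (a triangulated subcategory); closure under direct summands, i.e.\ thickness, is true but not required for this particular statement.
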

\begin{proof}This is Proposition 4 of \cite[Section 1]{Kh}.
We also refer the reader to \cite[Section III.2]{GM} for the standard
notion of a localizing class and definition of localization
with respect to a localizing class.
\end{proof}

\begin{defn}Let $A$ be a $p$-DG algebra. We define the derived
category $\mc{D}(A,\dif)$ of $p$-DG modules over $A$ to be the
localization of $\mc{C}(A,\dif)$ with respect to the class of
quasi-isomorphisms. When no confusion can arise, we will write
$\mc{D}(A)$ for short.
\end{defn}

$\mc{D}(A)$ inherits a natural triangulated structure from that of
$\mc{C}(A)$. The class of exact triangles in $\mc{D}(A)$ consists of
sextuples
$M\stackrel{u}{\lra}N\stackrel{v}{\lra}W\stackrel{w}{\lra}M[1]$ that
are isomorphic to the images of exact triangles in
$\mc{C}(A)$. Moreover, it also inherits a triangulated
module-category structure over $H\ufmod$ from that of $\mc{C}(A)$.

\begin{lemma}\label{lemma-ses-lead-to-dt-derived-category}Let
$0\lra M\stackrel{u}{\lra}N \stackrel{v}{\lra} W \lra 0$ be a short
exact sequence of $A_\dif$-modules. Then associated with it there is
an exact triangle
$$\xymatrix{M \ar[r]^{\underline{u}}&
N\ar[r]^-{\underline{v}} & W \ar[r]^-{\underline{w}}& M[1]}$$ in the
derived category.
\end{lemma}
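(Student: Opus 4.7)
The idea is to reduce to the homotopy-category version already proved in Lemma~\ref{lemma-ses-lead-to-dt-homotopy-category}, using the cone construction of Definition~\ref{def-exact-triangle-homotopy-category} as an intermediate step. The short exact sequence of $A_\dif$-modules need not split over $A$, so we cannot apply Lemma~\ref{lemma-ses-lead-to-dt-homotopy-category} directly; but we can replace $W$ by a quasi-isomorphic model that does fit the homotopy-category hypothesis.

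First I would form the homotopy-theoretic cone $C_u$ of $u$ as in Definition~\ref{def-exact-triangle-homotopy-category}, obtaining the standard exact triangle
\[
\xymatrix{M\ar[r]^{\underline{u}} & N\ar[r]^{\underline{v}'} & C_u\ar[r]^{\underline{w}'} & M[1]}
\]
in $\mc{C}(A)$. The pushout property of $C_u$ gives a canonical map $\phi: C_u\lra W$ defined by pairing the original $v:N\lra W$ with the zero map $H\o M\lra W$; compatibility uses $v\circ u=0$, while $0\circ \lambda_M=0$ trivially. By construction $\phi$ fits into a commutative diagram identifying the map $N\lra C_u\stackrel{\phi}{\lra} W$ with the given $v$.

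Next I would examine the natural short exact sequence of $A_\dif$-modules
\[
0\lra H\o M \lra C_u \stackrel{\phi}{\lra} W \lra 0,
\]
which comes directly from the pushout description $C_u=(N\oplus H\o M)/\{(u(m),-\lambda_M(m))\}$. Forgetting the $A$-action, this sequence splits as a sequence of $H$-modules because $H\o M$ is a free, hence injective, $H$-module. Applying Lemma~\ref{lemma-ses-lead-to-dt-H-mod-case} to this split sequence produces an exact triangle in $H\umod$ whose first term $H\o M$ is contractible (Proposition~\ref{prop-contractible-H-module-condition}), so the map $\phi$ is an isomorphism in $H\umod$. Unpacking this isomorphism means precisely that $\phi$ induces isomorphisms on slash cohomology, i.e.\ $\phi$ is a quasi-isomorphism in the sense of Definition~\ref{def-quasi-isomorphism}.

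Finally, passing to the derived category $\mc{D}(A)$ via Proposition~\ref{prop-qis-localizing-class}, the morphism $\phi$ becomes an isomorphism, so I can set $\underline{w}:=\underline{w}'\circ\phi^{-1}$ and transport the standard exact triangle $M\to N\to C_u\to M[1]$ across $\phi$ to obtain the desired exact triangle $M\to N\to W\to M[1]$ in $\mc{D}(A)$. The only delicate point—and the step I expect to require the most care—is verifying that $\phi$ is a quasi-isomorphism; once the splitting-as-$H$-modules of the auxiliary sequence is observed, the rest reduces to the already-established Lemmas~\ref{lemma-ses-lead-to-dt-H-mod-case} and~\ref{lemma-ses-lead-to-dt-homotopy-category}, together with the formal localization yielding $\mc{D}(A)$.
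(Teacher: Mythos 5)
Your proof is correct and takes what is almost certainly the intended approach (the paper defers to \cite[Lemma 4.4]{Qi}): form the cone $C_u$ from Definition~\ref{def-exact-triangle-homotopy-category}, extract the natural pushout map $\phi:C_u\lra W$ whose kernel is the free, hence contractible, $H$-module $H\o M$, conclude $\phi$ is a quasi-isomorphism, and transport the standard triangle across the resulting isomorphism in $\mc{D}(A)$. One small inaccuracy in your concluding sentence: the argument does not actually reduce to Lemma~\ref{lemma-ses-lead-to-dt-homotopy-category} (you never invoke it — indeed the whole point is that the sequence need not split over $A$); rather it reduces to Lemma~\ref{lemma-ses-lead-to-dt-H-mod-case} plus the pushout/cone construction and localization at quasi-isomorphisms.
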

\begin{proof}Omitted. This together with the explicit construction of
$\underline{w}$ is Lemma 4.4 of \cite[Section 4]{Qi}.
\end{proof}

In one very particular situation the derived category is easy to
determine, namely when it is equivalent to the zero category.

\begin{prop}\label{prop-contractible-p-DG-algebra-condition}
Let $A$ be a $p$-DG algebra. The following statements are
equivalent.
\begin{itemize}
\item[i)]$\mc{D}(A)\cong 0$.
\item[ii)] $A$ is acyclic.
\item[iii)] There exists an element $a \in A$ such that
$\dif^{p-1}(a)=1$.
\item[iv)] There exists an element $b \in A$ such that
$\dif(b)=1$.
\end{itemize}
Furthermore, if $b$ is central in $A$, then $\mc{C}(A)\cong 0$.
\end{prop}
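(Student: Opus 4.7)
The plan is to prove the cycle of implications (i)$\Rightarrow$(ii)$\Rightarrow$(iii)$\Rightarrow$(iv)$\Rightarrow$(i), and then address the centrality statement separately.

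For (i)$\Rightarrow$(ii) I would note that $A$ is an object of $\mathcal{D}(A)$ via the regular left $p$-DG module structure on itself; if $\mathcal{D}(A)\cong 0$ then $A\cong 0$ in $\mathcal{D}(A)$, which by definition means the map $A\to 0$ is a quasi-isomorphism, i.e.\ $A$ is acyclic. For (ii)$\Rightarrow$(iii) I would invoke Proposition~\ref{prop-contractible-H-module-condition}: for the ground field $\Bbbk$, acyclicity of the $p$-complex $A$ coincides with contractibility, so by characterization (v) every $\partial$-closed element, in particular $1\in A$, has the form $\partial^{p-1}(a)$ for some $a\in A$. For (iii)$\Rightarrow$(iv) simply set $b:=\partial^{p-2}(a)$, which has degree $-2$ and satisfies $\partial(b)=1$.

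The content is in (iv)$\Rightarrow$(i). Given $b$ with $\partial(b)=1$, an easy induction using the Leibniz rule gives $\partial(b^k)=kb^{k-1}$ and more generally $\partial^{n}(b^{p-1})=\tfrac{(p-1)!}{(p-1-n)!}b^{p-1-n}$. In particular $\partial^{p-1}(b^{p-1})=(p-1)!=-1$ modulo $p$. Now let $M$ be any $p$-DG module over $A$ and take $x\in M$ with $\partial(x)=0$. Using the module-level Leibniz rule
\[
\partial^{p-1}(b^{p-1}x)=\sum_{k=0}^{p-1}\binom{p-1}{k}\partial^{k}(b^{p-1})\,\partial^{p-1-k}(x),
\]
all terms with $k<p-1$ vanish because $\partial(x)=0$, leaving $\partial^{p-1}(b^{p-1}x)=-x$. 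Thus $y:=-b^{p-1}x$ satisfies $\partial^{p-1}(y)=x$, and Proposition~\ref{prop-contractible-H-module-condition}(v) shows that the underlying $p$-complex of $M$ is contractible, hence acyclic. Therefore every object of $\mathcal{D}(A)$ is acyclic, i.e.\ $\mathcal{D}(A)\cong 0$.

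For the final claim, assume in addition that $b$ is central. Then left multiplication by $-b^{p-1}$ is an honest $A$-linear endomorphism $h:M\to M$ of degree $2-2p$, and by Definition~\ref{def-null-homotopy-and-homotopy-category} it suffices to verify the identity
\[
\mathrm{Id}_M=\sum_{j=0}^{p-1}\partial^{p-1-j}\circ h\circ \partial^{j}.
\]
Expanding each $\partial^{p-1-j}(-b^{p-1}\partial^{j}(m))$ by Leibniz and collecting terms by the total number $l$ of derivatives falling on $b^{p-1}$, the coefficient of $\partial^{l}(b^{p-1})\partial^{p-1-l}(m)$ becomes $-\sum_{j=0}^{p-1-l}\binom{p-1-j}{l}$. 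Reindexing with $i=p-1-j$ and applying the hockey-stick identity yields $\sum_{i=l}^{p-1}\binom{i}{l}=\binom{p}{l+1}$, which is divisible by $p$ unless $l=p-1$. The surviving $l=p-1$ term contributes $-\partial^{p-1}(b^{p-1})\,m=m$, confirming the identity. Hence $\mathrm{Id}_M$ is null-homotopic for every $p$-DG module $M$, so $\mathcal{C}(A)\cong 0$.

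The main obstacle is the explicit combinatorial identity used in the last paragraph; everything else either reduces to Proposition~\ref{prop-contractible-H-module-condition} or is a short manipulation with $\partial(b)=1$. The hockey-stick collapse modulo $p$ is the cleanest route, and it is precisely what forces $b$ to be central in order to upgrade acyclicity to contractibility.
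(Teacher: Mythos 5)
Your proposal is correct. The implications (i)$\Rightarrow$(ii)$\Rightarrow$(iii)$\Rightarrow$(iv) match the paper's argument exactly. The interesting difference is in (iv)$\Rightarrow$(i) and the central case. The paper first proves an abstract operator identity: for $\Bbbk$-linear $D,L$ with $[D,L]=\Id$, one has $\sum_{i=0}^{p-1}D^{p-1-i}L^{p-1}D^i=-\Id$, established by computing $\mathrm{ad}_D^{p-1}(L^{p-1})$ two ways (iterated derivation gives $(p-1)!\Id$; direct expansion gives $\sum_i(-1)^i\binom{p-1}{i}D^iL^{p-1}D^{p-1-i}$, with $(-1)^i\binom{p-1}{i}\equiv 1$ mod $p$). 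Specializing $L=L_b$ then handles both the general and central cases at once, via criterion (vi) of Proposition~\ref{prop-contractible-H-module-condition}. You instead argue at the level of elements: for (iv)$\Rightarrow$(i) you invoke criterion (v), noting that any $\dif$-closed $x$ is hit by $-b^{p-1}x$ under $\dif^{p-1}$, which neatly avoids verifying the full homotopy identity in the non-central case. For the central case you then prove the homotopy identity directly, with the hockey-stick identity $\sum_{i=l}^{p-1}\binom{i}{l}=\binom{p}{l+1}$ replacing the paper's $(-1)^i\binom{p-1}{i}\equiv 1$ trick. Both combinatorial shortcuts are equivalent ways of showing that only the extreme term survives modulo $p$; your split into two cases is marginally longer but uses the weaker criterion (v) where possible, which some readers may find more transparent. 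One thing worth making explicit in your write-up: the iterated Leibniz rule $\dif^n(am)=\sum_k\binom{n}{k}\dif^k(a)\dif^{n-k}(m)$ you invoke holds even in the noncommutative setting (it follows by induction from the single-step rule), and the computation $\dif(b^k)=kb^{k-1}$ does not require $b$ to be central since $\dif(b)=1$ is central; you use this implicitly but it deserves a word.
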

\begin{proof} $i)\Rightarrow ii)$. $A\cong 0$.

$ii)\Rightarrow iii)$. Since $\dif(1)=0$ and $A$ is acyclic, Proposition
\ref{prop-contractible-H-module-condition} $v)$  implies that there is an element
$a$ with $\dif^{p-1}(a)=1$.

$iii)\Rightarrow iv)$. Take $b=\dif^{p-2}(a)$.

$iv)\Rightarrow i)$. To do this we use the following elementary
identity: if $D$ and $L$ are $\Bbbk$-linear maps on a vector
space satisfying $[D,L]=\Id$, then
$$\sum_{i=0}^{p-1}D^{p-1-i}L^{p-1}D^{i}=-\Id.$$
If this is proven, let $L=L_b$ be the left multiplication on any
$p$-DG module $M$ by the element $b$. Then $[\dif_M,
L_b]=\dif(b)=\Id_M$ and $i)$ follows by taking the homotopy to be
$-(L_b)^{p-1}$. Now we prove the identity. Let
$\textrm{ad}_{D}(L):=[D,L]$, and note that $\textrm{ad}_D$ is a derivation on the
space of all linear operators. On one hand,
$$\begin{array}{rcl}
(\textrm{ad}_{D})^{p-1}(L^{p-1})& =
&(\textrm{ad}_D)^{p-2}((p-1)L^{p-2}
\textrm{ad}_D(L))=(\textrm{ad}_D)^{p-2}((p-1)L^{p-2}\cdot \Id)\\
& = & \textrm{ad}_D^{p-3}((p-1)(p-2)L^{p-3}\textrm{ad}_D(L) )\cdots
= (p-1)!\Id \equiv -\Id ~(\textrm{mod}~p).
\end{array}$$
On the other hand, expanding the iterated commutators directly
results in
$$\begin{array}{rcl}
\textrm{ad}_D^{p-1}(L^{p-1})& = &[D,[D,\cdots,[D,L^{p-1}]\cdots]] =
\sum_{i=0}^{p-1}(-1)^i{p-1 \choose i}D^iL^{p-1}D^{p-1-i}\\
& = & \sum_{i=0}^{p-1}D^iL^{p-1}D^{p-1-i}.
\end{array}$$
The claimed identity follows.

 For the last statement, notice that the proof of
$iv)\Rightarrow i)$ says that $-(L_b)^{p-1}$ on any $p$-DG module
$M$ is a homotopy between $\Id_M$ and $0_M$, and thus any $M$ is
acyclic. If $b$ is moreover central, then $L_b^{p-1}$ is an
$A$-module map. The result follows.
\end{proof}

\paragraph{Morphism spaces as cohomology.} Similar to the usual DG case,
the morphism space in the homotopy category $\mc{C}(A)$ is the degree
zero part of a certain cohomology $p$-complex $\RHOM$.

First off, recall that in the abelian category $A_\dif\dmod$, the
morphism space between two $p$-DG modules $M$, $N$ can be computed
as
\[
\begin{array}{rcl}
\Hom_{A_\dif}(M,N) &\cong &\{u\in \Hom_A(M,N)|\dif \circ u-u \circ
\dif =0\}\\
& \cong & \{u\in \Hom_A(M,N)|\dif\cdot u=0\}\\
& \cong & \textrm{Ker}(\dif:\HOM^0_A(M,N)\lra \HOM_A^{2}(M,N)),
\end{array}
\]
where ``$\cdot$'' denotes the $H$-action on $\HOM_A$, and
$\HOM_A^{2}(M,N)$ stands for the space of $A$-module maps
homogeneous of degree two. Similarly by Definition
\ref{def-null-homotopy-and-homotopy-category}, there are canonical
isomorphisms of $\Bbbk$-vector spaces
\begin{eqnarray}
\Hom_{\mc{C}(A)}(M,N) &\cong &\dfrac{\Hom_{A_\dif}(M,N)}{\dif^{p-1}\cdot\Hom_A^{2-2p}(M,N)}\vspace{0.1in} \nonumber \\
& \cong & \dfrac{\textrm{Ker}(\dif:\HOM^0_A(M,N)\lra
\HOM_A^{2}(M,N))}{\textrm{Im}(\dif^{p-1}:\HOM_A^{2-2p}(M,N)\lra
\HOM_A^0(M,N))}\vspace{0.1in} \nonumber \\
& \cong & \mH_{/0}^0(\HOM_A(M,N)),
\end{eqnarray}
i.e.~this is just the degree zero part of the slash (or backslash) cohomology
of the $p$-complex $\HOM_A(M,N)$. It forgets a lot of information
about the total $p$-complex $\HOM_A(M,N)$
even if $A_\dif$-module maps of all degrees are taken into account:
$$\HOM_{\mc{C}(A)}(M,N):=\oplus_{i\in \Z}\Hom_{\mc{C}(A)}(M,N\{i\})=
\mH_{/0}(\HOM_A(M,N)).$$
We summarize the
above discussion in the following proposition, whose proof in the
more general case of module-algebras over any finite dimensional
Hopf algebras can be found in \cite[Section 5]{Qi}.

\begin{prop}\label{prop-morphism-as-cohomology}The following diagram
commutes
$$\xymatrix{ A_\dif\dmod\times A_\dif\dmod\ar[rr]^-{\HOM_A(-,-)}\ar[rrdd]
\ar@{..>}@/_4.6pc/[ddrrrr]_(.25){\Hom_{\mc{C}(A)}(-,-)}&&
H\dmod \ar[dd]^{Q} \ddrruppertwocell^{<2.5>{(\mathrm{Ker}\dif)^0}}{<3>{\pi}}  \\
&& && \\
&& H\udmod \ar[rr]^{\mH_{/0}^0} && \Bbbk\dmod.\\
&& && &&}
$$
Here $Q$ is the natural quotient functor which is the identity on objects\footnote{We usually just omit writing it when the context is clear.} from the abelian category
of $p$-complexes to its homotopy category, and $\pi$ is the natural
transformation which on a $p$-complex $V$ is given by
$\pi(V):(\mathrm{Ker}\dif)^0(V):=\mathrm{Ker}(\dif|_{V^0})\lra \mH^0_{/0}(V)$. \hfill$\square$
\end{prop}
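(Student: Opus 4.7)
The plan is to chase a pair $(M,N)$ of $p$-DG modules through both routes of the diagram and match them, drawing on the equational characterisation of $\Hom_{\mc{C}(A)}(M,N)$ already assembled in the paragraph preceding the statement. The key combinatorial ingredient is the congruence $(-1)^{i}\binom{p-1}{i}\equiv 1\pmod{p}$, used in Section~\ref{sec-base-category} for the characterisation of contractible $p$-complexes; its role here is the same.

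First I would identify the space of $p$-DG maps with $(\mathrm{Ker}\,\dif)^{0}$ applied to the internal hom $\HOM_A(M,N)$. The natural $H$-module structure on $\HOM_A(M,N)$ has differential $(\dif u)(m)=\dif_N(u(m))-u(\dif_M(m))$, so an $A$-linear map $u$ of degree zero is a $p$-DG map exactly when $\dif\cdot u=0$. This gives the upper-diagonal factorisation $\Hom_{A_\dif}(-,-)=(\mathrm{Ker}\,\dif)^{0}\circ\HOM_A(-,-)$.

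Next I would match the null-homotopic maps with the image of $\dif^{p-1}$. A short induction on $k$ using the Leibniz rule produces
\[
\dif^{k}(h)=\sum_{i=0}^{k}(-1)^{i}\binom{k}{i}\,\dif_N^{k-i}\circ h\circ \dif_M^{i}
\]
for $h\in \HOM_A(M,N)$. Specialising to $k=p-1$ and applying the congruence $(-1)^{i}\binom{p-1}{i}\equiv 1\pmod{p}$ gives $\dif^{p-1}(h)=\sum_{i=0}^{p-1}\dif_N^{p-1-i}\circ h\circ \dif_M^{i}$, so the null-homotopic maps in the sense of Definition~\ref{def-null-homotopy-and-homotopy-category} are precisely the image of $\dif^{p-1}\colon \HOM_A^{2-2p}(M,N)\lra \HOM_A^{0}(M,N)$. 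Taking the quotient then yields
\[
\Hom_{\mc{C}(A)}(M,N)\;\cong\; \frac{\mathrm{Ker}(\dif\colon \HOM_A^{0}\lra \HOM_A^{2})}{\mathrm{Im}(\dif^{p-1}\colon \HOM_A^{2-2p}\lra \HOM_A^{0})}\;=\;\mH_{/0}^{0}(\HOM_A(M,N)),
\]
which is the commutativity of the outer diagram.

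Finally I would define $\pi(V)\colon (\mathrm{Ker}\,\dif)^{0}(V)\lra \mH_{/0}^{0}(V)$ as the tautological map given by inclusion into $\mathrm{Ker}(\dif)$ followed by projection modulo $\mathrm{Im}(\dif^{p-1})$. Naturality in $V$ is immediate since any morphism of $p$-complexes preserves both of these subspaces, so it descends compatibly to the quotient. Evaluated on $V=\HOM_A(M,N)$, $\pi$ becomes the canonical surjection $\Hom_{A_\dif}(M,N)\twoheadrightarrow \Hom_{\mc{C}(A)}(M,N)$, which is precisely the 2-cell depicted in the statement. I expect the only step that is not pure bookkeeping to be the combinatorial identity expressing $\dif^{p-1}(h)$ as an unsigned sum; everything else is a formal unwinding of the definitions of the homotopy category and of the slash cohomology, and the general Hopf-theoretic framework of \cite{Qi} gives the identical computation in greater generality.
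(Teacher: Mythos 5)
Your proposal is correct and follows the same route as the paper: identify $\Hom_{A_\dif}(M,N)$ with $\mathrm{Ker}(\dif)$ in degree zero of the internal hom, use the commutator expansion of $\dif^{p-1}$ and the congruence $(-1)^i\binom{p-1}{i}\equiv 1\pmod p$ (which the paper already derived in Section~\ref{sec-base-category} for characterising maps factoring through projectives) to match null-homotopies with $\mathrm{Im}(\dif^{p-1})$, and conclude that $\Hom_{\mc{C}(A)}(M,N)\cong\mH_{/0}^0(\HOM_A(M,N))$ with $\pi$ the tautological quotient map. The paper states the proposition as a summary of exactly this chain of identifications, citing \cite[Section 5]{Qi} for the general Hopf-algebraic version, so there is no material difference in approach.
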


\paragraph{Cofibrant modules and bar resolution.}
In the DG case, morphism spaces between ``nice enough'' objects in
the derived category coincide with the morphism spaces of the same
objects in the homotopy category. The analogous result also holds in
the $p$-DG case, which we now make precise.

Following Keller \cite{Ke1}, we make the following definitions.

\begin{defn}\label{def-cofibrant-p-DG-modules} Let $A$ be a $p$-DG
algebra and $P$ a $p$-DG module over $A$.
\begin{itemize}
\item[i)] We say that $P$ is \emph{cofibrant} if for any surjective
quasi-isomorphism of $p$-DG modules $M\twoheadrightarrow N$, the
induced map of graded $\Bbbk$-vector spaces $\HOM_{A_\dif}(P,M)\lra
\HOM_{A_\dif}(P,N)$ is surjective.
\item[ii)]We say that $P$ satisfies \emph{property} (P) if the
following two conditions holds:
\begin{itemize}
\item[(P 1)] There is an exhaustive (possibly infinite) filtration
of $P$ by $A_\dif$-submodules:
$$0=F_{-1}\subset F_{0}\subset F_1\subset \cdots \subset F_r\subset F_{r+1}
\subset \cdots \subset P,$$ Here being exhaustive means that
$P=\cup_{r=0}^{\infty}F_r$.
\item[(P 2)]The associated graded
modules of the filtration $F_{r+1}/F_r$ for all $r\in \N$ are
isomorphic to (possibly infinite) direct sums of free $A$-modules of
the form $A\{s\}$, $s\in \Z$.
\end{itemize}
\end{itemize}
\end{defn}
The cofibrance property is equivalent to requiring that the induced map on
the homogeneous degree zero part of $\HOM_{A_\dif}$, which is just
$\Hom_{A_\dif}(P,M)\lra \Hom_{A_\dif}(P,N),$
be surjective, since $M\twoheadrightarrow N$ is a surjective quasi-isomorphism if and only if
$M\{r\}\twoheadrightarrow N\{r\}$ for any $r\in \Z$ is.

We list the main properties of these two types of modules without
proof. The reader can find the proofs in the
more general framework of hopfological algebra in \cite[Section 6]{Qi}.

\begin{prop}\label{prop-easy-properties-cofibrant-modules}
Let $A$ be a $p$-DG algebra. Cofibrant and property (P) modules over
$A$ enjoy the following properties:
\begin{itemize}
\item[i)] Property (P) modules are cofibrant.
\item[ii)] If $P$ is cofibrant and $M$ is acyclic, then
$\HOM_{A}(P,M)$ is a contractible $p$-complex. In particular
$\Hom_{\mc{C}(A)}(P,M)=0$.
\item[iii)] A $p$-DG module $P$ is cofibrant
if and only if $P$ is projective as an $A$-module, and for any
acyclic $p$-DG $A$-module $M$, $\HOM_A(P,M)$ is an acyclic $p$-complex.
\item[iv)] If $P$ is cofibrant and $M$ is any $p$-DG module, then
there is an isomorphism of $\Bbbk$-vector spaces
$\Hom_{\mc{C}(A)}(P,M)\cong \Hom_{\mc{D}(A)}(P,M).$
\item[v)] A $p$-DG module is cofibrant if and only if it is a direct
summand of a property (P) module in the abelian category $A_\dif\dmod$.

\end{itemize}
\end{prop}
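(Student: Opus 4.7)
The plan is to establish the five items roughly in the order given, with items (i) and (v) requiring most of the work and the remaining items following as consequences.

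For (i), I would use the $(P)$-filtration $0=F_{-1}\subset F_0\subset F_1\subset\cdots\subset P$ to inductively construct lifts. Suppose $\varphi\colon M\twoheadrightarrow N$ is a surjective quasi-isomorphism and $f\colon P\to N$ is a morphism of $p$-DG modules. Writing $K=\mker(\varphi)$, the short exact sequence $0\to K\to M\to N\to 0$ combined with $\varphi$ being a quasi-isomorphism forces $K$ to be acyclic, hence contractible as a $p$-complex by Proposition~\ref{prop-contractible-H-module-condition}. Given a $p$-DG lift $\tilde f_r\colon F_r\to M$, the quotient $F_{r+1}/F_r$ is a sum of free $A$-modules $A\{s\}$, and a $p$-DG map from $A\{s\}$ is determined by a $\dif$-cycle of degree $-s$. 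Any $A$-module lift of a cycle $n$ in $N$ yields some $m\in M$ with $\varphi(m)=n$ and $\dif m\in K$; applying Proposition~\ref{prop-contractible-H-module-condition}(v) to the cycle $\dif m\in K$ produces $y\in K$ with $\dif^{p-1}(y)=\dif m$, and the correction $\tilde n:=m-\dif^{p-2}(y)$ is a cycle in $M$ lifting $n$. Assembling these corrections along the filtration and taking the colimit produces the desired global lift.

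For (ii), the same filtered induction reduces the claim to $P=A\{s\}$, where $\HOM_A(A\{s\},M)\cong M\{-s\}$ as $p$-complexes. An acyclic $p$-DG module is in particular an acyclic graded $H$-module, and by Proposition~\ref{prop-contractible-H-module-condition} such an $H$-module is a direct sum of shifts of $V_{p-1}$, therefore contractible. Since contractibility of $p$-complexes is preserved under direct sums and under extensions with contractible quotients, the filtration argument gives contractibility of $\HOM_A(P,M)$ for all property~(P) modules $P$; the vanishing of $\Hom_{\mc{C}(A)}(P,M)$ then follows from Proposition~\ref{prop-morphism-as-cohomology}. For a general cofibrant $P$, I would either invoke (v) together with the fact that a summand of a contractible $p$-complex is contractible, or argue directly: the surjection $H\otimes M\twoheadrightarrow M$ via the counit of $H$ is a surjective quasi-isomorphism (total space contractible, target acyclic), and cofibrance produces a lift of any $f\colon P\to M$ through it, exhibiting $f$ as null-homotopic.

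For (iii), the forward direction packages part of (ii) with the standard lifting of surjections of $A$-modules, which become surjective quasi-isomorphisms of $p$-DG modules after endowing the kernel with the zero differential, yielding $A$-projectivity. The converse uses a property~(P) resolution $Q\twoheadrightarrow P$ produced as in (v): the hypothesis forces $\HOM_A(P,\mker)$ to be acyclic, hence contractible, which lets me split the quasi-isomorphism and realize $P$ as a summand of $Q$. Item (iv) then follows from (ii) formally: any morphism $P\to M$ in $\mc{D}(A)$ is represented by a roof $P\xleftarrow{s}P'\to M$ with $s$ a quasi-isomorphism, and (ii) applied to the cone of $s$ shows that $\Hom_{\mc{C}(A)}(P,-)$ inverts quasi-isomorphisms, straightening every roof to a genuine homotopy class.

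Item (v) is the main technical input and the $p$-analog of Keller's bar-resolution construction~\cite{Ke1}. For an arbitrary $p$-DG module $M$, one builds $\mathrm{Bar}(M)$ with a word-length filtration whose associated graded is a sum of free $A$-modules, and a canonical surjective quasi-isomorphism $\mathrm{Bar}(M)\twoheadrightarrow M$; if $M$ is cofibrant, cofibrance applied to $\Id_M$ against this surjection produces a splitting, realizing $M$ as a direct summand of a property~(P) module. The principal obstacle throughout is the $p$-adapted replacement of the classical DG identity $[\dif,h]=\Id$ by the longer sum $\sum_{j=0}^{p-1}\dif^{p-1-j}\circ h\circ\dif^j=\Id$: inductive constructions of the contracting data must be compatible with the full $H$-action at each filtration step rather than with a single differential, which is precisely what Proposition~\ref{prop-contractible-H-module-condition} makes possible.
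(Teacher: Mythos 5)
Your overall strategy — inductive lifting along the property~(P) filtration for (i), reduction to $\HOM_A(A\{s\}, M)$ for (ii), factoring through the bar resolution for (v), and the formal deductions of (iii), (iv) — is the right one and matches the approach in~\cite{Qi} that the paper cites. There are, however, two genuine gaps in your argument for (i) that need to be repaired.

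First, your invocation of Proposition~\ref{prop-contractible-H-module-condition}(v) ``to the cycle $\dif m\in K$'' is incorrect: $\dif m$ is generally \emph{not} a $\dif$-cycle, since $\dif(\dif m)=\dif^2 m$ need not vanish (the relation $\dif^p=0$ says nothing about $\dif^2$). What you actually have is $\dif^{p-1}(\dif m)=\dif^p m=0$. The fact to use is the complementary one: in a contractible $p$-complex, $\mker(\dif^{p-1})=\mim(\dif)$ — visible from Proposition~\ref{prop-contractible-H-module-condition}(iv), since in $V_{p-1}$ the image of $\dif$ is exactly the kernel of $\dif^{p-1}$. This directly produces $k\in K$ with $\dif k=\dif m$, and your correction is $\tilde n:=m-k$, without detouring through an element $y$ with $\dif^{p-1}(y)=\dif m$ (which would require $\dif^2 m=0$). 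Second, and more substantively, the inductive lifting problem is not ``lift $\dif$-cycles of $N$ to $\dif$-cycles of $M$''. When extending $\tilde f_r$ to $F_{r+1}$, an $A$-module section $\sigma:F_{r+1}/F_r\to F_{r+1}$ sends a generator $1_\lambda$ to $q_\lambda$ with $\dif q_\lambda\in F_r$ but typically nonzero. The constraint on $m_\lambda:=\tilde f_{r+1}(q_\lambda)$ is $\dif m_\lambda=\tilde f_r(\dif q_\lambda)$, an inhomogeneous equation depending on the data already constructed — not $\dif m_\lambda=0$. Starting from any $A$-module lift $m_\lambda^{(0)}$ of $f(q_\lambda)$, the obstruction is $z:=\dif m_\lambda^{(0)}-\tilde f_r(\dif q_\lambda)$, which lies in $K$ because $\varphi\circ\tilde f_r=f|_{F_r}$ and $f$ is a $p$-DG map, and which satisfies $\dif^{p-1}(z)=-\tilde f_r(\dif^{p}q_\lambda)=0$ because $\tilde f_r$ commutes with $\dif$; contractibility of $K$ then supplies the correction exactly as above. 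As written, your sketch collapses this to the special case $\dif q_\lambda=0$, which does not cover the actual inductive step.
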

\begin{proof} Omitted. See the results 6.4--6.10 of \cite{Qi}.
\end{proof}

The following theorem states that there are always ``enough''
property (P) modules.

\begin{thm}[Bar resolution]\label{thm-bar-resolution} Let $A$ be a
$p$-DG algebra. For any $p$-DG module $M$, there is a surjective
quasi-isomorphism of $p$-DG modules
$$\mathbf{p}(M)\twoheadrightarrow M,$$
where $\mathbf{p}(M)$ satisfies property (P). Moreover, the assignment
$M\mapsto \mathbf{p}(M)$ is functorial in $M$.
\end{thm}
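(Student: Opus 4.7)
The plan is to adapt Keller's DG bar resolution~\cite{Ke1} to the $p$-DG setting, following the hopfological framework of~\cite[Section 6]{Qi}. The key tool is the free-forgetful adjunction between $A_\dif\dmod$ and $H\dmod$ (where one forgets the $A$-action): the left adjoint sends a $p$-complex $V$ to $F(V):=A\otimes V$, where $A$ acts on the left tensor factor and $\dif$ acts by Leibniz. Each $F(V)$ is automatically a property~(P) module via the one-step filtration $0\subset F(V)$, since $F(V)$ is free as a graded $A$-module on the graded vector space $V$. The counit $\epsilon_M:A\otimes M\twoheadrightarrow M$, $a\otimes m\mapsto am$, is a surjective morphism of $p$-DG modules.

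Now iterate the monad $T:=F\circ G$ and set
\[
\mathbf{p}(M)\;:=\;\bigoplus_{r\geq 0}T^{r+1}(M)\;=\;\bigoplus_{r\geq 0}A^{\otimes(r+1)}\otimes M,
\]
with $A$ acting on the leftmost factor and total differential $\dif_{\mathrm{tot}}=\dif_I+\dif_b$, where $\dif_I$ is the tensor-product $H$-action on $A^{\otimes(r+1)}\otimes M$ coming from iterated coproduct, and $\dif_b$ is the standard signed bar differential $a_0\otimes\cdots\otimes a_r\otimes m\mapsto \sum_i (-1)^i a_0\otimes\cdots\otimes a_ia_{i+1}\otimes\cdots\otimes m + (-1)^r a_0\otimes\cdots\otimes a_{r-1}\otimes a_rm$. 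The augmentation $\epsilon$ equals $\epsilon_M$ on $T(M)$ and is zero on $T^{r+1}(M)$ for $r\geq 1$. The bar-length filtration $F_r:=\bigoplus_{s\leq r}T^{s+1}(M)$ is exhaustive and $\dif_{\mathrm{tot}}$-stable, with subquotients $F_r/F_{r-1}\cong A\otimes(A^{\otimes r}\otimes M)$ free as graded $A$-modules, so $\mathbf{p}(M)$ satisfies property~(P). The standard extra-degeneracy $s(a_0\otimes\cdots\otimes a_r\otimes m):=1\otimes a_0\otimes\cdots\otimes a_r\otimes m$ provides an $H$-linear (but not $A$-linear) contracting homotopy of the augmented bar complex at the level of the underlying $p$-complexes, and by Proposition~\ref{prop-contractible-H-module-condition} this implies $\ker\epsilon$ is acyclic, hence $\epsilon$ is a quasi-isomorphism. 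Functoriality is transparent since every component of the construction is natural in $M$.

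The main obstacle is verifying $\dif_{\mathrm{tot}}^p=0$. The strategy is to show that $\dif_I$ and $\dif_b$ commute by a direct calculation on generators, which reduces to the fact that the multiplication $A\otimes A\to A$ and the action $A\otimes M\to M$ are morphisms of $H$-modules. Given this commutativity, the characteristic-$p$ Frobenius identity $(x+y)^p=x^p+y^p$ for commuting $x,y$ collapses the binomial expansion to $\dif_{\mathrm{tot}}^p=\dif_I^p+\dif_b^p$. The term $\dif_I^p=0$ follows from cocommutativity of $H$, since the iterated coproduct of $\dif^p$ vanishes; and $\dif_b^2=0$ by the usual signed bar computation, so $\dif_b^p=0$ for every $p\geq 2$. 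The remaining delicate point is to verify that the extra-degeneracy $s$, being $H$-linear rather than $A$-linear, still supplies enough structure to conclude acyclicity (not merely contractibility) of $\ker\epsilon$, which is handled by splitting the simplicial contraction level-by-level in the bar filtration and assembling the pieces compatibly with the $p$-differential.
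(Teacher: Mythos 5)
Your construction does not yield a quasi-isomorphism for $p>2$, and the gap is exactly the ``remaining delicate point'' you flag: the extra-degeneracy $s$ satisfies the two-term identity $s\dif_b+\dif_b s=\Id$, which is the usual DG contracting-homotopy condition, whereas Proposition~\ref{prop-contractible-H-module-condition} requires the $p$-term identity $\Id=\sum_{j=0}^{p-1}\dif^{p-1-j}\circ h\circ\dif^{j}$ for some $h$ of degree $2-2p$. These coincide when $p=2$ but not otherwise. Concretely, take $A=M=\Bbbk$, so $\dif_I=0$ and $T^{r+1}(M)=\Bbbk$. (Note also that $\dif_b$ as you define it has internal degree $0$ while $\dif_I$ has degree $2$, so $\dif_{\mathrm{tot}}$ is not homogeneous; one must shift the bar degree $r$ component by $\{-2r\}$, which you do not mention.) On pure scalars the alternating bar differential $\dif_b\colon B_r\to B_{r-1}$ equals $1$ for even $r\geq 2$ and $0$ for odd $r$, so that as an $H$-module $\mathbf{p}(\Bbbk)\cong V_0\oplus\bigoplus_{k\geq 1}V_1\{-4k\}$ and $\ker\epsilon\cong\bigoplus_{k\geq 1}V_1\{-4k\}$. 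Since $V_1=H/(\dif^2)$ is not a free $H$-module when $p>2$, the kernel $\ker\epsilon$ is not acyclic and $\epsilon$ is not a quasi-isomorphism. (For general $A$, the map $s$ is not even a $\dif_{\mathrm{tot}}$-homotopy: since $s$ commutes with $\dif_I$, one computes $s\dif_{\mathrm{tot}}+\dif_{\mathrm{tot}}s=\Id+2\dif_I s$.)

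The structural reason the Keller-style bar totalization cannot be imported directly is that acyclicity in $H\udmod$ is governed by the free module $V_{p-1}\cong H$, and a square-zero bar direction with $\dif_b^2=0$ can only produce $V_0$- and $V_1$-type indecomposables in the $A=\Bbbk$ limit, never $V_{p-1}$. Your Frobenius observation $(\dif_I+\dif_b)^p=\dif_I^p+\dif_b^p$ for commuting operators is correct as far as it goes, but it only certifies that $\dif_{\mathrm{tot}}$ is a $p$-differential; it says nothing about whether the resulting object resolves $M$. The paper's proof delegates to Theorem~6.6 and Corollary~6.7 of~\cite{Qi}, where $\mathbf{p}(M)$ is produced by an inductive colimit construction adapted to the shift functor $[1]=\widetilde{V}_{p-2}\{-p\}\otimes(-)$, rather than by summing a simplicial bar object and adding the internal and bar differentials.
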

\begin{proof} See Theorem 6.6 and Corollary 6.7 of \cite{Qi}. \end{proof}

We will refer to the functorial cofibrant module $\mathbf{p}(M)$ as
the \emph{bar resolution} of $M$.

\begin{cor} Let $\mc{CF}(A)$ (resp. $\mc{P}(A)$) denote the
full-subcategory of $\mc{C}(A)$ consisting of cofibrant (resp.
property (P)) modules. Then the composition of functors
$$\mc{CF}(A)\subset \mc{C}(A)\stackrel{Q}{\lra} \mc{D}(A),$$
$$(\textrm{resp.}~\mc{P}(A)\subset \mc{C}(A)\stackrel{Q}{\lra} \mc{D}(A)),$$
where $Q$ is the localization functor, is an equivalence of
categories. \hfill$\square$
\end{cor}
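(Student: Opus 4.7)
The plan is to establish essential surjectivity and full faithfulness for the composition $\mc{CF}(A) \hookrightarrow \mc{C}(A) \xrightarrow{Q} \mc{D}(A)$, using the two key inputs supplied just above: Theorem~\ref{thm-bar-resolution} (bar resolution) together with parts i) and iv) of Proposition~\ref{prop-easy-properties-cofibrant-modules}. The argument for $\mc{P}(A)$ will then be identical since $\mathbf{p}(M)$ is property (P) by construction and property (P) implies cofibrant.

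For essential surjectivity, given any object $M\in \mc{D}(A)$, apply the bar resolution to obtain a surjective quasi-isomorphism $\mathbf{p}(M) \twoheadrightarrow M$ of $p$-DG modules. Since a quasi-isomorphism becomes invertible in $\mc{D}(A)$, one has $\mathbf{p}(M)\cong M$ in $\mc{D}(A)$, and $\mathbf{p}(M)$ is cofibrant (indeed property (P)) by the theorem and Proposition~\ref{prop-easy-properties-cofibrant-modules} i). Hence every object of $\mc{D}(A)$ is isomorphic to an object of the essential image of $\mc{CF}(A)$ (and of $\mc{P}(A)$).

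For full faithfulness, let $P,P^\prime \in \mc{CF}(A)$. The canonical map
\[
\Hom_{\mc{C}(A)}(P,P^\prime) \lra \Hom_{\mc{D}(A)}(P,P^\prime)
\]
induced by the localization functor $Q$ is an isomorphism by Proposition~\ref{prop-easy-properties-cofibrant-modules} iv) applied with $M=P^\prime$ (the statement there holds for arbitrary $p$-DG module $M$, so in particular for cofibrant $P^\prime$). Thus the restriction of $Q$ to $\mc{CF}(A)$ is fully faithful, and combined with essential surjectivity gives the claimed equivalence $\mc{CF}(A) \simeq \mc{D}(A)$. The same two steps go through verbatim with $\mc{CF}(A)$ replaced by $\mc{P}(A)$, using that $\mathbf{p}(M)$ itself satisfies property (P).

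There is no real obstacle here: the corollary is a formal consequence of the existence of functorial property (P) resolutions and the fact that cofibrant objects see quasi-isomorphisms correctly. The only point requiring minor care is that the hom-isomorphism in Proposition~\ref{prop-easy-properties-cofibrant-modules} iv) must be invoked on the second argument being merely a $p$-DG module (not necessarily cofibrant), which is exactly the form stated, so no additional work is needed.
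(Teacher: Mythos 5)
Your proof is correct and uses exactly the ingredients the paper intends by leaving this corollary unproved: Theorem~\ref{thm-bar-resolution} for essential surjectivity, Proposition~\ref{prop-easy-properties-cofibrant-modules}~iv) (applied to cofibrant $P$ and arbitrary $M=P'$) for full faithfulness, and Proposition~\ref{prop-easy-properties-cofibrant-modules}~i) to cover the $\mc{P}(A)$ case. Nothing further is needed.
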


\paragraph{Compact modules and Grothendieck groups.} When working with the
abelian or derived category of modules over some ring, allowing
infinitely generated modules makes the Grothendieck group of the module category zero.
One needs to restrict the size of (projective) modules to define
the Grothendieck group. Similar consideration applies in the $p$-DG
context, and an appropriate size restriction on $p$-DG modules is the
\emph{compactness} condition.

\begin{defn}\label{def-compact-p-DG-module} Let $A$ be a $p$-DG
algebra. An object $M\in \mc{D}(A)$ is called \emph{compact} if the
functor $\Hom_{\mc{D}(A)}(M,-):\mc{D}(A)\lra \Bbbk\dmod$ commutes
with arbitrary direct sums.
\end{defn}

For instance, the module $A\{r\}$ is compact for any $r\in \Z$.

\begin{prop}\label{prop-derived-category-compactly-generated} Let
$A$ be a $p$-DG algebra. The derived category $\mc{D}(A)$ is
generated by the set of compact objects $\{A\{r\}|r\in \Z\}$, in
the sense that if $M\in \mc{D}(A)$ satisfies
$$\Hom_{\mc{D}(A)}(A\{r\}[s],M)=0$$
for all $r$, $s\in \Z$, then $M\cong 0$ in $\mc{D}(A)$.
\end{prop}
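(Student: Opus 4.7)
The plan is to reduce the vanishing statement to the fact, already built into the slash cohomology machinery, that $\mH_{/0}$ detects non-contractibility of $p$-complexes. First I would observe that every $A\{r\}[s]$ is cofibrant. For $s=0$ this is immediate because $A\{r\}$ is a free $A$-module of rank one with trivial property~(P) filtration $0\subset A\{r\}$; for general $s$ one bootstraps using $A[1]=\widetilde{V}_{p-2}\{-p\}\otimes A$ together with the filtration of $\widetilde{V}_{p-2}$ by sub-$p$-complexes $\mathrm{span}_\Bbbk(\widetilde{v}_0,\ldots,\widetilde{v}_j)$, whose one-dimensional subquotients produce a property~(P) filtration on $\widetilde{V}_{p-2}\otimes A$ by free $A$-submodules. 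Iterating, Proposition~\ref{prop-easy-properties-cofibrant-modules}(iv) gives
\[
\Hom_{\mc{D}(A)}(A\{r\}[s],M)\cong \Hom_{\mc{C}(A)}(A\{r\}[s],M).
\]

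Next I would identify the right-hand side with a piece of slash cohomology of $M$. Since an $A$-linear map $A\to M$ is determined by the image of $1$ and inherits the differential $\dif_M$, one has $\HOM_A(A,M)\cong M$ as $p$-complexes. Combining this with the $\otimes$--$\HOM$ adjunction, the self-duality $\widetilde{V}_{p-2}^{\ast}\cong\widetilde{V}_{p-2}$, and the description of the inverse shift $[-s]$ as tensor product with $\widetilde{V}_{p-2}^{\otimes s}\{sp\}$ upgrades this to
\[
\HOM_A(A\{r\}[s],M)\cong M\{-r\}[-s]
\]
as $p$-complexes. Proposition~\ref{prop-morphism-as-cohomology} then rewrites the hypothesis as
\[
\mH^{0}_{/0}\bigl(M\{-r\}[-s]\bigr)=0\qquad\text{for all }r,s\in\Z.
\]

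Finally, restrict to $s=0$ and let $r$ range over $\Z$. Grading shift gives $\mH^{0}_{/0}(M\{-r\})\cong \mH^{r}_{/0}(M)$, so the hypothesis forces $\mH_{/0}(M)=\bigoplus_{r\in\Z}\mH^{r}_{/0}(M)=0$. Unravelling the definition, $\mH_{/0}(M)=\ker\dif_M/\mathrm{Im}\,\dif_M^{p-1}$, so Proposition~\ref{prop-contractible-H-module-condition}(v) implies the underlying $p$-complex of $M$ is contractible, hence $M$ is acyclic as a $p$-DG module, and therefore $M\cong 0$ in $\mc{D}(A)$. The main obstacle is the bookkeeping for the identification $\HOM_A(A\{r\}[s],M)\cong M\{-r\}[-s]$ as $p$-complexes and the verification that the homological shift preserves cofibrance; once these are in place, the proof uses only the $s=0$ specialization and is a clean consequence of the detection property of $\mH_{/0}$.
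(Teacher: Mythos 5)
Your argument is correct and is the natural $p$-DG analogue of the classical fact that $D(A)$ is compactly generated by $A$ because $\Hom_{D(A)}(A,M[n])\cong H^n(M)$; the paper itself omits the proof, deferring to \cite[Proposition 7.6]{Qi}, so there is no explicit argument in the text to compare against, but yours is the expected route. One small slip worth fixing: $\mathrm{span}_\Bbbk(\tilde{v}_0,\ldots,\tilde{v}_j)$ is \emph{not} $\dif$-stable, since $\dif(\tilde{v}_j)=\tilde{v}_{j+1}$; the correct ascending filtration of $\widetilde{V}_{p-2}$ by sub-$p$-complexes is $G_j=\mathrm{span}_\Bbbk(\tilde{v}_{p-2-j},\ldots,\tilde{v}_{p-2})$, which still yields a property (P) filtration of $\widetilde{V}_{p-2}\{-p\}\otimes A$ by free $A$-submodules with one-dimensional associated graded pieces (more generally, any finite-dimensional $p$-complex $V$ has such a filtration, so $V\otimes A$ is automatically property (P)). With that correction, the remaining steps are sound: $\HOM_A(A\{r\}[s],M)\cong M\{-r\}[-s]$ as $p$-complexes via $\HOM_A(A,M)\cong M$, self-duality of $\widetilde{V}_{p-2}$, and the adjunction; Proposition \ref{prop-morphism-as-cohomology} then converts the hypothesis into the vanishing of $\mH_{/0}^r(M)$ for all $r$, which by Proposition \ref{prop-contractible-H-module-condition}(v) forces the underlying $p$-complex of $M$ to be contractible, i.e.\ $M$ is acyclic, so $M\cong 0$ in $\mc{D}(A)$. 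Your observation that only the $s=0$ case of the hypothesis is actually used is also correct.
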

\begin{proof}Omitted. See \cite[Proposition 7.6]{Qi}.
\end{proof}
The general machinery of Ravenel-Neeman can be applied to the compactly generated
category $\mc{D}(A)$ to characterize the class of compact objects in it.

\begin{defn}\label{def-finite-cell-modules} A $p$-DG module $M\in
A_\dif\dmod$ is called a \emph{finite cell
module} if there is a finite exhaustive filtration of
$A_\dif$-modules
$$0=F_{-1}\subset F_0 \subset \cdots \subset F_{r-1} \subset F_{r} \subset \cdots \subset F_N=M$$
such that the associated graded modules $F_{r}/F_{r-1}$ for all $0
\leq r \leq N$ are isomorphic, as $A$-modules, to  finite direct
sums of the free $A$-modules $A\{s\}$ for various $s\in \Z$.
\end{defn}

In other words, finite cell modules are just property (P) modules that are finitely generated
as $A$-modules.

\begin{thm}[Characterization of compact modules]
\label{thm-characterizing-compact-modules} The compact
objects in $\mc{D}(A)$ are those which are isomorphic in the
derived category to a direct summand of a finite cell module.
\end{thm}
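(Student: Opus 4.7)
The plan is to establish a two-way inclusion between the class of compact objects and the class of direct summands (in $\mc{D}(A)$) of finite cell modules, and to package the hard direction by appealing to the standard Ravenel--Neeman machinery for compactly generated triangulated categories.

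\textbf{Summands of finite cell modules are compact.} First I would observe that the generators $A\{r\}$ are compact (as already noted in the excerpt), and that the class $\mc{D}(A)^c$ of compact objects is a thick subcategory of $\mc{D}(A)$: closure under shifts and finite direct sums is immediate from $\Hom_{\mc{D}(A)}(-,\oplus_i M_i)$ commuting with finite sums, closure under extensions follows from the standard five-lemma argument applied to the long exact $\Hom$-sequence attached to a distinguished triangle, and closure under retracts from the $H\ufmod$-linearity of the morphism spaces. Given a finite cell module $M$ with filtration $0=F_{-1}\subset F_0\subset\cdots\subset F_N=M$, Lemma~\ref{lemma-ses-lead-to-dt-derived-category} produces distinguished triangles $F_{r-1}\to F_r\to F_r/F_{r-1}\to F_{r-1}[1]$ in $\mc{D}(A)$. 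Since each $F_r/F_{r-1}$ is a finite direct sum of generators $A\{s\}$, hence compact, an induction on $r$ gives compactness of $F_N=M$, and hence of any of its direct summands in $\mc{D}(A)$.

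\textbf{Compact objects are summands of finite cell modules.} For the converse I would invoke Neeman's theorem: in a compactly generated triangulated category, the full subcategory of compact objects coincides with the thick subcategory generated by any compact generating set. Proposition~\ref{prop-derived-category-compactly-generated} verifies the hypotheses with generators $\{A\{r\}\}_{r\in \Z}$, so it remains to identify $\mathrm{thick}(\{A\{r\}\})$ with the class of $\mc{D}(A)$-summands of finite cell modules. One inclusion is the content of the previous paragraph; for the other I would verify that the class $\mc{F}$ of objects isomorphic in $\mc{D}(A)$ to direct summands of finite cell modules is already a thick subcategory containing every $A\{r\}$. Stability under shifts is clear from $A\{r\}[1]\cong\widetilde{V}_{p-2}\{-p\}\o A\{r\}$, which admits a finite cell module structure as $\widetilde{V}_{p-2}$ has a finite filtration with trivial $\dif$-action subquotients. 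For closure under cones, given a morphism $\underline{u}\colon M\to N$ between two finite cell modules, the bar resolution (Theorem~\ref{thm-bar-resolution}) combined with property (iv) of Proposition~\ref{prop-easy-properties-cofibrant-modules} lets us represent $\underline{u}$ by an honest $A_\dif$-module map between cofibrant replacements, whose mapping cone, constructed via the push-out in Definition~\ref{def-exact-triangle-homotopy-category}, inherits a finite cell filtration from those of $M$ and $N$. Closure under summands is built into the definition of $\mc{F}$.

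\textbf{Main obstacle.} The genuinely delicate point is the cone-closure step: a priori the push-out used to form a mapping cone only produces a $p$-DG module, not a visibly finite cell one, so some care is needed to assemble its filtration from the filtrations on $M$, $N$, and the standard filtration on $H\o M$ by powers of $\partial$. Once this is in place, the identification $\mathrm{thick}(\{A\{r\}\})=\mc{F}$ is immediate and Neeman's theorem finishes the argument. An alternative route, which sidesteps the explicit cone computation, is to filter a given compact $M$ by approximating it from inside by finite cell modules using the bar resolution and eventual stabilization of $\Hom_{\mc{D}(A)}(M,-)$ on the directed system of finite cell subobjects of $\mathbf{p}(M)$; but the thick-closure argument seems cleaner.
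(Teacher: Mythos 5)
Your plan is the standard Ravenel--Neeman argument that the paper itself points to (see the remark preceding Definition~\ref{def-finite-cell-modules}, and Theorem~7.14 and Corollary~7.15 of \cite{Qi}), and it is correct. The only step you should tighten is cone-closure of the class $\mc{F}$, where your appeal to bar resolutions is both unnecessary and potentially problematic: passing to $\mathbf{p}(M)$, $\mathbf{p}(N)$ would replace finite cell modules by infinite ones, destroying the filtration you want. The cleaner route is to observe that a finite cell module satisfies property~(P) by definition, hence is already cofibrant by Proposition~\ref{prop-easy-properties-cofibrant-modules}(i); then part~(iv) of the same proposition shows that a derived-category morphism $\underline{u}\colon M\to N$ between finite cell modules is represented by an honest $A_\dif$-module map $u\colon M\to N$, unique up to homotopy. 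The cone $C_u$ sits in an $A$-split short exact sequence $0\to N\to C_u\to M[1]\to 0$, and $M[1]=\widetilde{V}_{p-2}\{-p\}\otimes M$ is again finite cell (tensor the finite cell filtration of $M$ with a composition series of the finite-dimensional $p$-complex $\widetilde{V}_{p-2}$), so concatenating the filtration of $N$ with the preimages of the filtration of $M[1]$ exhibits $C_u$ as finite cell. For a general morphism between summands, enlarge it to a morphism between the ambient finite cell modules by composing with the inclusion and projection; the cone of the original morphism is then a retract of the cone of the enlarged one, so $\mc{F}$ is closed under cones and retracts as required, and Neeman's theorem finishes the proof exactly as you describe.
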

\begin{proof}Omitted. See Theorem 7.14 and Corollary
7.15 of \cite{Qi}.
\end{proof}

Informally, we think of the notions introduced
in this section as analogues of the usual ring theoretic
concepts below. In the comparison table, $R$ stands for some ring, ``f.g.'' is short
for ``finitely generated'', and the arrows indicate inclusion relations.
Dotted line surrounds
types of objects in the abelian category $A_{\dif}\dmod$, while the bottom right arrow
indicates that any finite cell module is a compact object in the derived category
$\mc{D}(A)$.

\drawing{table.eps}

Denote by $\mc{D}^c(A)$ the strictly full subcategory of $\mc{D}(A)$
consisting of compact objects. The above theorem implies that
$\mc{D}^c(A)$ is \emph{Karoubian}, or equivalently \emph{idempotent complete}.
In the simplest case $A\cong \Bbbk$
there is an equivalence of categories $H\ufmod \cong \mc{D}^c(\Bbbk)$.

\begin{defn}\label{def-Grothendieck-group-p-DG-rings}
The \emph{Grothendieck group}  $K_0(\mc{D}^c(A))$ of a $p$-DG algebra
$A$ (or $K_0(A)$ if no confusion can arise) is the abelian group
generated by the symbols of the isomorphism classes of objects in
$\mc{D}^c(A)$, subject to the relations that
$$[M]=[L]+[N]$$
whenever there is an exact triangle $L\lra M \lra N \lra L[1]$ in
$\mc{D}^c(A)$.
\end{defn}

The exact tensor bi-functor $\o: H\udmod \times \mc{D}(A)\lra
\mc{D}(A)$ restricts to an exact bi-functor
$$\o: H\ufmod \times \mc{D}^c(A)\lra \mc{D}^c(A),$$
which in turn equips $K_0(A)$ with a module structure over the ring
$\mathbb{O}_{p}\cong K_0(H\ufmod)$.

\paragraph{Derived hom and tensor product.} Observe that if $A$ is a $p$-DG
algebra, its opposite algebra $A^{\textrm{op}}$ equipped with the
same differential $\dif$ is also a $p$-DG algebra,
called the \emph{opposite $p$-DG algebra} of $A$. The
category of \emph{right} $p$-DG modules over $A$
can be identified with that of \emph{left} modules over $A^{\textrm{op}}$.

The category of $p$-DG algebras has a natural monoidal structure as
follows. If $A$ and $B$ are two $p$-DG algebras, then the \emph{tensor
product $p$-DG algebra} $A\o B$ is the usual tensor product algebra
equipped with the differential $\dif(a\o b):=\dif (a)\o b+a\o\dif(
b)$. One readily checks that this differential is compatible with
the algebra structure, and furthermore $A\cong A\o \Bbbk$, $B\cong
\Bbbk \o B$ sit inside $A\o B$ naturally as $p$-DG subalgebras.

Now if $A$, $B$ are $p$-DG algebras, a \emph{$p$-DG
$(A,B)$-bimodule} $_AX_B$ is a left module $X$ over
the $p$-DG algebra $A\o B^{\textrm{op}}$. $p$-DG bimodules naturally
give rise to functors on $p$-DG module categories via the hom and
tensor functors
$$\HOM_{A}(_AX_B,-):A_\dif\dmod \lra B_\dif\dmod, \ \ \ \
{_AM}\mapsto \HOM_A(_AX_B,{_AM}),$$
$$_AX_B\o_B(-): B_\dif\dmod \lra A_\dif\dmod, \ \ \ \
{_BN} \mapsto {_AX_B}\o_B N.$$

\begin{defn}\label{def-derived-hom-and-tensor} Let $A$, $B$ be $p$-DG
algebras, and $_AX_B$ be a $p$-DG $(A,B)$-bimodule.
\begin{itemize}
\item[i).]The \emph{derived tensor product} $_AX\o^{\mathbf{L}}_B:
\mc{D}(B)\lra \mc{D}(A)$ is the composition of functors
$$\mc{D}(B)\stackrel{\mathbf{p}}{\lra} \mc{P}(B) \xrightarrow{_A X\o_B(-)}
\mc{C}(A)\stackrel{Q}{\lra}\mc{D}(A),$$ where $Q$ is the natural
localization functor.
\item[ii).] The \emph{derived hom functor} $\mathbf{R}\HOM(_AX_B,-)$
is the composition of functors $$\mc{D}(A)
\xrightarrow{\HOM_A(\mathbf{p_A}(X),-)} \mc{C}(B) \stackrel{Q}{\lra}
\mc{D}(B),$$ where $\mathbf{p_A}(X)$ denotes the bar resolution of
$X$ as a left $p$-DG $A$-module.\footnote{By the construction of the
bar resolution in \cite[Theorem 6.6]{Qi}, $\mathbf{p_A}(X)$ has a
natural right $p$-DG $B$-module structure.}
\end{itemize}
\end{defn}

The tensor product and hom functors satisfy the following adjunction
property.

\begin{prop}\label{prop-tensor-hom-adjunction}A p-DG $(A,B)$-bimodule $X$
gives an adjunction of functors in the derived category:
$$\Hom_{\mc{D}(A)}(X\o^{\mathbf{L}}_BN,{M})\cong
\Hom_{\mc{D}(B)}({N},\mathbf{R}\HOM_A(X,M)),$$
for any $M\in\mc{D}(A)$ and $N\in \mc{D}(B)$.
\end{prop}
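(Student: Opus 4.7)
The plan is to lift the classical tensor–hom adjunction through cofibrant resolutions, following the template of the DG case.

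First, I would verify the underlying adjunction at the abelian level: for any $p$-DG $(A,B)$-bimodule $X$ and any left modules $M$ over $A$, $N$ over $B$, there is a natural isomorphism of $p$-complexes
\begin{equation*}
\HOM_A(X\otimes_B N,\, M) \;\cong\; \HOM_B(N,\, \HOM_A(X,M)),
\end{equation*}
where the $H$-action on both sides comes from the Leibniz rule $(\dif f)(x) = \dif(f(x)) - f(\dif x)$. This is the ordinary tensor–hom adjunction of graded modules, and one just checks that the isomorphism intertwines $\dif$, which is automatic because both $H$-structures are induced by the same derivation on $X\otimes_B N$ and $\HOM_A(X,M)$ respectively.

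Next, replace $X$ by its bar resolution $\widetilde{X}:=\mathbf{p}_A(X)$ as a left $p$-DG $A$-module (which, per the footnote, inherits a compatible right $B$-action), and $N$ by $\widetilde{N}:=\mathbf{p}(N)$. By Definition \ref{def-derived-hom-and-tensor}, $\mathbf{R}\HOM_A(X,M)$ is represented in $\mc{D}(B)$ by $\HOM_A(\widetilde{X},M)$. I claim that $\widetilde{X}\otimes_B \widetilde{N}$ is a cofibrant representative in $\mc{D}(A)$ of $X\otimes^{\mathbf{L}}_B N$. Cofibrance is seen as follows: the property (P) filtration of $\widetilde{X}$ has associated graded pieces that are free left $A$-modules of the form $A\otimes_\Bbbk V_r$ for certain $(\Bbbk,B)$-bimodules $V_r$ (this is the shape of the bar construction). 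Tensoring this filtration with $\widetilde{N}$ over $B$ yields a filtration of $\widetilde{X}\otimes_B\widetilde{N}$ with subquotients $A\otimes_\Bbbk (V_r\otimes_B \widetilde{N})$, each a free left $A$-module; so $\widetilde{X}\otimes_B\widetilde{N}$ has property (P) and is in particular cofibrant. To see that it represents $X\otimes^{\mathbf{L}}_B N$, note that the quasi-isomorphism $\widetilde{X}\twoheadrightarrow X$ of right $B$-modules, when tensored with the property (P) module $\widetilde{N}$ over $B$, remains a quasi-isomorphism (because $(-)\otimes_B\widetilde{N}$ is exact on the filtration whose subquotients are free $B$-modules), so $\widetilde{X}\otimes_B\widetilde{N}\to X\otimes_B\widetilde{N}$ is a quasi-isomorphism.

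Now apply the classical adjunction with $N$ replaced by $\widetilde{N}$ and $X$ replaced by $\widetilde{X}$:
\begin{equation*}
\HOM_A(\widetilde{X}\otimes_B\widetilde{N},\, M) \;\cong\; \HOM_B(\widetilde{N},\, \HOM_A(\widetilde{X},M)).
\end{equation*}
Passing to $\mH^0_{/0}$ on both sides (Proposition \ref{prop-morphism-as-cohomology}) gives an isomorphism of the corresponding homotopy-category hom spaces
\begin{equation*}
\Hom_{\mc{C}(A)}(\widetilde{X}\otimes_B\widetilde{N},\, M) \;\cong\; \Hom_{\mc{C}(B)}(\widetilde{N},\, \HOM_A(\widetilde{X},M)).
\end{equation*}
Since $\widetilde{X}\otimes_B\widetilde{N}$ and $\widetilde{N}$ are both cofibrant, Proposition \ref{prop-easy-properties-cofibrant-modules}(iv) identifies each side with the corresponding hom space in the derived category, yielding
\begin{equation*}
\Hom_{\mc{D}(A)}(X\otimes^{\mathbf{L}}_B N,\, M) \;\cong\; \Hom_{\mc{D}(B)}(N,\, \mathbf{R}\HOM_A(X,M)).
\end{equation*}
Naturality in $M$ and $N$ is inherited from naturality of the abelian adjunction and functoriality of the bar resolution (Theorem \ref{thm-bar-resolution}).

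The main obstacle is the double-cofibrancy bookkeeping in the middle step: one must ensure that $\mathbf{p}_A(X)$ can be chosen with a compatible right $B$-action and that the resulting tensor product $\widetilde{X}\otimes_B\widetilde{N}$ is still cofibrant over $A$. Both reduce to inspecting the explicit bar construction of \cite{Qi}; once this structural fact is in hand, the argument is a formal transport of the abelian adjunction through the resolutions.
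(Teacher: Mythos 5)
Your proof is correct and follows the standard ``balanced resolution'' strategy for tensor--hom adjunctions; the paper itself omits the argument, citing only \cite[Proposition 8.13]{Qi}, so there is no in-paper proof to compare against. You correctly identify that the key structural input is the $(A,B)$-bimodule compatibility of the bar filtration on $\mathbf{p}_A(X)$, which the present paper records only in a footnote. One small clarification worth making explicit when you conclude that $\mathbf{p}_A(X)\otimes_B\mathbf{p}(N)$ has property (P): the subquotients $A\otimes_{\Bbbk}\bigl(V_r\otimes_B\mathbf{p}(N)\bigr)$ are generally \emph{not} direct sums of $A\{s\}$ as $p$-DG modules, since the $p$-complex $V_r\otimes_B\mathbf{p}(N)$ can carry a nontrivial differential; they are, however, free as graded $A$-modules, which is what condition (P2) actually requires (compare Definition~\ref{def-finite-cell-modules}, where the qualifier ``as $A$-modules'' is spelled out). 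With that reading, your cofibrance step is sound, and the remaining reduction to the abelian adjunction followed by $\mH^0_{/0}$ and Proposition~\ref{prop-easy-properties-cofibrant-modules}(iv) goes through as written.
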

\begin{proof} This is a special case of Proposition 8.13 in \cite{Qi}.
\end{proof}

A morphism $\mu:X\lra Y$ of $p$-DG $(A,B)$-bimodules descends to a natural
transformation between the derived tensor product functors
$$\mu^{\mathbf{L}}: X\o_B^{\mathbf{L}}(-)\Longrightarrow Y\o_B^{\mathbf{L}}(-):
 \mc{D}(B)\lra \mc{D}(A).$$
The following proposition gives a criterion on when a derived tensor
functor induces an equivalence of derived categories, and when such
a natural transformation is an isomorphism of functors.

\begin{prop}\label{prop-criterion-derived-equivalence}
\begin{itemize}
\item[i)] Let $X$ be a $p$-DG $(A,B)$-bimodule and suppose it is cofibrant
as a $p$-DG $A$-module. Then
$X\o^{\mathbf{L}}_B(-):\mc{D}(B)\lra \mc{D}(A)$ is an
equivalence of triangulated categories if and only if the following
two conditions hold:
\begin{itemize}
\item[1)] The natural map $B\lra \HOM_{A}(X,X)$ is a quasi-isomorphism.
\item[2)] $X$, when regarded as a p-DG $A$-module, is a compact generator of
$\mc{D}(A)$.
\end{itemize}
\item[ii)] Let $\mu:X\lra Y$ be a morphism of p-DG
$(A,B)$-bimodules. The natural transformation
$\mu^{\mathbf{L}}: X\o_B^{\mathbf{L}}(-)\Longrightarrow
Y\o_B^{\mathbf{L}}(-)$ is an isomorphism of functors if and only if
$\mu:X\lra Y$ is a quasi-isomorphism of p-DG bimodules.
\end{itemize}
\end{prop}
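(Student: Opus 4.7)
The plan is to mirror Keller's classical argument for derived Morita equivalence (\cite{Ke1}) in the $p$-DG setting, using the compact generation of $\mc{D}(A)$ by shifts of $A$ (Proposition~\ref{prop-derived-category-compactly-generated}) and the tensor-hom adjunction of Proposition~\ref{prop-tensor-hom-adjunction}. Since $X$ is cofibrant as a $p$-DG $A$-module, Proposition~\ref{prop-easy-properties-cofibrant-modules} iii) implies that $\mathbf{R}\HOM_A(X,-)$ coincides, up to canonical quasi-isomorphism, with the underived $\HOM_A(X,-)$ on $\mc{D}(A)$, and similarly $X\otimes_B^{\mathbf{L}}B \cong X$ canonically. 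Write $F := X\otimes_B^{\mathbf{L}}(-)$ and $G := \HOM_A(X,-)$ for the resulting adjoint pair on derived categories.

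For the forward direction of (i), suppose $F$ is an equivalence. Since equivalences preserve compactness and generation, and $B$ is a compact generator of $\mc{D}(B)$, the image $F(B) \cong X$ is a compact generator of $\mc{D}(A)$, yielding (2). The unit $\eta_B: B \to GF(B) = \HOM_A(X,X)$ is then an isomorphism in $\mc{D}(B)$; unwinding the adjunction shows it is the natural structural map $b \mapsto (x \mapsto xb)$, proving (1). For the converse, condition (1) makes $\eta_B$ a quasi-isomorphism. Condition (2), via compactness of $X$, ensures $G$ commutes with arbitrary coproducts in $\mc{D}(A)$, while $F$ preserves coproducts as a left adjoint. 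Hence the full subcategory of $\mc{D}(B)$ on which $\eta$ is an isomorphism is triangulated and closed under coproducts; containing the generator $B$, it is all of $\mc{D}(B)$. The symmetric d\'evissage applied to the counit $FG \Rightarrow \Id_{\mc{D}(A)}$, starting from $FG(X) \cong X$ (which follows formally from the unit being an isomorphism) and using that $X$ generates $\mc{D}(A)$, shows the counit is also an isomorphism, so $F$ is an equivalence.

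For part (ii), the ``only if'' direction follows by evaluating $\mu^{\mathbf{L}}$ at $B \in \mc{D}(B)$: the identifications $X \otimes_B^{\mathbf{L}} B \cong X$ and $Y \otimes_B^{\mathbf{L}} B \cong Y$ recover $\mu$ itself, forcing it to be a quasi-isomorphism. Conversely, if $\mu$ is a quasi-isomorphism of bimodules, pick a property (P) resolution $\mathbf{p}(N) \twoheadrightarrow N$ for each $N \in \mc{D}(B)$ (Theorem~\ref{thm-bar-resolution}); the exhaustive filtration of $\mathbf{p}(N)$ from Definition~\ref{def-cofibrant-p-DG-modules} reduces, via the five lemma applied to the triangles produced by Lemma~\ref{lemma-ses-lead-to-dt-derived-category} at each filtration step, to the base case $\mathbf{p}(N) = B\{s\}$, where $\mu \otimes \Id$ is just $\mu$ up to a shift. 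The main technical point I anticipate is the identification in the forward direction of (i) of the abstract unit $B \to GF(B)$ produced by the equivalence with the concrete structural map coming from the right $B$-action on $X$; this is routine in the classical DG setting but warrants care here because $\HOM_A$ refers to graded maps over the smash product $A_\dif$ and morphisms in the derived category are captured by slash cohomology via Proposition~\ref{prop-morphism-as-cohomology}.
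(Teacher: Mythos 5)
The paper itself omits the proof and simply cites Proposition~8.8 of \cite{Qi}, so there is no internal proof to compare against; your argument, however, is the standard Keller-style derived Morita theory: test the unit on the compact generator $B$, use compactness of $X$ (which makes $\HOM_A(X,-)$ commute with coproducts) together with $F$ being a left adjoint to see that the class of objects on which the unit is an isomorphism is localizing, hence everything, then bootstrap the counit via the triangle identity and the fact that $X$ generates $\mc{D}(A)$; part (ii) by d\'evissage along a property (P) filtration with a colimit passage. This is precisely the argument one expects the cited reference to give, and your sketch is sound. Two small imprecisions worth tightening if you were to write it out in full: the reduction $\mathbf{R}\HOM_A(X,-)\cong\HOM_A(X,-)$ for cofibrant $X$ is best justified by observing that the bar resolution $\mathbf{p}(X)\twoheadrightarrow X$ is a quasi-isomorphism between cofibrant modules, hence a homotopy equivalence (using Proposition~\ref{prop-easy-properties-cofibrant-modules} parts i), ii), iv) together, not iii) alone); and in part (ii) the infinite filtration of $\mathbf{p}(N)$ requires an explicit remark that slash cohomology commutes with the filtered colimit so that the termwise quasi-isomorphisms assemble.
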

\begin{proof}Omitted. See Proposition 8.8 of \cite{Qi}.
\end{proof}

\begin{eg}\label{eg-cofibrance-in-Morita}We give an example of the above proposition related to the classical Morita equivalence. Let $A$ be a $p$-DG algebra and $U$ a finite dimensional $p$-complex. Then $A\o U$ is a $p$-DG $A$-module which satisfies property (P) (Definition \ref{def-cofibrant-p-DG-modules}). Form the algebra $B:=\mathrm{END}_A(A\o U)^{op}$. $B$ has a natural $p$-DG algebra structure where for any $f\in B$ and $x\in A\o U$,
\[
\dif(f)(x)=\dif(f(x))-f(\dif(x)).
\]
Moreover, $A\o U$ is a $p$-DG $(A,B)$-bimodule, and we define $(A\o U)^\vee:=\HOM_A(A\o U, A)\cong A\o U^\ast$, which is a $p$-DG $(B,A)$-bimodule. It is easy to see that
\[(A\o U)^\vee \o_A (A\o U) \cong B,\ \ \ (A\o U)\o_B (A\o U)^\vee \cong A, \]
as $p$-DG bimodules. Therefore, the functors between abelian categories
\[
(A\o U)\o_B(-): B_\dif\dmod \lra A_\dif\dmod,
\]
\[
(A\o U)^\vee \o_A(-): A_\dif\dmod \lra B_\dif\dmod,
\]
which are mutually inverse of each other descend to equivalences of homotopy categories
\[
(A\o U)\o_B(-): \mc{C}(B) \lra \mc{C}(A),
\]
\[
(A\o U)^\vee \o_A(-): \mc{C}(A) \lra \mc{C}(B).
\]
It is a natural question to ask whether these functors further pass to derived equivalences. Proposition \ref{prop-criterion-derived-equivalence} i) says that if $U$ is not a contractible $p$-complex, in which case $A\o U$ is a compact generator of $\mc{D}(A)$, then
\[
(A\o U)\o_B^{\mathbb{L}}(-): \mc{D}(B) \lra \mc{D}(A)
\]
is an equivalence of derived categories. On the other hand, $(A\o U)^\vee\cong A\o U^\ast$ is not a cofibrant $B$-module if $U$ is contractible. It is cofibrant if and only if $U$ is not acyclic. When the cofibrance condition is satisfied,
\[
(A\o U)^\vee \o_A^{\mathbb{L}}(-): \mc{D}(A) \lra \mc{D}(B)
\]
is then an equivalence by Proposition \ref{prop-criterion-derived-equivalence} i) again.
\end{eg}

\paragraph{Induction and restriction functors.} A very special case
of the previous discussion is when we have a map of $p$-DG algebras
$\phi:B\lra A$, and the bimodule is given by $_AX_B={_AA_B}$. We
will allow maps $\phi: B\lra A$ with $\dif_B\circ \phi= \phi\circ
\dif_A$ which are non-unital, with $\phi(1_B)$ only an
idempotent in $A$.

\begin{defn}\label{def-induction-restriction-functors} Let $\phi:B\lra
A$ be a map of $p$-DG algebras.
\begin{itemize}
\item[i).] The (derived) induction functor $\phi^*=\mathrm{Ind}_B^A$ is the
derived tensor functor associated with the bimodule $_AA_B$:
$$\phi^*=\mathrm{Ind}_B^A=A\o_B^{\mathbf{L}}(-):\mc{D}(B)\lra \mc{D}(A).$$
\item[ii).] The restriction functor $\phi_*=\mathrm{Res}^A_B$ is the
forgetful functor via the morphism $\phi$, i.e. it takes a $p$-DG
$A$-module $M$ to a $p$-DG $B$ module $\phi(1_B)\cdot M$ by letting $B$ act through $\phi$.
It is an exact functor on $A_\dif\dmod$, and therefore it descends
to an exact functor
$$\phi_*=\mathrm{Res}^A_B:\mc{D}(A)\lra\mc{D}(B)$$
\end{itemize}
\end{defn}
The restriction functor coincides with the (derived) hom functor
$\mathbf{R}\HOM_A(A_B,-)$, since as a left $A_\dif$-module, $A$
satisfies property (P), and thus for any graded $A$-module $_AM$,
$$\mathbf{R}\HOM_A(A_B,M)=\HOM_A(A_B,M)={_BM}.$$
Hence by Proposition
\ref{prop-tensor-hom-adjunction}, there is an adjunction
$$\Hom_{\mc{D}(A)}(\phi^*(N),M)\cong \Hom_{\mc{D}(B)}(N,\phi_*(M))$$
for any $N\in \mc{D}(B)$ and $M\in \mc{D}(A)$. It follows that
$\phi^*(\mc{D}^c(A))\subset\mc{D}^c(B)$ since $\phi_*$ obviously
commutes with taking arbitrary direct sums and direct summands. Alternatively, this can be directly
seen from the characterization of compact modules as finite cell modules
(Theorem \ref{thm-characterizing-compact-modules}). In this language, Proposition
\ref{prop-criterion-derived-equivalence} translates into the
following important special case, which is the p-DG analogue of
Theorem 10.12.5.1 of \cite{BL}. Functor $\phi_*$ does not necessarily
preserve compact objects. It does, for instance, when $A$ has a finite $p$-DG
resolution as a $(B,B)$-bimodule,
or when $A$ has finite dimensional cohomology.

\begin{thm}\label{thm-qis-algebra-equivalence-derived-categories}Let
$\phi:B\lra A$ be a morphism of $p$-DG algebras that is a
quasi-isomorphism. Then the induction and restriction functors
$$\phi^*:\mc{D}(B)\lra \mc{D}(A), \ \ \ \ \phi_*:\mc{D}(A)\lra \mc{D}(B)$$
are mutually inverse equivalence of categories.
\end{thm}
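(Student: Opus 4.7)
The plan is to reduce the statement to the general derived-equivalence criterion already established, namely Proposition~\ref{prop-criterion-derived-equivalence}~i), applied to the $p$-DG $(A,B)$-bimodule ${}_AA_B$ obtained by letting $B$ act on $A$ from the right through $\phi$. By Definition~\ref{def-induction-restriction-functors}, the induction functor $\phi^{\ast}$ is exactly $A\otimes^{\mathbf{L}}_B(-)$ for this bimodule, and the tensor-hom adjunction of Proposition~\ref{prop-tensor-hom-adjunction} makes $\phi^{\ast}$ left adjoint to $\phi_{\ast}$. Thus, once $\phi^{\ast}$ is shown to be an equivalence, general category theory forces the right adjoint $\phi_{\ast}$ to be a quasi-inverse.

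To invoke the criterion I verify its three hypotheses for $X={}_AA_B$. First, $X$ is cofibrant as a left $p$-DG $A$-module: it is free of rank one, and so trivially satisfies property (P) via the one-step filtration $0\subset A$, hence is cofibrant by Proposition~\ref{prop-easy-properties-cofibrant-modules}~i); this also means that the derived hom $\mathbf{R}\HOM_A(X,-)$ agrees with the naive $\HOM_A(X,-)$ and reduces to the usual restriction, so the identification of $\phi_{\ast}$ with the right adjoint is automatic. Second, $X=A\{0\}$ is one of the distinguished compact generators of $\mc{D}(A)$ appearing in Proposition~\ref{prop-derived-category-compactly-generated}. Third, under the canonical identification of $p$-DG algebras $\HOM_A(A,A)\cong A^{\mathrm{op}}$ (a left $A$-endomorphism of $A$ is right multiplication by its value at $1$), the map induced by the right $B$-action on $X$ is precisely $\phi$, viewed as a morphism of $p$-DG algebras into $A^{\mathrm{op}}$. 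Since quasi-isomorphism is preserved under passing to opposite algebras, this map is a quasi-isomorphism by the hypothesis on $\phi$.

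With the three hypotheses in place, Proposition~\ref{prop-criterion-derived-equivalence}~i) yields that $\phi^{\ast}\colon\mc{D}(B)\to\mc{D}(A)$ is an equivalence of triangulated categories, whereupon the adjunction forces $\phi_{\ast}$ to be its quasi-inverse. The main point requiring attention is not really a difficulty, but simply the bookkeeping that identifies the bimodule endomorphism algebra $\HOM_A({}_AA_B,{}_AA_B)$ with $A$ in a way compatible with the $B$-action coming from $\phi$; this is the only place where one must be careful about left-versus-right conventions. Beyond that, the result is immediate from the machinery developed in the previous subsections, and in particular no separate construction of unit and counit is needed.
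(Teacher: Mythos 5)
Your proof is correct and follows essentially the same route as the paper: apply Proposition~\ref{prop-criterion-derived-equivalence}~i) to the bimodule ${}_AA_B$ to get that $\phi^{\ast}$ is an equivalence, then use the adjunction of Proposition~\ref{prop-tensor-hom-adjunction} to conclude that $\phi_{\ast}$ is its quasi-inverse. The paper's own proof is just this sentence together with a citation to Corollary~8.17 of~\cite{Qi}, so your write-up supplies the verification of the hypotheses that the paper leaves implicit, and does so accurately (the one thing worth noting is that the induced map $B\to\HOM_A(A,A)$ is most naturally a ring \emph{anti}-homomorphism, but, as you observe, only the underlying map of $p$-complexes matters for the quasi-isomorphism condition, and that map is $\phi$ under $\HOM_A(A,A)\cong A$).
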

\begin{proof}That $\phi^*$ is an equivalence follows from Proposition
\ref{prop-criterion-derived-equivalence}, while that of $\phi_*$
follows by adjunction \ref{prop-tensor-hom-adjunction}. For details see
the proof in \cite[Corollary 8.17]{Qi}.
\end{proof}

\begin{cor}\label{cor-qis-algebra-isomorphic-K-0}If $\phi:B\lra A$
is a quasi-isomorphism of $p$-DG algebras, then $[\phi_*]:K_0(A)\lra
K_0(B)$ is an isomorphism of $\mathbb{O}_{p}$-modules.
\end{cor}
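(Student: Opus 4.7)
The plan is to bootstrap from Theorem \ref{thm-qis-algebra-equivalence-derived-categories}, which already gives us mutually inverse triangulated equivalences
\[
\phi^\ast : \mc{D}(B) \longrightarrow \mc{D}(A), \qquad \phi_\ast : \mc{D}(A) \longrightarrow \mc{D}(B).
\]
All that remains is to descend from the full derived categories to the compact subcategories $\mc{D}^c(A)$, $\mc{D}^c(B)$, and to check compatibility with the $\mathbb{O}_p$-module structure.

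First I would verify that both $\phi^\ast$ and $\phi_\ast$ preserve compactness. Compactness is an intrinsic property of a triangulated category with arbitrary coproducts (Definition \ref{def-compact-p-DG-module}): an object $M$ is compact iff $\Hom(M,-)$ commutes with direct sums. Since $\phi^\ast$ and $\phi_\ast$ are quasi-inverse equivalences of triangulated categories, each of them transports hom-spaces and coproducts along natural isomorphisms, so each sends compact objects to compact objects. Concretely, for $M\in\mc{D}^c(A)$ and a family $\{N_i\}\subset\mc{D}(B)$, applying the adjunction of Proposition \ref{prop-tensor-hom-adjunction} and then the equivalence gives
\[
\Hom_{\mc{D}(B)}(\phi_\ast M, \textstyle\bigoplus_i N_i) \cong \Hom_{\mc{D}(A)}(M, \bigoplus_i \phi^\ast N_i) \cong \bigoplus_i \Hom_{\mc{D}(A)}(M, \phi^\ast N_i) \cong \bigoplus_i \Hom_{\mc{D}(B)}(\phi_\ast M, N_i),
\]
so $\phi_\ast M\in\mc{D}^c(B)$; symmetrically for $\phi^\ast$. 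Thus the equivalence of Theorem \ref{thm-qis-algebra-equivalence-derived-categories} restricts to an equivalence of triangulated subcategories $\mc{D}^c(A)\simeq\mc{D}^c(B)$.

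Any triangulated equivalence induces an isomorphism of Grothendieck groups (it preserves isomorphism classes and exact triangles, hence respects the defining relations in Definition \ref{def-Grothendieck-group-p-DG-rings}), so $[\phi_\ast] : K_0(A)\to K_0(B)$ is a group isomorphism with inverse $[\phi^\ast]$.

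Finally I would check $\mathbb{O}_p$-linearity. The $\mathbb{O}_p$-action on $K_0(A)$ comes from the tensor action $\otimes : H\ufmod \times \mc{D}^c(A) \to \mc{D}^c(A)$ discussed just before Definition \ref{def-Grothendieck-group-p-DG-rings}. For any $V\in H\ufmod$ and any $p$-DG $A$-module $M$, there is an obvious natural isomorphism of $p$-DG $B$-modules $\phi_\ast(V\otimes M) \cong V\otimes \phi_\ast(M)$: both sides are equal as $\Bbbk$-vector spaces, with $B$ acting through $\phi$ on the $M$-factor (trivially on $V$) and $\dif$ acting by the Leibniz rule on $V\otimes M$ in both cases. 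This is already a strict isomorphism at the level of $p$-DG modules, so it descends to an isomorphism in $\mc{D}(B)$, which at the level of $K_0$ gives $[\phi_\ast]([V]\cdot[M]) = [V]\cdot[\phi_\ast M]$.

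The only step that requires genuine care is the preservation of compactness; once that is in place, the rest is formal. Combining everything, $[\phi_\ast]$ is an isomorphism of $\mathbb{O}_p$-modules, with inverse $[\phi^\ast]$.
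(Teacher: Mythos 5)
Your proof is correct and takes essentially the same approach as the paper: invoke Theorem \ref{thm-qis-algebra-equivalence-derived-categories} to get mutually inverse equivalences $\phi^\ast, \phi_\ast$, observe that compactness is a categorical notion preserved by equivalences so these restrict to $\mc{D}^c(A)\simeq\mc{D}^c(B)$, and conclude on $K_0$. The paper compresses this into two sentences; you have usefully spelled out the compactness argument and made the $\mathbb{O}_p$-linearity check (via $\phi_\ast(V\otimes M)\cong V\otimes\phi_\ast M$) explicit, which the paper leaves implicit.
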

\begin{proof} Since being compact is a categorical concept, an
equivalence of categories preserves compactness property. The result
follows readily from this and Theorem
\ref{thm-qis-algebra-equivalence-derived-categories}.
\end{proof}

\subsection{Special cases}\label{subsec-base-category-special-cases}
\paragraph{Smooth artinian algebras.}
A graded $\Bbbk$-algebra $A$ is naturally a $p$-DG algebra
with the trivial differential ($\dif=0$).
An algebra $A$ is smooth if it has a finite projective resolution
as an $(A,A)$-bimodules.

For the moment denote by $K_0^\prime(A)$ the usual Grothendieck group
of $A$, which is a $\Z[q,q^{-1}]$-module, generated by symbols $[P]$ of finitely
generated graded projective $A$-modules.

\begin{prop}\label{prop-K0-of-smooth-basic-artinian-algebras}The Grothendieck
group of a smooth artinian algebra $A$, regarded as a $p$-DG algebra
with the trivial differential, is related to the usual Grothendieck group $K_0^\prime(A)$
by
\[
K_0(A,\dif)\cong K_0^\prime(A)\o_{\Z[q,q^{-1}]}\mathbb{O}_p.
\]
\end{prop}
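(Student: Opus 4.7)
The crucial observation I would use is that the hypothesis $\dif_A=0$ makes the smash product collapse: in $A_\dif$ one has $[1\o\dif, a\o 1]=\dif(a)\o 1 = 0$, so $A_\dif\cong A\o H$ as graded algebras, and a $p$-DG $A$-module is nothing more than a graded $A$-module equipped with an $A$-linear, degree-$2$, $p$-nilpotent endomorphism $\dif_M$. The plan is to define a map
\[
\Phi: K_0^\prime(A)\o_{\Z[q,q^{-1}]}\mathbb{O}_p \lra K_0(A,\dif),\qquad [P]\o[V]\mapsto[P\o_\Bbbk V],
\]
where $P$ is regarded as a $p$-DG module with trivial differential. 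Well-definedness follows because short exact sequences of projective $A$-modules split as $A$-module sequences and remain exact after $-\o V$, while short exact sequences of $H$-modules stay exact after $P\o -$; both types yield exact triangles in $\mc{D}^c(A,\dif)$ via Lemma~\ref{lemma-ses-lead-to-dt-derived-category}, and compatibility with the $\Z[q,q^{-1}]$-action follows from $P\{1\}\o V\cong P\o V\{1\}$.

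For surjectivity I would invoke the smoothness hypothesis to obtain a finite projective resolution of $A$ as an $(A,A)$-bimodule,
\[
0\lra R^{-n}\lra\cdots\lra R^{0}\lra A\lra 0,
\]
where, after Morita-reducing to a basic form with primitive idempotents $e_1,\ldots,e_N$, each $R^{k}$ is a finite direct sum of indecomposable projective bimodules $Ae_i \o e_j A$. For any compact $p$-DG $A$-module $M$, tensoring over $A$ yields a finite exact sequence of $p$-DG modules $0\lra R^{-n}\o_A M\lra\cdots\lra R^0\o_A M \lra M \lra 0$. Because $\dif_A=0$, the differential on each $R^k\o_A M$ is carried entirely by the $M$-factor, so $R^k\o_A M\cong \bigoplus_{i,j} P_i\o (e_jM)$ as $p$-DG modules, with trivial $\dif$ on $P_i=Ae_i$ and induced $\dif$ on the $p$-complex $e_jM$. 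Compactness of $M$ forces each $e_jM$ to be a compact $p$-complex, so every summand lies in $\mathrm{image}(\Phi)$; hence $[M]=\sum_{k}(-1)^k[R^k\o_A M]$ is in the image as well.

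For injectivity I would exhibit a left inverse $\Psi$ via derived tensor with simples. Let $T_i=e_iA/e_i\mathrm{rad}(A)$ denote the simple right $A$-module attached to $e_i$, equipped with trivial differential, and set
\[
\Psi([M])\,:=\,\sum_{i=1}^N [P_i]\o[T_i\o_A^{\mathbf{L}}M].
\]
Smoothness (via finite global dimension of $A$) guarantees that each $T_i$ admits a finite projective resolution as an $A$-module, so $T_i\o_A^{\mathbf{L}}M$ is compact in $\mc{D}^c(H)\cong H\ufmod$; exactness of derived tensor then makes $\Psi$ well-defined on Grothendieck groups. A direct computation using $T_i\o_A P_j\cong\delta_{ij}\Bbbk$ for basic $A$ yields $\Psi\circ\Phi([P_j]\o[V])=[P_j]\o[V]$, proving $\Phi$ injective. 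The main obstacle I anticipate is constructing and applying the finite bimodule resolution used for surjectivity; this is precisely what smoothness supplies, and the proposition would fail without it (as illustrated by $\Bbbk[x]/(x^n)$ with $\dif=0$, which has infinite global dimension).
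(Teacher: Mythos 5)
The paper does not actually supply an argument here — it simply cites \cite[Proposition 9.10]{Qi} — so there is no internal proof to compare against. Your self-contained argument is correct, and the two pillars you use (a finite $(A,A)$-bimodule resolution supplied by smoothness for surjectivity, derived tensoring against the simples $T_i$ for injectivity) fit cleanly with the machinery developed in Section~\ref{sec-pder}. Two technical points are worth making explicit, though neither is a gap. First, to go from the exact sequence $0\to R^{-n}\otimes_A M\to\cdots\to R^0\otimes_A M\to M\to 0$ to the alternating-sum relation in $K_0(\mc{D}^c(A))$, you break it into short exact sequences and apply Lemma~\ref{lemma-ses-lead-to-dt-derived-category}; the intermediate syzygies are compact because in any exact triangle two compact vertices force the third to be compact, and they are honest $p$-DG modules because each bimodule syzygy $K^{-k}$ is projective as a right $A$-module (split off inductively using projectivity of $A$ and the $R^k$'s on the right), so $-\otimes_A M$ preserves exactness. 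Second, the compactness of $T_i\otimes^{\mathbf{L}}_A M$ really requires both hypotheses working together: smoothness puts $T_i$ in the thick subcategory of $\mc{D}(A^{\mathrm{op}},\dif)$ generated by the free module, and artinian-ness is what guarantees that the building blocks $T_i\otimes^{\mathbf{L}}_A A\{s\}\cong T_i\{s\}$ (equivalently, the restrictions $e_jM$ of a finite cell module) are finite-dimensional $p$-complexes. With those points spelled out, the verification $\Psi\circ\Phi=\mathrm{id}$ via $T_i\otimes_A P_j\cong\delta_{ij}\Bbbk$ for basic $A$, and the Morita reduction (which is automatically a $p$-DG Morita equivalence when $\dif_A=0$), complete a valid proof.
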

\begin{proof}Omitted. See \cite[Proposition 9.10]{Qi}.
\end{proof}

\paragraph{Derivations on matrix algebras.} It is well known that
the space of derivations on an associative algebra $A$ modulo inner
derivations (i.e. those of the form $\partial(a):=[x,a]=xa-ax$, for
any $a \in A$ and a fixed element $x\in A$) is classified
by the first Hochschild cohomology group $\mathrm{HH}^{1}(A)$, and
this cohomology group is preserved by Morita equivalences.
Thus if $A=\Mn $ is the $n \times n$ matrix algebra over
the ground field $\Bbbk$,
$$\mathrm{Der}(A)/\mathrm{Inn}(A)\cong \mathrm{HH}^{1}(A) \cong
\mathrm{HH}^1(\Bbbk)=0,$$
since on the ground field, there are no
non-zero derivations. In particular this says that any derivation on
the matrix algebra $\Mn$ arises as taking commutator with some fixed
element.

Now we specialize to matrix algebras over fields of positive characteristic
$p$. We  assume that $\Bbbk$ is algebraically closed
until the end of this paragraph. Let $\partial_J
(M):=\textrm{ad}_J(M)=JM-MJ~,\forall M \in \Mn$ while $J$ is fixed.
Then we have:
$$\begin{array}{rcl}
(\textrm{ad}_J)^{p}(M)&=&[J,\cdots,[J,M]\cdots] =
\sum_{k=0}^{p}(-1)^k {p \choose k}J^{p-k}M J^{k}\\
& = & J^{p}M-MJ^{p} = \textrm{ad}_{J^p}(M)
\end{array}$$
To have
$\partial_J^p=0$ on $\Mn$, one needs to require $J^p \in Z(\Mn)\cong
\Bbbk\cdot I_{n\times n}$, say $J^p=\lambda \cdot I_{n\times n}$.
Since $\Bbbk$ is algebraically closed, $\lambda$ has a $p$-th root
$\mu \in \Bbbk$. Then we have $(J-\mu I_{n\times n})^p=0$.
There exists $g\in
\mathrm{GL}(n,\Bbbk)$ such that $g(J-\mu I_{n\times
n})g^{-1}=\mathrm{Diag}(J_{i_1},\cdots,J_{i_m})$, where $i_1+\cdots
+i_m=n$ and each $J_{i_r}~(1\leq r \leq m)$ is the standard
$i_r\times i_r$ Jordan matrix with $E_{i,i+1}=1$ and 0 everywhere
else. Thus $J=g^{-1}(\textrm{Diag}(J_{i_1},\cdots,J_{i_m})+\mu
I_{n\times n})g=g^{-1}(\textrm{Diag}(J_{i_1},\cdots,J_{i_m}))g+\mu
I_{n\times n}$, and
$\mathrm{ad}J=\mathrm{ad}(g^{-1}(\textrm{Diag}(J_{i_1},\cdots,J_{i_m}))g)$
allows us to get rid of $\mu$ and $\lambda$. Hence we can assume
from the beginning that $J^p=0$. Such
matrices are classified up to conjugation by partitions
$(i_1,\cdots, i_m)\vdash n$ with $i_r\leq p$ for all $1\leq r \leq m$, with each
$i_r$ corresponding to a Jordan block $J_{i_r}$ as above. In particular, $p$-nilpotent
derivations on $\Mn$ are classified by such partitions, and the classifications
holds over any $\Bbbk$, no necessarily algebraically closed, so this restriction
can be relaxed.

Next, we observe that $\Mn$ has an obvious $\Z$ grading. Indeed, the
multiplication rule $E_{i,j}E_{k,l}=\delta_{j,k}E_{i,l}$ gives us a
$\Z$-grading $\mathrm{deg}(E_{i,j})=j-i$. The space of degree $r$ homogeneous
elements consists of matrices whose non-zero
entries are concentrated on the shifted $r$-th diagonal, i.e. the
span of $E_{i,j}$, $j-i=r$. Thus the space of homogeneous
derivations on $\Mn$ of degree one with respect to this grading coincide with the
shifted first diagonal $\oplus_{0\leq i\leq n-1}\Bbbk E_{i,i+1}$.

\paragraph{A toy model.} We examine the graded
matrix algebra $\Mn$ as above, with the derivation of
the simplest Jordan type: for any $M\in \Mn$,
$$\dif_{n}(M):=[J_n, M],$$
where $J_n=\sum_{i=1}^{n-1}E_{i,i+1}$ has only one Jordan block. Necessarily $n \leq p$. It is easy to see that
$\dif_{n}(E_{k,n})=E_{k-1,n}=E_{k-1,n}E_{n,n}$, for $1\leq k\leq n$. To make
the degrees match our conventions, set $\mathrm{deg}(E_{i,j})=2(j-i)$
so that $\mathrm{deg}(\dif_{n})=2$.

\begin{lemma}\label{lemma-a-Morita-equivalence-condition} Let $R$ be
a ring and $e\in R$ an idempotent such that $R=ReR:=\{\sum_{r,r^\prime} rer^{\prime}|
~r,~r^\prime \in R\}$. Then $R$ is Morita equivalent to the ring
$eRe.$
\end{lemma}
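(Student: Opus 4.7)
The plan is to verify the hypotheses of the classical Morita theorem by exhibiting $Re$ as a progenerator of the category of left $R$-modules whose opposite endomorphism ring is identified with $eRe$. Since $e$ is an idempotent, there is a decomposition $R = Re \oplus R(1-e)$ of left $R$-modules, so $Re$ is a direct summand of the free rank-one module $R$; in particular $Re$ is a finitely generated projective left $R$-module.

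Next I would identify $\mathrm{End}_R(Re)^{\mathrm{op}}$ with $eRe$ via the map $f \mapsto f(e)$, with inverse sending $ere \in eRe$ to the endomorphism of $Re$ given by right multiplication by $ere$. A direct check shows that composition of endomorphisms corresponds to multiplication in $eRe$ in the reverse order, yielding $\mathrm{End}_R(Re)^{\mathrm{op}} \cong eRe$ as rings.

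The crux, and the only place the hypothesis $R = ReR$ is used, is to verify that $Re$ is a generator of $R\text{-mod}$. The trace ideal of $Re$ is the two-sided ideal $\sum_{\varphi \in \mathrm{Hom}_R(Re, R)} \varphi(Re)$. Since $\mathrm{Hom}_R(Re, R) \cong eR$ via $\varphi \mapsto \varphi(e)$, with inverse sending $er \in eR$ to right multiplication by $er$, the trace ideal equals $Re \cdot eR = ReR$, which is all of $R$ by hypothesis. A module is a generator if and only if its trace ideal coincides with $R$, so $Re$ is indeed a generator.

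With $Re$ a finitely generated projective generator and $\mathrm{End}_R(Re)^{\mathrm{op}} \cong eRe$, the classical Morita theorem provides mutually inverse equivalences
\[
\mathrm{Hom}_R(Re,-) \cong e \cdot (-) \ : \ R\text{-mod} \lra eRe\text{-mod}, \qquad Re \o_{eRe} (-) \ : \ eRe\text{-mod} \lra R\text{-mod}.
\]
I do not expect any serious obstacle: the only nonformal ingredient is the identification of the trace ideal with $ReR$, which is immediate once one writes down the isomorphism $\mathrm{Hom}_R(Re,R) \cong eR$, and then the assumption $R = ReR$ is precisely what is needed to conclude.
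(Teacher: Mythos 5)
Your proof is correct, and it takes a genuinely different route from the paper's. The paper simply exhibits the pair of functors $(Re)\otimes_{eRe}(-)$ and $(eR)\otimes_R(-)$ and asserts (leaving the verification to the reader) that they are mutually inverse equivalences; the only nontrivial check is that the composite $Re\otimes_{eRe}eR\otimes_R(-)$ is naturally isomorphic to the identity, which comes down to the natural surjection $Re\otimes_{eRe}eR\to ReR=R$ being an isomorphism. You instead invoke the classical Morita theorem: you show $Re$ is finitely generated projective (as a summand of $R$), identify $\mathrm{End}_R(Re)^{\mathrm{op}}\cong eRe$, and deduce that $Re$ is a generator by computing its trace ideal to be $ReR$ and using the hypothesis $R=ReR$. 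What your route buys is that you never have to verify by hand that the bimodule map $Re\otimes_{eRe}eR\to R$ is injective — the Morita machinery handles that — while the paper's route buys concreteness and brevity, explicitly naming both functors (note that your $\mathrm{Hom}_R(Re,-)\cong e\cdot(-)$ coincides with the paper's $eR\otimes_R(-)$, so the functors themselves end up being the same; only the justification differs). Both arguments use the hypothesis $R=ReR$ in exactly one place, which is reassuring.
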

\begin{proof}$(Re)\o_{eRe}(-):eRe\dmod\lra R\dmod$, and
$(eR)\o_R(-):R\dmod\lra eRe\dmod$ are readily seen to be mutually inverse functors
between the two categories.
\end{proof}

We will apply this lemma in the situation when $R=\Mn_{\dif_{n}}$ is the
smash product algebra (\ref{rmk-p-DG-as-hopfological}).

\begin{prop} \label{prop-smash-matrix-morita-equivalent-H}
When $1\leq n \leq p$, the algebra $\Mn_{\dif_{n}}$ is Morita equivalent to $H$.
\end{prop}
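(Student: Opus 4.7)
The plan is to apply Lemma \ref{lemma-a-Morita-equivalence-condition} with the idempotent $e := E_{n,n} \otimes 1$ in $R := \Mn_{\partial_n}$. This reduces the proposition to verifying two conditions: that $R = ReR$, and that $eRe \cong H$ as algebras.

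For the first, observe that $\Mn \otimes 1 \subset R$ is a unital subalgebra and that the factorization $E_{i,j} = E_{i,n} E_{n,n} E_{n,j}$ in $\Mn$ shows every matrix unit $E_{i,j} \otimes 1$ lies in $ReR$. Hence $\Mn \otimes 1 \subset ReR$. Since $R$ is $\Bbbk$-spanned by products of the form $(a \otimes 1)(1 \otimes h)$ with $a \in \Mn$ and $h \in H$, and $ReR$ is a two-sided ideal, we conclude $R = ReR$.

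For the second, I would compute $eRe$ explicitly. A direct iteration of the formula $\partial_n(E_{i,j}) = E_{i-1,j} - E_{i,j+1}$ (using the convention $E_{0,j} = E_{i,n+1} = 0$) gives
\[
\partial_n^m(E_{n,n}) = \begin{cases} E_{n-m,n} & 0 \le m \le n-1, \\ 0 & m \ge n, \end{cases}
\]
so the hypothesis $n \le p$ is exactly what ensures $\partial_n^p = 0$ on $\Mn$ and hence that $R$ is well-defined. Combining this with the smash product commutation
\[
(1 \otimes \partial^k)(b \otimes 1) = \sum_{l=0}^{k} \binom{k}{l} \partial_n^{k-l}(b) \otimes \partial^l,
\]
one computes
\[
e \cdot (E_{i,j} \otimes \partial^k) \cdot e \;=\; \delta_{i,n} \binom{k}{n-j}\, E_{n,n} \otimes \partial^{k+j-n},
\]
interpreted as $0$ when $k+j-n<0$ or $n-j<0$. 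This shows $eRe$ has $\Bbbk$-basis $\{E_{n,n} \otimes \partial^l : 0 \le l \le p-1\}$, each basis element realized by specializing $(i,j,k) = (n,n,l)$. The same type of calculation yields the multiplication rule $(E_{n,n} \otimes \partial^l)(E_{n,n} \otimes \partial^m) = E_{n,n} \otimes \partial^{l+m}$, establishing the algebra isomorphism $eRe \xrightarrow{\sim} H$ sending $E_{n,n} \otimes \partial^l \mapsto \partial^l$.

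The main work is the bookkeeping in the smash product computation for $eRe$. The calculation is conceptually straightforward but requires tracking binomial coefficients, matrix indices, and range conditions simultaneously. The essential simplification is that among the matrix units $E_{n-k+l,n}$ appearing in the expansion of $\partial_n^{k-l}(E_{n,n})$, only the one with $n-k+l = j$ produces a nonzero product $E_{n,j} E_{n-k+l,n} = E_{n,n}$ after flanking by $e$, collapsing each sum to a single term.
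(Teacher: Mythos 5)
Your proof is correct and takes essentially the same route as the paper's: both apply Lemma \ref{lemma-a-Morita-equivalence-condition} to the idempotent $E_{n,n}$ and then compute $eRe$ by repeatedly commuting $\partial^k$ past $E_{n,n}$, your smash-product identity $(1\otimes\partial^k)(b\otimes 1)=\sum_l\binom{k}{l}\partial_n^{k-l}(b)\otimes\partial^l$ being exactly the paper's commutator relation $[\dif_n^k,E_{n,n}]=\sum_{l=1}^k\binom{k}{l}E_{n-l,n}\dif_n^{k-l}$ in disguise. You spell out the $eRe$ calculation and the $R=ReR$ verification in slightly more detail, but there is no substantive difference in method.
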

\begin{proof} $\Mn_{\dif_{n}}$ has a basis
$\{E_{i,j}\dif_{n}^r|1\leq i,~j\leq n,~0\leq r \leq p-1\}$. Thus
$$
\begin{array}{rcl}
\Mn_{\dif_{n}}\cdot E_{n,n}\cdot \Mn_{\dif_n} & \supset & \{\sum_{i,j}E_{i,n}E_{n,n}E_{n,j}\dif_{n}^{r}|1\leq i,~j\leq n,~0\leq r \leq p-1\}\\
 & = & \{\sum_{i,j}E_{i,j}\dif_{n}^{r}|1\leq i,~j\leq n,~0\leq r \leq p-1 \}\\
 & = &\Mn_{\dif_{n}},
\end{array}$$
and the above lemma
applies. We conclude that $\Mn_{\dif_{n}}$ is Morita equivalent to the subring
$E_{n,n}\cdot \Mn_{\dif_{n}}\cdot E_{n,n}$. This subring is spanned by elements of the form
$\{E_{n,i}\dif_{n}^r E_{n,n}|1\leq i \leq n,~0\leq r \leq p-1\}$. We claim that in
fact it has as a basis $\{E_{n,n}\dif_n^r E_{n,n}|0\leq r \leq p-1\}$. This is
readily checked using the commutator relation
$$[\dif_n^k, E_{n,n}]=\sum_{l=1}^k{k \choose l}E_{n-l,n}\dif_n^{k-l},$$
where we set $E_{s,n}=0$ if $s\leq 0$. The formula follows by an easy induction
and the $\dif_n$ action on $E_{k,n}$ above. Finally, the same commutator
relation also implies that, for any $0\leq r,s \leq p-1$,
$$(E_{n,n}\dif_n^rE_{n,n})(E_{n,n}\dif_n^sE_{n,n})=E_{n,n}\dif_{n}^{r+s}E_{n,n},$$
which in turn shows that $E_{n,n}\cdot \Mn_{\dif_{n}} \cdot E_{n,n}$ has the same ring structure
as $H$. The proposition follows.
\end{proof}

The abelian category of graded $\Mn_{\dif_n}$-modules is Krull-Schmidt.
As an immediate application of the previous result, the indecomposable modules
are classified as follows.

\begin{cor} When $1\leq n \leq p$, any indecomposable graded $\Mn_{\dif_n}$-module is isomorphic to
precisely one
of the form $\widetilde{V}_i\{m\}\o \Bbbk^n $ for $i\in
\{0,1,\cdots p-1\}$ and $m\in \Z$, where $\Bbbk^n=\oplus_{j=1}^n\Bbbk E_{j,n}$
is the column p-DG module and the differential acts by
$\dif_n(E_{j,n})=E_{j-1,n}$.
\end{cor}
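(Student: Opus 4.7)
The plan is to apply Morita equivalence transported through Proposition~\ref{prop-smash-matrix-morita-equivalent-H}. That proposition exhibits the bimodule $P := \Mn_{\dif_n}E_{n,n}$ as implementing a graded Morita equivalence between $\Mn_{\dif_n}$ and $H \cong E_{n,n}\Mn_{\dif_n}E_{n,n}$, so that
\[
P \otimes_H (-) \colon H\dmod \lra \Mn_{\dif_n}\dmod
\]
is an equivalence of graded abelian categories. Such an equivalence is Krull--Schmidt and bijects isomorphism classes of indecomposable graded modules. Since the indecomposable graded $H$-modules are precisely the pairwise non-isomorphic balanced indecomposables $\widetilde{V}_i\{m\}$ for $i \in \{0,\ldots,p-1\}$ and $m\in\Z$ (see the discussion in Section~\ref{sec-base-category}), the corollary will follow once the image of $\widetilde{V}_i\{m\}$ under this functor is identified with $\widetilde{V}_i\{m\}\otimes \Bbbk^n$.

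The substantive step is therefore the explicit description of the bimodule $P$. Using the commutator identity
\[
[\dif_n^k,E_{n,n}] \;=\; \sum_{l=1}^{k}\binom{k}{l}E_{n-l,n}\dif_n^{k-l}
\]
recorded in the proof of Proposition~\ref{prop-smash-matrix-morita-equivalent-H}, one checks that $P$ has $\Bbbk$-basis $\{E_{i,n}\dif_n^r : 1\le i\le n,\ 0\le r\le p-1\}$. With the gradings $\deg(E_{i,j})=2(j-i)$ and $\deg(\dif_n)=2$, the left $\Mn$-action acts on the index $i$ by matrix multiplication, the right $H$-action is right multiplication by powers of $\dif_n$, and the smash-product relation $\dif_n E_{i,n} = E_{i,n}\dif_n + E_{i-1,n}$ forces
\[
\dif \cdot (E_{i,n}\dif_n^r) \;=\; E_{i-1,n}\dif_n^r \,+\, E_{i,n}\dif_n^{r+1}
\]
(using the conventions $E_{0,n}=0$ and $\dif_n^p=0$). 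This is precisely the Leibniz differential on the tensor product of $p$-complexes $\Bbbk^n \otimes H$, where $\Bbbk^n = \oplus_{j=1}^n \Bbbk E_{j,n}$ carries the differential $\dif_n(E_{j,n}) = E_{j-1,n}$ of the statement and $H$ is the left regular $p$-complex. Hence there is an isomorphism $P \cong \Bbbk^n \otimes H$ of graded $(\Mn_{\dif_n},H)$-bimodules.

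Given this identification, the Morita functor reduces to
\[
P \otimes_H V \;\cong\; (\Bbbk^n \otimes H) \otimes_H V \;\cong\; \Bbbk^n \otimes V,
\]
with the left $\Mn$-action on the first factor and $\dif$ acting by the Leibniz rule; specializing to $V = \widetilde{V}_i\{m\}$ yields the asserted form $\widetilde{V}_i\{m\}\otimes\Bbbk^n$. Pairwise non-isomorphism of these modules is inherited from that of the $\widetilde{V}_i\{m\}$ in $H\dmod$ through the equivalence. The main obstacle is the graded bimodule identification $P \cong \Bbbk^n \otimes H$: beyond matching the underlying $\Mn$-module and the grading, one must track the interaction between the left $\dif$-action coming from the smash product and the right $H$-action, which is exactly what the commutator identity above delivers.
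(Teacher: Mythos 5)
Your proof is correct and follows the same approach as the paper, which simply cites the Morita equivalence of Proposition~\ref{prop-smash-matrix-morita-equivalent-H} (via Lemma~\ref{lemma-a-Morita-equivalence-condition}) without elaboration. You have usefully made explicit the step the paper leaves implicit, namely the graded bimodule identification $\Mn_{\dif_n}E_{n,n}\cong \Bbbk^n\o H$, so that the functor $(\Mn_{\dif_n}E_{n,n})\o_H(-)$ is seen to send $\widetilde{V}_i\{m\}$ to $\widetilde{V}_i\{m\}\o\Bbbk^n$.
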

\begin{proof}Follows directly from the proof of Lemma
\ref{lemma-a-Morita-equivalence-condition} and the previous proposition.
\end{proof}

In particular, if $n=p$, the column module is acyclic, and the corollary implies that
all $p$-DG modules over $\mathrm{M}(p,\Bbbk)$ are acyclic. Thus $\mc{D}(\mathrm{M}(p,\Bbbk),\dif_p)\cong 0$.
In fact an easy computation shows that
\[
\dif_p\left(\sum_{i=1}^{p-1}i E_{i+1,i}\right)=
\left[\left(\begin{array}{cccccc}
0 & 1 &&&& \\
 & 0 & 1 &&& \\
 && 0 & 1 && \\
 &&& \ddots & \ddots &\\
 &&&& 0 & 1 \\
  &&&&& 0
\end{array}
\right),
\left(\begin{array}{cccccc}
0 &&&&& \\
1 & 0 &&&& \\
 &2 & 0 &&& \\
 &&\ddots& \ddots &&\\
 &&& p-2 & 0 & \\
  &&&& p-1 & 0
\end{array}
\right)\right]=I_{p} \ ,
\]
so that Proposition
\ref{prop-contractible-p-DG-algebra-condition} iii) applies.

\begin{prop}\label{prop-matrix-column-module-cofibrant} If $1\leq n \leq p-1$, the column $p$-DG module
$\Bbbk^n$ over the $p$-DG algebra $(\Mn,\dif_n)$ is a compact
cofibrant generator of $\mc{D}(\Mn)$.
\end{prop}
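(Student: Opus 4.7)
The plan is to apply the characterization in Proposition \ref{prop-easy-properties-cofibrant-modules}(iii): it suffices to show that $\Bbbk^n$ is projective as an $\Mn$-module and that $\HOM_\Mn(\Bbbk^n, M)$ is an acyclic $H$-module for every acyclic $p$-DG $\Mn$-module $M$. The first condition is immediate from $\Bbbk^n = \Mn\cdot E_{n,n}$, which exhibits $\Bbbk^n$ as a direct summand of the regular $\Mn$-module via the idempotent $E_{n,n}$. For the second, evaluation at $E_{n,n}$ identifies $\HOM_\Mn(\Bbbk^n, M)\cong E_{n,n}M$ as graded vector spaces, and a direct Leibniz computation shows the induced $H$-action is $m\mapsto E_{n,n}\partial_M(m)$, which agrees with the $H$-action on $E_{n,n}M$ coming from the Morita equivalence of Proposition \ref{prop-smash-matrix-morita-equivalent-H}. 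The key step is then to show this passes acyclicity. Upgrading the indecomposable classification from the preceding corollary, I will observe that every $p$-DG $\Mn$-module $M$ admits a canonical isomorphism $M\cong N\otimes_\Bbbk \Bbbk^n$ with $N = E_{n,n}M$, where the right-hand side carries the Leibniz differential $\partial_M = \tilde\partial\otimes 1 + 1\otimes \partial_{\Bbbk^n}$; this follows by decomposing $M = \bigoplus_j E_{j,j}M$, transporting each summand onto $N$ via left multiplication by $E_{j,n}$, and using $\partial_n(E_{j,n}) = E_{j-1,n}$. Because $\Bbbk^n\cong V_{n-1}$ is not contractible as an $H$-module (using $0\le n-1\le p-2$), and since $U\otimes W\cong 0$ in $H\umod$ forces $U\cong 0$ or $W\cong 0$, acyclicity of $M = N\otimes \Bbbk^n$ forces acyclicity of $N$, as required.

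\textbf{Compactness.} Given cofibrance, Proposition \ref{prop-easy-properties-cofibrant-modules}(iv) yields $\Hom_{\mc{D}(\Mn)}(\Bbbk^n, -)\cong \Hom_{\mc{C}(\Mn)}(\Bbbk^n, -)$. The latter is the quotient of $\Hom_{\Mn_\partial}(\Bbbk^n, -)$ by the subspace $\partial^{p-1}\cdot \Hom_\Mn^{2-2p}(\Bbbk^n, -)$. Since $\Bbbk^n$ is cyclic over $\Mn$ (generated by any $E_{j,n}$), both the numerator and denominator commute with arbitrary direct sums, so the quotient does as well, proving that $\Bbbk^n$ is compact.

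\textbf{Generation and main obstacle.} I will produce an explicit filtration of $\Mn$ by $p$-DG submodules whose subquotients are grading shifts of $\Bbbk^n$. Let $F_k\subset \Mn$ denote the $\Bbbk$-span of $\{E_{i,j} : j\ge k\}$. The formula $\partial_n(E_{i,j}) = E_{i-1,j} - E_{i,j+1}$ shows that $F_k$ is stable under both left $\Mn$-multiplication and $\partial_n$, giving a chain $0 = F_{n+1}\subset F_n\subset \cdots \subset F_1 = \Mn$ of $p$-DG submodules with $F_k/F_{k+1}\cong \Bbbk^n\{2(k-n)\}$. By Lemma \ref{lemma-ses-lead-to-dt-derived-category}, each short exact sequence lifts to an exact triangle in $\mc{D}(\Mn)$, placing $\Mn$ in the triangulated subcategory generated by $\{\Bbbk^n\{r\}:r\in\Z\}$; combined with Proposition \ref{prop-derived-category-compactly-generated}, which asserts that $\{\Mn\{r\}\}$ already generates $\mc{D}(\Mn)$, this shows that $\{\Bbbk^n\{r\}\}$ does so as well. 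The main technical difficulty lies in the cofibrance step, specifically the $p$-DG Morita-type identification $M\cong E_{n,n}M\otimes \Bbbk^n$; once this structural decomposition is in hand, the acyclicity transfer and the remaining two properties are essentially routine.
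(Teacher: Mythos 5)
Your proof is correct, and it follows the same basic strategy as the paper — reduce cofibrance to the Morita-type identification of $p$-DG modules with tensor products $E_{n,n}M\otimes\Bbbk^n$, and obtain generation from the column filtration of $\Mn$ — but you execute the two non-generation steps somewhat differently. For cofibrance, the paper simply quotes the preceding corollary (indecomposable graded $\Mn_{\dif_n}$-modules are $\widetilde V_i\{m\}\otimes\Bbbk^n$) and checks that $\HOM_\Mn(\Bbbk^n,K)$ is contractible for each indecomposable acyclic $K$; you instead re-derive the canonical decomposition $M\cong E_{n,n}M\otimes\Bbbk^n$ for an arbitrary $p$-DG module $M$, which avoids the implicit Krull--Schmidt reduction to indecomposables and gives the acyclicity transfer in one stroke via the fact that tensoring with the non-contractible $V_{n-1}$ detects contractibility. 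For compactness, the paper dismisses it as ``clear since finite dimensional''; you spell out the actual argument (cofibrance lets one replace derived $\Hom$ by homotopy $\Hom$, which commutes with coproducts because both the strict $\Hom$ and the space of null-homotopies do, $\Bbbk^n$ being finitely generated). Both versions are valid; yours fills in steps the paper leaves to the reader, at the cost of a little extra length. One small wording quibble: the commutation of $\Hom_{\Mn_\partial}(\Bbbk^n,-)$ with direct sums requires $\Bbbk^n$ to be finitely generated over the smash product $\Mn_\partial$, not merely cyclic over $\Mn$; of course this holds here (the module is finite-dimensional), but the stated reason is slightly off.
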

\begin{proof}Let $K$ be any indecomposable acyclic $p$-DG module over $(\Mn,\dif_J)$.
By our classification of indecomposable modules, $K\cong \Bbbk^n\o V_{p-1}\{m\}$ for some $m\in \Z$.
Thus $\HOM_{\Mn}(\Bbbk^n, K)=\HOM_{\Mn}(\Bbbk^n,\Bbbk^n)\o V_{p-1}\{m\}\cong V_{p-1}\{m\}$, which
is a contractible $p$-complex. Therefore Proposition \ref{prop-easy-properties-cofibrant-modules} iii)
implies that $\Bbbk^n$ is cofibrant. The compactness of $\Bbbk^n$ is clear since it is finite dimensional.
Finally, it is a generator since the free left $p$-DG module $\Mn$ has a filtration by column submodules
$F_1\subset F_2 \subset F_3 \subset \cdots \subset F_{n-1}\subset F_n=\Mn$, such that the quotients
$F_r/F_{r-1}\cong \Bbbk^n\{2(r-1)\}$. The result follows.
\end{proof}

\begin{rmk}\label{rmk-matrix-algebra-convolution}
When $1\leq n \leq p-1$, the iterated extension of $\Mn$ in the abelian category $\Mn_{\dif_n}\dmod$ gives rise to
a convolution diagram in $\mc{D}(\Mn)$, using Lemma \ref{lemma-ses-lead-to-dt-derived-category},
\[
\xymatrix@C=1.5em{F_1 \ar[rr] && F_2 \ar[rr] \ar[dl] && F_3\ar[r]\ar[dl] &\cdots\ar[r] &F_{n-1}\ar[rr] && F_n \ar[dl]\\
& \Bbbk^n\{2\}\ar[ul]_{[1]} && \Bbbk^n\{4\} \ar[ul]_{[1]} && && \Bbbk^n\{2(n-1)\}\ar[ul]_{[1]} &
}
\]
where $F_1\cong \Bbbk^n$ and $F_n\cong \Mn$. Since $\mc{D}(\Mn)$ is generated by $\Mn$ (Proposition \ref{prop-derived-category-compactly-generated}), it follows from this diagram that $\Bbbk^n$ is a generator of $\mc{D}(\Mn)$, and $[\Mn]=(1+q^2+\cdots+q^{2(n-1)})[\Bbbk^n]$ in $K_0(\Mn)$.
\end{rmk}

\begin{cor}\label{cor-p-DG-morita-equivalence-matrix-ring} If $1\leq n \leq p-1$, the functor
$$\Bbbk^n\o_{\Bbbk}(-):\mc{D}(\Bbbk) \lra \mc{D}(\Mn)$$
is an equivalence of triangulated categories. Consequently, $K_0(\Mn)\cong \mathbb{O}_{p}.$
\end{cor}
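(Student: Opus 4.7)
The plan is to apply Proposition \ref{prop-criterion-derived-equivalence} i) to the $p$-DG $(\Mn,\Bbbk)$-bimodule $\Bbbk^n$, where the right $\Bbbk$-action is scalar multiplication. Two conditions must be verified: that $\Bbbk^n$ is cofibrant as a left $p$-DG $\Mn$-module and a compact generator of $\mc{D}(\Mn)$, and that the natural algebra map $\Bbbk \lra \HOM_{\Mn}(\Bbbk^n,\Bbbk^n)$ is a quasi-isomorphism. The first of these is precisely the content of Proposition \ref{prop-matrix-column-module-cofibrant}, so there is nothing to do.

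For the endomorphism condition, I would directly compute $\HOM_{\Mn}(\Bbbk^n,\Bbbk^n)$. Ungraded, $\Bbbk^n$ is the unique simple module over the matrix algebra, so its endomorphism ring is $\Bbbk$. With respect to the grading $\deg(E_{j,n})=2(n-j)$, the generator $E_{n,n}$ sits in degree $0$ and so the unique nonzero $\Mn$-linear endomorphism is homogeneous of degree zero. Thus $\HOM_{\Mn}(\Bbbk^n,\Bbbk^n)\cong \Bbbk$ is concentrated in degree zero, and for dimensional reasons the induced differential $\dif(f) = \dif_{\Bbbk^n}\circ f - f\circ\dif_{\Bbbk^n}$ is automatically zero. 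Hence the unit map $\Bbbk\lra \HOM_{\Mn}(\Bbbk^n,\Bbbk^n)$ is an \emph{isomorphism} of $p$-DG algebras, in particular a quasi-isomorphism. Since $\Bbbk^n$ is trivially cofibrant as a right $p$-DG $\Bbbk$-module (it is just a graded $\Bbbk$-vector space), the derived tensor product $\Bbbk^n\o^{\mathbf{L}}_{\Bbbk}(-)$ coincides with the ordinary tensor product, and Proposition \ref{prop-criterion-derived-equivalence} i) yields the desired equivalence of triangulated categories.

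For the Grothendieck group consequence, I would use that any equivalence of triangulated categories preserves compact objects, so the equivalence restricts to $\mc{D}^c(\Bbbk)\cong \mc{D}^c(\Mn)$ and induces an isomorphism $K_0(\Bbbk)\cong K_0(\Mn)$. Combined with the identification $\mc{D}^c(\Bbbk)\cong H\ufmod$ recalled in Section~\ref{subsec-p-DG-algebra-and-module}, and the computation $K_0(H\ufmod)\cong \mathbb{O}_p$ from (\ref{def-mathbb-O-p}), this gives $K_0(\Mn)\cong \mathbb{O}_p$.

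There is no real obstacle: the argument is essentially an assembly of results already established in the paper, with the only computation being the elementary identification of the endomorphism $p$-DG algebra of the standard column module. As a sanity check, the same Grothendieck group calculation can be read off directly from the convolution diagram in Remark~\ref{rmk-matrix-algebra-convolution}, which gives $[\Mn]=(1+q^2+\cdots+q^{2(n-1)})[\Bbbk^n]$ and exhibits $[\Bbbk^n]$ as a free $\mathbb{O}_p$-module generator of $K_0(\Mn)$; however the equivalence of derived categories is the stronger and cleaner statement.
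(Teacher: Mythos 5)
Your proof is correct and takes essentially the same approach as the paper: both invoke Proposition~\ref{prop-matrix-column-module-cofibrant} together with Proposition~\ref{prop-criterion-derived-equivalence} i), you simply make explicit the (easy) check that $\Bbbk\lra\HOM_{\Mn}(\Bbbk^n,\Bbbk^n)$ is an isomorphism, which the paper leaves implicit.
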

\begin{proof} The cofibrance of $\Bbbk^n$ allows us to pass from derived tensor product to underived tensor product. The result follows directly from the previous result and Proposition~\ref{prop-criterion-derived-equivalence}.
\end{proof}

The method generalizes with essentially no change to the case when the differential on $\Mn$ has more than one Jordan block. The above Proposition~\ref{prop-matrix-column-module-cofibrant} and Corollary~\ref{cor-p-DG-morita-equivalence-matrix-ring} hold as long as all Jordan blocks are of size less or equal to $p$ and there is at least one Jordan block of size strictly less than $p$. If all Jordan blocks are of size $p$, the algebra is acyclic and the derived category collapses to zero.


\section{The \texorpdfstring{$p$}{p}-DG nilHecke algebra}\label{sec-nilHecke}

\subsection{\texorpdfstring{$p$}{p}-derivations on the nilHecke algebra}\label{subsec-p-der-on-NH}
\paragraph{A $p$-derivation on the polynomial ring.}
Define the derivation $\dif$ on the ring of polynomials $\Bbbk[x]$, $\deg(x)=2$,
by $\dif(x) = x^2.$ This implies $\dif(x^m) = m x^{m+1}$, and thus $\dif^p(x)=0$, therefore making
$\Bbbk[x]$ into an $H$-module algebra. The inclusion $\Bbbk \subset \Bbbk[x]$
is an isomorphism in the stable category $H\umod$, since under the action of $\dif$
the algebra of polynomials decomposes into the trivial representation $\Bbbk$ of
$H$ and free modules $H\{2+2pk\}$  spanned by $x^{1+pk}, x^{2+pk}, \cdots,
x^{p(k+1)}$ for $k\in \N$ (recall that we adopt the convention $\N=\{0,1,2,\dots\}$).

Let $\pol_n = \Bbbk[x_1, \dots, x_n]$ be the ring of polynomials in
$n$ variables, with $\deg(x_i)=2$. Define the derivation $\dif$ on
$\pol_n$ by $\dif(x_i) = x_i^2$. Viewed as a graded $H$-module,
$\pol_n$ is the tensor product of $n$ copies of the module $\Bbbk[x]$.
Since the latter is isomorphic to $V_0$ in $H\umod$, the inclusion
$V_0 \subset \pol_n$ taking $V_0$ to $\Bbbk\cdot 1$ is an isomorphism in
$H\umod$.

We denote by $\bpol_n$ the space $\pol_n$ with the grading
shifted down by $\frac{n(n-1)}{2}$, viewed as a graded left $\pol_n$-module,
$$ \bpol_n:=\pol_n\ \{ \textstyle{\frac{n(1-n)}{2}} \}.$$
The generator $1$ of $\bpol_n$ lives in  degree $\frac{n(1-n)}{2}$, and  $\bpol_n=\pol_n \cdot 1$.
We call graded module  $\bpol_n$  the \emph{balanced} free $\pol_n$-module, and sometimes denote
its elements $f\cdot 1$ instead of just $f$.

A $p$-DG module structure on the balanced module $\bpol_n$ is determined by
$\dif(1)= g_\alpha \cdot 1$, where $\alpha=(\alpha_1, \dots, \alpha_n)\in \Bbbk^n$,
$g_\alpha=\sum \alpha_i x_i$ is a linear function in $x_i$'s,
viewed as an element of $\bpol_n$. Then for any $f\in \pol_n$,
\begin{equation} \label{eq-dif-g}
\dif(f \cdot 1 ) = \dif(f)\cdot 1  + f \cdot \dif(1)=(\dif(f)+ fg_\alpha)\cdot 1.
\end{equation}
The condition $\dif^p=0$ translates into $\alpha_i \in \mathbb{F}_p$, i.e.~ the coefficients being
residues mod $p$. We denote this $p$-DG module structure on
$\bpol_n$ by $\bpol_n(\alpha)$ and its generator $1$ by $1_\alpha$ to stress their dependence on $\alpha=(\alpha_1,
\dots, \alpha_n )\in \mathbb{F}_p^n$.
In what follows, to each $\alpha_t\in \mathbb{F}_p$ we assign the corresponding
element of $\N$, via the inclusion $\{0, 1, \dots, p-1\}\subset \N$, and denote it by $\alpha_t$
(as long as no confusion is possible).

Up to an overall grading
shift, $\bpol_n(\alpha)$ is isomorphic (as a $p$-DG $\pol_n$-module)
to the ideal inside the $p$-DG algebra
$\pol_n$ generated by the element $x_1^{\alpha_1} \cdots x_n^{\alpha_n}$.
As an $H$-module, $\bpol_n(\alpha)$
decomposes into the tensor product of modules $\bpol_1(\alpha_t)$ over $\Bbbk[x_t]$
for $t$ ranging from $1$ to $n$. The $H$-module $\bpol_1(\alpha), \alpha\in \mathbb{F}_p$
contains the submodule spanned by $1_\alpha, x_11_\alpha, \dots, x_1^{p-\alpha}1_\alpha$
for $\alpha\in\mathbb{F}_p^{\ast}$ and by $1_\alpha$ for $\alpha=0$, and the inclusion of this
submodule into $\bpol_1(\alpha)$ is a quasi-isomorphism. Thus, $\bpol_1(0) \cong V_0$,
and $\bpol_1(\alpha)\cong V_{p-\alpha}$ for $\alpha \neq 0$.
The module $\bpol_1(\alpha )$ is contractible if and only if
$\alpha=1$, and, more generally, $\bpol_n(\alpha)$ is a contractible
$H$-module if and only if at least one of the coefficients $\alpha_t$ of the
linear function $g_\alpha$ is $1$.

\paragraph{The induced action on symmetric functions.} The symmetric
group $S_n$ acts on $\pol_n$ by permuting the variables. Denote by
$\sym_n$ the subalgebra of $\pol_n$ consisting of $S_n$-invariant
functions, $\sym_n = \pol_n^{S_n}$. The derivation $\dif$ commutes with
the action of $S_n$ and restricts to a derivation on $\sym_n$. This
subalgebra of symmetric functions is free with generators being
elementary symmetric polynomials $e_1, e_2, \dots, e_n$, where $e_m$
is the sum of $x_{i_1}\cdots x_{i_m}$ over all subsets $\{i_1,
\dots, i_m\}$ of $\{1, 2, \dots, n\}$ of cardinality $m$:
$$ e_m = \sum_{1\le i_1 < \dots < i_m \le n} x_{i_1}\cdots x_{i_m} .$$
Later, when varying the number $n$ of variables, we will denote $e_m$ by
$e_m^{(n)}$ to stress the dependence on $n$. For instance
$e_1^{(1)}=x_1$, and $e_2^{(3)}=x_1x_2+x_1x_3+x_2x_3$.

We have the following explicit formula describing the differential
on the elementary symmetric functions.
\begin{lemma}\label{lemma-dif-on-elemetary-symmetric-functions} The
derivation $\dif$ acts on the elementary symmetric functions as
follows:
\begin{equation*} \label{eq-diff-of-e}
\dif (e_m)  = e_1 e_m - (m+1)e_{m+1} \ \ (m<n),\ \ \ \ \dif (e_n) =
e_1 e_n .
\end{equation*}
\end{lemma}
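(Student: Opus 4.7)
The plan is a direct combinatorial computation using only the Leibniz rule and the defining formula $\partial(x_i) = x_i^2$, with the generating-function style identity $e_1 e_m = \partial(e_m) + (m+1)e_{m+1}$ emerging naturally.

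First I would apply the Leibniz rule to a single squarefree monomial: for $1 \le i_1 < \cdots < i_m \le n$,
\[
\partial(x_{i_1}\cdots x_{i_m}) \;=\; \sum_{k=1}^{m} x_{i_1}\cdots x_{i_{k-1}}\, x_{i_k}^{2}\, x_{i_{k+1}}\cdots x_{i_m}.
\]
Summing over all $m$-subsets $\{i_1,\dots,i_m\}$ of $\{1,\dots,n\}$ then gives an explicit expression for $\partial(e_m)$ as a sum of monomials each containing exactly one repeated factor $x_{i_k}^2$.

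Next I would expand the product $e_1 e_m = \bigl(\sum_{j=1}^n x_j\bigr)\bigl(\sum_{i_1<\cdots<i_m} x_{i_1}\cdots x_{i_m}\bigr)$ and partition the resulting terms according to whether the index $j$ coincides with some $i_k$ or not. The ``coincident'' contributions $x_{i_1}\cdots x_{i_k}^2\cdots x_{i_m}$ reassemble precisely into the formula for $\partial(e_m)$ obtained in the previous step. The ``non-coincident'' contributions are squarefree monomials $x_j x_{i_1}\cdots x_{i_m}$ of degree $m+1$; each such squarefree monomial $x_{j_1}\cdots x_{j_{m+1}}$ is produced in exactly $m+1$ ways (corresponding to choosing which of its indices plays the role of $j$), so their total equals $(m+1)e_{m+1}$. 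Rearranging yields the identity $\partial(e_m) = e_1 e_m - (m+1)e_{m+1}$ for $m<n$.

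Finally, for $m=n$ the only $n$-subset is $\{1,\dots,n\}$, so every index $j$ already lies in the subset; the ``non-coincident'' case is empty and the $(m+1)e_{m+1}$ term disappears (consistently with $e_{n+1}=0$), leaving $\partial(e_n)=e_1 e_n$. There is no real obstacle here — the only step one needs to be careful about is the combinatorial overcounting $(m+1)$ in the second case and the boundary behavior at $m=n$; both are straightforward.
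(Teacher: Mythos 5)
Your proof is correct and takes a genuinely different route from the paper's. The paper works with the generating function $\prod_{i=1}^n(1+x_it)=\sum_{m=0}^n e_m t^m$: applying $\partial$ (as a $\Bbbk[t]$-linear derivation) to the left side produces $\sum_i x_i^2 t \prod_{j\ne i}(1+x_jt)$, and the key trick is the rewriting $x_i^2 t = (x_i^2 t + x_i) - x_i$, which splits the sum into $e_1\cdot\prod_j(1+x_jt)$ minus $\frac{d}{dt}\prod_j(1+x_jt)$; the $\frac{d}{dt}$ term produces the $(m+1)e_{m+1}$ coefficients automatically, and both the $m<n$ formula and the boundary case $\partial(e_n)=e_1 e_n$ drop out by comparing coefficients of $t^m$. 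You instead compute $\partial(e_m)$ monomial-by-monomial from the Leibniz rule and match it against the expansion of $e_1 e_m$, sorting terms by whether the extra index repeats an existing one; the coincident terms reproduce $\partial(e_m)$ and the non-coincident squarefree terms give $(m+1)e_{m+1}$ by the $(m+1)$-fold overcount. The generating function argument is shorter and packages the bookkeeping into a single algebraic identity, while your direct combinatorial argument is more elementary, makes the source of the $(m+1)$ coefficient transparent, and handles the $m=n$ boundary case by simple inspection rather than by coefficient comparison. Both are standard symmetric-function techniques and equally rigorous.
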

\begin{proof}Consider the following generating function over $\Z[t]$ for the
elementary symmetric functions (we set $e_0:=1$)
$$\prod_{i=1}^n(1+x_it)=\sum_{m=0}^ne_mt^m, $$
and let $\dif$ act on it as a $\Z[t]$-linear derivation which is
determined by $\dif(x_i)=x_i^2$. Then differentiating both sides
gives us
$$\begin{array}{rcl}
\sum_{m=0}^{n}\dif(e_m)t^m & = & \sum_{i=1}^{n}
(x_i^2t\cdot\prod_{j\neq i}(1+x_jt))\\
& = & \sum_{i=1}^{n}\left((x_i^2t+x_i)\cdot\prod_{j\neq
i}(1+x_jt)-x_i\cdot\prod_{j\neq i}(1+x_jt)\right)\\
& = & \sum_{i=1}^n\left(x_i \prod_{j=1}^m(1+x_jt)\right) -
\frac{\dif}{\dif t}\prod_{j=1}^n(1+x_jt)\\
& = & e_1\cdot \prod_{j=1}^n(1+x_jt)-\frac{\dif}{\dif
t}\prod_{j=1}^n(1+x_jt)\\
& = &e_1 \cdot \sum_{m=0}^ne_mt^m
-\sum_{m=-1}^{n-1}(m+1)e_{m+1}t^{m}.
\end{array}
$$
Comparing coefficients of $t$ on both sides gives the claimed
formula.
\end{proof}

As a graded $\sym_n$-module, $\pol_n$ is free of rank $n!$ with a
basis $\{ x_1^{b_1}x_2^{b_2}\cdots x_{n-1}^{b_{n-1}}|0 \le b_i \le
n-i \}$. More generally, for any fixed permutation $s$ of
$\{0,1,\cdots,n-1\}$, the set
$\{x_{1}^{b_1}x_{2}^{b_2}\cdots x_{n}^{b_{n}}| 0\leq a_i \leq s(i-1)\}$
is a basis of the module.

When $n<p$, the group algebra $\Bbbk[S_n]$ is semisimple, and the $S_n$-representation
$\pol_n$ decomposes into a direct sum of its isotypic components $\pol_{n,\lambda}$,
over all partitions $\lambda$ of $n$, corresponding to irreducible representations
$L_{\lambda}$ of $S_n$. The algebra of invariants
$\sym_n =\pol_n^{S_n}= \pol_{n, (n)}$ is identified with the summand corresponding
to the trivial representation $L_{(n)}$.
There is only one possibility for the corresponding direct sum decomposition
of the trivial representation $V_0$ in the stable category ($V_0\cong \pol_n$ in $H\udmod$).

\begin{cor}\label{cor-V0-qis-Symn} When $n<p$, the natural inclusion $V_0 \subset \sym_n$ is
an equivalence in the stable category $H\umod$, while $\pol_{n,\lambda}$
is contractible for any partition $\lambda\not= (n)$. \hfill$\square$
\end{cor}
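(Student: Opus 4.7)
The plan is to use Krull--Schmidt in the stable category $H\umod$ together with the preceding observation that $\pol_n \cong V_0$ stably. First I would recall that as an $H$-module, $\pol_n \cong \Bbbk[x]^{\otimes n}$, and since each $\Bbbk[x]\cong V_0$ in $H\umod$ (as noted just above), we have a stable isomorphism $V_0 \cong \pol_n$, realized concretely by the inclusion $V_0 = \Bbbk\cdot 1 \hookrightarrow \pol_n$.

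Next, using the hypothesis $n<p$, the group algebra $\Bbbk[S_n]$ is semisimple, so we get an $H$-equivariant decomposition
\[
\pol_n \;=\; \bigoplus_{\lambda \vdash n} \pol_{n,\lambda}, \qquad \pol_{n,(n)} = \sym_n,
\]
where each $\pol_{n,\lambda}$ is a graded $H$-submodule (the derivation $\dif$ is $S_n$-equivariant, so it preserves isotypic components). This decomposition descends to $H\umod$, and by Krull--Schmidt applied to the indecomposable stable object $V_0$, exactly one of the summands $\pol_{n,\lambda}$ is stably isomorphic to $V_0$ and all others are stably zero, i.e.\ contractible.

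To identify the $V_0$-summand as $\sym_n$, I would consider the factorization of the stable equivalence $V_0 \hookrightarrow \pol_n$ through $\sym_n$, namely
\[
V_0 \xrightarrow{\;i\;} \sym_n \xrightarrow{\;j\;} \pol_n,
\]
where $i$ and $j$ are the natural inclusions (both send $1$ to $1$). Since $j\circ i$ is a stable isomorphism, $i$ is nonzero in $H\umod$. But $\End_{H\umod}(V_0) = \End_H(V_0) = \Bbbk$ (the identity of $V_0$ does not factor through a projective since $V_0$ is not a summand of $V_{p-1}$), so any nonzero morphism out of $V_0$ in $H\umod$ that lands in an indecomposable stably isomorphic to $V_0$ is an isomorphism. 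Combined with Krull--Schmidt, this forces $\sym_n \cong V_0$ in $H\umod$ with $i$ the isomorphism, and all remaining isotypic summands $\pol_{n,\lambda}$, $\lambda \neq (n)$, to be contractible (being summands of a complement to a Krull--Schmidt factor isomorphic to the whole).

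The only mildly subtle step is checking that $\End_{H\umod}(V_0) = \Bbbk$, i.e.\ that the identity of $V_0$ is not null-homotopic; this follows from Proposition~\ref{prop-contractible-H-module-condition}, since a contractible module is free (a sum of shifts of $V_{p-1}$) and $V_0$ is not such for $p>1$. Everything else is a direct application of Krull--Schmidt and the prior identification $\pol_n \cong V_0$.
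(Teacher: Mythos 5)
Your argument is correct and follows essentially the same route the paper sketches: the $S_n$-equivariance of $\dif$ gives an $H$-module (hence stable) decomposition $\pol_n \cong \bigoplus_\lambda \pol_{n,\lambda}$, and Krull--Schmidt applied to $V_0 \cong \pol_n$ in $H\umod$ forces exactly one summand to be $V_0$ and the rest to vanish; since the stable isomorphism $V_0 \hookrightarrow \pol_n$ factors through $\sym_n = \pol_{n,(n)}$, that summand must be $\sym_n$. You fill in the step the paper leaves implicit under the phrase ``there is only one possibility,'' and your side remark that $\End_{H\umod}(V_0)=\Bbbk$, while not strictly necessary (it suffices that a nonzero map out of $V_0$ cannot land in a contractible object), does no harm.
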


When $n=p$, the elementary symmetric function $e_p = x_1x_2\dots x_p
$ has the property that $e_p^p$ is in the kernel but not in the
image of $\partial$, as easily seen from Lemma \ref{lemma-dif-on-elemetary-symmetric-functions}.
The inclusion $V_0\{ 2p^2\}\subset
\pol_n$ taking a basis vector of $V_0$ to $e_p^p$ realizes
$V_0\{2p^2\}$ as a direct summand of the $H$-module $\pol_p.$ More
generally, for any $a_1, \dots, a_p \ge 0$ the inclusion $V_0\{ 2p
(a_1+ \dots + a_p)\}\subset \pol_p$ taking a basis vector to
$x_1^{pa_1}x_2^{p a_2}\dots x_p^{pa_p}$ realizes the former as a
direct summand of $\pol_p$.

\paragraph{The nilHecke algebra.}The nilHecke algebra $\NH_n$ on $n$
strands over the field $\Bbbk$, see \cite{KK, Lau1},
has generators $x_1, \dots, x_n$ and $\delta_1,\dots, \delta_{n-1}$,
subject to defining relations
\begin{eqnarray} \label{eq-nh-1}
& & \delta_i^2=0, \ \  \delta_i \delta_{i+1} \delta_i =
\delta_{i+1}\delta_i \delta_{i+1}, \ \  \delta_i\delta_j = \delta_j
\delta_i
\ \ \mathrm{if} \ \  |i-j|>1, \\   \label{eq-nh-2}
& &  x_i \delta_j = \delta_j x_i \ \ \mathrm{if} \ \  j\not= i, i+1, \ \
 x_i x_j = x_j x_i, \\
 & & x_i \delta_i - \delta_i x_{i+1} = 1, \ \
     \delta_i x_i - x_{i+1}\delta_i = 1.  \label{eq-nh-3}
\end{eqnarray}

We often use a graphical presentation for monomials in the
generators of $\NH_n$, with $x_i$, respectively $\delta_i$ depicted
as a dot on the $i$-th strand and the crossing of the $i$th and
$(i+1)$-st strands:

\vspace{0.1in}
\[x_i:=
\begin{DGCpicture}
\DGCstrand(0,0)(0,1.5)[`$1$]
\DGCstrand(2,0)(2,1.5)[`$i$]\DGCdot{0.75}
\DGCstrand(4,0)(4,1.5)[`$n$]
\DGCcoupon*(0.25,0.25)(1.75,1.25){$\cdots$}
\DGCcoupon*(2.25,0.25)(3.75,1.25){$\cdots$}
\end{DGCpicture} \quad \quad \quad \quad
\delta_i:=
\begin{DGCpicture}
\DGCstrand(0,0)(0,1.5)[`$1$]
\DGCstrand(1.5,0)(2.5,1.5)[`$i+1$]
\DGCstrand(2.5,0)(1.5,1.5)[`$i$]
\DGCstrand(4,0)(4,1.5)[`$n$]
\DGCcoupon*(0.25,0.25)(1.75,1.25){$\cdots$}
\DGCcoupon*(2.25,0.25)(3.75,1.25){$\cdots$}
\end{DGCpicture}
\]


The enumerating labels on the strands will be omitted when they are clear from the context.
Multiplication in $\NH_n$ is given by vertical concatenations of diagrams, with
the product $xy $ represented by the picture

\[
\begin{DGCpicture}
\DGCstrand(0,0)(0,2) \DGCstrand(1,0)(1,2) \DGCstrand(3,0)(3,2)
\DGCcoupon(-0.25,0.5)(3.25,1.5){$x$}
\DGCcoupon*(1.25,1.65)(2.75,1.85){$\cdots$}
\DGCcoupon*(1.25,0.15)(2.75,0.35){$\cdots$}
\end{DGCpicture}
\cdot
\begin{DGCpicture}
\DGCstrand(0,0)(0,2)
\DGCstrand(1,0)(1,2)
\DGCstrand(3,0)(3,2)
\DGCcoupon(-0.25,0.5)(3.25,1.5){$y$}
\DGCcoupon*(1.25,1.65)(2.75,1.85){$\cdots$}
\DGCcoupon*(1.25,0.15)(2.75,0.35){$\cdots$}
\end{DGCpicture}
=
\begin{DGCpicture}
\DGCstrand(0,0)(0,2)
\DGCstrand(1,0)(1,2)
\DGCstrand(3,0)(3,2)
\DGCcoupon(-0.25,0.2)(3.25,0.85){$y$}
\DGCcoupon(-0.25,1.15)(3.25,1.8){$x$}
\DGCcoupon*(1.25,0.88)(2.75,1.1){$\cdots$}
\DGCcoupon*(1.25,0)(2.75,0.16){$\cdots$}
\DGCcoupon*(1.25,1.84)(2.75,2){$\cdots$}
\end{DGCpicture}
\]

The nilHecke algebra with zero strands $\NH_0\cong\Bbbk$ is one-dimensional and spanned by the empty diagram $\varnothing$. The defining relations say that far away generators commute and the following diagrammatic equalities hold.

$$
\begin{DGCpicture}
\DGCstrand(0,0)(1,1)(0,2) \DGCstrand(1,0)(0,1)(1,2)
\end{DGCpicture}
=0, \ \ \ \ \ \ \ \
\begin{DGCpicture}
\DGCstrand(0,0)(2,2) \DGCstrand(2,0)(0,2) \DGCstrand(1,0)(0,1)(1,2)
\end{DGCpicture}
=
\begin{DGCpicture}
\DGCstrand(0,0)(2,2) \DGCstrand(2,0)(0,2) \DGCstrand(1,0)(2,1)(1,2)
\end{DGCpicture},
$$
\\
$$\begin{DGCpicture}
\DGCstrand(0,0)(1,1)\DGCdot{.25} \DGCstrand(1,0)(0,1)
\end{DGCpicture}-
\begin{DGCpicture}
\DGCstrand(0,0)(1,1)\DGCdot{.75} \DGCstrand(1,0)(0,1)
\end{DGCpicture}=
\begin{DGCpicture}
\DGCstrand(0,0)(0,1) \DGCstrand(1,0)(1,1)
\end{DGCpicture}=
\begin{DGCpicture}
\DGCstrand(0,0)(1,1) \DGCstrand(1,0)(0,1)\DGCdot{.75}
\end{DGCpicture}-\begin{DGCpicture}
\DGCstrand(0,0)(1,1) \DGCstrand(1,0)(0,1)\DGCdot{.25}
\end{DGCpicture}.
$$

For any permutation $w$ we define $\delta_w := \delta_{i_1}\dots \delta_{i_r}$,
where $w=s_{i_1}\dots s_{i_r}$ is some reduced decomposition of $w$ into the
product of elementary transpositions $s_i= (i,i+1)$. The nilHecke algebra has
a basis with elements given by
$$ x_1^{b_1}\cdots x_n^{b_n} \delta_w$$
over all $b_1, \dots, b_n\in \N$ and $w\in S_n$. The element
$x_i^b$ for some $b\in \N$ is denoted by a dot with
a label $b$ next to it on the $i$-th strand. Each basis
element can then be represented by a diagram of the composition of
crossings describing permutation $w$ composed with dots on the strands
above the permutation diagram.

\[
\begin{DGCpicture}
\DGCstrand(0,0)(1,1)(1,2)\DGCdot{1.75}[r]{$b_2$}
\DGCstrand(1,0)(0,1)(0,2)\DGCdot{1.75}[r]{$b_1$}
\DGCstrand(2,0)(3,1.5)(3,2)\DGCdot{1.75}[r]{$b_4$}
\DGCstrand(3,0)(4,1.5)(4,2)\DGCdot{1.75}[r]{$b_5$}
\DGCstrand(4,0)(2,1.5)(2,2)\DGCdot{1.75}[r]{$b_3$}
\end{DGCpicture}
\]
Diagrams without dots span a subalgebra of $\NH_n$ called
the \emph{nilCoxeter algebra}. This subalgebra, denoted $\mathrm{NC}_n$, has a
basis $\{\delta_w|w\in S_n\}$.

The algebra $\NH_n$ possesses the following symmetries which are naturally described
using its diagrammatic presentation. Reflecting a diagram about a
horizontal axis is an algebra anti-automorphism of $\NH_n$, which we
will denote by $\psi$.
\begin{equation}\label{eqn-NH-symmetry-psi}
\psi\left(~
\begin{DGCpicture}
\DGCstrand(0,0)(1,2)
\DGCstrand(1,0)(2,2)
\DGCstrand(2,0)(0,2)\DGCdot{1.65}
\end{DGCpicture}
~\right)=
\begin{DGCpicture}
\DGCstrand(0,0)(2,2)\DGCdot{0.35}
\DGCstrand(1,0)(0,2)
\DGCstrand(2,0)(1,2)
\end{DGCpicture}.
\end{equation}
Reflecting a diagram about a vertical axis and simultaneously multiplying it
by $(-1)^s$, where $s$ is the number of crossings in the diagram, is an algebra
automorphism of
$\NH_n$, which will be denoted by $\sigma$.
\begin{equation}\label{eqn-NH-symmetry-sigma}
\sigma\left(~
\begin{DGCpicture}
\DGCstrand(0,0)(1,2)
\DGCstrand(1,0)(2,2)
\DGCstrand(2,0)(0,2)\DGCdot{1.65}
\end{DGCpicture}
~\right)=(-1)^2
\begin{DGCpicture}
\DGCstrand(0,0)(2,2)\DGCdot{1.65}
\DGCstrand(1,0)(0,2)
\DGCstrand(2,0)(1,2)
\end{DGCpicture}=
\begin{DGCpicture}
\DGCstrand(0,0)(2,2)\DGCdot{1.65}
\DGCstrand(1,0)(0,2)
\DGCstrand(2,0)(1,2)
\end{DGCpicture}.
\end{equation}

\vspace{0.06in}

More intrinsically, the nilHecke algebra can be defined as the algebra of endomorphisms
of $\bpol_n$ as a $\sym_n$-module:
\begin{equation} \label{eq-nh-as-sym-mod}
 \NH_n = \mathrm{END}_{\sym_n}(\bpol_n).
\end{equation}
The generator $x_t$ of $\NH_n$ acts as multiplication by $x_t\in \pol_n$ on $\bpol_n$ (the same notation is
used here for an element of $\pol_n$ and the corresponding endomorphism of $\bpol_n$).
The generator $\delta_i$ act as the divided difference operator
$$\delta_t(f) = \frac{f- {}^t f}{x_t - x_{t+1}},$$
where $^t f$  is the polynomial $f\in \bpol_n$ with $x_t, x_{t+1}$
transposed. Divided difference operators commute with
multiplications by symmetric functions. It is easy to check that the relations
(\ref{eq-nh-1})-(\ref{eq-nh-3}) hold on these endomorphisms, resulting in a homomorphism
from the algebra with these generators and relations to $\mathrm{END}_{\sym_n}(\bpol_n)$.
This homomorphism can be shown to be an isomorphism. We refer to $\bpol_n$ as the (left)
balanced polynomial representation of $\NH_n$.

It is well known that $\bpol_n$ is a free graded module over $\sym_n$ of rank $n!$ and
graded rank $[n]!$ (due to balancing, the graded rank is invariant under
$q\leftrightarrow q^{-1}$ symmetry).
For a sequence $\beta=(b_1, \dots, b_n)$ with
$b_k\in \N$ let $x^{\beta}=x_1^{b_1}x_2^{b_2}\dots x_n^{b_n}$.
The set
\begin{equation}\label{eq-set-Bn}
  B_n \ =  \  \{ x^{\beta}|  \ \  0\le b_t \le n-t, \  t=1, \dots, n\}
\end{equation}
is a homogeneous basis of graded free $\sym_n$-module $\bpol_n$. Necessarily $b_n=0$
for such $\beta$. Let $U_n=\Bbbk\{B_n\}$ be the $\Bbbk$-vector subspace of $\bpol_n$
with basis $B_n$. There is a canonical isomorphism
\begin{equation}\label{eq-nat-map}
  U_n \otimes_{\Bbbk} \sym_n \stackrel{\cong}{\lra} \bpol_n
\end{equation}
taking $x^{\beta} \otimes f$ to $fx^{\beta}$ and, via equation
(\ref{eq-nh-as-sym-mod}), producing an isomorphism
\begin{equation}\label{NH-iso-mat}
 \NH_n \cong \mathrm{END}_{\Bbbk}(U_n)\otimes_{\Bbbk} \sym_n \cong
U_n\otimes U_n^{\ast} \otimes \sym_n.
\end{equation}
Upon ordering elements of $B_n$, we get an isomorphism
\begin{equation}\label{nhn-isom-sym}
 \NH_n \cong \mathrm{M}(n!,\sym_n),
\end{equation}
which identifies $\NH_n$ with the matrix algebra of size $n!$ with
coefficients in the ring of symmetric functions in $x_1, \dots, x_n$.

More generally, for any permutation $s\in S_n$, let
\begin{equation}\label{eq-set-Bns}
  B_{n,s} \ :=  \  \{ x^{\beta}|  \ \  0\le b_{s(t)} \le n-t, \  t=1, \dots, n\}.
\end{equation}
Then $B_{n,s}$ is a homogeneous basis of graded free $\sym_n$-module $\bpol_n$.
Denoting by $U_{n,s}=\Bbbk\langle B_{n,s}\rangle$ the $\Bbbk$-vector subspace of $\bpol_n$
with basis $B_n$, there is a natural isomorphism of $\sym_n$-modules
\begin{equation}\label{eq-nat-map-s}
  U_{n,s} \otimes_{\Bbbk} \sym_n \stackrel{\cong}{\lra} \bpol_n
\end{equation}
taking $x^{\beta} \otimes f$ to $fx^{\beta}$ and giving an isomorphism
\begin{equation}\label{NH-iso-mat-s}
\NH_n \cong \mathrm{END}_{\Bbbk}(U_{n,s}) \otimes_{\Bbbk} \sym_n \cong
U_{n,s}\otimes U_{n,s}^{\ast} \otimes \sym_n.
\end{equation}
If $s=(1)\in S_n$ is the identity element, we also write $B_{n}^+$ instead of $B_n=B_{n,(1)}$
and $U_n^+$ instead of $U_n=U_{n,(1)}$. When $s$ is the maximal length permutation
$w_0=(1,n)(2,n-1)\cdots$, we denote $B_{n,w_0}$ and $U_{n,w_0}$ by $B_n^-$ and $U_n^-$,
correspondingly.

\vspace{0.05in}

Let $\overline{U}_n=\bpol_n/\bpol_n\cdot \sym_n^{\prime},$ where
$\sym_n^{\prime}$ is the codimension one ideal of $\sym_n$ consisting of symmetric functions
with zero constant term.
Isomorphisms (\ref{eq-nat-map}) and (\ref{eq-nat-map-s}),
upon modding out by $\sym_n^{\prime}$, induce natural isomorphisms of graded $\Bbbk$-vector
spaces
\begin{equation} \label{U-isoms}
U_n \cong \overline{U}_n, \quad \quad U_{n,s}\cong \overline{U}_n.
\end{equation}

Any $\sym_n$-linear endomorphism of $\bpol_n$ preserves the subspace
$\bpol_n\cdot \sym_n^{\prime}$. Consequently, an element of $\NH_n$ induces
an endomorphism of $\overline{U}_n$, and we get a surjective homomorphism
\begin{equation}\label{eq-surj-hom}
\NH_n = \mathrm{END}_{\sym_n}(\bpol_n) \lra \mathrm{END}_{\Bbbk}(
 \bpol_n/(\bpol_n\cdot \sym_n^{\prime})) =  \mathrm{END}_{\Bbbk}(\overline{U}_n)
\cong \NH_n/\NH_n\cdot \sym_n^{\prime}.
\end{equation}
Denote this homomorphism
\begin{equation}\label{eq-hom-pin}
\pi_n \ : \  \NH_n \lra \mathrm{END}_{\Bbbk}(\overline{U}_n).
\end{equation}
The quotient map $\bpol_n \lra \overline{U}_n$ admits many sections
$\overline{U}_n \lra \bpol_n.$
Any such section determines an injective homomorphism
$$ \jmath_n' \ : \ \mathrm{END}_{\Bbbk}(\overline{U}_n) \lra \NH_n $$
such that $\pi_n \circ \jmath_n'=\Id.$ We now choose a particular section.
The quotient map takes $U_n \subset \bpol_n$ bijectively onto $\overline{U}_n$, which
we use to identify $U_n$ and $\overline{U}_n$, obtaining an injective homomorphism
\begin{equation} \label{eq-jmathn}
\jmath_n \ : \ \mathrm{END}_{\Bbbk}(\overline{U}_n) \lra \NH_n
\end{equation}
as the composition
\begin{eqnarray*}
& & \mathrm{END}_{\Bbbk}(\overline{U}_n) \cong \mathrm{END}_{\Bbbk}(U_n) \hookrightarrow
\mathrm{END}_{\Bbbk}(U_n) \otimes \mathrm{END}_{\sym_n}(\sym_n)
\cong   \\
& & \mathrm{END}_{\sym_n}(U_n\otimes \sym_n) \cong \mathrm{END}_{\sym_n}(\bpol_n)=\NH_n.
\end{eqnarray*}
We have $\pi_n \circ \jmath_n = \Id$.
Likewise, for any permutation $s\in S_n$, the quotient map  $\bpol_n \lra \overline{U}_n$
induces an isomorphism between vector spaces $U_{n,s}$ and $ \overline{U}_n$, leading
to a section $\jmath_{n,s}:  \mathrm{END}_{\Bbbk}(\overline{U}_n) \lra \NH_n$ with
$\pi_n \circ \jmath_{n,s} = \Id$.

Let
\begin{equation}\label{eq-def-polcheck}
\bpol^{\vee}_n= \HOM_{\sym_n}(\bpol_n,\sym_n)
\end{equation}
 be the dual of $\bpol_n$ viewed as
a (graded) $\sym_n$-module. $\bpol^{\vee}_n$ is naturally a right graded $\NH_n$-module
and a graded $\sym_n$-module of
graded dimension $[n]!$. Isomorphism (\ref{eq-nat-map}) induces an isomorphism
\begin{equation}\label{eq-pncheck-iso}
 \bpol_n^{\vee} \cong U_n^{\ast} \otimes_{\Bbbk} \sym_n
\end{equation}
of $\sym_n$-modules. There is a canonical algebra isomorphism
\begin{equation} \label{eq-nhn-via-polndual}
\phi' \ : \ \bpol_n \otimes_{\sym_n} \bpol_n^{\vee}\stackrel{\cong}{\lra}\NH_n.
\end{equation}
coming from the definition of $\NH_n$ as the algebra of all $\sym_n$-linear endomorphisms
of $\bpol_n$.

Let $\delta(n) = \delta_{w_0}$ be the element of $\NH_n$
corresponding to the maximal length permutation in $S_n$.
\begin{equation} \label{eqn-def-delta-n}
\delta(n):= \underbrace{
\begin{DGCpicture}
\DGCstrand(0,0)(3,3) \DGCstrand(1,0)(0,1)(2,3)
\DGCstrand(2,0)(0,2)(1,3) \DGCstrand(3,0)(0,3)
\end{DGCpicture}}_{n~\mathrm{strands}}
=\begin{DGCpicture} \DGCstrand(0,0)(0,3) \DGCstrand(1,0)(1,3)
\DGCstrand(2,0)(2,3) \DGCstrand(3,0)(3,3)
\DGCcoupon(-0.25,1)(3.25,2){$\delta(n)$}
\end{DGCpicture}
\end{equation}
The element $\delta(n)$ spans the one-dimensional subspace of $\NH_n$ of lowest
degree (degree $n(1-n)$). The operator $\delta(n)$ on $\bpol_n$ takes any polynomial $f$
to a symmetric polynomial.

The symmetric $\sym_n$-bilinear pairing
\begin{equation}\label{eq-nat-pairing}
(-,-) \ : \ \bpol_n \otimes_{\sym_n} \bpol_n \lra \sym_n
\end{equation}
taking $f\otimes g$ to $\delta(n)(fg)$ is nondegenerate~\cite[Section 2.5]{Man}
and  has the invariance property
$$ (y f , g) = (f , \psi(y) g), \ \  y\in \NH_n, \ f,g\in \pol_n.$$
It induces an isomorphism
\begin{equation} \label{eq-iso-pol-check}
\phi^{\circ} \ : \  \bpol_n \stackrel{\cong}{\lra} \bpol_n^{\vee}
\end{equation}
of free graded $\pol_n$-modules taking $1\in\bpol_n$ to the $\sym_n$-linear map
$1^\vee: \bpol_n \lra \sym_n$ which sends $g\in \bpol_n$ to $\delta(n)(g)$.
In general, under $\phi^{\circ}$, $f\in \bpol_n$ goes to the $\sym_n$-linear map
$$f1^\vee : \bpol_n \lra \sym_n, \ \ \ \ g \mapsto f1^\vee(g):=\delta(n)(fg).$$
The map $\phi^{\circ}$ can also be viewed as an isomorphism of graded right $\NH_n$-modules,
where we turn $\bpol_n$ from a left to a right $\NH_n$-module via $\psi$.

Let $\phi_n = \phi' \circ (\Id\otimes \phi^{\circ})$ be the composition  map
\begin{equation} \label{eqn-comp-map}
\bpol_n \otimes_{\sym_n} \bpol_n  \xrightarrow{\Id\otimes \phi^{\circ}}
\bpol_n \otimes_{\sym_n}  \bpol_n^{\vee} \stackrel{\phi'}{\lra}  \NH_n.
\end{equation}
This map
\begin{equation} \label{eqn-multofpol}
\phi_n \ : \ \bpol_n \otimes_{\sym_n} \bpol_n  \lra \NH_n
\end{equation}
 is an isomorphism of graded $\NH_n$-bimodules which takes $f\otimes g$ to
$f \delta(n) g$.

Let $\epsilon_n:=(-1)^{\frac{n(n-1)}{2}}\delta(n)x_2x_3^2\cdots x_n^{n-1}\in \NH_n$.
It is not hard to check that
$$\delta(n)(x_2x_3^2\cdots x_n^{n-1})=(-1)^{\frac{n(n-1)}{2}},$$
so that $\epsilon_n(f)=f$ for any $f\in \sym_n$. Moreover, the image of $\epsilon_n$ acting on $\bpol_n$
is exactly $\sym_n$. This implies that $\epsilon_n$ is a primitive idempotent in
$\NH_n$. In our graphical notation, it is depicted as follows.
\[
\epsilon_n=(-1)^{\frac{n(n-1)}{2}}
\begin{DGCpicture}
\DGCstrand(0,0)(3,3)
\DGCstrand(1,0)(0,1)(2,3)\DGCdot{0.25}[r]{$^1$}
\DGCstrand(2,0)(0,2)(1,3)\DGCdot{0.25}[r]{$^2$}
\DGCstrand(3,0)(0,3)\DGCdot{0.25}[r]{$^{n-1}$}
\DGCdot.{0.15}[l]{$\cdots$}
\end{DGCpicture}
\]
The element $\epsilon_n$ is homogeneous of degree $0$, and $\delta_t \epsilon_n=0$ for any $1\le t \le n-1$.
Up to a degree shift,
the graded projective left $\NH_n$-module $\NH_n\epsilon_n$ is naturally isomorphic
to the polynomial module $\bpol_n$,
\begin{equation} \label{eq-two-left-mod-iso}
 \bpol_n \cong \NH_n\epsilon_n\{\textstyle{\frac{ n(1-n)}{2}}\}.
\end{equation}
The isomorphism, unique up to a nonzero constant, takes the generator $1$ of $\bpol_n$ to $\epsilon_n$.
The modules in (\ref{eq-two-left-mod-iso}) are also isomorphic to the module induced from
the one-dimensional representation placed in degree $n(1-n)/2$ of the nilCoxeter subalgebra
$\mathrm{NC}_n\subset\NH_n$.

Consider the idempotent
\begin{equation}\label{eq-epsi-ast}
\epsilon_n^{\ast}=\psi\sigma({\epsilon_n})= x_1^{n-1}x_2^{n-2}\cdots x_{n-1} \delta(n).
\end{equation}
Diagrammatically,
\[
\epsilon^\ast_n:=\psi\sigma({\epsilon_n})=
\begin{DGCpicture}
\DGCstrand(0,0)(3,3)
\DGCstrand(1,0)(0,1)(2,3)\DGCdot{2.85}[ul]{$\cdots^{1}$}
\DGCstrand(2,0)(0,2)(1,3)\DGCdot{2.85}[ul]{$^{n-2}$}
\DGCstrand(3,0)(0,3)\DGCdot{2.85}[ul]{$^{n-1}$}
\end{DGCpicture}.
\]
The projective right $\NH_n$-module $\epsilon_n^\ast\NH_n$ generated by
this idempotent is isomorphic, up to grading shift, to $\bpol_n^{\vee}$,
\begin{equation}\label{eq-right-mod-iso}
\epsilon_n^\ast\NH_n \{ \textstyle{\frac{n(1-n)}{2}} \} \cong \bpol_n^{\vee},
\end{equation}
via the isomorphism which
takes $\epsilon_n^{\ast}$ to the functional $f\longmapsto \delta(n)(f),$ $f\in \bpol_n$.
The left regular representation of $\NH_n$ decomposes into $n!$ copies of the
polynomial representation $\bpol_n$ with grading shifts, while the right regular representation
decomposes into the same number of copies of $\bpol_n^\vee$.

We recall an explicit basis of $\NH_n$ over its center $\sym_n$
as given in \cite[Section 2.5]{KLMS}. Denote the set
of sequences $\{\alpha=(\alpha_1,\dots, \alpha_{n-1})|~0\leq \alpha_t \leq
t,~t=1,\cdots, n-1\}$ by $\mathrm{Sq}(n)$. For any
$\alpha\in \mathrm{Sq}(n)$, set $|\alpha|=\sum_{t=1}^{n-1}\alpha_t$ and
$\hat{\alpha}=(\hat{\alpha}_1,\hat{\alpha}_2, \dots, \hat{\alpha}_{n-1}
):=(1-\alpha_1,2-\alpha_2,\dots,n-1-\alpha_{n-1})$. Define for
each $\alpha \in \mathrm{Sq}(n)$,
$$e_{\alpha}:=e_{\alpha_1}^{(1)}e_{\alpha_2}^{(2)}\cdots
e_{\alpha_{n-1}}^{(n-1)},\ \ \ \
x^{\hat{\alpha}}:=x_2^{\hat{\alpha}_1}x_3^{\hat{\alpha}_2}\cdots
x_{n}^{\hat{\alpha}_{n-1}}.$$
See the beginning of Section 3.1 for the notation of symmetric functions
adopted here.

\begin{prop}\label{prop-nil-hecke-basis-over-center} $\NH_n$
is isomorphic to the $n!\times n!$ matrix algebra $\mathrm{M}(n!,\sym_n)$ over its center
$\sym_n$. A particular homogeneous matrix basis of $\NH_n$ can be given by
$$\{E_{\alpha,\beta}=(-1)^{|\hat{\beta}|}
e_{\alpha}\delta(n)x^{\hat{\beta}}
~|~\alpha,~\beta \in \mathrm{Seq}(n)\}.$$
\end{prop}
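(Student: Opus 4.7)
The plan is to reduce the statement to a duality lemma for $\delta(n)$, using two bimodule isomorphisms already established. Combining the isomorphism $\phi_n: \bpol_n \otimes_{\sym_n} \bpol_n \to \NH_n$, $f \otimes g \mapsto f\delta(n)g$, of (\ref{eqn-multofpol}) with the pairing isomorphism $\phi^{\circ}: \bpol_n \to \bpol_n^{\vee}$, $f \mapsto \delta(n)(f \cdot -)$, of (\ref{eq-iso-pol-check}) identifies $\NH_n$ with $\mathrm{END}_{\sym_n}(\bpol_n) \cong \bpol_n \otimes_{\sym_n} \bpol_n^{\vee}$. Under this identification, any two $\sym_n$-bases $\{f_{\alpha}\}$ and $\{h_{\beta}\}$ of $\bpol_n$ that are \emph{dual} with respect to the pairing $(f,g) := \delta(n)(fg)$ automatically produce matrix units $f_{\alpha}\delta(n)h_{\beta}$ in $\NH_n$. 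Hence the proposition reduces to: (i) verifying that $\{e_{\alpha}\}_{\alpha \in \mathrm{Sq}(n)}$ and $\{x^{\hat{\beta}}\}_{\beta \in \mathrm{Sq}(n)}$ are $\sym_n$-bases of $\bpol_n$, and (ii) showing that these two bases are dual up to the sign $(-1)^{|\hat{\beta}|}$.

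For (i), the set $\{x^{\hat{\beta}}\}_{\beta \in \mathrm{Sq}(n)}$ coincides with the basis $B_n^{-} = B_{n,w_0}$ of (\ref{eq-set-Bns}) via the bijection $\hat{\beta}_t = t - \beta_t$, so it is manifestly an $\sym_n$-basis of $\bpol_n$. For $\{e_{\alpha}\}_{\alpha \in \mathrm{Sq}(n)}$, since $\sym_n$ is a graded connected polynomial ring, a graded Nakayama argument reduces matters to checking that the images of $\{e_{\alpha}\}$ span (equivalently, are linearly independent in) the coinvariant ring $\overline{U}_n = \bpol_n / \bpol_n \cdot \sym_n^{\prime}$. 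As $|\mathrm{Sq}(n)| = n! = \dim_{\Bbbk}\overline{U}_n$, linear independence suffices; it can be obtained by induction on $n$, peeling off the factor $e_{\alpha_{n-1}}^{(n-1)}$ and tracking that the resulting triangular change of basis against the monomial basis $B_n^{+}$ of (\ref{eq-set-Bn}) is invertible. This basis fact agrees with \cite[Section 2.5]{KLMS}.

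The technical heart is (ii), the duality identity
\begin{equation}\label{plan-duality}
\delta(n)\left(e_{\alpha}\cdot x^{\hat{\beta}}\right) \ = \ (-1)^{|\hat{\beta}|}\,\delta_{\alpha,\beta}\quad \text{in } \Bbbk \subset \sym_n.
\end{equation}
I would prove this by induction on $n$, using the reduced decomposition $\delta(n) = \delta(n-1)\cdot\delta_{n-1}\delta_{n-2}\cdots \delta_1$ in which $\delta(n-1)$ acts only on $x_1,\ldots,x_{n-1}$. Splitting $e_{\alpha} = e_{\alpha'} \cdot e_{\alpha_{n-1}}^{(n-1)}$ with $\alpha' = (\alpha_1,\ldots,\alpha_{n-2})$ and using that $e_{\alpha_{n-1}}^{(n-1)}$ is $S_{n-1}$-symmetric lets one push it past $\delta(n-1)$. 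The inner computation then reduces to evaluating $\delta_{n-1}\cdots\delta_1$ on $e_{\alpha_{n-1}}^{(n-1)}\cdot x_n^{\hat{\beta}_{n-1}}$; I expect this to produce $(-1)^{\hat{\beta}_{n-1}}$ exactly when $\alpha_{n-1} = \beta_{n-1}$ and zero otherwise, after which the inductive hypothesis finishes off the remaining polynomial in $x_1,\ldots,x_{n-1}$.

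The main obstacle will be proving the vanishing in (\ref{plan-duality}) when $\alpha \neq \beta$, particularly in the range $|\alpha| > |\beta|$ where degree considerations alone would still permit a nonzero symmetric polynomial in the image. Controlling this requires careful bookkeeping in the inductive step; an alternative route is to exploit the anti-automorphism $\psi$ of (\ref{eqn-NH-symmetry-psi}), which swaps the two copies of $\bpol_n$ in the pairing and thereby forces an $\alpha \leftrightarrow \beta$ symmetry that cuts the vanishing analysis roughly in half. Once (\ref{plan-duality}) is secured, the matrix unit relations $E_{\alpha,\beta}E_{\gamma,\delta} = \delta_{\beta,\gamma}E_{\alpha,\delta}$ and $\sum_{\alpha}E_{\alpha,\alpha} = 1$ follow automatically from the dual-basis structure on $\bpol_n\otimes_{\sym_n}\bpol_n^{\vee} \cong \NH_n$, completing the proof of the proposition.
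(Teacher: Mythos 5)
Your high-level reduction is the same one the paper uses: identify $\NH_n$ with $\bpol_n \otimes_{\sym_n}\bpol_n^{\vee}$ via $\phi_n$ and $\phi^{\circ}$, so that a pair of $\sym_n$-bases of $\bpol_n$ dual under $(f,g) = \delta(n)(fg)$ automatically furnishes matrix units. The paper however does no work beyond this observation; it simply states that $\{e_{\alpha}\}$, $\{x^{\hat{\alpha}}\}$ are dual and cites \cite[Proposition 2.16]{KLMS}. You attempt to supply the missing duality identity from scratch, and this is where a genuine gap appears.

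The inductive step you sketch does not hold up. The factorization $\delta(n) = \delta(n-1)\cdot\delta_{n-1}\cdots\delta_1$ is correct, but the reduction you then propose — ``the inner computation reduces to evaluating $\delta_{n-1}\cdots\delta_1$ on $e_{\alpha_{n-1}}^{(n-1)}\cdot x_n^{\hat{\beta}_{n-1}}$'' — tacitly assumes that $\delta_{n-1}\cdots\delta_1$ respects the splitting $e_\alpha x^{\hat\beta} = \bigl(e_{\alpha'}x_2^{\hat\beta_1}\cdots x_{n-1}^{\hat\beta_{n-2}}\bigr)\cdot\bigl(e_{\alpha_{n-1}}^{(n-1)}x_n^{\hat\beta_{n-1}}\bigr)$. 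It does not: each $\delta_i$ obeys only the twisted Leibniz rule $\delta_i(fg) = \delta_i(f)g + {}^{s_i}\!f\cdot\delta_i(g)$, and the left-hand factor is neither $S_n$-invariant nor killed by the $\delta_i$'s in play, so cross-terms proliferate. Moreover your proposed fallback — invoking $\psi$ of (\ref{eqn-NH-symmetry-psi}) to ``force an $\alpha \leftrightarrow \beta$ symmetry'' — does not deliver: $\psi$ fixes $\delta(n)$ and gives the (already obvious) commutativity $\delta(n)(e_\alpha x^{\hat\beta}) = \delta(n)(x^{\hat\beta}e_\alpha)$, i.e.\ symmetry in the two polynomial \emph{arguments}; it does not relate $(e_\alpha, x^{\hat\beta})$ to $(e_\beta, x^{\hat\alpha})$, because $\alpha$ and $\beta$ index \emph{different} bases. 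So the hard case $|\alpha|>|\beta|$, which you correctly identify as the one not killed by degree considerations, remains unresolved. Either make the induction honest by tracking all cross-terms in the twisted Leibniz expansion, or do as the paper does and cite the dual-basis computation in \cite[Section 2.5]{KLMS} outright.
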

\begin{proof} This is true because $\{e_{\alpha}\}$, $\{x^{\hat{\alpha}}\}$ for $\alpha \in \mathrm{Sq}(n)$ form dual bases under the $\sym_n$- bilinear pairing $(-,-)$. See \cite[Proposition 2.16]{KLMS}.
\end{proof}

This basis will be depicted diagrammatically as follows.
Abbreviating the polynomials $e_{\alpha}$,
$x^{\hat{\beta}}$ by coupons labeled by the same symbols,
the basis elements look like
\[
E_{\alpha,\beta}
=(-1)^{|\hat{\beta} |}~
\begin{DGCpicture}
\DGCstrand(0,-0.5)(0,0)(3,3)(3,3.5) \DGCstrand(1,-0.5)(1,0)(0,1)(2,3)(2,3.5)
\DGCstrand(2,-0.5)(2,0)(0,2)(1,3)(1,3.5) \DGCstrand(3,-0.5)(3,0)(0,3)(0,3.5)
\DGCcoupon(-0.25, -0.35)(3.25, 0.15){$x^{\hat{\beta}}$}
\DGCcoupon(-0.25, 2.85)(3.25, 3.35){$e_{\alpha}$}
\end{DGCpicture}=
(-1)^{|\hat{\beta}|}~
\begin{DGCpicture} \DGCstrand(0,0)(0,3) \DGCstrand(1,0)(1,3)
\DGCstrand(2,0)(2,3) \DGCstrand(3,0)(3,3)
\DGCcoupon(-0.25,0.15)(3.25,0.75){$x^{\hat{\beta}}$}
\DGCcoupon(-0.25,1)(3.25,2){$\delta(n)$}
\DGCcoupon(-0.25,2.25)(3.25,2.85){$e_{\alpha}$}
\end{DGCpicture}\ .
\]

\begin{eg}\label{eg-NH2-basis}
$\NH_2$ provides the simplest non-trivial example for the above proposition,
with basis elements given by the four diagrams below.
$$
\left(\begin{array}{ccc}
\begin{DGCpicture}
\DGCstrand(0,0)(1,1)
\DGCstrand(1,0)(0,1)\DGCdot{0.75}
\end{DGCpicture} &&
-\begin{DGCpicture}
\DGCstrand(0,0)(1,1)
\DGCstrand(1,0)(0,1)\DGCdot{0.25}\DGCdot{0.75}
\end{DGCpicture}\\
 && \\
\begin{DGCpicture}
\DGCstrand(0,0)(1,1)
\DGCstrand(1,0)(0,1)
\end{DGCpicture} &&
-
\begin{DGCpicture}
\DGCstrand(0,0)(1,1)
\DGCstrand(1,0)(0,1)\DGCdot{0.25}
\end{DGCpicture}
\end{array}\right) \ .
$$
\end{eg}


\paragraph{Local $p$-differentials on nilHecke algebras.}
We now look for $p$-differentials on nilHecke
algebras compatible with the differential $\dif$ on the subalgebra $\pol_n$ and local,
in the sense of being compatible with inclusions $\NH_n \subset \NH_{k_1+n+k_2}$, where
we add $k_1$, respectively $k_2$, vertical strands on the sides of a diagram in $\NH_n$.
The nilHecke algebra is the $\sym_n$-endomorphism algebra of the module $\bpol_n$, and $\bpol_n$
has a family of $p$-differentials $\dif_{\alpha}$ parametrized by
$g_{\alpha}=\sum \alpha_i x_i$, $\alpha_i \in \mathbb{F}_p$, see (\ref{eq-dif-g}).
The corresponding $p$-DG $\pol_n$-module was denoted $\bpol_n(\alpha)$, where
$\alpha=(\alpha_1, \dots, \alpha_n)$.

Before we move on to study $(\NH_n,\dif_a)$ as a $p$-DG algebra, we investigate
how the differential structure of the polynomial module changes under the
duality $\bpol_n(\alpha)^{\vee}$. Recall that $\dif_\alpha$ on $\bpol_n(\alpha)$
induces on $\bpol_n(\alpha)^\vee$ the differential $\dif_\alpha^\vee$ such that,
for any $z \in \bpol_n(\alpha)^\vee$ and $f\in \bpol_n(\alpha)$,
\begin{equation}\label{eqn-dif-on-dual-mod-elements}
\dif_\alpha^\vee(z)(f)=\dif(z(f))-z(\dif_\alpha(f)),
\end{equation}
where the undecorated operator $\dif$ is the natural differential on the ring
of symmetric polynomials (Lemma \ref{lemma-dif-on-elemetary-symmetric-functions}).
In this way $\bpol_n(\alpha)^\vee$ is naturally a $p$-DG $(\sym_n,\pol_n)$-bimodule
since $\bpol_n(\alpha)$ is a $p$-DG bimodule over $(\pol_n,\sym_n)$. Recall from
equation (\ref{eq-iso-pol-check}) that, under $\phi^{\circ}$, the generator
$1_\alpha\in \bpol_n(\alpha)$ is sent to the $\sym_n$-linear functional
$1_\alpha^{\vee}$ which acts on any element $f$ of $\bpol_n(\alpha)$ as
$$1_\alpha^\vee(f)=\phi^{\circ}(1_\alpha)(f)=\delta(n)(f).$$
In what follows, for any $\alpha=(\alpha_1, \dots, \alpha_n)\in \Bbbk^n$, we let
$\alpha^\vee:=(\alpha_1^\vee, \dots, \alpha_n^\vee)\in \Bbbk^n$ and
$\alpha_t^\vee:=1-n-\alpha_t$ for any $1\leq t \leq n$.

\begin{lemma}\label{lemma-stucture-of-dual-poly-mod}The differential
$\dif_\alpha^\vee$ acts on the generator $1_\alpha^\vee$ by
\begin{equation}\label{eqn-dif-on-dual-mod-generator}
\dif_\alpha^\vee(1_\alpha^\vee)=\sum_{t=1}^n\alpha_t^\vee x_t 1_\alpha^\vee.
\end{equation}
Equivalently, the dual polynomial module $\bpol_n(\alpha)^{\vee}$ with the
induced differential $\dif_\alpha^\vee$ is isomorphic to
$\bpol_n(\alpha^\vee)$ with the differential $\dif_{\alpha^\vee}$ under the
homomorphism $\phi^\circ$.
\end{lemma}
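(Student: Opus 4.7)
My plan is to evaluate $\dif_\alpha^\vee(1_\alpha^\vee)$ on an arbitrary element $f\in \bpol_n(\alpha)$ using the definition \eqref{eqn-dif-on-dual-mod-elements}, reduce the identity to a computation of the commutator $[\dif,\delta(n)]$, and then derive the ``equivalently'' statement as a consequence via $\phi^\circ$.

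\textbf{Step 1 (setting up the identity).} Since $1_\alpha^\vee(f)=\delta(n)(f)$, and $\dif_\alpha(f)=\dif(f)+fg_\alpha$ by \eqref{eq-dif-g}, formula \eqref{eqn-dif-on-dual-mod-elements} gives
\[
\dif_\alpha^\vee(1_\alpha^\vee)(f) \ = \ \dif(\delta(n)(f))-\delta(n)(\dif(f))-\delta(n)(g_\alpha f) \ = \ [\dif,\delta(n)](f)-\delta(n)(g_\alpha f),
\]
where in the last equality I used that $\delta(n)$ is $\sym_n$-linear. So the task reduces to evaluating $[\dif,\delta(n)]$ on $\bpol_n$.

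\textbf{Step 2 (the key commutator, main technical step).} I will use the standard antisymmetrizer formula for the longest divided difference:
\[
\delta(n)(f) \ = \ \frac{\sum_{w\in S_n}(-1)^{\ell(w)}w(f)}{D}, \qquad D:=\prod_{i<j}(x_i-x_j).
\]
Since $\dif(x_i)=x_i^2$, the derivation $\dif$ is $S_n$-equivariant, so it commutes with the antisymmetrizer in the numerator. Applying $\dif$ to the quotient above and using the Leibniz rule yields
\[
\dif(\delta(n)(f))\ =\ \delta(n)(\dif(f)) - \delta(n)(f)\cdot \frac{\dif(D)}{D}.
\]
A direct computation gives
\[
\frac{\dif(D)}{D}\ = \ \sum_{i<j}\frac{x_i^2-x_j^2}{x_i-x_j}\ = \ \sum_{i<j}(x_i+x_j)\ = \ (n-1)e_1,
\]
so $[\dif,\delta(n)] = -(n-1)e_1\,\delta(n) = (1-n)e_1\,\delta(n)$ as operators on $\bpol_n$.

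\textbf{Step 3 (conclusion).} Substituting back, and using again that $e_1\in\sym_n$ passes through $\delta(n)$,
\[
\dif_\alpha^\vee(1_\alpha^\vee)(f) \ = \ \delta(n)\big((1-n)e_1\, f-g_\alpha f\big) \ = \ \delta(n)(g_{\alpha^\vee}f),
\]
since $(1-n)e_1-g_\alpha=\sum_{t=1}^n(1-n-\alpha_t)x_t=g_{\alpha^\vee}$ by the definition $\alpha_t^\vee=1-n-\alpha_t$. The right-hand side is precisely $(g_{\alpha^\vee}\cdot 1_\alpha^\vee)(f)=\big(\sum_t\alpha_t^\vee\, x_t \cdot 1_\alpha^\vee\big)(f)$, which proves \eqref{eqn-dif-on-dual-mod-generator}. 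For the equivalent reformulation, $\phi^\circ$ is already a $\pol_n$-module isomorphism sending the generator $1_{\alpha^\vee}\in\bpol_n(\alpha^\vee)$ to $1_\alpha^\vee\in\bpol_n(\alpha)^\vee$; commutation with differentials then follows from the Leibniz rule once it holds on the generator, where both sides equal $g_{\alpha^\vee}\cdot 1_\alpha^\vee$. The main obstacle is the commutator calculation in Step~2; the rest is bookkeeping.
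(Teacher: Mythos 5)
Your proof is correct, and it takes a genuinely different route from the paper's. The paper verifies the identity $\dif_\alpha^\vee(1_\alpha^\vee)(f)=\sum_t\alpha_t^\vee\,\delta(n)(x_t f)$ by evaluating both sides on the explicit $\sym_n$-basis $B_n=\{x^\beta\mid 0\le b_t\le n-t\}$ of $\bpol_n(\alpha)$; this reduces to a finite case analysis (two cases for when $\delta(n)(x_1^{b_1}\cdots x_t^{b_t+1}\cdots x_n^{b_n})$ is nonzero), with no conceptual input beyond $\sym_n$-linearity. Your argument instead isolates the operator identity $[\dif,\delta(n)]=-(n-1)e_1\,\delta(n)$ and proves it once and for all via the antisymmetrizer presentation $\delta(n)=D^{-1}\sum_w(-1)^{\ell(w)}w$ and the ``logarithmic derivative'' computation $\dif(D)/D=(n-1)e_1$; the $S_n$-equivariance of $\dif$ (which holds because $\dif(x_i)=x_i^2$ is uniform in $i$) makes the antisymmetrizer transparent to $\dif$, and the lemma then follows for arbitrary $f$ with no basis or case analysis. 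What you gain is uniformity and a cleaner conceptual core; what the paper's version gains is that it is entirely elementary and stays inside $\pol_n$, avoiding any appeal to the fraction field or to the Vandermonde formula. Note also that your commutator identity is precisely Lemma~\ref{lemma-dif-0-on-delta-n} of the paper (specialized to $a=0$), which the paper proves separately by a diagrammatic induction; your Vandermonde argument gives an alternative, shorter proof of that lemma as well. One small point worth saying explicitly: $\dif(P/D)$ is computed in the fraction field via the quotient rule, and the resulting expression is a polynomial since $D\mid P$ and $\dif(D)/D=\sum_{i<j}(x_i+x_j)$ is already polynomial; the student proof implicitly relies on this.
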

\begin{proof}
Since both sides of equation (\ref{eqn-dif-on-dual-mod-generator}) are $\sym_n$-linear, it suffices to check this formula on a set of $\sym_n$-basis elements for $\bpol_n(\alpha)$, for instance, the set $B_n$ from (\ref{eq-set-Bn}).

The left hand side of equation (\ref{eqn-dif-on-dual-mod-generator}), applied to any basis element $x^\beta:=x_1^{b_1}\cdots x_n^{b_n}$ where $0\leq b_t\leq n-t$, gives via equation (\ref{eqn-dif-on-dual-mod-elements})
\[
\dif_\alpha^\vee(1_\alpha^\vee)(x^\beta) =  \dif(\delta(n)(x^\beta))-\delta(n)(\dif_{\alpha}(x^\beta))
 =  -\sum_{t=1}^n \delta(n)((b_t+\alpha_t)x_1^{b_1}\cdots x_t^{b_t+1}\cdots x_n^{b_n}).
\]
On the other hand,
\[
(\sum_{t=1}^n\alpha_t^\vee x_t 1_\alpha^\vee)(x^\beta)  =  \sum_{t=1}^n \alpha_t^\vee \delta(n)(x_t\cdot x_\beta)
=  \sum_{t=1}^n (1-n-\alpha_t)\delta(n)(x_1^{b_1}\cdots x_t^{b_t+1}\cdots x_n^{b_n}).\]
Observe that $\delta(n)(x_1^{b_1}\cdots x_t^{b_t+1}\cdots x_n^{b_n})$ can be non-zero only in the following two cases:
\begin{itemize}
\item[(i)] Each $b_l=n-l$ and $t=1$;
\item[(ii)] Exactly one of, say $b_l=n-l-1$ ($1\leq l \leq n-1$), while the other $b_{l^\prime}=n-l^\prime$ for $l^\prime\neq l$, and either $t=l$ or $t=l+1$.
\end{itemize}
In case (i) the right hand sides of both equations are equal to $(1-n-\alpha_1)e_1$,
while in case (ii) both sides are equal to $\alpha_{l+1}-\alpha_{l}$. The lemma follows.
\end{proof}

This $p$-DG structure induces a differential on
$\NH_n = \mathrm{END}_{\sym_n}(\bpol_n(\alpha))$, via
\[
\dif_\alpha(\xi)(f):=\dif(\xi(f))-\xi(\dif(f)),
\]
for $\xi\in \mathrm{END}_{\sym_n}(\bpol_n(\alpha))$ and $f\in \bpol_n(\alpha)$.

Let us compute this differential on generators of $\NH_n$. On the subalgebra $\pol_n$
the differential will restrict to the original differential $\dif$, with $\dif(x_i) =x_i^2$.

Due to the local nature of generators $\delta_i$, to compute $\dif_\alpha(\delta_i)$
we can reduce to $n=2$ case. Then $g= \alpha_1 x_1 + \alpha_2 x_2$, and a short computation
yields
\begin{equation} \label{eq-dif-on-delta1}
\dif_\alpha(\delta_1) = a_1  - (a_1+1) x_1
\delta_1 + (a_1 -1) x_2 \delta_1,
\end{equation}
where $a_1 = \alpha_2 - \alpha_1$. In general,
\begin{equation} \label{eq-dif-on-deltai}
\dif_\alpha(\delta_i) = a_i \Id - (a_i+1) x_i
\delta_i + (a_i -1) x_{i+1} \delta_i,
\end{equation}
where $a_i = \alpha_{i+1}-\alpha_i$.
We would like $\dif_\alpha$ to be local, in the sense that the coefficients for its action
on generators should not depend on $i$. This is equivalent to  $a_1=a_2= \dots = a_{n-1}$.
Let $a=a_1$. Then $\alpha_2=\alpha_1+a$ and, in general, $\alpha_{k+1}=\alpha_1 + k a$ and
\begin{equation*}
g=g(\alpha_1,a)= \alpha_1 x_1 + (\alpha_1+a)x_2 + \dots + (\alpha_1+(n-1)a)x_n.
\end{equation*}
We denote the corresponding $p$-DG module structure on $\bpol_n$ by $\bpol_n(\alpha_1,a)$,
previously denoted $\bpol_n(\alpha)$ for
$\alpha=(\alpha_1, \alpha_1+a,\dots, \alpha_1+(n-1)a)$.
The induced differential on $\NH_n$ depends only on $a\in \mathbb{F}_p$, not on
$\alpha_1$, and will be denoted $\dif_a$.
It is given on generators by
\begin{eqnarray*}
 \dif_a(x_i) & = & x_i^2, \\
 \dif_a(\delta_i) & = &  a  - (a+1) x_i
\delta_i + (a -1) x_{i+1} \delta_i.
\end{eqnarray*}

Diagrammatically,
\begin{equation}\label{eqn-dif-dot}
\dif_a~\left(
\begin{DGCpicture}
\DGCstrand(1,0)(1,1) \DGCdot{0.5}
\end{DGCpicture} \right)
~ = ~
\begin{DGCpicture}
\DGCstrand(1,0)(1,1) \DGCdot{0.35}\DGCdot{0.65}
\end{DGCpicture}
~ = ~
\begin{DGCpicture}
\DGCstrand(1,0)(1,1) \DGCdot{0.5}[r]{$^2$}
\end{DGCpicture}
\end{equation}

\begin{equation}\label{eqn-dif-crossing}
\dif_a\left(~
\begin{DGCpicture}
\DGCstrand(1,0)(0,1) \DGCstrand(0,0)(1,1)
\end{DGCpicture}~\right)
= a~
\begin{DGCpicture}
\DGCstrand(0,0)(0,1) \DGCstrand(1,0)(1,1)
\end{DGCpicture}
-(a+1)
\begin{DGCpicture}
\DGCstrand(1,0)(0,1)\DGCdot{0.75} \DGCstrand(0,0)(1,1)
\end{DGCpicture}
+(a-1)
\begin{DGCpicture}
\DGCstrand(1,0)(0,1) \DGCstrand(0,0)(1,1)\DGCdot{0.75}
\end{DGCpicture}.
\end{equation}

\vspace{0.1in}

Thus, the $p$-DG $\pol_n$-module $\bpol_n(\alpha_1, a)$ for $\alpha_1,a\in \mathbb{F}_p$
induces a local differential $\dif_a$ on $\NH_n$, turning it into a $p$-DG algebra.
The differential extends to the entire $\NH_n$ via the Leibniz rule
$\dif(xy) = \dif(x)y + x\dif(y)$. Clearly, $\dif_a^p=0$, since this is true
for the differential $\dif_\alpha$ on  $\bpol_n(\alpha_1, a)$.

The differential $\dif_a$ takes a ``one-strand'' generator
$x_i$ to a one-strand diagram $x_i^2$ and a ``two-strand'' generator $\delta_i$
to a linear combination of two-strand diagrams. It
commutes with the obvious inclusions $\NH_n \subset \NH_{k_1+n+k_2}$
given by adding $k_1$ vertical lines to the left and $k_2$ vertical
lines to the right of a diagram in $\NH_n$. We say that $\dif_a$ is
a \emph{local} differential on the family of algebras $\NH_n$. Notice that the defining
equations (\ref{eqn-dif-dot}), (\ref{eqn-dif-crossing}) make sense for any $a\in \Bbbk$.

\begin{lemma}\label{lemma-dif-p-equals-zero-nilhecke}
Under any parameter $a\in \Bbbk$, the equation $\dif_a^p=0$ holds on $\NH_n$ if and only if $a\in \F_p$.
\end{lemma}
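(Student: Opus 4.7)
The plan is to exploit the polynomial representation $\NH_n = \mathrm{END}_{\sym_n}(\bpol_n)$ to transport the claim into a concrete computation on $\bpol_n$. Because $\partial_a$ is a derivation and $p$-th powers of derivations remain derivations in characteristic $p$, the identity $\partial_a^p = 0$ can be tested on generators. On the dots one has $\partial_a^p(x_i) = p!\,x_i^{p+1} = 0$ for any $a$, so the content lies in the generators $\delta_i$. By the locality of $\partial_a$ with respect to the inclusions $\NH_2 \subset \NH_n$, the vanishing of $\partial_a^p(\delta_i)$ in $\NH_n$ for all $i$ is equivalent to $\partial_a^p(\delta_1) = 0$ in $\NH_2$; the cases $n \le 1$ are trivial since $a$ does not enter the differential.

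For $n = 2$ I would fix the auxiliary parameter $\alpha_1 = 0$ (the induced derivation on $\NH_2$ is independent of this choice) and write the operator $\partial$ on $\bpol_2(0,a)$ as $\partial = \partial_{\mathrm{std}} + L_{ax_2}$, with $\partial_{\mathrm{std}}(x_i) = x_i^2$ and $L_{ax_2}$ multiplication by $ax_2$. The induced derivation on $\NH_2 = \mathrm{END}_{\sym_2}(\bpol_2)$ is then the inner derivation $\partial_a(\xi) = [\partial,\xi]$ inside $\mathrm{END}_\Bbbk(\bpol_2)$. The key tool is the standard characteristic-$p$ identity $(\mathrm{ad}\,T)^p = \mathrm{ad}(T^p)$, valid in any associative algebra because $L_T$ and $R_T$ commute; it gives $\partial_a^p(\xi) = [\partial^p,\xi]$. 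Thus $\partial_a^p = 0$ on $\NH_2$ if and only if $\partial^p \in \mathrm{END}_{\NH_2}(\bpol_2)$. Under the matrix isomorphism $\NH_2 \cong \mathrm{M}(2,\sym_2)$ of (\ref{nhn-isom-sym}), $\bpol_2$ is the column module, whence $\mathrm{END}_{\NH_2}(\bpol_2) \cong \sym_2$, acting on $\bpol_2$ by multiplication.

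A short induction using $\partial(x_2^k\cdot 1) = (k+a)\,x_2^{k+1}\cdot 1$ gives
\[
\partial^k(1) \;=\; (a)_k\, x_2^k, \qquad (a)_k \;:=\; a(a+1)\cdots(a+k-1),
\]
so in particular $\partial^p(1) = (a)_p\, x_2^p$. If $\partial^p$ coincides with multiplication by some $h \in \sym_2$, then $h = \partial^p(1) = (a)_p\, x_2^p$; but $x_2^p$ is not invariant under $x_1 \leftrightarrow x_2$, so $(a)_p\, x_2^p \in \sym_2$ forces $(a)_p = 0$. The factors $a, a+1, \ldots, a+p-1$ are pairwise distinct in $\Bbbk$, so their product vanishes exactly when one of them is zero, that is, exactly when $a \in \F_p$. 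This settles the ``only if'' direction. For the converse, if $a \in \F_p$ I would take $\alpha_1 \in \F_p$ so that every $\alpha_t = \alpha_1 + (t-1)a$ lies in $\F_p$; by the characterization of $p$-DG structures on $\bpol_n$ following (\ref{eq-dif-g}) this gives $\partial^p = 0$ on $\bpol_n(\alpha_1,a)$, and hence $\partial_a^p = [\partial^p,-] = 0$ on $\NH_n$. The only delicate point is recognizing that the constraint $\partial_a^p = 0$ collapses to the symmetry condition $\partial^p(1) \in \sym_n$ via the matrix-model identification of $\mathrm{END}_{\NH_n}(\bpol_n)$; the remaining steps are routine.
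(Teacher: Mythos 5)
Your argument is correct, and it takes a genuinely different route from the one in the paper. The paper proves the ``only if'' direction by an explicit inductive computation of $\dif_a^k(\delta_1)$ in $\NH_2$: it establishes a closed formula for $\dif_a^k(\delta_1)$ ($k \ge 3$) involving the product $\prod_{m=-i-1}^{k-2-i}(a+m)$, and then reads off that the coefficient collapses to $a^p - a$ when $k = p$. This is elementary but involves keeping careful track of a binomial-type sum. Your approach sidesteps all of that by recognizing that $\dif_a$ on $\NH_n = \mathrm{END}_{\sym_n}(\bpol_n)$ is the restriction of the inner derivation $\mathrm{ad}_{\partial_\alpha}$ in $\mathrm{END}_{\Bbbk}(\bpol_n)$, invoking the standard characteristic-$p$ identity $(\mathrm{ad}\,T)^p = \mathrm{ad}(T^p)$ to reduce $\dif_a^p = 0$ to the statement that $\partial_\alpha^p$ lies in the commutant $\mathrm{END}_{\NH_2}(\bpol_2) \cong \sym_2$, and then making the very small computation $\partial_\alpha^p(1) = (a)_p\,x_2^p$ with $(a)_p = a(a+1)\cdots(a+p-1) = a^p - a$. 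The symmetry obstruction ($x_2^p \notin \sym_2$) does the rest. What your approach buys is conceptual clarity and shortness: all the combinatorics of dragging $\partial_a$ across $\delta_1$ is absorbed into the two structural facts $(\mathrm{ad}\,T)^p = \mathrm{ad}(T^p)$ and the Morita identification $\mathrm{END}_{\NH_n}(\bpol_n) \cong \sym_n$ (both of which the paper already has in hand from Section 2.3 and equation (\ref{nhn-isom-sym})). The only thing the paper's computation buys additionally is the explicit closed formula for $\dif_a^p(\delta_1)$ itself, which your argument does not produce but which the lemma statement does not require.
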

\begin{proof} The ``if'' part follows at once from the corresponding property ($\dif_g^p=0$)
of  the differential $\dif_{g(\alpha_1,a)}$ on $\bpol_n$.
Clearly, $\dif_a^p(x_i)=0$ for any $a\in \Bbbk$.
It suffices to show that
$\dif_a^p(\delta_1)=0$ precisely when $a \in \F_p$.

By a direct computation we have
$$\begin{array}{rcl}
\dif_a^2(\delta_1)& = &
\dif_a(a-(a+1)x_1\delta_1+(a-1)x_2\delta_1)\\
& = &
(a+1)ax_1^2\delta_1-2(a+1)(a-1)x_1x_2\delta_1+(a-1)ax_2^2\delta_1\\
& & -(a+1)ax_1+a(a-1)x_2.
\end{array}$$
Thus when $p=2$, $\dif_a^2(\delta_1)=0$ if and only if $a\in \F_2$.

Applying $\dif_a$ once more, we obtain
$$\dif_a^3(\delta_1)=(a+1)a(a-1)\left((x_1-x_2)^3\delta_1+(x_1-x_2)^2\right).$$
An induction shows that for any $k\geq 3$,
$$\begin{array}{rcl}
\dif_a^k(\delta_1)& = &(a+1)a(a-1)(x_2-x_1)^3\\
& & \cdot \left(\sum_{i=0}^{k-3}(-1)^i{k-3 \choose i}
\prod_{j=0}^{k-4-i}(a+2+j)\prod_{l=0}^{i-1}(a-2-l)x_1^ix_2^{k-3-i}\delta_1\right)\\
& & +(a+1)a(a-1)(x_2-x_1)^2\\
& & \cdot \left(\sum_{i=0}^{k-3}(-1)^i{k-3 \choose i}
\prod_{j=0}^{k-4-i}(a+2+j)\prod_{l=0}^{i-1}(a-2-l)x_1^ix_2^{k-3-i}\right).
\end{array}$$
The coefficient in front of each term, for a fixed $i$,
is ${k-3 \choose i}$ times the product of sums of $a$ with
some consecutive integers:
$$(a+1)a(a-1)\prod_{j=0}^{k-4-i}(a+2+j)\prod_{l=0}^{i-1}(a-2-l)=
\prod_{m=-i-1}^{k-2-i}(a+m).$$
When $k=p\geq 3$, the $p$ consecutive residues range over all elements of
$\F_p$. It follows that
$$(a+1)a(a-1)\prod_{j=0}^{p-2-i}(a+2+j)\prod_{l=0}^{i-1}(a-2-l)=
a^p-a,$$
and thus
$$
\dif_a^p(\delta_1) =  (a^p-a)(x_2-x_1)^2\sum_{i=0}^{p-3}
\left((-1)^i{p-3 \choose i}x_1^ix_2^{p-3-i}\left((x_2-x_1)\delta_1+1\right)\right),
$$
which is zero if and only if $a\in \F_p$. The lemma follows.
\end{proof}

\begin{rmk}\label{rmk-p=2-relation-with-LOT}
When $p=2$, the differential
$\dif_1$ preserves the nilCoxeter subalgebra $\mathrm{NC}_n$ of $\NH_n$ and
acts on a crossing by
\[
\dif_1\left(~
\begin{DGCpicture}
\DGCstrand(0,0)(1,1)
\DGCstrand(1,0)(0,1)
\end{DGCpicture}
~\right)=
\begin{DGCpicture}
\DGCstrand(0,0)(0,1)
\DGCstrand(1,0)(1,1)
\end{DGCpicture}~.
\]
$(\mathrm{NC}_n,\dif_1)$ is a differential graded algebra, and
appears in knot Floer homology~\cite{LOT1,LOT2} and in categorification
of quantum superalgebras~\cite{Kh2}.
\end{rmk}

\begin{rmk}\label{rmk-degree-minus-two-differential}
Any degree -2 $p$-nilpotent local differential $\dif^\prime$ acting on $\NH_n$
has the form

$$
{\dif^\prime}~\left(
\begin{DGCpicture}
\DGCstrand(1,0)(1,1) \DGCdot{0.5}
\end{DGCpicture} \right)
~ = ~\mu~
\begin{DGCpicture}
\DGCstrand(1,0)(1,1)
\end{DGCpicture} \ ,
\ \ \ \ \
{\dif^\prime}\left(~
\begin{DGCpicture}
\DGCstrand(1,0)(0,1)
\DGCstrand(0,0)(1,1)
\end{DGCpicture}
~\right)
= 0,
$$
for some $\mu\in \Bbbk$.
If $\mu \neq 0$, ${\dif^\prime}(\frac{x_1}{\mu})=1$, so that $\NH_n$
is acyclic for any $n\geq 1$, and its Grothendieck group vanishes.
For this reason, these differentials do not appear
interesting and we will not consider them.
\end{rmk}

By Lemma \ref{lemma-dif-p-equals-zero-nilhecke}, $\dif_a$ is a local degree two $p$-nilpotent
differential on the family of algebras $\NH_n$. It is easy to check that any local
degree two $p$-nilpotent differential on the family $\NH_n$ is given by
$\lambda \dif_a$ for some $\lambda\in \Bbbk$ and $a\in \F_p$.
Moreover, $\dif_a$ and $\dif_{-a}$ are related to each other by the
symmetries $\psi$, $\sigma$ of the nilHecke algebra.

\begin{prop}\label{prop-symmetry-between-a-and-minus-a}The following equalities
hold:
\begin{equation}  \label{eq-d-invol}
\psi\dif_a\psi=\dif_{-a}, \ \ \ \ \sigma\dif_a\sigma=\dif_{-a}.
\end{equation}
\end{prop}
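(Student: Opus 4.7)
The plan is to verify both identities by checking them on the generators $x_i$ and $\delta_i$ of $\NH_n$, which is sufficient because both sides are derivations of the $\Bbbk$-algebra $\NH_n$. For $\sigma\dif_a\sigma$ this is immediate since $\sigma$ is an algebra automorphism. For $\psi\dif_a\psi$ this is only marginally more subtle: $\psi$ is an anti-automorphism, but applying it twice restores the order, so for $x,y\in \NH_n$,
\[
(\psi\dif_a\psi)(xy) = \psi\bigl(\dif_a(\psi(y))\psi(x) + \psi(y)\dif_a(\psi(x))\bigr) = (\psi\dif_a\psi)(x)\,y + x\,(\psi\dif_a\psi)(y).
\]

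Next I would carry out the generator checks. Recall that $\psi$ fixes each generator, $\psi(x_i)=x_i$ and $\psi(\delta_i)=\delta_i$. On dots, $(\psi\dif_a\psi)(x_i)=\psi(x_i^2)=x_i^2=\dif_{-a}(x_i)$. On crossings,
\[
(\psi\dif_a\psi)(\delta_i)=\psi\bigl(a-(a+1)x_i\delta_i+(a-1)x_{i+1}\delta_i\bigr)=a-(a+1)\delta_i x_i+(a-1)\delta_i x_{i+1}.
\]
Applying the nilHecke relations $\delta_i x_i = x_{i+1}\delta_i+1$ and $\delta_i x_{i+1}=x_i\delta_i-1$ and collecting terms gives
\[
-a+(a-1)x_i\delta_i-(a+1)x_{i+1}\delta_i,
\]
which is exactly $\dif_{-a}(\delta_i)=-a-((-a)+1)x_i\delta_i+((-a)-1)x_{i+1}\delta_i$.

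For $\sigma$, one uses $\sigma(x_i)=x_{n+1-i}$ and $\sigma(\delta_i)=-\delta_{n-i}$. Then $(\sigma\dif_a\sigma)(x_i)=\sigma(x_{n+1-i}^2)=x_i^2=\dif_{-a}(x_i)$, and for the crossing,
\[
(\sigma\dif_a\sigma)(\delta_i)=\sigma\bigl(-a+(a+1)x_{n-i}\delta_{n-i}-(a-1)x_{n-i+1}\delta_{n-i}\bigr).
\]
Since $\sigma$ is an algebra automorphism, $\sigma(x_{n-i}\delta_{n-i})=-x_{i+1}\delta_i$ and $\sigma(x_{n-i+1}\delta_{n-i})=-x_i\delta_i$, so the expression becomes $-a-(a+1)x_{i+1}\delta_i+(a-1)x_i\delta_i=\dif_{-a}(\delta_i)$.

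No step is a real obstacle: the only point requiring care is confirming that $\psi\dif_a\psi$ is a derivation (the two compositions cancel the order-reversal of $\psi$), after which everything reduces to two short computations on generators using the defining nilHecke relations.
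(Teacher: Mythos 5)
Your proof is correct, and the $\psi$ half is essentially the same computation as in the paper, only written algebraically rather than diagrammatically. The one substantive difference is in the $\sigma$ case: the paper observes that $\sigma$ on $\NH_n$ is induced by the involution $x_t\leftrightarrow x_{n+1-t}$ of $\pol_n$ (there is a small index typo in the paper, which writes $x_{n-t}$), which sends the twisting polynomial $g(\alpha_1,a)$ to $g(\alpha_1+(n-1)a,-a)$ and therefore sends $\dif_a$ to $\dif_{-a}$ at the level of polynomial representations — a conceptual one-liner. You instead verify $\sigma\dif_a\sigma=\dif_{-a}$ directly on generators, using $\sigma(x_i)=x_{n+1-i}$ and $\sigma(\delta_i)=-\delta_{n-i}$, which is a bit longer but entirely self-contained and does not rely on the identification $\NH_n\cong\mathrm{END}_{\sym_n}(\bpol_n)$. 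Both approaches are valid; the paper's is shorter once one has set up the polynomial picture, yours is more elementary. Your preliminary observation that $\psi\dif_a\psi$ is again a derivation (since the two order-reversals cancel), which justifies the reduction to generators, is a genuine clarification of a point the paper leaves implicit.
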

\begin{proof} The symmetry $\sigma$ of $\NH_n$ is induced by the involution on
the algebra $\pol_n$ (also denoted $\sigma$)
transposing $x_t$ and $x_{n-t}$, $1\le t \le n-1$. This involution takes
linear function $g(\alpha_1, a)$ to $g(\alpha_1+(n-1)a, -a)$, which implies
the second relation in (\ref{eq-d-invol}).

It suffices to check the first relation on generators
$x_t$ and $\delta_t$, since both sides are derivations. On $x_t$, we
have
$$\psi\dif_a\psi
\left(
\begin{DGCpicture}
\DGCstrand(0,0)(0,1)\DGCdot{0.5}
\end{DGCpicture}
\right)=\psi\dif_a
\left(
\begin{DGCpicture}
\DGCstrand(0,0)(0,1)\DGCdot{0.5}
\end{DGCpicture}
\right)=
\psi\left(
\begin{DGCpicture} \DGCstrand(0,0)(0,1)\DGCdot{0.5}[r]{$^2$}
\end{DGCpicture}\right)=
\begin{DGCpicture} \DGCstrand(0,0)(0,1)\DGCdot{0.5}[r]{$^2$}
\end{DGCpicture}~=
\dif_{-a}\left(
\begin{DGCpicture}
\DGCstrand(0,0)(0,1)\DGCdot{0.5}
\end{DGCpicture}
\right),
$$
while on $\delta_t$
\begin{eqnarray*}
\psi\dif_a\psi\left( \
\begin{DGCpicture}
\DGCstrand(0,0)(1,1)
\DGCstrand(1,0)(0,1)
\end{DGCpicture} \ \right) & \hspace{-0.1in} = & \hspace{-0.1in} \psi\dif_a
\left(~
\begin{DGCpicture}\DGCstrand(0,0)(1,1)
\DGCstrand(1,0)(0,1)
\end{DGCpicture}
~\right) =\psi \left(
a~\begin{DGCpicture}\DGCstrand(0,0)(0,1) \DGCstrand(1,0)(1,1)
\end{DGCpicture} -(a+1)
\begin{DGCpicture}\DGCstrand(0,0)(1,1)
\DGCstrand(1,0)(0,1)\DGCdot{0.75}
\end{DGCpicture}+(a-1)
\begin{DGCpicture}
\DGCstrand(0,0)(1,1)\DGCdot{0.75}
\DGCstrand(1,0)(0,1)
\end{DGCpicture}~\right)\\
& \hspace{-0.1in} = & \hspace{-0.1in}a~
\begin{DGCpicture}
\DGCstrand(0,0)(0,1)
\DGCstrand(1,0)(1,1)
\end{DGCpicture}-(a+1)
\begin{DGCpicture}
\DGCstrand(0,0)(1,1)\DGCdot{0.25}
\DGCstrand(1,0)(0,1)
\end{DGCpicture}+(a-1)
\begin{DGCpicture}
\DGCstrand(0,0)(1,1)
\DGCstrand(1,0)(0,1)\DGCdot{0.25}
\end{DGCpicture}\\
& \hspace{-0.1in} = & \hspace{-0.1in} a~
\begin{DGCpicture}
\DGCstrand(0,0)(0,1)
\DGCstrand(1,0)(1,1)
\end{DGCpicture}-(a+1)\left(~
\begin{DGCpicture}
\DGCstrand(0,0)(1,1)\DGCdot{0.75}
\DGCstrand(1,0)(0,1)
\end{DGCpicture}+
\begin{DGCpicture}
\DGCstrand(0,0)(0,1)
\DGCstrand(1,0)(1,1)
\end{DGCpicture}~\right)+(a-1)\left(~
\begin{DGCpicture}
\DGCstrand(0,0)(1,1)
\DGCstrand(1,0)(0,1)\DGCdot{0.75}
\end{DGCpicture}-
\begin{DGCpicture}
\DGCstrand(0,0)(0,1)
\DGCstrand(1,0)(1,1)
\end{DGCpicture}~\right)\\
& \hspace{-0.1in} = & \hspace{-0.1in}(-a)
\begin{DGCpicture}
\DGCstrand(0,0)(0,1)
\DGCstrand(1,0)(1,1)
\end{DGCpicture}-(-a+1)
\begin{DGCpicture}
\DGCstrand(0,0)(1,1)
\DGCstrand(1,0)(0,1)\DGCdot{0.75}
\end{DGCpicture}+(-a-1)
\begin{DGCpicture}
\DGCstrand(0,0)(1,1)\DGCdot{0.75}
\DGCstrand(1,0)(0,1)
\end{DGCpicture}=\dif_{-a}\left(~
\begin{DGCpicture}
\DGCstrand(0,0)(1,1)
\DGCstrand(1,0)(0,1)
\end{DGCpicture}~\right).
\end{eqnarray*}
The result follows.
\end{proof}

\begin{prop}\label{prop-NHn-compact-H-module} For any $n \in \N$ and $a\in \F_p$, the
graded $H$-module $\NH_n$ is a compact object of $H\udmod$.
\end{prop}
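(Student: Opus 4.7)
The plan is to exhibit $\NH_n$ as a finite iterated extension, in the triangulated category $H\udmod$, of objects already known to be compact. Consider the length filtration on $S_n$, giving $F_k := \bigoplus_{\ell(w)\le k}\pol_n\cdot\delta_w \subset \NH_n$, a finite chain of graded left $\pol_n$-submodules
\[
F_0 \subset F_1 \subset \cdots \subset F_{\binom{n}{2}} = \NH_n.
\]

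The main step is to check that $F_k$ is stable under $\dif_a$. By the Leibniz rule it suffices to analyze $\dif_a(\delta_w)$ for a reduced word $\delta_w = \delta_{i_1}\cdots\delta_{i_r}$ with $r = \ell(w)$. Expanding via $\dif_a(\delta_i) = a - (a+1)x_i\delta_i + (a-1)x_{i+1}\delta_i$ produces two kinds of contributions: length-$(r-1)$ terms arising from deletion of some $\delta_{i_j}$ (from the constant~$a$), and length-$r$ expressions of the form $\delta_{i_1}\cdots(x\delta_{i_j})\cdots\delta_{i_r}$ with $x\in\{x_{i_j}, x_{i_j+1}\}$. Commuting $x$ leftward through $\delta_{i_{j-1}},\ldots,\delta_{i_1}$ using the nilHecke relations $\delta_i x_i = x_{i+1}\delta_i + 1$, $x_{i+1}\delta_i = \delta_i x_i - 1$, and $\delta_i x_j = x_j\delta_i$ for $j\ne i,i+1$, each such expression reduces to a main term $(\text{single variable})\cdot\delta_{i_1}\cdots\delta_{i_r}$ plus strictly shorter correction terms. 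Hence $\dif_a(\delta_w)\in F_{\ell(w)}$ and $F_k$ is $\dif_a$-stable.

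The same computation identifies the $p$-DG subquotient $F_k/F_{k-1}$ as a direct sum of summands $\pol_n\cdot\delta_w \bmod F_{k-1}$ over $\ell(w)=k$, with the induced differential on each summand acting by
\[
\dif_a(f\cdot\delta_w) \equiv (\dif(f) + f\cdot g_w)\cdot\delta_w \pmod{F_{k-1}},
\]
where $g_w = \sum_{t=1}^n \gamma_{w,t}x_t$ is a linear polynomial with coefficients in $\F_p$ collected from the leftward commutations above. This formula matches the defining twist of a balanced polynomial module, so each summand is isomorphic, as a $p$-DG $\pol_n$-module and in particular as an $H$-module, to $\bpol_n(\alpha_w)$ up to a grading shift, where $\alpha_w = (\gamma_{w,1},\ldots,\gamma_{w,n})$.

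Finally, each $\bpol_n(\alpha_w)$ is compact in $H\udmod$: as shown in the excerpt, under the $H$-action it factors as a tensor product $\bigotimes_t \bpol_1(\gamma_{w,t})$, and each $\bpol_1(\beta)$ contains a finite-dimensional quasi-isomorphic $H$-submodule ($V_0$ if $\beta=0$, $V_{p-\beta}$ otherwise), so $\bpol_n(\alpha_w)$ is isomorphic in $H\udmod$ to a finite-dimensional $H$-module and hence lies in $H\udmod^c$. The short exact sequences $0\to F_{k-1}\to F_k \to F_k/F_{k-1}\to 0$ induce exact triangles in $H\udmod$ by Lemma~\ref{lemma-ses-lead-to-dt-H-mod-case}, and since compact objects form a thick triangulated subcategory, induction on $k$ yields the compactness of $\NH_n$. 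The principal technical hurdle is the commutation bookkeeping in the second paragraph; once $F_k$ is shown $\dif_a$-stable and the subquotients are identified as balanced polynomial modules, the rest reduces to facts already established in the excerpt.
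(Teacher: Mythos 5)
Your argument is correct and follows essentially the same strategy as the paper's: filter $\NH_n$ by the $\pol_n$-span of the $\delta_w$'s (ordered by length), observe from the explicit formula for $\dif_a(\delta_w)$ that the filtration is $\dif_a$-stable with subquotients that are shifted copies of $\bpol_n(\alpha)$ for various $\alpha \in \F_p^n$, and invoke that each such module is isomorphic in $H\udmod$ to a finite-dimensional (hence compact) $H$-module. The paper refines the length filtration to a total order on $S_n$ so that each subquotient is a single $\bpol_n(\alpha)\{m\}$, while you group all permutations of a given length into one layer; this is purely cosmetic, and your closing appeal to thickness of the compact subcategory is an equivalent way of packaging the paper's "quasi-isomorphic to a finite-dimensional $H$-module" conclusion.
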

\begin{proof}
We must
show that $\NH_n$ is quasi-isomorphic to a finite dimensional $H$-module.
To do this, fix a reduced expression $s_{t_1}\cdots s_{t_r}$ for each element $w\in S_n$,
where $s_t=(t,t+1)$, and let $\delta_w:=\delta_{t_1}\cdots\delta_{t_r}$
(the element $\delta_w$ of $\NH_n$ depends only on $w$ and not on a reduced expression).
Equation
(\ref{eqn-dif-crossing}) and the nilHecke relations imply that
\begin{equation} \label{eq-da-on-deltaw}
\dif_a(\delta_w)=\sum_{t=1}^nb_{t}x_t\delta_w+\sum_{w^\prime}f_{w^\prime}
\delta_{w^\prime},
\end{equation}
where $b_k\in \F_p$, permutations
$w^\prime\in S_n$ appearing in the second summand have strictly
fewer crossings than $w$, and $f_{w^\prime}$ are polynomials in $x_i$'s
(both $b_k$ and $f_{w^\prime}$ depend on $w$). The nilHecke
algebra $\NH_n$ is a free left $\pol_n$-module with a basis of elements $\delta_w$, over all
$w\in S_n$.
Choose
a total ordering $\leq$ on $S_n$ which refines the partial order $w_1 < w_2$ whenever $w_1$ has fewer
crossings than $w_2$. Let $\tau_1, \tau_2, \dots, \tau_{n!}$ be the list of
permutations in this order. Consider the filtration
$$ 0 = N_0 \subset N_1 \subset N_2 \subset \dots \subset N_{n!} = \NH_n$$
where $N_j$ is the free left $\pol_n$-submodule of $\NH_n$ spanned by
$\delta_{\tau_1}, \delta_{\tau_2}, \dots , \delta_{\tau_j}$.

Equations (\ref{eq-da-on-deltaw}) show that the derivation
$\dif_a$ preserves this filtration, $\dif_a(N_j) \subset N_j$, and that
each subquotient $N_j / N_{j-1}$ is isomorphic to the p-DG $\pol_n$-module
$\bpol_n(\alpha)\{m\}$ for $\alpha=(\alpha_{1}, \dots , \alpha_n)\in \F_p^n$ and some $m\in \Z$.

From the earlier discussion of rank one $\pol_n$ $p$-DG modules we know that
$\bpol_n(\alpha)$ is quasi-isomorphic to a finite-dimensional $H$-module for any $\alpha$
(equivalently, the graded $H$-module $\bpol_n(\alpha)$ is isomorphic to a direct sum
of a finite-dimensional $H$-module and a free $H$-module).
Therefore, $\NH_n$ has the same property.
\end{proof}

\begin{lemma}\label{lemma-dif-0-on-delta-n}The differential $\dif_0$
acts on the element $\delta(n)\in \NH_n$ by
\begin{equation*}\label{eqn-dif0-on-delta}
\dif_0(\delta(n))=-(n-1)e_1^{(n)}\delta(n),
\end{equation*}
which is depicted diagrammatically by
$$\dif_0\left(
\begin{DGCpicture}
\DGCstrand(0,0)(0,2)
\DGCstrand(1,0)(1,2)
\DGCstrand(2,0)(2,2)
\DGCstrand(3,0)(3,2)
\DGCcoupon(-0.25,0.65)(3.25,1.35){$\delta(n)$}
\end{DGCpicture}
\right)=-(n-1)~
\begin{DGCpicture}
\DGCstrand(0,0)(0,2)
\DGCstrand(1,0)(1,2)
\DGCstrand(2,0)(2,2)
\DGCstrand(3,0)(3,2)
\DGCcoupon(-0.25,0.15)(3.25,0.85){$\delta(n)$}
\DGCcoupon(-0.25,1.15)(3.25,1.85){$e_1$}
\end{DGCpicture}.
$$
\end{lemma}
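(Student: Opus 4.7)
The plan is to use the intrinsic identification \eqref{eq-nh-as-sym-mod}, $\NH_n\cong \mathrm{END}_{\sym_n}(\bpol_n)$, and reduce the desired formula to an operator identity on polynomials. When $a=0$, the family $\bpol_n(\alpha_1,0)$ includes $\alpha_1=0$, in which case the differential on $\bpol_n$ is simply the derivation determined by $\dif(x_i)=x_i^2$ and Leibniz. The induced differential on $\NH_n$ is then
\[
\dif_0(\xi)(f)\ =\ \dif(\xi(f))-\xi(\dif(f)),\qquad \xi\in\NH_n,\ f\in\bpol_n,
\]
so it suffices to prove that, as operators on $\bpol_n$,
\[
\dif(\delta(n)(f))-\delta(n)(\dif(f))\ =\ -(n-1)\,e_1^{(n)}\,\delta(n)(f)\qquad\text{for all } f\in\bpol_n.
\]

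The key step is to represent $\delta(n)$ through antisymmetrization by the standard Vandermonde formula
\[
\delta(n)(f)\ =\ \frac{1}{\Delta}\sum_{w\in S_n}(-1)^{\ell(w)}w(f),\qquad \Delta\ :=\ \prod_{1\le i<j\le n}(x_i-x_j),
\]
which is a polynomial identity in $\bpol_n$ because the numerator is $S_n$-antisymmetric and hence divisible by the Vandermonde. Because $\dif(x_i)=x_i^2$ is variable-wise, $S_n$ acts on $\pol_n$ by $\Bbbk$-algebra automorphisms that commute with $\dif$: $w\circ\dif=\dif\circ w$ for every $w\in S_n$. Hence the antisymmetrization $\sum_w(-1)^{\ell(w)}w(f)$ commutes with $\dif$, and applying the quotient rule to the equation $\Delta\cdot\delta(n)(f)=\sum_w(-1)^{\ell(w)}w(f)$ in the fraction field of $\pol_n$ gives
\[
\dif(\delta(n)(f))\ =\ \delta(n)(\dif(f))\ -\ \frac{\dif(\Delta)}{\Delta}\,\delta(n)(f).
\]

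It remains to compute the logarithmic derivative of the Vandermonde:
\[
\frac{\dif(\Delta)}{\Delta}\ =\ \sum_{i<j}\frac{\dif(x_i-x_j)}{x_i-x_j}\ =\ \sum_{i<j}\frac{x_i^2-x_j^2}{x_i-x_j}\ =\ \sum_{i<j}(x_i+x_j)\ =\ (n-1)\,e_1^{(n)},
\]
since each variable appears in exactly $n-1$ of the unordered pairs $\{i,j\}$. Substituting yields $\dif_0(\delta(n))(f)=-(n-1)\,e_1^{(n)}\,\delta(n)(f)$ for every $f\in\bpol_n$, whence the claimed identity $\dif_0(\delta(n))=-(n-1)\,e_1^{(n)}\,\delta(n)$ holds in $\NH_n$.

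The only mild subtlety is the temporary passage through the field of fractions of $\pol_n$: the Vandermonde formula for $\delta(n)$ involves $\Delta^{-1}$, but since the antisymmetric numerator is always divisible by $\Delta$, the quotient-rule calculation produces a genuine polynomial identity. Beyond this bookkeeping point, the argument is a short computation using only the Leibniz rule and the manifest $S_n$-equivariance of $\dif$, so no structural obstacle arises.
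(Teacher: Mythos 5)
Your argument is correct, and it is genuinely different from the paper's proof. The paper argues by induction on $n$, pulling a single strand around the diagram to reduce $\delta(k)$ to $\delta(k-1)$ and then pushing dots through crossings; the computation is entirely diagrammatic and repeated for each stage of the induction. You instead exploit the operator realization $\NH_n=\mathrm{END}_{\sym_n}(\bpol_n)$ together with the classical Vandermonde formula
$\delta(n)(f)=\Delta^{-1}\sum_{w\in S_n}(-1)^{\ell(w)}w(f)$, $\Delta=\prod_{i<j}(x_i-x_j)$,
observe that the antisymmetrizer commutes with $\dif$ (since $\dif(x_i)=x_i^2$ is variable-wise and $w\circ\dif=\dif\circ w$), and reduce the whole statement to the logarithmic derivative $\dif(\Delta)/\Delta=\sum_{i<j}(x_i+x_j)=(n-1)e_1^{(n)}$. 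This is shorter and more conceptual: it computes $\dif_0(\delta(n))$ in one stroke, and it makes transparent that the correction term is precisely the logarithmic derivative of the Vandermonde. The paper's inductive proof, while longer, stays inside the graphical calculus and illustrates the dot-sliding machinery that is reused in later lemmas (the authors explicitly note ``the same method can be used in similar situations later on''), such as Lemma~\ref{lemma-dif-1-on-delta-n}. One small remark: the Vandermonde identity should be quoted explicitly (it is standard, e.g.\ \cite[Section 2.5]{Man}, the reference the paper already uses for the nondegenerate pairing), and your verification that $\Delta\cdot\delta(n)(f)$ is a polynomial makes the ``quotient-rule'' step a bona fide polynomial identity rather than a fraction-field shortcut, so the only subtlety you flag is indeed harmless.
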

\begin{proof}Here we give a relatively detailed proof of the formula as the same
method can be used in similar situations later on.
The lemma is proven by induction. The $n=1$ case is clear.
Suppose we have shown the formula when $n\leq k-1$.
When $n=k$, we have
\[
\begin{array}{l}
\dif_0\left(\begin{DGCpicture}
\DGCstrand(0,0)(0,2)
\DGCstrand(0.5,0)(0.5,2)
\DGCstrand(1,0)(1,2)
\DGCstrand(1.5,0)(1.5,2)
\DGCstrand(2,0)(2,2)
\DGCcoupon(-0.15,0.5)(2.15,1.5){$\delta(k)$}
\end{DGCpicture}\right)  =  \dif_0\left(~
\begin{DGCpicture}
\DGCstrand(0,0)(2,0.95)(2,2)
\DGCstrand(0.5,0)(0,0.75)(0,2)
\DGCstrand(1,0)(0.5,0.75)(0.5,2)
\DGCstrand(1.5,0)(1,0.75)(1,2)
\DGCstrand(2,0)(1.5,0.75)(1.5,2)
\DGCcoupon(-0.15,1)(1.65,1.75){$\delta(k-1)$}
\end{DGCpicture}~\right)\\
\\
 =  -(k-2)~
\begin{DGCpicture}
\DGCstrand(0,0)(2,0.95)(2,2)
\DGCstrand(0.5,0)(0,0.75)(0,2)
\DGCstrand(1,0)(0.5,0.75)(0.5,2)
\DGCstrand(1.5,0)(1,0.75)(1,2)
\DGCstrand(2,0)(1.5,0.75)(1.5,2)
\DGCcoupon(-0.15,0.85)(1.65,1.35){$\delta(k-1)$}
\DGCcoupon(-0.15,1.45)(1.65,1.85){$e_{1}$}
\end{DGCpicture} -
\begin{DGCpicture}
\DGCstrand(0,0)(2,0.95)(2,2)
\DGCstrand(0.5,0)(0,0.75)(0,2)\DGCdot{0.75}
\DGCstrand(1,0)(0.5,0.75)(0.5,2)
\DGCstrand(1.5,0)(1,0.75)(1,2)
\DGCstrand(2,0)(1.5,0.75)(1.5,2)
\DGCcoupon(-0.15,1)(1.65,1.75){$\delta(k-1)$}
\end{DGCpicture}
-
\begin{DGCpicture}
\DGCstrand(0,0)(2,0.95)(2,2)\DGCdot{0.35}
\DGCstrand(0.5,0)(0,0.75)(0,2)
\DGCstrand(1,0)(0.5,0.75)(0.5,2)
\DGCstrand(1.5,0)(1,0.75)(1,2)
\DGCstrand(2,0)(1.5,0.75)(1.5,2)
\DGCcoupon(-0.15,1)(1.65,1.75){$\delta(k-1)$}
\end{DGCpicture}\\
\\
-~
\begin{DGCpicture}
\DGCstrand(0,0)(2,0.95)(2,2)
\DGCstrand(0.5,0)(0,0.75)(0,2)
\DGCstrand(1,0)(0.5,0.75)(0.5,2)\DGCdot{0.75}
\DGCstrand(1.5,0)(1,0.75)(1,2)
\DGCstrand(2,0)(1.5,0.75)(1.5,2)
\DGCcoupon(-0.15,1)(1.65,1.75){$\delta(k-1)$}
\end{DGCpicture}
-\begin{DGCpicture}
\DGCstrand(0,0)(2,0.95)(2,2)\DGCdot{0.45}
\DGCstrand(0.5,0)(0,0.75)(0,2)
\DGCstrand(1,0)(0.5,0.75)(0.5,2)
\DGCstrand(1.5,0)(1,0.75)(1,2)
\DGCstrand(2,0)(1.5,0.75)(1.5,2)
\DGCcoupon(-0.15,1)(1.65,1.75){$\delta(k-1)$}
\end{DGCpicture}-\cdots-
\begin{DGCpicture}
\DGCstrand(0,0)(2,0.95)(2,2)
\DGCstrand(0.5,0)(0,0.75)(0,2)
\DGCstrand(1,0)(0.5,0.75)(0.5,2)
\DGCstrand(1.5,0)(1,0.75)(1,2)
\DGCstrand(2,0)(1.5,0.75)(1.5,2)\DGCdot{0.75}
\DGCcoupon(-0.15,1)(1.65,1.75){$\delta(k-1)$}
\end{DGCpicture}-
\begin{DGCpicture}
\DGCstrand(0,0)(2,0.95)(2,2)\DGCdot{0.75}
\DGCstrand(0.5,0)(0,0.75)(0,2)
\DGCstrand(1,0)(0.5,0.75)(0.5,2)
\DGCstrand(1.5,0)(1,0.75)(1,2)
\DGCstrand(2,0)(1.5,0.75)(1.5,2)
\DGCcoupon(-0.15,1)(1.65,1.75){$\delta(k-1)$}
\end{DGCpicture}\\
\\
 = -(k-2)\begin{DGCpicture}
\DGCstrand(0,0)(2,0.95)(2,2)
\DGCstrand(0.5,0)(0,0.75)(0,2)
\DGCstrand(1,0)(0.5,0.75)(0.5,2)
\DGCstrand(1.5,0)(1,0.75)(1,2)
\DGCstrand(2,0)(1.5,0.75)(1.5,2)
\DGCcoupon(-0.15,0.85)(1.65,1.35){$\delta(k-1)$}
\DGCcoupon(-0.15,1.45)(1.65,1.85){$e_1$}
\end{DGCpicture} -\left(~
\begin{DGCpicture}
\DGCstrand(0,0)(2,0.95)(2,2)
\DGCstrand(0.5,0)(0,0.75)(0,2)\DGCdot{0.75}
\DGCstrand(1,0)(0.5,0.75)(0.5,2)
\DGCstrand(1.5,0)(1,0.75)(1,2)
\DGCstrand(2,0)(1.5,0.75)(1.5,2)
\DGCcoupon(-0.15,1)(1.65,1.75){$\delta(k-1)$}
\end{DGCpicture}+
\begin{DGCpicture}
\DGCstrand(0,0)(2,0.95)(2,2)
\DGCstrand(0.5,0)(0,0.75)(0,2)
\DGCstrand(1,0)(0.5,0.75)(0.5,2)\DGCdot{0.75}
\DGCstrand(1.5,0)(1,0.75)(1,2)
\DGCstrand(2,0)(1.5,0.75)(1.5,2)
\DGCcoupon(-0.15,1)(1.65,1.75){$\delta(k-1)$}
\end{DGCpicture}
+\cdots
\begin{DGCpicture}
\DGCstrand(0,0)(2,0.95)(2,2)
\DGCstrand(0.5,0)(0,0.75)(0,2)
\DGCstrand(1,0)(0.5,0.75)(0.5,2)
\DGCstrand(1.5,0)(1,0.75)(1,2)
\DGCstrand(2,0)(1.5,0.75)(1.5,2)\DGCdot{0.75}
\DGCcoupon(-0.15,1)(1.65,1.75){$\delta(k-1)$}
\end{DGCpicture}~\right)\\
\\
 -\left(~\begin{DGCpicture}
\DGCstrand(0,0)(2,0.95)(2,2)\DGCdot{0.35}
\DGCstrand(0.5,0)(0,0.75)(0,2)
\DGCstrand(1,0)(0.5,0.75)(0.5,2)
\DGCstrand(1.5,0)(1,0.75)(1,2)
\DGCstrand(2,0)(1.5,0.75)(1.5,2)
\DGCcoupon(-0.15,1)(1.65,1.75){$\delta(k-1)$}
\end{DGCpicture}
+
\begin{DGCpicture}
\DGCstrand(0,0)(2,0.95)(2,2)\DGCdot{0.45}
\DGCstrand(0.5,0)(0,0.75)(0,2)
\DGCstrand(1,0)(0.5,0.75)(0.5,2)
\DGCstrand(1.5,0)(1,0.75)(1,2)
\DGCstrand(2,0)(1.5,0.75)(1.5,2)
\DGCcoupon(-0.15,1)(1.65,1.75){$\delta(k-1)$}
\end{DGCpicture}+\cdots+
\begin{DGCpicture}
\DGCstrand(0,0)(2,0.95)(2,2)\DGCdot{0.75}
\DGCstrand(0.5,0)(0,0.75)(0,2)
\DGCstrand(1,0)(0.5,0.75)(0.5,2)
\DGCstrand(1.5,0)(1,0.75)(1,2)
\DGCstrand(2,0)(1.5,0.75)(1.5,2)
\DGCcoupon(-0.15,1)(1.65,1.75){$\delta(k-1)$}
\end{DGCpicture}
~\right)\\
\\
\end{array}
\]
\[
\begin{array}{l}
 =
-(k-2)\begin{DGCpicture}
\DGCstrand(0,0)(2,0.95)(2,2)
\DGCstrand(0.5,0)(0,0.75)(0,2)
\DGCstrand(1,0)(0.5,0.75)(0.5,2)
\DGCstrand(1.5,0)(1,0.75)(1,2)
\DGCstrand(2,0)(1.5,0.75)(1.5,2)
\DGCcoupon(-0.15,0.85)(1.65,1.35){$\delta(k-1)$}
\DGCcoupon(-0.15,1.45)(1.65,1.85){$e_1$}
\end{DGCpicture}-
\begin{DGCpicture}
\DGCstrand(0,0)(2,0.95)(2,2)
\DGCstrand(0.5,0)(0,0.75)(0,2)
\DGCstrand(1,0)(0.5,0.75)(0.5,2)
\DGCstrand(1.5,0)(1,0.75)(1,2)
\DGCstrand(2,0)(1.5,0.75)(1.5,2)
\DGCcoupon(-0.15,0.85)(1.65,1.25){$e_{1}$}
\DGCcoupon(-0.15,1.35)(1.65,1.85){$\delta(k-1)$}
\end{DGCpicture}-
(k-1)\begin{DGCpicture}
\DGCstrand(0,0)(2,0.95)(2,2)\DGCdot{1.75}
\DGCstrand(0.5,0)(0,0.75)(0,2)
\DGCstrand(1,0)(0.5,0.75)(0.5,2)
\DGCstrand(1.5,0)(1,0.75)(1,2)
\DGCstrand(2,0)(1.5,0.75)(1.5,2)
\DGCcoupon(-0.15,0.85)(1.65,1.35){$\delta(k-1)$}
\end{DGCpicture}\\
\\
 = - (k-1)\begin{DGCpicture}
\DGCstrand(0,0)(2,0.95)(2,2)
\DGCstrand(0.5,0)(0,0.75)(0,2)
\DGCstrand(1,0)(0.5,0.75)(0.5,2)
\DGCstrand(1.5,0)(1,0.75)(1,2)
\DGCstrand(2,0)(1.5,0.75)(1.5,2)
\DGCcoupon(-0.15,0.75)(1.65,1.25){$\delta(k-1)$}
\DGCcoupon(-0.15,1.35)(2.15,1.75){$e_1$}
\end{DGCpicture}=-(k-1)\begin{DGCpicture}
\DGCstrand(0,0)(0,2)
\DGCstrand(0.5,0)(0.5,2)
\DGCstrand(1,0)(1,2)
\DGCstrand(1.5,0)(1.5,2)
\DGCstrand(2,0)(2,2)
\DGCcoupon(-0.15,0.25)(2.15,1.25){$\delta(k)$}
\DGCcoupon(-0.15,1.35)(2.15,1.75){$e_1$}
\end{DGCpicture} \ ,
\end{array}
\]
where in the fourth equality, all dots on the last strand in the second
parenthesized expression can slide up
without obstruction essentially because $\delta_i^2=0$, while in the fifth
equality we used that elements of $\sym_{k-1}$ commute with $\delta(k-1)$.
\end{proof}

\begin{lemma}\label{lemma-dif-1-on-delta-n}
The differential $\dif_1$ acts on $\delta(n)\in \NH_n$ as follows:
\begin{equation*}
\dif_1(\delta(n)) = -\sum_{t=1}^n(n-t)x_t\delta(n)-\sum_{t=1}^n(t-1)\delta(n)x_t,
\end{equation*}
which has the diagrammatic presentation
\begin{eqnarray*}
\dif_1\left(
\begin{DGCpicture}
\DGCstrand(0,0)(0,2) \DGCstrand(0.5,0)(0.5,2) \DGCstrand(1,0)(1,2)
\DGCstrand(1.5,0)(1.5,2)
\DGCcoupon(-0.15,0.5)(1.65,1.5){$\delta(n)$}
\end{DGCpicture}
\right)& = & - (n-1) {\begin{DGCpicture}
\DGCstrand(0,0)(0,2)\DGCdot{1.75} \DGCstrand(0.5,0)(0.5,2)
\DGCstrand(1,0)(1,2) \DGCstrand(1.5,0)(1.5,2)
\DGCcoupon(-0.15,0.5)(1.65,1.5){$\delta(n)$}
\end{DGCpicture}}-(n-2)
{\begin{DGCpicture} \DGCstrand(0,0)(0,2)
\DGCstrand(0.5,0)(0.5,2)\DGCdot{1.75} \DGCstrand(1,0)(1,2)
\DGCstrand(1.5,0)(1.5,2)
\DGCcoupon(-0.15,0.5)(1.65,1.5){$\delta(n)$}
\end{DGCpicture}}- \cdots -
{\begin{DGCpicture} \DGCstrand(0,0)(0,2) \DGCstrand(0.5,0)(0.5,2)
\DGCstrand(1,0)(1,2)\DGCdot{1.75} \DGCstrand(1.5,0)(1.5,2)
\DGCcoupon(-0.15,0.5)(1.65,1.5){$\delta(n)$}
\end{DGCpicture}} \\
&&\\
&& - {\begin{DGCpicture} \DGCstrand(0,0)(0,2)
\DGCstrand(0.5,0)(0.5,2)\DGCdot{0.25} \DGCstrand(1,0)(1,2)
\DGCstrand(1.5,0)(1.5,2)
\DGCcoupon(-0.15,0.5)(1.65,1.5){$\delta(n)$}
\end{DGCpicture}}
-\cdots-(n-2) {\begin{DGCpicture} \DGCstrand(0,0)(0,2)
\DGCstrand(0.5,0)(0.5,2) \DGCstrand(1,0)(1,2)\DGCdot{0.25}
\DGCstrand(1.5,0)(1.5,2)
\DGCcoupon(-0.15,0.5)(1.65,1.5){$\delta(n)$}
\end{DGCpicture}}
-(n-1) {\begin{DGCpicture} \DGCstrand(0,0)(0,2)
\DGCstrand(0.5,0)(0.5,2) \DGCstrand(1,0)(1,2)
\DGCstrand(1.5,0)(1.5,2)\DGCdot{0.25}
\DGCcoupon(-0.15,0.5)(1.65,1.5){$\delta(n)$}
\end{DGCpicture}} \ .
\end{eqnarray*}
\end{lemma}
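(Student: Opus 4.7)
The plan is to reduce the computation of $\dif_1(\delta(n))$ to Lemma~\ref{lemma-dif-0-on-delta-n} by showing that $\dif_1-\dif_0$ is the inner derivation on $\NH_n$ given by commutation with $\sum_{t=1}^{n}(t-1)x_t$.

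First I would recall that $\dif_a$ was constructed as the differential induced on $\NH_n=\mathrm{END}_{\sym_n}(\bpol_n)$ by the $p$-derivation $\dif_\alpha$ on the polynomial representation $\bpol_n(\alpha_1,a)$, where
\[
\dif_\alpha(f\cdot 1_\alpha)=\bar{\dif}(f)\cdot 1_\alpha+fg_\alpha\cdot 1_\alpha,\qquad g_\alpha=\sum_{t=1}^{n}(\alpha_1+(t-1)a)x_t,
\]
with $\bar{\dif}$ the underlying derivation on $\pol_n$ given by $\bar{\dif}(x_i)=x_i^2$. Unpacking the formula $\dif_a(\xi)(v)=\dif_\alpha(\xi v)-\xi\,\dif_\alpha(v)$ for any $\xi\in\NH_n$ yields $\dif_a(\xi)=[\bar{\dif},\xi]+[L_{g_\alpha},\xi]$, where $L_{g_\alpha}$ denotes left multiplication by $g_\alpha$. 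Splitting $g_\alpha=\alpha_1 e_1+a\sum_{t=1}^{n}(t-1)x_t$ and using that $e_1\in\sym_n$ is central in $\NH_n$, the $\alpha_1 e_1$ contribution kills itself under commutation, leaving
\begin{equation}\label{eqn-dif-a-vs-dif-0-proposal}
\dif_a(\xi)=\dif_0(\xi)+a\sum_{t=1}^{n}(t-1)[x_t,\xi]\qquad(\xi\in\NH_n).
\end{equation}
As a direct verification: on $x_i$ both sides of (\ref{eqn-dif-a-vs-dif-0-proposal}) vanish; on $\delta_i$, a short computation using $[x_i,\delta_i]=(x_i-x_{i+1})\delta_i-1$ and $[x_{i+1},\delta_i]=(x_{i+1}-x_i)\delta_i+1$ produces $\dif_a(\delta_i)-\dif_0(\delta_i)=a(1-x_i\delta_i+x_{i+1}\delta_i)$, matching the right-hand side.

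Specializing (\ref{eqn-dif-a-vs-dif-0-proposal}) to $a=1$ and $\xi=\delta(n)$, and invoking Lemma~\ref{lemma-dif-0-on-delta-n} in the form $\dif_0(\delta(n))=-(n-1)e_1\delta(n)=-\sum_{t=1}^{n}(n-1)x_t\delta(n)$, yields
\[
\dif_1(\delta(n))=-\sum_{t=1}^{n}(n-1)x_t\delta(n)+\sum_{t=1}^{n}(t-1)x_t\delta(n)-\sum_{t=1}^{n}(t-1)\delta(n)x_t.
\]
Combining the first two sums via $(t-1)-(n-1)=-(n-t)$ produces precisely the claimed identity, and its translation into the diagrammatic form of the lemma is routine. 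The only step requiring any thought is the identity (\ref{eqn-dif-a-vs-dif-0-proposal}); once that is in hand no real obstacle remains, and I view this comparison of $\dif_1$ with $\dif_0$ as essentially the main content of the lemma.
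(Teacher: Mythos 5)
Your proof is correct, and it takes a genuinely different route from the paper. The paper's own proof of Lemma~\ref{lemma-dif-1-on-delta-n} is deferred to an induction on $n$ ``analogous to the one used in the proof of the previous lemma'' (Lemma~\ref{lemma-dif-0-on-delta-n}), i.e.\ a diagrammatic computation peeling off the last strand of $\delta(n)$ and sliding dots. You instead observe that the difference $\dif_a-\dif_0$ is the inner derivation $a\,[\sum_{t}(t-1)x_t,\,-]$ on $\NH_n$ — a fact that follows cleanly from the construction of $\dif_a$ as $[\dif_\alpha,-]$ on $\mathrm{END}_{\sym_n}(\bpol_n(\alpha))$, after splitting $g_\alpha=\alpha_1 e_1 + a\sum_t(t-1)x_t$ and discarding the central $e_1$-term — and then read off the answer from the already-proved $\dif_0$ case. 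Your identity is in fact a strengthening of the observation the authors use later (Corollary~\ref{cor-dif-a-on-delta-n} is proved via $\dif_a=\dif_0+a(\dif_1-\dif_0)$, which is weaker: it only records affine dependence on $a$ without identifying $\dif_1-\dif_0$ as inner). The spot-check on $\delta_i$ using $[x_i,\delta_i]=(x_i-x_{i+1})\delta_i-1$ and $[x_{i+1},\delta_i]=(x_{i+1}-x_i)\delta_i+1$ is correct. Your approach buys conceptual clarity and brevity, at the cost of one extra general observation (that the difference of the two local differentials is inner); the paper's approach is self-contained diagram calculus but repeats the structure of the preceding lemma's computation.
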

\begin{proof}The proof is by an induction argument analogous to the one we used
in the proof of the previous lemma. We leave it as an exercise.
\end{proof}

\begin{cor}\label{cor-dif-a-on-delta-n}The differential $\dif_a$ acts on $\delta(n)\in \NH_n$ by
$$ \dif_a(\delta(n)) = \left(\sum_{t=1}^n (t-1)ax_t\right) \delta(n) -
\delta(n)\left(\sum_{t=1}^n  \left((t-1)a+n-1\right) x_t\right). $$
\end{cor}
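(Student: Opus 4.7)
My plan is to exploit the affine dependence of $\dif_a$ on the parameter $a$. First, I would observe from the defining formulas $\dif_a(x_i) = x_i^2$ and $\dif_a(\delta_i) = a - (a+1)x_i\delta_i + (a-1)x_{i+1}\delta_i$ that
\[
\dif_a \ = \ \dif_0 + a D,
\]
where $D$ is the $\Bbbk$-linear derivation of $\NH_n$ determined on generators by $D(x_i) = 0$ and $D(\delta_i) = 1 - x_i\delta_i + x_{i+1}\delta_i$. Indeed, both $\dif_a$ and $\dif_0 + aD$ are derivations that agree on all the generators, so they must coincide on all of $\NH_n$.

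Next, I would compute $D(\delta(n))$ by subtracting the formulas from the two preceding lemmas, since $D = \dif_1 - \dif_0$. Combining the $-\sum_t(n-t)x_t\delta(n)$ appearing in Lemma~\ref{lemma-dif-1-on-delta-n} with the $+(n-1)\sum_t x_t\delta(n)$ coming from Lemma~\ref{lemma-dif-0-on-delta-n} yields
\[
D(\delta(n)) \ = \ \sum_{t=1}^n (t-1) x_t \delta(n) \ - \ \sum_{t=1}^n (t-1) \delta(n) x_t.
\]

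Finally, I would substitute into $\dif_a(\delta(n)) = \dif_0(\delta(n)) + a\,D(\delta(n))$, obtaining
\[
\dif_a(\delta(n)) \ = \ -(n-1)\sum_{t=1}^n x_t \delta(n) \ + \ a\sum_{t=1}^n(t-1)x_t\delta(n) \ - \ a\sum_{t=1}^n(t-1)\delta(n)x_t,
\]
and then use that $e_1^{(n)} = \sum_{t=1}^n x_t$ is symmetric, hence central in $\NH_n$, to rewrite $-(n-1)\sum_t x_t \delta(n) = -(n-1)\delta(n)\sum_t x_t$. Regrouping the two right-hand summations over $\delta(n)$ gives exactly the claimed identity. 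I do not expect any serious obstacle: the preceding two lemmas carry out all the combinatorial work on $\delta(n)$, and the remaining ingredients are merely the affine linearity of $\dif_a$ in $a$ and the centrality of $e_1^{(n)}$.
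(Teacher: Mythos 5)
Your proof is correct and follows exactly the route the paper takes: the paper's proof of this corollary reads, in full, that the result follows from the two preceding lemmas together with the identity $\dif_a = \dif_0 + a(\dif_1 - \dif_0)$. You simply spell out the arithmetic and the centrality of $e_1^{(n)}$ that the paper leaves implicit.
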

\begin{proof}The result follows from the previous two lemmas together
with the formula $\dif_a = \dif_0 + a (\dif_1-\dif_0)$.
\end{proof}

We summarize the main results of this subsection in the following proposition.

\begin{prop}\label{prop-NH-as-p-DG-bimod}Let $\alpha=(\alpha_1,\dots, \alpha_n)$ be an
$n$-tuple of numbers in $\F_p$ with the property that $\alpha_t-\alpha_{t-1}=a$ for all
$2\leq t \leq n$. Set $\alpha^\vee=(1-n-\alpha_1, \dots, 1-n-\alpha_n)$.
\begin{itemize}
\item[(i)] There is a $\dif$-invariant, non-degenerate $\sym_n$-bilinear form
\[
(-,-):\bpol_n(\alpha)\o \bpol_n(\alpha^\vee)\lra \sym_n, \ \ \ \ f \o g \mapsto \delta(n)(fg),
\]
which is also compatible with the left and right $p$-DG $(\NH_n,\dif_a)$-module
structures. Here being $\dif$-invariant means that
$\dif(f,g)=(\dif_\alpha(f),g)+(f,\dif_{\alpha^\vee}(g))$. The compatibility
condition states that $(\xi(f),g)=(f,\psi(\xi)(g))$ and the Leibniz rule
$$\dif(\xi(f),g)=(\dif_a(\xi)(f),g)+(\xi(\dif_{\alpha}(f)),g)+(\xi(f),\dif_{\alpha^\vee}(g))$$
holds for any $\xi \in \NH_n$.
\item[(ii)] There is an isomorphism $\phi^\circ$ of right $(\NH_n,\dif_a)$-modules
\[
\phi^\circ: (\bpol_n(\alpha^\vee),\dif_{\alpha^\vee} )\lra (\bpol_n(\alpha)^\vee,
\dif_\alpha^\vee),
\]
where $\phi^\circ$ takes the generator $1_{\alpha^\vee}\in \bpol_n(\alpha^\vee)$
to the $\sym_n$-linear map $1_\alpha^\vee: f \mapsto \delta(n)(f)$.
\item[(iii)] There is an isomorphism of graded $p$-DG $(\NH_n,\dif_a)$-bimodules
\[
\phi_n: \bpol_n(\alpha)\o_{\sym_n}\bpol_n(\alpha^\vee)\lra \NH_n, \ \ \ \ f \o g
\mapsto f \delta(n) g
\]
which equals the composition
\[
\bpol_n(\alpha) \otimes_{\sym_n} \bpol_n(\alpha^\vee)  \xrightarrow{\Id\otimes \phi^{\circ}}
\bpol_n(\alpha) \otimes_{\sym_n}  \bpol_n(\alpha)^{\vee} \stackrel{\phi'}{\lra}  \NH_n.
\]
\end{itemize}
\end{prop}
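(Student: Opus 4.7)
The plan is to verify the differential compatibility statements in (i), (ii), (iii), since the underlying $\sym_n$-bilinearity of the pairing, its non-degeneracy, the $\psi$-invariance $(\xi f,g) = (f,\psi(\xi)g)$, and the $\NH_n$-bimodule isomorphism $\phi_n$ have already been established in equations~(\ref{eq-nat-pairing})--(\ref{eqn-multofpol}), without reference to differentials.

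Part~(ii) is essentially a restatement of Lemma~\ref{lemma-stucture-of-dual-poly-mod}: that lemma gives $\dif_\alpha^\vee(1_\alpha^\vee) = \sum_{t=1}^n \alpha_t^\vee\, x_t\cdot 1_\alpha^\vee$, which matches $\dif_{\alpha^\vee}(1_{\alpha^\vee}) = g_{\alpha^\vee}\cdot 1_{\alpha^\vee} = \sum_{t=1}^n \alpha_t^\vee\, x_t\cdot 1_{\alpha^\vee}$ under the identification $\phi^\circ(1_{\alpha^\vee}) = 1_\alpha^\vee$. Since $\phi^\circ$ is $\pol_n$-linear, this coincidence on the generator propagates to all of $\bpol_n(\alpha^\vee)$.

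For part~(i), the $\dif$-invariance of the pairing amounts to the identity
\[
\dif\bigl(\delta(n)(fg)\bigr) \;=\; \delta(n)\bigl(\dif_\alpha(f)\, g\bigr) \;+\; \delta(n)\bigl(f\,\dif_{\alpha^\vee}(g)\bigr).
\]
Expanding the right-hand side with $\dif_\alpha(f) = \dif(f) + g_\alpha f$ and $\dif_{\alpha^\vee}(g) = \dif(g) + g_{\alpha^\vee} g$, and using that $g_\alpha + g_{\alpha^\vee} = (1-n)e_1\in\sym_n$ is central in $\NH_n$, it becomes $\delta(n)\bigl(\dif(fg)\bigr) - (n-1)e_1\cdot \delta(n)(fg)$. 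The left-hand side yields the same expression by Lemma~\ref{lemma-dif-0-on-delta-n} combined with the Leibniz rule $\dif(\delta(n)\cdot h) = \dif_0(\delta(n))\cdot h + \delta(n)\cdot\dif(h)$ applied to $h=fg$. The combined Leibniz rule for the bimodule-plus-pairing compatibility is then a formal consequence of the defining equation $\dif_a(\xi)(f) := \dif_\alpha(\xi(f)) - \xi(\dif_\alpha(f))$ of $\dif_a$ on $\NH_n$.

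Part~(iii) follows from (i) and (ii) once one checks that the bimodule isomorphism $\phi_n = \phi'\circ(\Id\otimes\phi^\circ)$ intertwines differentials. Expanding both sides of $\dif_a\bigl(\phi_n(f\otimes g)\bigr) = \phi_n\bigl(\dif(f\otimes g)\bigr)$ via the Leibniz rule, and using that $\dif_a$ restricts to the usual derivation on $\pol_n\subset\NH_n$, the verification reduces to the single identity
\[
\dif_a(\delta(n)) \;=\; g_\alpha\cdot\delta(n) \;+\; \delta(n)\cdot g_{\alpha^\vee}
\]
in $\NH_n$. Substituting the explicit formulas for $g_\alpha$ and $g_{\alpha^\vee}$, commuting the central symmetric part $\alpha_1 e_1$ through $\delta(n)$, and comparing with Corollary~\ref{cor-dif-a-on-delta-n} confirms this identity. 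The main obstacle throughout is bookkeeping: one must simultaneously keep track of the plain $\dif$ on $\sym_n$, the derivation $\dif_a$ on $\NH_n$ (which restricts to $\dif$ on the subring $\pol_n$), and the two twisted derivations $\dif_\alpha$, $\dif_{\alpha^\vee}$ on $\bpol_n(\alpha)$ and $\bpol_n(\alpha^\vee)$. The verifications close precisely because the twist discrepancies assemble, via $g_\alpha + g_{\alpha^\vee} = -(n-1)e_1$, into the same central element that appears in the $a=0$ formula for $\dif(\delta(n))$.
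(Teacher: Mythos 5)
Your proof is correct and follows the same route as the paper, which simply assembles the prior lemmas (your Lemma~\ref{lemma-stucture-of-dual-poly-mod} for part~(ii), the $\delta(n)$-differential formulas for parts~(i) and~(iii)); you supply the computational details that the paper leaves implicit. The one notational remark is that the paper's proof also cites Proposition~\ref{prop-symmetry-between-a-and-minus-a} as the mechanism for endowing $\bpol_n(\alpha^\vee)$ with its right $p$-DG $(\NH_n,\dif_a)$-module structure via $\psi$, whereas you obtain the same structure formally by transport along $\phi^\circ$ from the natural right $p$-DG structure on $\bpol_n(\alpha)^\vee$ — both are valid.
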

\begin{proof}Follows from Lemma \ref{lemma-stucture-of-dual-poly-mod}, Proposition
\ref{prop-symmetry-between-a-and-minus-a} and the explicit form of $\dif_a(\delta(n))$
in Corollary \ref{cor-dif-a-on-delta-n}.
\end{proof}

\subsection{Specializations of \texorpdfstring{$a$}{a}}
The differential $\dif_a$ turns $\NH_n$ into a $p$-DG algebra.

\begin{prop} \label{prop-NHp-contractible} The $p$-DG algebra $(\NH_n, \dif_a)$ is acyclic for
any $n\ge p$ and $a\in \F_p^{\ast}$.
\end{prop}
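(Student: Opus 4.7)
By Proposition~\ref{prop-contractible-p-DG-algebra-condition}, it suffices to produce an element $b \in \NH_n$ with $\dif_a(b)=1$. The local nature of $\dif_a$ makes the inclusion $\NH_p \hookrightarrow \NH_n$ (adjoining $n-p$ trivial strands on the right of a diagram) a unital morphism of $p$-DG algebras that sends $1$ to $1$, so any $b$ solving the equation inside $\NH_p$ solves it inside $\NH_n$. This reduces the problem to the case $n=p$.

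For $n=p$, I would interpret $\NH_p \cong \mathrm{END}_{\sym_p}(\bpol_p(\alpha_1,a))$ as in~(\ref{eq-nh-as-sym-mod}), under which $\dif_a$ becomes the commutator derivation $\dif_a(\xi) = \dif_\alpha \circ \xi - \xi\circ \dif_\alpha$; then producing $b$ with $\dif_a(b)=1$ is equivalent to producing an $\sym_p$-linear endomorphism $b$ of $\bpol_p(\alpha_1,a)$ satisfying $[\dif_\alpha, b] = \mathrm{Id}$. The key point is to choose $\alpha_1\in\F_p$ so that at least one $\alpha_t := \alpha_1 + (t-1)a$ equals $1$, which is possible precisely because $a \in \F_p^\ast$ makes $\{\alpha_t \bmod p : 1\le t\le p\}$ a complete system of residues modulo $p$. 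Once such an index $t$ is fixed, $\bpol_1(\alpha_t) \cong V_{p-1}$ is a free $H$-module, so the $H$-module isomorphism $\bpol_p(\alpha) \cong \bigotimes_{s=1}^p \bpol_1(\alpha_s)$ together with the fact that tensoring with a free $H$-module is free shows that $\bpol_p(\alpha)$ is itself free, hence acyclic, as an $H$-module.

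The principal remaining task, and the main obstacle, is to upgrade this $\Bbbk$-linear contractibility of $\bpol_p(\alpha)$ into an $\sym_p$-linear null-homotopy of $\mathrm{Id}_{\bpol_p(\alpha)}$, since a $\Bbbk$-linear contraction need not respect the $\sym_p$-action. One strategy is to verify that $\bpol_p(\alpha)$ is a cofibrant $p$-DG $\sym_p$-module -- for instance by exhibiting a property~(P) filtration whose subquotients are shifted copies of $\sym_p$ with the inherited (untwisted) $p$-DG structure -- and then apply Proposition~\ref{prop-easy-properties-cofibrant-modules}(iii) with $P=M=\bpol_p(\alpha)$ to conclude that $\HOM_{\sym_p}(\bpol_p(\alpha),\bpol_p(\alpha)) \cong \NH_p$ is an acyclic $p$-complex, thereby producing the required $b$. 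A more hands-on alternative is to write $b$ down explicitly as a combination of divided differences (e.g.\ of the form $\delta(p) \cdot f$ for a well-chosen polynomial $f$) and to verify $\dif_a(b) = 1$ directly using the formulas of Lemma~\ref{lemma-dif-1-on-delta-n} and Corollary~\ref{cor-dif-a-on-delta-n} together with the nilHecke relations.
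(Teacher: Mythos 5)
You correctly reduce to $n=p$, correctly realize $\NH_p \cong \mathrm{END}_{\sym_p}(\bpol_p(\alpha))$, correctly observe that some $\alpha_t=1$ so $\bpol_p(\alpha)$ is free as an $H$-module, and — most importantly — you correctly identify the remaining obstacle: contractibility of $\bpol_p(\alpha)$ as an $H$-module does not by itself give acyclicity of its $\sym_p$-endomorphism algebra. However, at exactly this point you stop: you name two strategies (cofibrance over $\sym_p$, or an explicit $b$) but carry out neither, and neither is a routine verification. In particular, cofibrance of $\bpol_p(\alpha)$ over $\sym_p$ is genuinely nontrivial: it fails in general (already the rank-one module $\bpol_1(1)$ over $\sym_1=\Bbbk[x]$ is not cofibrant, even though it is free as an $H$-module), so it cannot be asserted without an actual property (P) filtration.

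The paper supplies precisely this missing ingredient: using that for $a\in\F_p^\ast$ the residues $\alpha_1,\alpha_1+a,\dots,\alpha_1+(p-1)a$ form a complete system mod $p$, it chooses a permutation $s\in S_p$ with $s(k)\equiv p-\alpha_1-(k-1)a$ so that the monomial $\Bbbk$-basis $B_{p,s}$ of the free $\sym_p$-module $\bpol_p$ spans a $\dif_\alpha$-stable subspace $U_{p,s}$. This upgrades the natural map $U_{p,s}\otimes\sym_p \to \bpol_p(\alpha)$ to an isomorphism of $p$-DG $\sym_p$-modules, and hence gives a $p$-DG algebra isomorphism $\NH_p \cong \mathrm{END}_\Bbbk(U_{p,s})\otimes\sym_p$; since $U_{p,s}\cong V_0\otimes V_1\otimes\cdots\otimes V_{p-1}$ is a free $H$-module, both tensor factors and therefore $\NH_p$ are acyclic. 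You use the same arithmetic fact (residues exhaust $\F_p$) only in its weaker form (some $\alpha_t=1$), which yields acyclicity of $\bpol_p(\alpha)$ but not the $\dif_\alpha$-stable basis. To complete your argument along either of your suggested routes you would have to produce that stable basis, which is the actual content of the paper's proof and is omitted from yours.
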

\begin{proof}It suffices to check the result for $n=p$. Indeed if $\NH_p$ is acyclic,
by Proposition \ref{prop-contractible-p-DG-algebra-condition}, there exists an element
$y_p$ such that $\dif_a(y_p)=1_{\NH_p}$.
Then the element $y_p\o 1_{\NH_{n-p}} \in \NH_p\o\NH_{n-p}\subset \NH_n$ satisfies
$\dif_a(y_p\o 1_{\NH_{n-p}})=1_{\NH_n}$, and $\NH_n$ is acyclic by Proposition
\ref{prop-contractible-p-DG-algebra-condition}.

When $n=p$, pick any $\alpha_1\in \F_p$. The residues $\alpha_1, \alpha_1 + a, \dots,
\alpha_1+(p-1)a$ run over all elements in $\F_p$. Let $s\in S_p$ be the permutation
such that $s(k)=p-\alpha_1 - (k-1)a$ modulo $p$ for $1\le k\le p$. Then
$ \dif_{\alpha}(x_1^{s(1)}x_2^{s(2)}\cdots x_p^{s(p)}) =0 $, where we view
the argument as an element of $\bpol_p(\alpha)$ for $\alpha=(\alpha_1, \alpha_1+a,
\dots, \alpha_1+(p-1)a)$. Consequently, the subspace $U_{p,s}$ is $\dif_{\alpha}$-stable
and map (\ref{eq-nat-map-s}) is an isomorphism of $(\sym_p, \dif_a)$-modules.

As an $H$-module,
$ U_{p,s} \cong V_0 \otimes V_1 \otimes \dots \otimes V_{p-1}.$
The $H$-module $V_{p-1}$ is free, and $U_{p,s}$ is a free module as well.
Hence, the $p$-DG algebra $\mathrm{END}_{\Bbbk}(U_{p,s})$ is acyclic, and
\begin{equation}\label{eq-NH-contr}
\NH_p \cong \mathrm{END}_{\sym_p}(\bpol_p(\alpha)) \cong \mathrm{END}_{\sym_p}(U_{p,s}\otimes \sym_p)
\cong \mathrm{END}_{\Bbbk}(U_{p,s}) \otimes \sym_p
\end{equation}
is acyclic as well.
\end{proof}

\begin{eg}\label{eg-NH3-acyclic-char-3}
When $\mathrm{char}(\Bbbk)=3$, and $a=1$ one computes that
\[
\dif_1^2 \left(~
\begin{DGCpicture}[scale=0.5]
\DGCstrand(0,0)(1,2)
\DGCstrand(1,0)(2,2)
\DGCstrand(2,0)(0,2)
\end{DGCpicture}
~\right)=
\begin{DGCpicture}[scale=0.5]
\DGCstrand(0,0)(0,2)
\DGCstrand(1,0)(1,2)
\DGCstrand(2,0)(2,2)
\end{DGCpicture}
\ .
\]
Conjugating  by $\sigma$,
\[
\dif_{-1}^2 \left(~
\begin{DGCpicture}[scale=0.5]
\DGCstrand(0,0)(2,2)
\DGCstrand(1,0)(0,2)
\DGCstrand(2,0)(1,2)
\end{DGCpicture}
~\right)=
\begin{DGCpicture}[scale=0.5]
\DGCstrand(0,0)(0,2)
\DGCstrand(1,0)(1,2)
\DGCstrand(2,0)(2,2)
\end{DGCpicture}
\ .
\]
This proves acyclicity of $(\NH_3,\dif_{\pm 1})$ via
Proposition~\ref{prop-contractible-p-DG-algebra-condition}.
\end{eg}

An argument similar to the one in Proposition~\ref{prop-NHp-contractible} shows that,
for $n=p-1$ and $a\in \F_p^{\ast}$, the subspace
$U_{p-1,s}$ is $\dif_{\alpha}$-stable for a unique pair of $\alpha_1$ and $s$.
As a $p$-DG algebra,
\begin{equation}\label{eq-NH-p-1}
\NH_{p-1} \cong \mathrm{END}_{\Bbbk}(U_{p-1,s}) \otimes \sym_{p-1}.
\end{equation}
Since $\sym_{p-1}$ is quasi-isomorphic to the ground field $\Bbbk$, the inclusion
$ \mathrm{END}_{\Bbbk}(U_{p-1,s}) \subset \NH_{p-1}$ is a quasi-isomorphism of $p$-DG algebras.
For $n<p-1$, in general, there are no permutations $s$ such that $U_{n,s}$ is
$\dif_{\alpha}$-stable unless $a=\pm 1$, singling out these values of $a$, as discussed below
after the proof of Proposition \ref{prop-a-equals-0-no-derived-category-vanishes}.

Proposition~\ref{prop-NHp-contractible} yields the following corollary.

\begin{cor}\label{cor-NHp-trivial-K0}
For any $a\in \mathbb{F}_p^*$ and $n\geq p$, $\mc{D}(\NH_n,\dif_a)\cong 0$, and
consequently $K_0(\NH_n, \dif_a)=0$.
\end{cor}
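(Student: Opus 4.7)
The plan is to derive this corollary as an immediate consequence of the acyclicity statement in Proposition~\ref{prop-NHp-contractible} combined with the equivalence of conditions in Proposition~\ref{prop-contractible-p-DG-algebra-condition}. No new computation is really needed; one simply has to assemble the relevant facts already proven above.

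First I would invoke Proposition~\ref{prop-NHp-contractible}, which tells us that for any $a\in \F_p^\ast$ and $n\ge p$ the $p$-DG algebra $(\NH_n,\dif_a)$ is acyclic. Then I would apply the equivalence $(ii)\Leftrightarrow (i)$ of Proposition~\ref{prop-contractible-p-DG-algebra-condition}: since $(\NH_n,\dif_a)$ is acyclic as a $p$-complex, its derived category vanishes, $\mc{D}(\NH_n,\dif_a)\cong 0$.

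From $\mc{D}(\NH_n,\dif_a)\cong 0$, the full subcategory of compact objects $\mc{D}^c(\NH_n,\dif_a)$ is also zero, and by Definition~\ref{def-Grothendieck-group-p-DG-rings} the Grothendieck group $K_0(\NH_n,\dif_a)$ of the zero category is the trivial abelian group. This gives the claimed vanishing.

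There is no real obstacle here: the content of the corollary is entirely packaged in the earlier proposition and in the general equivalence between acyclicity of a $p$-DG algebra and triviality of its derived category. The only point worth emphasizing, if one wanted to be fully explicit, is that the element $y_n\in \NH_n$ with $\dif_a(y_n)=1$ produced in the proof of Proposition~\ref{prop-NHp-contractible} gives a homotopy on every $p$-DG $\NH_n$-module via left multiplication, annihilating every object in $\mc{D}(\NH_n,\dif_a)$ directly, so one does not need to separately invoke the centrality clause at the end of Proposition~\ref{prop-contractible-p-DG-algebra-condition}.
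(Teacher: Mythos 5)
Your proof is correct and follows precisely the route the paper takes: the paper's proof of this corollary reads, in full, ``Combine Proposition~\ref{prop-NHp-contractible} with Proposition~\ref{prop-contractible-p-DG-algebra-condition}.'' Your closing remark about not needing the centrality clause is also accurate, since the equivalence $(ii)\Leftrightarrow(i)$ in Proposition~\ref{prop-contractible-p-DG-algebra-condition} already yields $\mc{D}(\NH_n,\dif_a)\cong 0$ (centrality is only invoked there for the stronger conclusion that the homotopy category $\mc{C}(A)$ vanishes).
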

\begin{proof}Combine Proposition \ref{prop-NHp-contractible} with Proposition
\ref{prop-contractible-p-DG-algebra-condition}.
\end{proof}

When $a=0$, the derived category $\mc{D}(\NH_n,\dif_0)$ does not vanish for any $n\geq 0$.
This is the reason why we will disregard this case in what follows when
categorifying the small quantum group $u_{\mathbb{O}_{2p}}^+(\mathfrak{sl}_2)$.

\begin{prop}\label{prop-a-equals-0-no-derived-category-vanishes}
For any $n\in \N$, $\mc{D}(\NH_n,\dif_0)\ncong 0$.
\end{prop}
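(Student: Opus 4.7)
The plan is to produce a $p$-DG $\NH_n$-module whose underlying $p$-complex is not acyclic; once we have such a module, acyclicity of $(\NH_n,\partial_0)$ becomes impossible, and Proposition~\ref{prop-contractible-p-DG-algebra-condition} forces $\mc{D}(\NH_n,\partial_0)\ncong 0$.

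The key observation I would first record is a ``transport of acyclicity'' principle: if $\NH_n$ were acyclic, then by Proposition~\ref{prop-contractible-p-DG-algebra-condition} there exists $b\in \NH_n$ with $\partial_0(b)=1$. For any $p$-DG module $M$ over $(\NH_n,\partial_0)$, the Leibniz rule gives $[\partial_M,L_b]=L_{\partial_0(b)}=\mathrm{Id}_M$, and the identity established in the proof of Proposition~\ref{prop-contractible-p-DG-algebra-condition} iv) shows that $-L_b^{p-1}$ is a $\Bbbk$-linear null-homotopy of $\mathrm{Id}_M$. Thus $M$ would be contractible as an $H$-module, hence in particular acyclic. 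Consequently, exhibiting a single non-acyclic $p$-DG $\NH_n$-module will do the job.

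For the explicit module I would take $M=\bpol_n(\alpha_1,a)$ with parameters $\alpha_1=0$ and $a=0$, so that $\alpha=(0,0,\dots,0)$ and the twist $g_\alpha=0$. The differential on $M$ then reduces to $\partial(f\cdot 1)=\partial(f)\cdot 1$, i.e.\ the standard $p$-derivation on $\pol_n$ with $\partial(x_i)=x_i^2$. Since $a=0$, this polynomial module is indeed a $p$-DG module over $(\NH_n,\partial_0)$, as explained just before and in Proposition~\ref{prop-NH-as-p-DG-bimod}. As a graded $H$-module, $\pol_n\cong \Bbbk[x_1]\otimes_\Bbbk\cdots\otimes_\Bbbk\Bbbk[x_n]$; each tensor factor is quasi-isomorphic to $V_0$ by the discussion at the very beginning of Section~\ref{subsec-p-der-on-NH}, so $M\cong V_0\otimes\cdots\otimes V_0\cong V_0$ in $H\udmod$, which is nonzero. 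Hence $M$ is not acyclic, contradicting the principle above and completing the proof.

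There is no real obstacle in this argument; everything boils down to the careful choice of parameters making the twist $g_\alpha$ vanish. The only point that requires a moment's thought is verifying that the induced differential on $\mathrm{END}_{\sym_n}(\bpol_n(\alpha))$ is indeed $\partial_0$, which follows from the general fact (recorded after \eqref{eq-dif-on-deltai}) that this induced differential depends only on the consecutive differences $\alpha_{t+1}-\alpha_t$, all of which are $0$ here. The contrast with the $a\in\F_p^*$ case is transparent: for nonzero $a$, no choice of $\alpha_1$ makes $g_\alpha$ symmetric/trivial on $\pol_n$, which is precisely what allowed the acyclicity argument of Proposition~\ref{prop-NHp-contractible} to go through via a Jordan-block analysis on a well-chosen $\sym_n$-basis.
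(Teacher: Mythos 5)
Your proof is correct, and it takes a genuinely different route from the paper's. The paper argues directly inside the algebra: using Lemma~\ref{lemma-dif-0-on-delta-n} it observes that $\dif_0(\delta(n))=0$ when $n=kp+1$, that $\delta(n)$, being of lowest degree, cannot lie in the image of $\dif_0$, so $\Bbbk\delta(n)\cong V_0$ survives in $\mH_{/0}(\NH_n)$; non-acyclicity for general $n$ then follows by the contrapositive of the observation that acyclicity of $\NH_n$ propagates to $\NH_m$ for $m>n$ via the tensor embedding $\NH_n\otimes\NH_{m-n}\subset\NH_m$. Your argument is instead module-theoretic: you extract from the proof of Proposition~\ref{prop-contractible-p-DG-algebra-condition} iv) the observation that an element $b$ with $\dif_0(b)=1$ forces every $p$-DG $\NH_n$-module to be contractible as a $p$-complex (via the $\Bbbk$-linear homotopy $-L_b^{p-1}$), and then you exhibit the untwisted polynomial module $\bpol_n(0,\dots,0)$, whose underlying $p$-complex is $\pol_n\cong V_0$ in $H\udmod$, as a non-acyclic $p$-DG module over $(\NH_n,\dif_0)$. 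What your approach buys is uniformity: it avoids the auxiliary computation of $\dif_0(\delta(n))$ and the reduction to $n\equiv 1\ (\mathrm{mod}\ p)$, and it isolates a reusable ``transport of acyclicity'' principle; it also makes the contrast with the $a\in\F_p^*$ case transparent, since there no choice of $\alpha_1$ kills the twist $g_\alpha$. What the paper's approach buys is an explicit nontrivial slash-cohomology class inside $\NH_n$ itself rather than inside an auxiliary module. Your small remark that the vanishing of $g_\alpha$ is ``what allowed the acyclicity argument of Proposition~\ref{prop-NHp-contractible} to go through'' has the polarity reversed: vanishing of the twist is precisely what \emph{blocks} the Jordan-block argument there, since for $a\neq 0$ the residues $\alpha_1,\alpha_1+a,\dots$ exhaust $\F_p$ and permit the choice of a $\dif_\alpha$-stable free basis $U_{p,s}$, while for $a=0$ they do not; this is a cosmetic slip that does not affect the correctness of the proof.
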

\begin{proof} By Proposition \ref{prop-contractible-p-DG-algebra-condition}
we need to show that $\NH_n$ is not acyclic. Acyclicity of $\NH_n$ implies acyclicity
of $\NH_m$ for any $m>n$.
If $n=kp+1$ where $k\in \N$, Lemma \ref{lemma-dif-0-on-delta-n} shows that
$\dif_0(\delta(n))=0$. Being of the lowest degree in $\NH_n$, the element $\delta(n)$
is not in the image of $\dif_0$. Hence $\Bbbk\delta(n)\cong V_0$ spans a non-trivial
summand in $\mH_{/0}(\NH_n)$, and $\NH_n$ is not acyclic for $n=kp+1$ and, therefore,
for any $n$.
\end{proof}

\paragraph{Why specialize.} The main point of this subsection is that, under the
specialization $a=\pm 1$, the $p$-DG algebras $\NH_n$ behave extremely well: they are
quasi-isomorphic to matrix algebras (see
Proposition~\ref{prop-nilhecke2-qis-2by2-matrix-algebra} and
Corollary~\ref{cor-NHn-qis-matrix-algebra}), and the column modules are compact cofibrant
(Proposition~\ref{prop-NHn-column-module-compact}), which allows one to compute the
Grothendieck group $K_0(\mc{D}(\NH_n,\dif_{\pm 1}))$, see
Corollary~\ref{cor-K0-of-derived-categories-NHn}.
Another reason will become clear upon generalizing from $\mf{sl}_2$
to arbitrary simply-laced simple Lie algebras, see
Theorem~\ref{thm-QSR-holds-only-for-special-paramters}.

Recall from the beginning of Section \ref{subsec-p-der-on-NH} that the
$(\pol_2,\dif)$-module $(\bpol_2(\alpha),\dif_\alpha)$ has a generator $1_\alpha$
on which $\dif_\alpha$ acts by $\dif_\alpha(1_\alpha)=(\alpha_1x_1+\alpha_2x_2)1_\alpha$,
where $\alpha_2-\alpha_1=a$ is non-zero in $\F_p$. We would like to determine
for which values of $a$ does there exist a two-step filtration on $\bpol_2(\alpha)$ whose
subquotients are rank one $(\sym_2,\dif)$-modules. This amounts to asking an equivalent
question: when does $(\beta_1x_1+\beta_2x_2)1_{\alpha}$ generate a $(\sym_2,\dif)$-stable
submodule, where $\beta_1,\beta_2\in \Bbbk$ are constants that are not both
zero?

\begin{prop}\label{prop-sub-stab} The $\sym_2$-module
$\sym_2\cdot(\beta_1x_1+\beta_2x_2)1_{\alpha}$
is $\dif_\alpha$-stable only when $a=\pm 1$ (among $a$ in $\F_p^{\ast}$). Furthermore,
\begin{itemize}
\item if $a=1$, then $\beta_2=0$ and $\beta_1\neq 0$;
\item if $a=-1$, then $\beta_1=0$ and $\beta_2\neq 0$.
\end{itemize}
\end{prop}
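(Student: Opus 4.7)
The plan is a direct degree-matching computation using the formula \eqref{eq-dif-g}. Setting $f = \beta_1 x_1+\beta_2 x_2$ and $g_\alpha = \alpha_1 x_1+\alpha_2 x_2$, I first compute
\[
\dif_\alpha\bigl((\beta_1 x_1+\beta_2 x_2)\cdot 1_\alpha\bigr)
= \bigl(\beta_1(1+\alpha_1) x_1^2 + \beta_2(1+\alpha_2) x_2^2 + (\beta_1\alpha_2 + \beta_2\alpha_1)\, x_1 x_2\bigr)\cdot 1_\alpha.
\]
Stability of $\sym_2\cdot(\beta_1 x_1+\beta_2 x_2)\cdot 1_\alpha$ under $\dif_\alpha$ is equivalent to the existence of some $s\in\sym_2$ with $\dif_\alpha\bigl((\beta_1 x_1+\beta_2 x_2)1_\alpha\bigr) = s\cdot(\beta_1 x_1+\beta_2 x_2)\cdot 1_\alpha$.

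The right-hand side of the displayed equation is homogeneous quadratic in $x_1, x_2$, while $\beta_1 x_1+\beta_2 x_2$ is linear, so $s$ must be symmetric and linear in $x_1, x_2$. The only such polynomial up to scalar is $e_1 = x_1+x_2$, so $s = c\, e_1$ for some $c\in\Bbbk$. Expanding
\[
c(x_1+x_2)(\beta_1 x_1+\beta_2 x_2) = c\beta_1 x_1^2 + c\beta_2 x_2^2 + c(\beta_1+\beta_2)x_1 x_2
\]
and matching coefficients of $x_1^2$, $x_2^2$, and $x_1 x_2$ yields the linear system
\begin{align*}
\beta_1(1+\alpha_1) &= c\,\beta_1,\\
\beta_2(1+\alpha_2) &= c\,\beta_2,\\
\beta_1\alpha_2+\beta_2\alpha_1 &= c(\beta_1+\beta_2).
\end{align*}

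I then conclude by case analysis on $(\beta_1,\beta_2)$. If both are nonzero, the first two equations force $c = 1+\alpha_1 = 1+\alpha_2$, whence $a = \alpha_2-\alpha_1 = 0$, contradicting $a\in\F_p^\ast$. If $\beta_2=0$ and $\beta_1\neq 0$, then $c = 1+\alpha_1$ and the cross-term equation becomes $\beta_1\alpha_2 = (1+\alpha_1)\beta_1$, i.e., $a=1$. Symmetrically, $\beta_1=0$ and $\beta_2\neq 0$ forces $a=-1$. This exhausts the possibilities, and since $(\beta_1,\beta_2)\neq(0,0)$ by hypothesis, it proves both the $a=\pm 1$ dichotomy and the stated vanishing of one coefficient in each case. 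The argument is pure linear algebra; the only mild subtlety is the degree-counting step that pins down $s$ as a scalar multiple of $e_1$, and I do not expect any genuine obstacle.
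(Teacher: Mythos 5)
Your proof is correct. The paper leaves this proposition as an exercise, and your degree-matching calculation is the natural and complete argument: the Leibniz rule $\dif_\alpha(sm)=\dif(s)m+s\dif_\alpha(m)$ with $\dif(s)\in\sym_2$ reduces stability of the cyclic submodule to the single condition $\dif_\alpha\bigl((\beta_1x_1+\beta_2x_2)1_\alpha\bigr)\in\sym_2\cdot(\beta_1x_1+\beta_2x_2)1_\alpha$, the grading forces $s=ce_1$, and the three coefficient equations in $x_1^2,x_2^2,x_1x_2$ then yield exactly the stated dichotomy in $a=\alpha_2-\alpha_1$.
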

\begin{proof}Exercise.
\end{proof}

We see that such a filtration on $\bpol_2(\alpha)$ exists for $a\in \F_p^{\ast}$
iff $a=\pm 1$.  If $a=0$, there is a unique submodule as above,
generated by $(x_1-x_2)1_{\alpha}$.

\vspace{0.06in}

Recall that $\NH_2$ is isomorphic to the matrix algebra $\mathrm{M}(2,{\sym_2})$
(see Proposition \ref{prop-nil-hecke-basis-over-center} and Example \ref{eg-NH2-basis}).
Such an identification is not unique. Indeed, as a $\sym_2$-module,
\begin{equation}\label{eqn-decomp-pol2-over-sym2}
\bpol_2=\Bbbk[x_1, x_2]\cong \sym_2\{2\} \oplus \sym_2 \cong \sym_2v_1(b)\oplus \sym_2v_2,
\end{equation}
where the second summand is canonically generated by the degree zero element
$v_2=1\in \pol_2$,
while there is a 1-parameter family of choices for the degree two generator
\[
v_1(b)=x_1+b(x_1+x_2), \ \ \ b\in \Bbbk.
\]
Let us also set $v_2(b)=v_2$ for $b\in \Bbbk$.
Under the identification $\NH_2\cong \mathrm{End}_{\sym_2}(\bpol_2)$,
we obtain two primitive homogeneous idempotents of $\NH_2$ depending on the parameter $b$:
\[
\epsilon_{11}(b):=
\begin{DGCpicture}
\DGCstrand(0,0)(1,1)
\DGCstrand(1,0)(0,1)\DGCdot{0.75}
\end{DGCpicture}
+b
\left(
\begin{DGCpicture}
\DGCstrand(0,0)(1,1)
\DGCstrand(1,0)(0,1)\DGCdot{0.75}
\end{DGCpicture}
+
\begin{DGCpicture}
\DGCstrand(0,0)(1,1)\DGCdot{0.75}
\DGCstrand(1,0)(0,1)
\end{DGCpicture}
\right)\ , \ \ \
\epsilon_{22}(b):=-
\begin{DGCpicture}
\DGCstrand(0,0)(1,1)
\DGCstrand(1,0)(0,1)\DGCdot{0.25}
\end{DGCpicture}
-b
\left(
\begin{DGCpicture}
\DGCstrand(0,0)(1,1)
\DGCstrand(1,0)(0,1)\DGCdot{0.75}
\end{DGCpicture}
+
\begin{DGCpicture}
\DGCstrand(0,0)(1,1)\DGCdot{0.75}
\DGCstrand(1,0)(0,1)
\end{DGCpicture}
\right) \ ,
\]
such that $\epsilon_{ii}(b)v_j(b)=\delta_{ij}v_j(b)$, $i,j\in\{1,2\}$.
Thus, $\epsilon_{11}(b)$ is a
projection from $\pol_2$ onto a summand isomorphic to $\sym_2\{2\}$, while
$\epsilon_{22}(b)$ is the projection onto the unique summand isomorphic to $\sym_2$.
When $b=0$, we recover the idempotents of Example \ref{eg-NH2-basis}. The elements
$\epsilon_{11}(b), \epsilon_{22}(b)$ over $b\in \Bbbk$ are the only homogeneous minimal idempotents
in $\NH_2$.

We next check for what values of $a$ there is an idempotent
$\epsilon_{ii}(b)$ ($i=1,2$) which generates a $\dif_a$-stable left submodule of $\NH_2$.
This property is related to the categorification of the second divided power $E^{(2)}$, see
the next subsection.

\begin{prop}\label{prop-when-dif-a-preserves-idem}The $\NH_2$-module
$\NH_2\cdot\epsilon_{ii}(b)$ is $\dif_a$-stable, where $i\in \{1,2\}$, $b\in \Bbbk$ and
$a\in \F_p^\ast$ if and only if either $i=2$, $a=1$ and $b=-1$, in which case
\[
\epsilon_{22}(-1)=
\begin{DGCpicture}
\DGCstrand(0,0)(1,1)\DGCdot{0.25}
\DGCstrand(1,0)(0,1)
\end{DGCpicture}\ , \ \ \
\dif_{-1}\left(
\begin{DGCpicture}
\DGCstrand(0,0)(1,1)\DGCdot{0.25}
\DGCstrand(1,0)(0,1)
\end{DGCpicture}
~\right)=-~
\begin{DGCpicture}
\DGCstrand(0,0)(1,1)\DGCdot{0.25}\DGCdot{0.75}
\DGCstrand(1,0)(0,1)
\end{DGCpicture}~,
\]
or $i=2$, $a=-1$ and $b=0$, in which case
\[
\epsilon_{22}(0)=-~
\begin{DGCpicture}
\DGCstrand(0,0)(1,1)
\DGCstrand(1,0)(0,1)\DGCdot{0.25}
\end{DGCpicture}\ , \ \ \
\dif_1\left(-
\begin{DGCpicture}
\DGCstrand(0,0)(1,1)
\DGCstrand(1,0)(0,1)\DGCdot{0.25}
\end{DGCpicture}
~\right)=
\begin{DGCpicture}
\DGCstrand(0,0)(1,1)
\DGCstrand(1,0)(0,1)\DGCdot{0.25}\DGCdot{0.75}
\end{DGCpicture}
.
\]
\end{prop}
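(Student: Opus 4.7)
The plan is to translate the ideal-stability condition into a rank-one $\sym_2$-submodule stability condition inside the polynomial representation $\bpol_2(\alpha)$, where Proposition~\ref{prop-sub-stab} applies directly.

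First, since $\NH_2 = \mathrm{END}_{\sym_2}(\bpol_2)$ acts $\sym_2$-linearly on the decomposition $\bpol_2 = \sym_2 v_1(b) \oplus \sym_2 v_2(b)$ and $\epsilon_{ii}(b)(v_j(b)) = \delta_{ij} v_j(b)$, I would identify the left ideal as an annihilator,
\[
\NH_2 \cdot \epsilon_{ii}(b) \;=\; \{\,\xi \in \NH_2 \mid \xi(v_j(b))=0 \ \text{for all}\ j\neq i\,\}.
\]
Consequently $\NH_2 \cdot \epsilon_{ii}(b)$ is $\dif_a$-stable if and only if $(\dif_a \epsilon_{ii}(b))(v_j(b)) = 0$ for $j \neq i$. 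Invoking the compatibility $(\dif_a \xi)(f) = \dif_\alpha(\xi(f)) - \xi(\dif_\alpha(f))$ together with $\epsilon_{ii}(b)(v_j(b)) = 0$, the condition collapses to $\epsilon_{ii}(b)(\dif_\alpha(v_j(b))) = 0$. Since $\ker \epsilon_{ii}(b) = \sym_2 \cdot v_j(b)$ is precisely the complementary summand for $n=2$, and $\sym_2$ is itself $\dif$-stable, this is equivalent to the rank-one $\sym_2$-submodule $\sym_2 \cdot v_j(b) \subset \bpol_2(\alpha)$ being $\dif_\alpha$-stable.

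This reformulation brings the problem squarely into the framework of Proposition~\ref{prop-sub-stab}. For $i=1$ one has $v_2(b)=1$, and stability of $\sym_2 \cdot 1$ requires $\dif_\alpha(1) = g_\alpha \in \sym_2$, which forces $\alpha_1 = \alpha_2$, i.e.\ $a=0$; hence no pair $(a,b)$ with $a\in\F_p^\ast$ survives in this case. For $i=2$ one has $v_1(b) = (1+b)x_1 + bx_2$, and applying Proposition~\ref{prop-sub-stab} with $\beta_1 = 1+b$, $\beta_2 = b$ leaves exactly two admissible pairs with $a \in \F_p^\ast$: the case in which $v_1(b)$ is a scalar multiple of $x_1$ (forcing $\beta_2 = 0$, i.e.\ $b=0$, with $a=1$), and the case in which $v_1(b)$ is a scalar multiple of $x_2$ (forcing $\beta_1 = 0$, i.e.\ $b=-1$, with $a=-1$). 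These are precisely the two surviving idempotents $\epsilon_{22}(0) = -\delta_1 x_2$ and $\epsilon_{22}(-1) = \delta_1 x_1$.

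Finally, the explicit diagrammatic formulas for $\dif_a(\epsilon_{22}(b))$ in each surviving case are confirmed by direct nilHecke computation, using $\epsilon_{22}(b) = 1 - ((1+b)x_1 + bx_2)\delta_1$, the derivation rules $\dif_a(x_i)=x_i^2$ and $\dif_a(\delta_1) = a - (a+1)x_1\delta_1 + (a-1)x_2\delta_1$, and the relations $\delta_1 x_1 = x_2\delta_1 + 1$, $\delta_1 x_2 = x_1\delta_1 - 1$; each such computation produces a scalar multiple of the corresponding idempotent on the right, manifestly lying in the prescribed left ideal. The principal conceptual step is the translation in the second paragraph, reducing an ideal-stability statement inside the matrix algebra $\NH_2$ to a submodule-stability statement inside the rank-two polynomial module $\bpol_2(\alpha)$; once this bridge is in place, the already-established Proposition~\ref{prop-sub-stab} delivers the classification and the verification reduces to bookkeeping.
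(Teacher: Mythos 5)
Your argument is correct and is precisely the approach the authors intend, as spelled out in the remark immediately following the proposition: the left ideal $\NH_2\cdot\epsilon_{ii}(b)$ is the annihilator of the complementary summand $\sym_2 v_j(b)\subset\bpol_2(\alpha)$, and your chain of equivalences
$\NH_2\epsilon_{22}(b)$ $\dif_a$-stable $\Leftrightarrow$ $(\dif_a\epsilon_{22}(b))(v_1(b))=0$ $\Leftrightarrow$ $\epsilon_{22}(b)(\dif_\alpha(v_1(b)))=0$ $\Leftrightarrow$ $\sym_2 v_1(b)$ $\dif_\alpha$-stable
reduces the claim cleanly to Proposition~\ref{prop-sub-stab}; the $i=1$ case is also handled correctly. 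Be aware, however, that your resulting classification $(a,b)\in\{(1,0),(-1,-1)\}$ for $i=2$ does \emph{not} match the pairing $(a,b)\in\{(1,-1),(-1,0)\}$ printed in the proposition — and this is because the proposition's text contains a typo in the values of $a$. The statement as printed is internally inconsistent: it declares ``$a=1$ and $b=-1$'' but displays $\dif_{-1}\bigl(\epsilon_{22}(-1)\bigr)$, and declares ``$a=-1$ and $b=0$'' but displays $\dif_1\bigl(\epsilon_{22}(0)\bigr)$. The displayed formulas, which one can check by direct nilHecke computation, are correct, and they encode exactly your pairing; this is further corroborated by the later discussion around Notation~\ref{notation-P-plus-minus}, where $\epsilon_2=-\delta_1 x_2=\epsilon_{22}(0)$ is observed to generate a $\dif_1$-stable left ideal. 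One small terminological slip in your last paragraph: $\dif_a(\epsilon_{22}(b))$ is a \emph{left multiple} of $\epsilon_{22}(b)$ by a degree-two polynomial (for instance $\dif_{-1}(\epsilon_{22}(-1))=-x_2\cdot\epsilon_{22}(-1)$), not a scalar multiple; what matters for stability is only that it lands in the left ideal, which it does.
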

\begin{proof}Exercise.
\end{proof}

Notice that Propositions~\ref{prop-sub-stab} and \ref{prop-when-dif-a-preserves-idem}
both single out values $1,-1$ for $a\in \F_p^{\ast}$. The two propositions are closely
related. Given a two-step $\dif_a$-stable filtration $0\subset L \subset \bpol_2$,
the left ideal of $\mathrm{END}_{\sym_2}(\bpol_2)$ consisting of maps that annihilate $L$
is $\dif_a$-stable.

\vspace{0.1in}

\paragraph{A matrix presentation.}
Recall that $\sym_n^{\prime}$ is the maximal ideal of $\sym_n$ consisting of
all symmetric polynomials without the constant term.
Then $\sym_n/\sym_n^{\prime}\cong\Bbbk$, and moreover $\sym_n^{\prime}\cdot \NH_n$
is a two-sided ideal of $\NH_n$ whose quotient ring is a matrix ring:
\begin{equation}
\NH_n/\sym_n^{\prime}\cdot\NH_n = \mathrm{END}(\overline{U}_n) \cong \mathrm{M}(n!,\Bbbk).
\end{equation}
This ideal is stable under the $\dif_a$ action: for any $e\in \sym_n^{\prime}$ and $x\in \NH_n$,
$$\dif_a(e\cdot x)=\dif_a (e)\cdot x+e \cdot\dif_a(x)\in \sym_n^{\prime} \cdot\NH_n,$$
promoting the map (\ref{eq-hom-pin}) to a homomorphism of $p$-DG algebras.

\begin{ntn}
In what follows, we set
$$\left(\mathrm{M}(n!,\Bbbk),\dif_a\right) \ := \ \left(\NH_n/(\sym_n^{\prime} \cdot \NH_n),\dif_a\right)$$
to be the quotient matrix algebra over $\Bbbk$ with the induced differential, and denote by
$\pi_n: (\NH_n,\dif_a) \lra (\mathrm{M}(n!,\Bbbk),\dif_a)$ the canonical surjective homomorphism of $p$-DG algebras.
\end{ntn}

It is obvious that
$\pi_1:\NH_1\lra \NH_1/(\sym_1^{\prime})=\Bbbk$ and $\jmath_1:\Bbbk\hookrightarrow \NH_1$
are mutually inverse quasi-isomorphisms of $p$-DG algebras. Similar properties
hold for $\NH_n$ ($n\geq 2$) under the $\dif_{\pm 1}$-action.
We start with the first non-trivial case.
The $\dif_1$ action on the basis elements of $\NH_2$ from
Example~\ref{eg-NH2-basis} is given by

\[
\begin{CD}
\begin{DGCpicture}
\DGCstrand(0,0)(1,1)
\DGCstrand(1,0)(0,1) \DGCdot{0.75}
\end{DGCpicture}
@>{1}>>
-\begin{DGCpicture}
\DGCstrand(0,0)(1,1)
\DGCstrand(1,0)(0,1)\DGCdot{0.25} \DGCdot{0.75}
\end{DGCpicture}\\
@A{-1}AA @A{-1}AA\\
\begin{DGCpicture} \DGCstrand(0,0)(1,1)
\DGCstrand(1,0)(0,1)
\end{DGCpicture} @>{1}>>
-\begin{DGCpicture}
\DGCstrand(0,0)(1,1)
\DGCstrand(1,0)(0,1)\DGCdot{0.25}
\end{DGCpicture}
\end{CD}
\]

As a $(\sym_2,\dif_1)$-module, the $\dif_1$-stable ideal $\sym_2^{\prime}\cdot\NH_2$
admits an increasing filtration by $(\sym_2,\dif_1)$-submodules:
$$
\sym_2^{\prime}
\begin{DGCpicture}
\DGCstrand(0,0)(1,1)
\DGCstrand(1,0)(0,1)\DGCdot{0.25} \DGCdot{0.75}
\end{DGCpicture}
\subset\sym_2^{\prime}
\begin{DGCpicture}
\DGCstrand(0,0)(1,1)
\DGCstrand(1,0)(0,1)\DGCdot{0.25} \DGCdot{0.75}
\end{DGCpicture}
+\sym_2^{\prime}
\begin{DGCpicture}
\DGCstrand(0,0)(1,1)
\DGCstrand(1,0)(0,1)\DGCdot{0.75}
\end{DGCpicture}
\subset
\sym_2^{\prime}
\begin{DGCpicture}
\DGCstrand(0,0)(1,1)
\DGCstrand(1,0)(0,1)\DGCdot{0.25} \DGCdot{0.75}
\end{DGCpicture}
+\sym_2^{\prime}
\begin{DGCpicture}
\DGCstrand(0,0)(1,1)
\DGCstrand(1,0)(0,1)\DGCdot{0.75}
\end{DGCpicture}
+\sym_2^{\prime}
\begin{DGCpicture}
\DGCstrand(0,0)(1,1)
\DGCstrand(1,0)(0,1)\DGCdot{0.25}
\end{DGCpicture}
$$
$$\subset
\sym_2^{\prime}
\begin{DGCpicture}
\DGCstrand(0,0)(1,1)
\DGCstrand(1,0)(0,1)\DGCdot{0.25} \DGCdot{0.75}
\end{DGCpicture}
+\sym_2^{\prime}
\begin{DGCpicture}
\DGCstrand(0,0)(1,1)
\DGCstrand(1,0)(0,1)\DGCdot{0.75}
\end{DGCpicture}
+\sym_2^{\prime}
\begin{DGCpicture}
\DGCstrand(0,0)(1,1)
\DGCstrand(1,0)(0,1)\DGCdot{0.25}
\end{DGCpicture}
+\sym_2^{\prime}
\begin{DGCpicture}
\DGCstrand(0,0)(1,1)
\DGCstrand(1,0)(0,1)
\end{DGCpicture}
=\sym_2^{\prime}\cdot\NH_2.
$$
The subquotients of this filtration, as $(\sym_2,\dif_1)$-modules, are all
isomorphic to $\sym_2^{\prime}$, and hence contractible. Therefore,
$\sym_2^{\prime}\cdot\NH_2$ is a contractible ideal of $\NH_2$, and
$\NH_2\twoheadrightarrow \NH_2/\sym_2^{\prime}\cdot \NH_2\cong \mathrm{M}(2,\Bbbk)$
is a quasi-isomorphism of $p$-DG algebras. The same result holds for $\dif_{-1}$
by conjugation by $\sigma$.

\begin{prop}\label{prop-nilhecke2-qis-2by2-matrix-algebra} The canonical projection
$\pi_2:\NH_2\lra \NH_2/\sym_2^{\prime}\cdot\NH_2\cong
\mathrm{M}(2,\Bbbk)$ is a quasi-isomorphism if and only if $a=\pm
1$.
\end{prop}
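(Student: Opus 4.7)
The ``if'' direction is essentially established by the calculation immediately preceding the proposition. For $a = 1$, the kernel $I = \sym_2^{\prime} \cdot \NH_2$ carries a length-four filtration by $(\sym_2, \dif_1)$-submodules whose successive subquotients are each isomorphic to $\sym_2^{\prime}$; since the short exact sequence $0 \to \sym_2^{\prime} \to \sym_2 \to \Bbbk \to 0$ is an exact triangle in $\mc{D}(H)$ and the inclusion $V_0 \hookrightarrow \sym_2$ is a quasi-isomorphism by Corollary~\ref{cor-V0-qis-Symn}, $\sym_2^{\prime}$ is acyclic, hence so is $I$ by iterated cones, giving the quasi-iso for $\pi_2$. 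For $a = -1$, transport this conclusion through the algebra automorphism $\sigma$ of Proposition~\ref{prop-symmetry-between-a-and-minus-a}, which intertwines $\dif_1$ with $\dif_{-1}$, preserves $\sym_2^{\prime} \cdot \NH_2$, and commutes with $\pi_2$.

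For the converse, assume $a \in \F_p^{\ast} \setminus \{\pm 1\}$ and plan to show $[\NH_2] \ne [\mathrm{M}(2, \Bbbk)]$ in $K_0(H\ufmod) \cong \mathbb{O}_p$, which will force $I$ to carry a nonzero class and so to be non-acyclic. Working in the quotient, where $x_1^2 = 0$ and $x_1 + x_2 = 0$, a short direct computation identifies $\mathrm{M}(2, \Bbbk) \cong V_0 \oplus V_2\{-2\}$ in $H\udmod$, giving $[\mathrm{M}(2,\Bbbk)] = 2 + q^{-2} + q^2$. For $[\NH_2]$ I will use the two-step $\dif_a$-stable filtration $0 \subset \pol_2 \subset \NH_2$. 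The bottom $\pol_2 \cong \bpol_1(0) \otimes \bpol_1(0)$ is stably $V_0$, while the quotient, as a $(\pol_2, \dif)$-module with the induced differential obtained by reducing the formula for $\dif_a(\delta_1)$ modulo $\pol_2$, is isomorphic to $\bpol_2(-(a+1), a-1)\{-2\}$, and hence as an $H$-module to $\bpol_1(-(a+1)) \otimes_\Bbbk \bpol_1(a-1)$ with shift $\{-2\}$. Using that $[\bpol_1(\beta)] \in \mathbb{O}_p$ equals $1$, $0$, or $[V_{p-\beta}]$ according as $\beta \equiv 0$, $1$, or lies in $\{2, \ldots, p-1\}$, this yields the closed form $[\NH_2] = 1 + q^{-2}[\bpol_1(-(a+1))] \cdot [\bpol_1(a-1)]$.

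The equality $[\NH_2] = [\mathrm{M}(2,\Bbbk)]$ then reduces to $[\bpol_1(-(a+1))] \cdot [\bpol_1(a-1)] = 1 + q^2 + q^4$ in $\mathbb{O}_p$. The degenerate cases $a \equiv \pm 2 \pmod p$ (where one $\bpol_1$-factor vanishes in $K_0$) give $[\NH_2] = 1 \ne 2 + q^{-2} + q^2$ immediately. In the remaining generic subcase $\beta_1, \beta_2 \in \{2, \ldots, p-1\}$, using $[V_i] = (1 - q^{2(i+1)})/(1 - q^2)$ in $\Z[\zeta_p] \cong \Z[q^2]/\Psi_p(q^2)$, the equation reduces to the cyclotomic identity
\[
\zeta_p^{a+2} + \zeta_p^{2-a} = \zeta_p + \zeta_p^3
\]
in $\Z[\zeta_p]$. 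Comparing coefficients in the $\Z$-basis $\{\zeta_p^k : 1 \le k \le p-1\}$ forces $\{a+2, 2-a\} \equiv \{1, 3\} \pmod{p}$, whence $a \equiv \pm 1$, contradicting the hypothesis. The main obstacle I anticipate is the bookkeeping: carefully tracking the grading shifts through the tensor decomposition and uniformly organizing the degenerate cases $a \equiv \pm 2$ alongside the generic cyclotomic comparison.
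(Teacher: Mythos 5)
Your proof is correct, and the core structure — the ``if'' direction via acyclicity of the filtered kernel plus the $\sigma$-conjugation, and the converse via the two-step $\dif_a$-stable filtration $\pol_2\subset\NH_2$ and a comparison of Euler characteristics in $K_0(H\ufmod)$ — matches the paper's. Where you genuinely diverge is the endgame of the converse. You factor the quotient's class as $q^{-2}[\bpol_1(-(a+1))]\cdot[\bpol_1(a-1)]$, pass to $\Z[\zeta_p]\cong\Z[q^2]/(\Psi_p(q^2))$, peel off the degenerate cases $a\equiv\pm 2$ (where one factor is contractible), and in the generic case compare coefficients of the identity $\zeta_p^{a+2}+\zeta_p^{2-a}=\zeta_p+\zeta_p^3$ in the $\Z$-basis $\{\zeta_p^k:1\le k\le p-1\}$; since $a\not\equiv 0$ makes the two left-hand exponents distinct and $a\not\equiv\pm 1$ makes all four exponents distinct, this is airtight. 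The paper instead applies the $\F_p$-dimension reduction $q\mapsto 1$ of Notation~\ref{notation-symbol-as-O-dim} to the very same $\mathbb{O}_p$-equation, collapsing it in one line to $5-a^2\equiv 4\pmod p$, i.e.\ $a^2\equiv 1$. This shortcut handles all residues uniformly (no degenerate-case split, no basis bookkeeping) at the cost of proving only the weaker $\F_p$-level inequality — which of course still forces $[\NH_2]\ne[\mathrm M(2,\Bbbk)]$ and hence non-acyclicity of the kernel. Your version is heavier but has the merit of exhibiting the full $\mathbb{O}_p$-level obstruction directly; you could simplify considerably by noticing, as the paper does, that it suffices to compare ungraded dimensions modulo $p$.
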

\begin{proof} The ``if'' part follows from the discussion before the
proposition and the symmetry between $\dif_1$ and $\dif_{-1}$
(Proposition \ref{prop-symmetry-between-a-and-minus-a}). We now
prove the converse. As a $\dif_a$-module, $\NH_2$ fits into the
short exact sequence
\[
0\lra
\left\langle ~
\begin{DGCpicture}
\DGCstrand(0,0)(0,1)\DGCdot{0.75}[r]{$^{k_1}$}
\DGCstrand(1,0)(1,1)\DGCdot{0.75}[r]{$^{k_2}$}
\end{DGCpicture}
~\bigg| k_1,k_2 \in \N
\right\rangle
\lra \NH_2 \lra
\left\langle ~
\begin{DGCpicture}
\DGCstrand(0,0)(1,1)\DGCdot{0.75}[r]{$^{k_2}$}
\DGCstrand(1,0)(0,1)\DGCdot{0.75}[r]{$^{k_1}$}
\end{DGCpicture}
~\bigg|k_1,k_2 \in \N
\right\rangle
\lra 0.
\]
Here and in what follows, we use obtuse-angle brackets ``$\langle~\rangle$''
to stand for the $\Bbbk$-linear span of the enclosed elements. The $\dif_a$-submodule
\[
\left\langle
\begin{DGCpicture}
\DGCstrand(0,0)(0,1)\DGCdot{0.75}[r]{$^{k_1}$}
\DGCstrand(1,0)(1,1)\DGCdot{0.75}[r]{$^{k_2}$}
\end{DGCpicture}~\bigg| k_1,k_2 \in \N
\right\rangle
\cong \pol_2 \cong V_0 \oplus F,
\]
decomposes into a direct sum of the trivial module $V_0$ and a
free graded $\dif_a$-module $F$. On the quotient module $\dif_a$ acts by
\[\dif_a\left(\begin{DGCpicture}
\DGCstrand(0,0)(1,1)\DGCdot{0.75}[r]{$^{k_2}$}
\DGCstrand(1,0)(0,1)\DGCdot{0.75}[l]{$^{k_1}$}
\end{DGCpicture}\right)=
(k_1-a-1)\begin{DGCpicture}
\DGCstrand(0,0)(1,1)\DGCdot{0.75}[r]{$^{k_2}$}
\DGCstrand(1,0)(0,1)\DGCdot{0.75}[l]{${}^{k_1+1}$}
\end{DGCpicture}+(k_2+a-1)
\begin{DGCpicture}
\DGCstrand(0,0)(1,1)\DGCdot{0.75}[r]{$^{k_2+1}$}
\DGCstrand(1,0)(0,1)\DGCdot{0.75}[l]{$^{k_1}$}
\end{DGCpicture}.
\]
Hence,
\[
\left\langle~\begin{DGCpicture}
\DGCstrand(0,0)(1,1)\DGCdot{0.75}[r]{$^{k_2}$}
\DGCstrand(1,0)(0,1)\DGCdot{0.75}[r]{$^{k_1}$}
\end{DGCpicture}~\bigg|k_1\leq a+1~,k_2 \leq p+1-a
\right\rangle
\]
is a $\dif_a$-submodule of the quotient, and the quotient module decomposes
\[
\left\langle
~\begin{DGCpicture}
\DGCstrand(0,0)(1,1)\DGCdot{0.75}[r]{$^{k_2}$}
\DGCstrand(1,0)(0,1)\DGCdot{0.75}[r]{$^{k_1}$}
\end{DGCpicture}~\bigg|k_1~,k_2 \in \N
\right\rangle
\cong
\left\langle
~\begin{DGCpicture}
\DGCstrand(0,0)(1,1)\DGCdot{0.75}[r]{$^{k_2}$}
\DGCstrand(1,0)(0,1)\DGCdot{0.75}[r]{$^{k_1}$}
\end{DGCpicture}~\bigg|k_1\leq a+1~,k_2 \leq p+1-a \right\rangle
\oplus F^\prime,
\]
where $F^\prime$ is a free $\dif_a$-module. It follows that in the stable
category $H\udmod$ there is an exact triangle
\[V_0\lra \NH_2 \lra V_{a+1}\o V_{p+1-a}\{-2\}\lra V_0[1].\]
Therefore, the $\mathbb{O}_p$-dimension (see Notation~\ref{notation-symbol-as-O-dim})
of $(\NH_2,\dif_a)$ equals
$$[\NH_2]=1+q^{-2}(\sum_{i=0}^{a+1}q^{2i})(\sum_{j=0}^{p+1-a}q^{2j}).$$
Reduction mod
$p$ gives the $\mathbb{F}_p$-dimension $[\NH_2]_p=1+(a+2)(p+2-a)\equiv 5-a^2$, which
equals $\mathrm{dim}(\mathrm{M}(2,\Bbbk))=4$ modulo $p$ if and only if $a=\pm 1$.
The proposition follows.
\end{proof}

The general case of $(\NH_n,\dif_{\pm 1})$ for $n \geq 3$ is similar, namely the $p$-DG
algebras are all quasi-isomorphic to some $(n!)\times (n!)$-matrix $p$-DG algebras.
This will be shown using the $p$-DG polynomial
representations of $\NH_n$. We first make some simplification of notations.

\begin{ntn}\label{notation-P-plus-minus} When $a=1$, let $\bpol_n^+:=\bpol_n(\alpha^+)$, where $\alpha^+$ is the $n$-tuple $\alpha^+=(1-n, 2-n, \dots, 0)\in \F_p^n$. Likewise for $a=-1$, define $\bpol_n^-:=\bpol_n(\alpha^-)$, where $\alpha^- =(0,-1,\cdots, 1-n )$.
\end{ntn}

Observe that $\bpol_n^+$ is a left $p$-DG $(\NH_n,\dif_1)$-module while $\bpol_n^-$ is a left module over $(\NH_n,\dif_{-1})$. By applying $\psi$ (Proposition \ref{prop-symmetry-between-a-and-minus-a}), $\bpol_n^-$ becomes a right $p$-DG module over $(\NH_n,\dif_1)$. By Lemma \ref{lemma-dif-1-on-delta-n}, one can easily show that $\NH_n\epsilon_n$ (resp. $\epsilon_n^*\NH_n$) is a left (resp. right) $p$-DG ideal in $(\NH_n,\dif_1)$, and the isomorphism (\ref{eq-two-left-mod-iso}) (resp. (\ref{eq-right-mod-iso})) gives rise to an isomorphism of left (resp. right) $p$-DG modules:
$$
\big(\bpol_n^+,\dif_{\alpha^+}\big)\cong \big(\NH_n\epsilon_n\{\textstyle{\frac{n(1-n)}{2}}\}, \dif_1\big)
$$
$$
\left(\textrm{resp.}~\big(\bpol_n^-,\dif_{\alpha^-}\big)\cong \big(\epsilon_n^*\NH_n\{\textstyle{\frac{n(1-n)}{2}}\},\dif_1\big)\right).
$$
This follows from a straightforward computation that
\begin{equation}
\dif_1(\epsilon_n)=-\sum_{t=1}^n(n-t)x_t\epsilon_n, \ \ \ \ \left(\textrm{resp.}~ \dif_1(\epsilon_n^*)=-\sum_{t=1}^n(t-1)\epsilon_n^*x_t,\right)
\end{equation}
which in turn is a direct consequence of Lemma \ref{lemma-dif-1-on-delta-n}.

From now on till the end of this section, we will state the main results for $a=\pm 1$ while only giving the proof for $a=1$.
The $a=-1$ case follows by applying the symmetry $\sigma$ (Proposition \ref{prop-symmetry-between-a-and-minus-a}).
By Proposition \ref{prop-NH-as-p-DG-bimod}, there
is a homogeneous $(\NH_n,\dif_1)$-bimodule map
\begin{equation}\label{eqn-p-DG-bimodule-map-pol-and-NHn}
\phi_n:\bpol_n^+ \o_{\sym_n} \bpol_n^- \lra  \NH_n,
\end{equation}
which is an isomorphism. Furthermore, notice that as $p$-DG modules over $(\sym_n,\dif_1)$, $\bpol_n^+$
and $\bpol_n^-$ have respectively as bases
\begin{equation}\label{eqn-basis-of-Un-plus}
B_n^+:=
\left\{x^\beta := x_1^{b_1}x_2^{b_2}\cdots x_n^{b_n}| \beta=(b_1,b_2,\dots, b_n), ~0 \leq b_t \leq n-t, ~t=1,\dots, n
\right\},
\end{equation}
and
\begin{equation}\label{eqn-basis-of-Un-minus}
B_n^-:=
\left\{x^\beta := x_1^{b_1}x_2^{b_2}\cdots x_n^{b_n}| \beta=(b_1,b_2,\dots, b_n), ~0 \leq b_t \leq t-1 , ~t=1,\dots, n\right\}.
\end{equation}
The $\Bbbk$-vector subspace of $\bpol_n^+$ (resp. $\bpol_n^-$)
spanned by $B_n^+$ (resp. $B_n^-$) is stable under $\dif_1$. Therefore $B_n^+$
(resp. $B^-$) is also a basis for the $\dif_1$-module $U_n^+=\Bbbk\langle B_n^+\rangle$
(resp. $U_n^-=\Bbbk\langle B_n^-\rangle$ ). Furthermore, there are isomorphisms of $p$-DG modules over $\sym_n$
\[
U_n^+ \o \sym_n \lra \bpol_n^+, \ \ \ \ U_n^- \o \sym_n \lra \bpol_n^-,
\]
lifting the isomorphism (\ref{eq-nat-map-s}). The image of the product of these basis
elements under $\phi_n$ consists of
\[
\left\{
\begin{DGCpicture}
\DGCstrand(0,0.5)(0,2.5)\DGCdot{0.75}[r]{$^{c_1}$}\DGCdot{2.25}[r]{$^{b_1}$}
\DGCstrand(1,0.5)(1,2.5)\DGCdot{0.75}[r]{$^{c_2}$}\DGCdot{2.25}[r]{$^{b_2}$}
\DGCstrand(2,0.5)(2,2.5)\DGCdot{0.75}[r]{$^{c_3}\cdots$}\DGCdot{2.25}[r]{$^{b_3}\cdots$}
\DGCstrand(3,0.5)(3,2.5)\DGCdot{0.75}[r]{$^{c_n}$}\DGCdot{2.25}[r]{$^{b_n}$}
\DGCcoupon(-0.25,1.15)(3.25,1.85){$\delta(n)$}
\end{DGCpicture}
~\Bigg|~b_t \leq n-t ,~c_t \leq t-1,~t=1,\dots, n
\right\}.
\]
The matrix basis of $\NH_n$ given in Proposition \ref{prop-nil-hecke-basis-over-center}
is obviously contained in the $\Bbbk$-span of the above set. Since both sets have the
same cardinality $(n!)^2$, we conclude that they span the same subalgebra of $\NH_n$
isomorphic to the matrix algebra $\mathrm{M}(n!,\Bbbk)$. This gives rise to a $p$-DG lifting
of the homomorphism (\ref{eq-jmathn})
$$\jmath_n: (\mathrm{M}(n!,\Bbbk),\dif_1)\hookrightarrow (\NH_n,\dif_1)$$
as the composition
$$(\mathrm{END}(U_n^+),\dif_1) \hookrightarrow (\mathrm{END}_{\sym_n}(U_n^+\o \sym_n),\dif_1)\cong (\mathrm{END}(\bpol_n^+), \dif_1).$$
Similar results hold for the differential $\dif_{-1}$ as well.

\begin{prop}\label{prop-pm1-nilhecke-qis-matrix-algebra} When $a=\pm 1$, the canonical surjection of $p$-DG algebras $\pi_n: \NH_n\lra \NH_n/(\sym^{\prime}_n\cdot \NH_n)\cong \mathrm{M}(n!,\Bbbk)$ is a quasi-isomorphism. Furthermore, under $\dif_1$, a quasi-inverse of $\pi_n$ is given by the inclusion $\jmath_n:\mathrm{M}(n!,\Bbbk)\lra \NH_n$.
\end{prop}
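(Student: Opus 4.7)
The plan is to treat the $a=1$ case; the $a=-1$ case will follow by conjugating with the anti-automorphism $\sigma$ of Proposition~\ref{prop-symmetry-between-a-and-minus-a}. The strategy is to exhibit $(\NH_n,\dif_1)$ as a tensor product of $p$-DG algebras decoupling the matrix piece from the symmetric-function piece, and then argue that the symmetric-function piece is contractible modulo its constants.

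The setup immediately preceding the proposition already gives the isomorphism of $p$-DG $(\sym_n,\dif_1)$-modules $U_n^+\otimes_\Bbbk\sym_n \xrightarrow{\sim} \bpol_n^+$. Passing to $\sym_n$-linear endomorphisms and computing the induced differential on a pure tensor via $\dif(\phi)(u\otimes g) = \dif(\phi(u\otimes g)) - \phi(\dif(u\otimes g))$, I will obtain an isomorphism of $p$-DG algebras
$$ (\NH_n,\dif_1) \;\cong\; (\mathrm{END}_{\Bbbk}(U_n^+),\dif_1)\otimes_\Bbbk(\sym_n,\dif), $$
with the standard tensor-product differential on the right. Under this isomorphism the two-sided ideal $\sym_n'\cdot\NH_n$ corresponds to $\mathrm{END}_{\Bbbk}(U_n^+)\otimes\sym_n'$, the projection $\pi_n$ becomes $\mathrm{Id}\otimes(\sym_n\twoheadrightarrow\sym_n/\sym_n'=\Bbbk)$, and the section $\jmath_n$ becomes $\mathrm{Id}\otimes(\Bbbk\hookrightarrow\sym_n)$.

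It will then suffice to show that $\Bbbk\hookrightarrow\sym_n$ is a quasi-isomorphism of $p$-complexes; tensoring with the finite-dimensional $p$-complex $\mathrm{END}_{\Bbbk}(U_n^+)$ preserves this property, and since $\pi_n\circ\jmath_n=\mathrm{Id}$ by construction, both maps will then automatically be mutual quasi-inverses. For $n<p$, this quasi-isomorphism is exactly Corollary~\ref{cor-V0-qis-Symn}: the inclusion $V_0\subset\sym_n$ is an equivalence in $H\umod$, so in the $H$-module splitting $\sym_n=\Bbbk\oplus\sym_n'$ the complement $\sym_n'$ is a direct sum of free $H$-modules, and any $H$-module tensor product with $\sym_n'$ is contractible. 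I expect the main point demanding attention to be the verification that the tensor-factorization above is compatible with the $p$-differential, which hinges on the $\dif_1$-stability of $U_n^+$ inside $\bpol_n^+$; this stability reflects the exceptional role of $a=\pm 1$ singled out in Propositions~\ref{prop-sub-stab} and~\ref{prop-when-dif-a-preserves-idem} and would simply fail for other values of $a$, explaining why the proposition is specific to $a=\pm 1$.
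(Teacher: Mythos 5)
Your argument is essentially the paper's own proof: both rest on the $\dif_1$-stability of $U_n^+$, the resulting tensor factorization $(\NH_n,\dif_1)\cong(\mathrm{END}_\Bbbk(U_n^+),\dif_1)\otimes_\Bbbk(\sym_n,\dif)$, identification of $\pi_n$ and $\jmath_n$ with $\mathrm{Id}\otimes(\sym_n\twoheadrightarrow\Bbbk)$ and $\mathrm{Id}\otimes(\Bbbk\hookrightarrow\sym_n)$, and the quasi-isomorphism $\Bbbk\hookrightarrow\sym_n$ for $n<p$ (Corollary~\ref{cor-V0-qis-Symn}) being preserved under tensoring with the finite-dimensional $p$-complex $\mathrm{END}_\Bbbk(U_n^+)$. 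One small omission: the proposition is stated for all $n$, and your argument as written covers only $n<p$; for $n\ge p$ one needs the one-line observation (which the paper makes) that both $(\NH_n,\dif_{\pm 1})$ and $(\mathrm{M}(n!,\Bbbk),\dif_{\pm 1})$ are acyclic — the latter because $U_n^+$ is then a free $H$-module, so $\mathrm{END}_\Bbbk(U_n^+)\cong U_n^+\otimes(U_n^+)^\ast$ is free as well — so the statement holds vacuously.
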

\begin{proof} We only prove the $a=1$ case.  As a $p$-DG $\sym_n$-module, $\bpol_n^+\cong \sym_n \o U_n^+$. Introduce an increasing filtration on $U_n^+$ in the order of descending degrees of elements in $B_n^+$. The subquotients of this filtration on $U_n^+$ are all isomorphic to degree shifted copies of $\widetilde{V}_0$. Therefore $\bpol_n^+$ inherits a filtration whose subquotients are rank one free $p$-DG $\sym_n$-modules. It follows that the map of $p$-DG algebras $\pi_n$ factorizes as
$$\pi_n: (\mathrm{END}(\bpol_n^+),\dif_1)\cong (\mathrm{END}(U_n^+)\o \sym_n,\dif_1)\twoheadrightarrow (\mathrm{END}(U_n^+),\dif_1)\cong (\mathrm{M}(n!,\Bbbk),\dif_1) $$
with $\jmath_n$ as a section.
Here the middle arrow comes from the reduction of coefficient $\sym_n \twoheadrightarrow \Bbbk$.
Now if $n\geq p$, both sides are acyclic and the proposition is true vacuously. On the other hand, when $0\leq n< p$, $\sym_n$ is a quasi-isomorphic to $\Bbbk$ and the result follows.
\end{proof}

\begin{cor}\label{cor-NHn-qis-matrix-algebra} For any $n\in \N$, the functors
$$\jmath_n^*\cong {\pi_n}_\ast:\mc{D}(\NH_n,\dif_{1})\lra \mc{D}(\mathrm{M}(n!,\Bbbk),\dif_{1}),\ \ \ \ \pi_n^*\cong{\jmath_n}_\ast: \mc{D}(\mathrm{M}(n!,\Bbbk),\dif_{1})\lra \mc{D}(\NH_n,\dif_1)$$
are quasi-inverse equivalences of triangulated categories.
\end{cor}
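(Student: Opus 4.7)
The plan is to apply Theorem~\ref{thm-qis-algebra-equivalence-derived-categories} directly to the two $p$-DG algebra maps $\pi_n$ and $\jmath_n$. Proposition~\ref{prop-pm1-nilhecke-qis-matrix-algebra} has already established that $\pi_n:(\NH_n,\dif_{\pm 1})\to(\mathrm{M}(n!,\Bbbk),\dif_{\pm 1})$ is a quasi-isomorphism of $p$-DG algebras under either choice of parameter. The identity $\pi_n\circ\jmath_n=\Id$, which is verified in the construction of $\jmath_n$ from the section $\overline{U}_n\hookrightarrow\bpol_n$, then forces $\jmath_n$ to be a quasi-isomorphism as well, by 2-out-of-3 on the induced maps in slash cohomology.

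Given these two quasi-isomorphisms, Theorem~\ref{thm-qis-algebra-equivalence-derived-categories} applied to $\pi_n$ immediately yields that $\pi_n^*$ and ${\pi_n}_*$ are mutually inverse equivalences between $\mc{D}(\NH_n,\dif_{\pm 1})$ and $\mc{D}(\mathrm{M}(n!,\Bbbk),\dif_{\pm 1})$. The same theorem applied to $\jmath_n$ shows that $\jmath_n^*$ and ${\jmath_n}_*$ form another mutually inverse pair between the same two derived categories.

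To identify the functors pairwise, I would invoke the functoriality of induction and restriction under composition of $p$-DG algebra maps. From $\pi_n\circ\jmath_n=\Id_{\mathrm{M}(n!,\Bbbk)}$ one obtains ${\jmath_n}_\ast\circ{\pi_n}_\ast\cong \Id$ on $\mc{D}(\mathrm{M}(n!,\Bbbk),\dif_{\pm 1})$ and $\pi_n^\ast\circ\jmath_n^\ast\cong \Id$ on the same category. Combining the first of these with the fact that ${\pi_n}_\ast$ is an equivalence whose quasi-inverse is $\pi_n^\ast$ produces the isomorphism ${\jmath_n}_\ast\cong({\pi_n}_\ast)^{-1}\cong\pi_n^\ast$; the second identification $\jmath_n^\ast\cong{\pi_n}_\ast$ is obtained symmetrically. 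Since all the substantive analytic work was carried out in the preceding proposition (where the filtration by rank-one $(\sym_n,\dif_1)$-submodules of $\bpol_n^+$ established contractibility of $\sym_n'\cdot\NH_n$), the corollary is essentially a formal consequence and I do not expect any real obstacle here.
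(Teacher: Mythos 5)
Your proof is correct and follows the same route as the paper: the one-line proof there simply invokes Proposition~\ref{prop-pm1-nilhecke-qis-matrix-algebra} (which already records both that $\pi_n$ is a quasi-isomorphism and that $\jmath_n$ is a quasi-inverse, so your 2-out-of-3 step merely re-derives what that proposition states) together with Theorem~\ref{thm-qis-algebra-equivalence-derived-categories}. Your extra paragraph identifying $\jmath_n^\ast\cong{\pi_n}_\ast$ and $\pi_n^\ast\cong{\jmath_n}_\ast$ via $\pi_n\circ\jmath_n=\Id$ and functoriality of induction/restriction is exactly the bookkeeping the paper leaves implicit, and it is sound.
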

\begin{proof}This is an easy consequence of the previous Proposition \ref{prop-pm1-nilhecke-qis-matrix-algebra} and Theorem \ref{thm-qis-algebra-equivalence-derived-categories}.
\end{proof}

Under the differential $\dif_1$, the isomorphism $\phi_n$ of Proposition \ref{prop-NH-as-p-DG-bimod} restricts to an isomorphism $(U_n^+)^*\cong U_n^-$. Combined with the above Proposition we summarize this subsection by exhibiting the following commutative diagram
\begin{equation}\label{eqn-iso-of-NHn-as-bimodule}
\xymatrix{
 U_n^+\o U_n^- \ar[r]^-{\phi_n} \ar@<-0.5ex>[d] &  \mathrm{END}(U_n^+)\ar[r]^-{\cong}\ar@<-0.5ex>[d] & \mathrm{M}(n!,\Bbbk)\ar@<-0.5ex>[d] \\
\sym_n\o U_n^+\o U_n^- \ar[r]^-{\cong} \ar[d]_{\cong} \ar@<-0.5ex>[u] & \sym_n \o \mathrm{END}(U_n^+) \ar[r] \ar[d]_{\cong} \ar@<-0.5ex>[u] &\mathrm{M}(n!,\sym_n)\ar[d]^-{\cong} \ar@<-0.5ex>[u] \\
\bpol_n^+ \o_{\sym_n} \bpol_n^-  \ar[r]^-{\phi_n} & \mathrm{END}_{\sym_n}(\bpol_n^+) \ar[r]^-{\cong} &~\NH_n.}
\end{equation}
The vertical arrows in the upper half part of the diagram, given by base change with the maps of $p$-DG algebras $\Bbbk\hookrightarrow \sym_n $ and $\sym_n\twoheadrightarrow \Bbbk$, are all quasi-isomorphisms; while the rest of the diagram is described by Proposition \ref{prop-NH-as-p-DG-bimod}.

\paragraph{Compactness of the column module.} Corollary \ref{cor-NHn-qis-matrix-algebra} allows us
to compute the Grothendieck group $K_0(\NH_n,\dif_{\pm 1})$ in the same way as for the
toy model of matrix algebras (Corollary \ref{cor-p-DG-morita-equivalence-matrix-ring}).
However, we will show that the column module over $\NH_n$ is compact by exhibiting it
as a direct summand of a finite cell module, which in turn leads to a more explicit
classification of indecomposable modules in $\mc{D}^c(\NH_n,\dif_{\pm 1})$. Again,
the symmetries $\psi$ and $\sigma$ allow one to focus on the differential $\dif_1$.

We start with the first non-trivial case $n=2$ as an example. Assume without loss of
generality that $p\geq 3$. The center $\sym_2$ of $\NH_2$ is a $p$-DG subalgebra. As a
left $p$-DG module over this subalgebra, $\NH_2$ admits the following filtration by
$p$-DG submodules
\[
0 \subset \sym_2\cdot
\begin{DGCpicture}
\DGCstrand(0,0)(1,1)
\DGCstrand(1,0)(0,1) \DGCdot{0.75}
\end{DGCpicture}\oplus
\sym_2\cdot
\begin{DGCpicture}
\DGCstrand(0,0)(1,1)
\DGCstrand(1,0)(0,1)\DGCdot{0.25} \DGCdot{0.75}
\end{DGCpicture}
\subset \NH_2,
\]
where the differential $\dif_1$ acts on the basis elements by
\[
\dif_1\left(
\begin{DGCpicture}
\DGCstrand(0,0)(1,1)
\DGCstrand(1,0)(0,1)\DGCdot{0.75}
\end{DGCpicture}\right)
=-\begin{DGCpicture}
\DGCstrand(0,0)(1,1)
\DGCstrand(1,0)(0,1)\DGCdot{0.25} \DGCdot{0.75}
\end{DGCpicture}, \ \ \ \
\dif_1\left(
\begin{DGCpicture}
\DGCstrand(0,0)(1,1)
\DGCstrand(1,0)(0,1)\DGCdot{0.25} \DGCdot{0.75}
\end{DGCpicture}
\right)=0.
\]
Denote the middle term by $\bpol_2^-\{1\}$. Then
$$\bpol_2^-\cong \mathrm{Res}_{\sym_2}^{\NH_2}(\bpol_2^-),$$
and the filtration gives rise to a short exact sequence of $p$-DG modules over $(\sym_2,\dif_1)$:
\begin{equation}\label{eqn-NH2-ses-sym2-submodules}
0\lra \bpol_2^-\{1\}\lra {_{\sym_2}\NH_2}\lra \bpol_2^-\{-1\}\lra 0.
\end{equation}
The short exact sequence (\ref{eqn-NH2-ses-sym2-submodules}) comes from restricting to
$\sym_2$ the two-step filtration of $\NH_2$ as a right $p$-DG module over itself by
submodules isomorphic to degree-shifts of $\bpol_2^-$.
Furthermore, the left $p$-DG module $_{\sym_2}\NH_2$ over $(\sym_2,\dif_1)$ decomposes
into a direct sum (since $p\geq 3$)
\begin{equation} \label{eqn-decomposition-NH2-over-sym2}
{_{\sym_2}\NH_2}\cong \sym_2\cdot
\begin{DGCpicture}
\DGCstrand(0,0)(0,1)
\DGCstrand(1,0)(1,1)
\end{DGCpicture}
\bigoplus \left({_{\sym_2}\NH_2^\prime}\right),
\end{equation}
where $_{\sym_2}\NH_2^\prime$ is the rank three $p$-DG $\sym_2$-submodule
\[
\sym_2\cdot
\begin{DGCpicture}
\DGCstrand(0,0)(1,1)
\DGCstrand(1,0)(0,1)
\end{DGCpicture}
\oplus \sym_2\cdot\left(~
\begin{DGCpicture}
\DGCstrand(0,0)(1,1)
\DGCstrand(1,0)(0,1) \DGCdot{0.75}
\end{DGCpicture}+
\begin{DGCpicture}
\DGCstrand(0,0)(1,1)
\DGCstrand(1,0)(0,1)\DGCdot{0.25}
\end{DGCpicture}~\right)
\oplus \sym_2\cdot
\begin{DGCpicture}
\DGCstrand(0,0)(1,1)
\DGCstrand(1,0)(0,1)\DGCdot{0.25} \DGCdot{0.75}
\end{DGCpicture}.
\]
Recall that $(\NH_2,\dif_1)$ has the polynomial representation $\bpol_2^+$.
Applying $\bpol_2^+\o_{\sym_2}(-)$ to the short exact sequence (\ref{eqn-NH2-ses-sym2-submodules}) gives the short exact sequence of $p$-DG modules
\begin{equation}\label{eqn-ses-two-step-filtration-of-NH2}
0\lra \bpol_2^+\o_{\sym_2}\bpol_2^-\{1\}\lra \bpol_2^+\o_{\sym_2}\NH_2 \lra \bpol_2^+\o_{\sym_2}\bpol_2^-\{-1\}\lra 0.
\end{equation}
We identify the terms in this short exact sequence. First off, $\bpol_2^+\o_{\sym_2}\bpol_2^- \cong \NH_2$ as a left $p$-DG module over $(\NH_2,\dif_1)$ by Proposition \ref{prop-NH-as-p-DG-bimod}. Thus the two end terms of the sequence (\ref{eqn-ses-two-step-filtration-of-NH2}) are respectively $\NH_2\{1\}$ and $\NH_2\{-1\}$. Next, the direct sum decomposition (\ref{eqn-decomposition-NH2-over-sym2}) gives rise to a direct sum decomposition of $\NH_2$-modules
\[
\bpol_2^+\o_{\sym_2}\NH_2\cong \bpol_2^+\bigoplus \left(\bpol_2^+\o_{\sym_2} \NH_2^\prime\right),
\]
which contains $\bpol_2^+$ as a $p$-DG direct summand. Using the characterization of compact modules (Theorem \ref{thm-characterizing-compact-modules}), we conclude that the module $\bpol_2^+$ is compact cofibrant. The general situation is proven in a similar way.

\begin{prop}\label{prop-NHn-column-module-compact}
When $0\leq n \leq p-1$, the left $p$-DG module $\bpol_n^+$ over $(\NH_n,\dif_{1})$ is compact cofibrant.
\end{prop}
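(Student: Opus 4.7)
The plan is to generalize the $n=2$ computation carried out just above and exhibit $\bpol_n^+$ as a $p$-DG direct summand, in the abelian category of left $(\NH_n,\dif_1)$-modules, of a finite cell $(\NH_n,\dif_1)$-module built from $\NH_n$. Cofibrance will then follow from Proposition~\ref{prop-easy-properties-cofibrant-modules}~v), and compactness from Theorem~\ref{thm-characterizing-compact-modules}.

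First I would construct a filtration of $\NH_n$, regarded as a right $p$-DG module over itself, by right $p$-DG $\NH_n$-submodules whose successive subquotients are graded shifts of $\bpol_n^-$. This is obtained by taking the left $p$-DG $\sym_n$-submodule filtration of $\bpol_n^+$ with rank-one free subquotients (implicit in the proof of Proposition~\ref{prop-pm1-nilhecke-qis-matrix-algebra}) and tensoring over $\sym_n$ on the right with $\bpol_n^-$, via the bimodule isomorphism $\NH_n\cong \bpol_n^+\otimes_{\sym_n}\bpol_n^-$ of Proposition~\ref{prop-NH-as-p-DG-bimod}. Restricting this filtration to the central $p$-DG subalgebra $\sym_n\subset\NH_n$ yields a length-$n!$ left $p$-DG $\sym_n$-module filtration of ${_{\sym_n}\NH_n}$ whose subquotients are graded shifts of $\bpol_n^-$. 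Applying the exact induction functor $\bpol_n^+\otimes_{\sym_n}(-)$ then produces a filtration of $\bpol_n^+\otimes_{\sym_n}\NH_n$ by left $p$-DG $\NH_n$-submodules with subquotients graded shifts of $\bpol_n^+\otimes_{\sym_n}\bpol_n^-\cong \NH_n$, i.e., $\bpol_n^+\otimes_{\sym_n}\NH_n$ is a finite cell module in the sense of Definition~\ref{def-finite-cell-modules}.

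The second step is to realize $\bpol_n^+$ as a left $p$-DG $\NH_n$-direct summand of this finite cell module. It suffices to produce a $\dif_1$-equivariant decomposition ${_{\sym_n}\NH_n}\cong \sym_n\cdot 1\oplus \NH_n^\prime$ of left $p$-DG $\sym_n$-modules, after which tensoring with $\bpol_n^+$ yields $\bpol_n^+\otimes_{\sym_n}\NH_n\cong \bpol_n^+\oplus(\bpol_n^+\otimes_{\sym_n}\NH_n^\prime)$. Using the identification ${_{\sym_n}\NH_n}\cong\sym_n\otimes_\Bbbk\mathrm{END}_\Bbbk(U_n^+)$ of left $p$-DG $\sym_n$-modules coming from diagram~(\ref{eqn-iso-of-NHn-as-bimodule}), the task reduces to $\dif_1$-equivariantly splitting the inclusion $\Bbbk\cdot\mathrm{Id}\hookrightarrow\mathrm{END}_\Bbbk(U_n^+)$ of $p$-complexes. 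The trace map $\mathrm{tr}:\mathrm{END}_\Bbbk(U_n^+)\lra\Bbbk$ (with $\Bbbk$ placed in degree $0$ and carrying the zero differential) is $\dif_1$-equivariant by the cyclic invariance of the trace applied to the commutator differential, and $\tfrac{1}{n!}\mathrm{tr}$ provides a $p$-complex retraction onto $\Bbbk\cdot\mathrm{Id}$; crucially, the scalar $n!$ is invertible in $\Bbbk$ precisely because of the hypothesis $n\leq p-1$. Combining the two steps exhibits $\bpol_n^+$ as a direct summand of a finite cell module, whence it is compact and cofibrant.

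The main obstacle I anticipate is the splitting in the second step: the existence of a $\dif_1$-stable complement to $\sym_n\cdot 1$ inside ${_{\sym_n}\NH_n}$ is not formal, and the argument via the trace map fails exactly when $n!$ vanishes modulo $p$. This is where the assumption $n\leq p-1$ enters essentially, and it is consistent with Proposition~\ref{prop-NHp-contractible}, according to which for $n\geq p$ the $p$-DG algebra $\NH_n$ becomes acyclic and $\bpol_n^+$ collapses in the derived category.
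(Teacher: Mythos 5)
Your proposal is correct and follows essentially the same route as the paper's proof: both realize $\bpol_n^+\otimes_{\sym_n}\NH_n$ as a finite cell module using the bimodule isomorphism (\ref{eqn-iso-of-NHn-as-bimodule}), then split off $\bpol_n^+$ via the $\dif_1$-stable decomposition of ${_{\sym_n}\NH_n}$ into scalar plus traceless matrices, with the invertibility of $n!$ mod $p$ furnishing the splitting. The paper filters $U_n^+$ directly rather than passing through the right-module filtration of $\NH_n$ as you do, but this is only a cosmetic difference.
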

\begin{proof}We claim that the module in the proposition is a direct summand of a finite cell module. The isomorphism (\ref{eqn-iso-of-NHn-as-bimodule}) gives rise to
\[
{_{\sym_n}}(\NH_n)\stackrel{\cong}{\lra}  \sym_n\o U_n^+\o U_n^- \stackrel{\cong}{\lra} U_n^+\o \bpol_n^-,
\]
as left $p$-DG modules over $\sym_n$. Applying $\bpol_n^+\o_{\sym_n}(-)$ to both sides, we have
\begin{equation}\label{eqn-general-case-bimodule-decomposition}
\bpol_n^+\o_{\sym_n}\NH_n \stackrel{\cong}{\lra} U_n^+\o \bpol_n^+ \o \bpol_n^-.
\end{equation}
From a finite step filtration of $U_n^+$ whose associated graded modules are isomorphic to degree-shifted copies of $V_0$, the term on the right hand side inherits a finite step filtration whose subquotients are isomorphic to $\bpol_n^+\o V_0\{r\} \o \bpol_n^-\cong \NH_n\{r\}$ by Proposition \ref{prop-NH-as-p-DG-bimod}. Hence the right hand side is a finite cell module. On the other hand, when $0\leq n \leq p-1$, $n!$ is coprime to $p$ and $\NH_n$ splits as a direct sum of $p$-DG $(\sym_n,\dif_1)$-modules:
\[
{_{\sym_n}\NH_n}\cong \sym_n\cdot I_{n!}\oplus \NH_n^\prime,
\]
where $\NH_n^{\prime}$ denotes the space of traceless $n!\times n!$-matrices over $\sym_n$, which is the kernel of the canonical projection
$$\NH_n \lra \NH_n/[\NH_n,\NH_n]\cong \sym_n.$$
Therefore the left hand side of (\ref{eqn-general-case-bimodule-decomposition}) decomposes as
\[
\bpol_n^+\o_{\sym_n}\NH_n \cong \bpol_n^+\oplus (\bpol_n^+\o_{\sym_n}\NH_n^\prime ),
\]
from which the claim follows.
\end{proof}

Corollary \ref{cor-NHn-qis-matrix-algebra} implies that $K_0(\NH_n,\dif_{ 1})\cong K_0(\mathrm{M}(n!,\Bbbk),\dif_{ 1})$. Moreover, since $\dif_{1}$ acts by taking the commutator with a fixed matrix in $\mathrm{M}(n!,\Bbbk)$, one can directly show that $K_0(\mathrm{M}(n!,\Bbbk),\dif_{ 1})\cong \mathbb{O}_{2p}$ by the same argument as in Section 2.3. We will now give a direct proof of this using that $\bpol_n^+$ is a compact generator of $\mc{D}(\NH_n,\dif_1)$, which is similar to the one used in Corollary \ref{cor-p-DG-morita-equivalence-matrix-ring}.

The isomorphism (\ref{eqn-iso-of-NHn-as-bimodule}) gives rise to an isomorphism of left $p$-DG modules over $(\NH_n,\dif_1)$:
\[
\bpol_n^+\o U_n^- \cong {_{\NH_n}\NH_n}.
\]
Therefore, $\NH_n$ has a filtration whose subquotients are grading-shifted copies of $\bpol_n^+$. It follows that $\bpol_n^+$ is a compact generator of $\mc{D}^c(\NH_n,\dif_1)$. Now, since $\NH_n\cong \mathrm{M}(n!,\sym_n)$, the natural map $\sym_n\lra \mathrm{END}_{\NH_n}(\bpol_n^+)$ is an isomorphism of $p$-DG algebras.

\begin{cor}\label{cor-K0-of-derived-categories-NHn}Let $0\leq n \leq p-1$. Then the (underived) tensor functor
\[
\bpol_n^+\o_{\sym_n}(-): \mc{D}(\sym_n,\dif) \lra \mc{D}(\NH_n,\dif_1)
\]
is an equivalence of triangulated categories whose quasi-inverse is given by
\[
\HOM_{\NH_n}(\bpol_n^+,-): \mc{D}(\NH_n,\dif_1) \lra \mc{D}(\sym_n,\dif).
\]
Consequently, the Grothendieck group $K_0(\NH_n,\dif_{\pm 1})\cong \mathbb{O}_p$.
\end{cor}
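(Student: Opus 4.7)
The plan is to deduce this corollary by a direct application of Proposition~\ref{prop-criterion-derived-equivalence}~i) to the $p$-DG $(\NH_n,\sym_n)$-bimodule $\bpol_n^+$, where all the required hypotheses have essentially been assembled in the preceding results. I would proceed as follows.

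First, I would verify the two conditions of Proposition~\ref{prop-criterion-derived-equivalence}~i). Cofibrance of $\bpol_n^+$ as a left $(\NH_n,\dif_1)$-module is exactly Proposition~\ref{prop-NHn-column-module-compact}. The compact-generator property follows from the isomorphism $\bpol_n^+\o U_n^-\cong {_{\NH_n}\NH_n}$ noted just above the corollary: this exhibits a finite filtration of $\NH_n$ whose subquotients are grading shifts of $\bpol_n^+$, so $\bpol_n^+$ generates the free module, and hence generates $\mc{D}(\NH_n,\dif_1)$ by Proposition~\ref{prop-derived-category-compactly-generated}; compactness is clear since $\bpol_n^+$ is finitely generated. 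Finally, the natural map $\sym_n\lra \mathrm{END}_{\NH_n}(\bpol_n^+)$ is an isomorphism of $p$-DG algebras via the matrix isomorphism $\NH_n\cong \mathrm{M}(n!,\sym_n)$ (see Proposition~\ref{prop-nil-hecke-basis-over-center} and the $\dif_1$-equivariant refinement summarized in diagram~(\ref{eqn-iso-of-NHn-as-bimodule})), so trivially a quasi-isomorphism.

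Next I would pass from derived to underived functors. Because $\bpol_n^+$ is cofibrant over $\NH_n$, the derived tensor product $\bpol_n^+\o^{\mathbf{L}}_{\sym_n}(-)$ agrees with the ordinary tensor product, giving the asserted equivalence $\bpol_n^+\o_{\sym_n}(-):\mc{D}(\sym_n,\dif)\lra \mc{D}(\NH_n,\dif_1)$. By Proposition~\ref{prop-tensor-hom-adjunction} the right adjoint is $\mathbf{R}\HOM_{\NH_n}(\bpol_n^+,-)$; the same cofibrance argument (specifically Proposition~\ref{prop-easy-properties-cofibrant-modules}~iv)) identifies this with the underived $\HOM_{\NH_n}(\bpol_n^+,-)$, and being the right adjoint of an equivalence it is itself a quasi-inverse.

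For the consequence on Grothendieck groups, I would reduce to the base field. Under the hypothesis $n\le p-1$, Corollary~\ref{cor-V0-qis-Symn} says the unit inclusion $\Bbbk\hookrightarrow \sym_n$ is a quasi-isomorphism of $p$-DG algebras. By Corollary~\ref{cor-qis-algebra-isomorphic-K-0} this induces $K_0(\sym_n,\dif)\cong K_0(\Bbbk,\dif)\cong \mathbb{O}_p$. Combined with the derived equivalence above and its counterpart for $\dif_{-1}$ (obtained by applying the symmetry $\sigma$ of Proposition~\ref{prop-symmetry-between-a-and-minus-a}) this yields $K_0(\NH_n,\dif_{\pm 1})\cong \mathbb{O}_p$.

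The main obstacle, all of which has been cleared in the lead-up, is verifying compact cofibrance of the column module and the identification of its endomorphism $p$-DG algebra as exactly $\sym_n$; once these are in hand the corollary is essentially formal. The only point warranting care is checking that the filtration $\bpol_n^+\o U_n^-\cong {}_{\NH_n}\NH_n$ really produces a \emph{compact} generator (rather than just a generator), but since $U_n^-$ is finite-dimensional this filtration is finite, so the argument goes through.
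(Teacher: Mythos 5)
Your proposal is correct and follows essentially the same route as the paper: apply Proposition~\ref{prop-criterion-derived-equivalence}~i) using the cofibrance, compact-generator, and endomorphism-identification facts established just before the corollary, pass to the underived functors via cofibrance, identify the quasi-inverse via the tensor--hom adjunction, and then compute $K_0$ by reducing $\sym_n$ to $\Bbbk$ via Corollaries~\ref{cor-V0-qis-Symn} and \ref{cor-qis-algebra-isomorphic-K-0}. The only difference is that you spell out the verification of the hypotheses of Proposition~\ref{prop-criterion-derived-equivalence}~i) a bit more explicitly, whereas the paper treats them as already assembled in the preceding paragraphs.
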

\begin{proof}
Proposition \ref{prop-criterion-derived-equivalence} implies that
$\bpol_n^+\o^{\mathbb{L}}_{\sym_n}(-)$ is an equivalence of triangulated
categories.
Since $\bpol_n^+$ is cofibrant, the derived tensor product functor
coincides with the underived one
\[
\bpol_n^+\o^{\mathbb{L}}_{\sym_n}(-)\cong \bpol_n^+\o_{\sym_n}(-),
\]
and likewise for the hom functor.
 The second claim follows from the adjunction in
Proposition~\ref{prop-tensor-hom-adjunction}. Lastly, the inclusion
$\Bbbk \lra \sym_n$ is a quasi-isomorphism
of $p$-DG algebras when $0\leq n \leq p-1$. The result follows from Corollary
\ref{cor-qis-algebra-isomorphic-K-0}.
\end{proof}

The proof of Proposition~\ref{prop-NHn-column-module-compact} together with
Corollary~\ref{cor-K0-of-derived-categories-NHn}
actually gives us more information about the idempotent complete triangulated
category $\mc{D}^c(\NH_n,\dif_{\pm 1})$. For instance, the free
module $\NH_n$ fits into a convolution diagram built out of shifted
copies of the polynomial representation $\bpol_n^+$,
similar to the toy model case (see Remark~\ref{rmk-matrix-algebra-convolution}).
Modules of the form $\bpol_n^+ \o \tilde{V}_i$ for all $0\leq i\leq p-2$ and their
grading shifts form a complete list of isomorphism classes of
indecomposables in $\mc{D}^c(\NH_n,\dif_1)$.

\subsection{Grothendieck ring as \texorpdfstring{$u_{\mathbb{O}_p}^+(\mf{sl}_2)$}{u(sl(2))}}
\label{subsec-gr-ring}

\paragraph{The small quantum group $u_{\zeta_{2l}}^+(\mf{sl}_2)$.} Let $l\geq 1$ be $2$ or an odd integer. Let $\zeta_{2l}$ be
a fixed primitive $2l$-th root of unity in $\C$. Then $\zeta_l:=\zeta_{2l}^2$ is a primitive $l$-th root of unity.
Following Lusztig \cite[Section 5]{Lu1}, \cite[Chapter 36]{Lu} (see also \cite[Section V]{CF}), we
define the small quantum group $u_{\zeta_{2l}}^+(\mf{sl}_2)$ ($u^+_{\zeta_{2l}}$ for short) to be the $\Q[\zeta_{2l}]$-algebra
with one generator $E$ subject to the relation $E^l=0$. Equipped with
the comultiplication
\[
\Delta(E)=E\o 1 +1 \o E,
\]
$u_{\zeta_{2l}}^+$ becomes a twisted bialgebra in the category of $\Z/l$-graded
vector spaces. Here being twisted means that the multiplication in $u_{\zeta_{2l}}^+ \o u_{\zeta_{2l}}^+$ is given by
\[(E^n\o 1)(1\o E^{m})=E^n\o E^m\]
\[(1\o E^{n})(E^m\o 1)=\zeta_{2l}^{nm}E^m\o E^n\]
For $n\leq l-1$, the $n$-th divided power of $E$ is by definition $E^{(n)}:=\frac{E^n}{[n]!}$ ($E^{(0)}:=1$), where $[n]:=\dfrac{\zeta_{2l}^n-\zeta_{2l}^{-n}}{\zeta_{2l}-\zeta_{2l}^{-1}}=\sum_{i=1}^{n}\zeta_{2l}^{n+1-2i}$, and $[n]!:=\prod_{i=1}^n[i]$. In what follows we also denote by ${m \brack n}$ the quantum binomial coefficient defined as ${m \brack n}:=\dfrac{[m]!}{[n]![m-n]!}$ whenever $n \leq m$, $0\leq n,~m-n \leq l-1$. Since $[m]!=0$ if $m\geq l$, ${l \brack n}=0$.
The set $\{E^{(r)}|r=0,1,\cdots,l-1\}$ forms an integral basis of $u_{\zeta_{2l}}^+$ over the ring of the $2l$-th cyclotomic
integers $\mc{O}_{2l}=\Z[\zeta_{2l}]\cong \Z[q]/(\Psi_{2l}(q))$, with the relation
\[
E^{(n)}\cdot E^{(m)}=\left\{
\begin{array}{ll}
{n+m \brack n} E^{(n+m)} & \textrm{if $n+m\leq l-1$,}\\
& \\
0 & \textrm{otherwise}.
\end{array}
\right.
\]
This integral form will be denoted by $u^+_{\mc{O}_{2l}}$. The comultiplication acts on on the divided power elements by
\begin{equation}\label{eqn-comult-on-divided-powers}
\Delta(E^{(n)})=\sum_{t=0}^{n}\zeta_{2l}^{-t(n-t)}E^{(t)}\o E^{(n-t)} \in u^+_{\mc{O}_{2l}} \o u^+_{\mc{O}_{2l}}.
\end{equation}

We now
specialize to $l=p$. To do this we also define an $\mathbb{O}_p$-integral small quantum
group $u^+_{\mathbb{O}_p}(\mf{sl}_2)$ ($u^+_{\mathbb{O}_p}$ for short).  The
following elementary lemma guarantees that the divided powers $E^{(n)}$ for
$0\leq n\leq p-1$ can be defined over $\mathbb{O}_p$.

\begin{lemma}In the ring $\mathbb{O}_p=\Z[q]/(\Psi_{p}(q^2))$, the element
$$[n]_{\mathbb{O}}:=\frac{q^n-q^{-n}}{q-q^{-1}}=\sum_{t=1}^{n}q^{n+1-2t}$$
is a unit if $1\leq n\leq p-1$.
\end{lemma}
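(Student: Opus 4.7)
The plan is to reduce the claim to the well-known fact that certain cyclotomic units $(\zeta_p^n-1)/(\zeta_p-1)$ are invertible in $\mc{O}_p = \Z[\zeta_p]$, and then to transport this along a natural embedding $\mc{O}_p \hookrightarrow \mathbb{O}_p$. First I would show that $q$ itself is a unit in $\mathbb{O}_p$. Multiplying the defining relation $\Psi_p(q^2) = 1+q^2+\cdots+q^{2(p-1)} = 0$ by $q^2 - 1$ gives $q^{2p} = 1$ in $\mathbb{O}_p$, so $q^{-1} = q^{2p-1}$. Since
\[
[n]_{\mathbb{O}} = q^{-(n-1)}\bigl(1 + q^2 + q^4 + \cdots + q^{2(n-1)}\bigr),
\]
it now suffices to prove that $\Phi_n := 1 + q^2 + \cdots + q^{2(n-1)}$ is a unit in $\mathbb{O}_p$.

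Next I would identify the subring of $\mathbb{O}_p$ generated by $q^2$. Since $\Psi_p(q^2)$ has degree $2(p-1)$, the set $\{1, q, q^2, \ldots, q^{2p-3}\}$ is a $\Z$-basis of $\mathbb{O}_p$; in particular $\{1, q^2, q^4, \ldots, q^{2(p-2)}\}$ is $\Z$-linearly independent. Hence the evident ring homomorphism
\[
\mc{O}_p = \Z[y]/(\Psi_p(y)) \lra \mathbb{O}_p, \qquad y \mapsto q^2,
\]
is an injection, and $\Phi_n$ lies in the image. So being a unit in $\mc{O}_p$ implies being a unit in $\mathbb{O}_p$, reducing the problem entirely to $\mc{O}_p$.

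Finally, under the identification $q^2 \leftrightarrow \zeta_p$, the element $\Phi_n$ becomes the cyclotomic unit $1 + \zeta_p + \cdots + \zeta_p^{n-1} = (\zeta_p^n - 1)/(\zeta_p - 1)$. For $1 \leq n \leq p-1$ one has $\gcd(n,p) = 1$, so choose an integer $m$ with $nm \equiv 1 \pmod{p}$. Then in $\Z[\zeta_p]$,
\[
\frac{\zeta_p - 1}{\zeta_p^n - 1} = \frac{\zeta_p^{nm} - 1}{\zeta_p^n - 1} = 1 + \zeta_p^n + \zeta_p^{2n} + \cdots + \zeta_p^{(m-1)n},
\]
which provides an explicit inverse for $\Phi_n$ inside $\mc{O}_p$, hence inside $\mathbb{O}_p$. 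There is no real obstacle here — the only nontrivial ingredient is the cyclotomic unit computation, which is classical; the mild technical point is verifying the injectivity of $\mc{O}_p \hookrightarrow \mathbb{O}_p$, which is handled by the explicit $\Z$-basis above.
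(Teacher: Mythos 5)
Your proof is correct and is essentially the same argument as the paper's: in both cases the heart of the matter is that $\gcd(n,p)=1$ lets you choose $v$ (your $m$) with $nv\equiv 1\pmod p$ and write $(1-q^2)/(1-q^{2n})$ as the telescoping sum $1+q^{2n}+\cdots+q^{2n(v-1)}$, exhibiting an explicit inverse. The paper does this directly inside $\mathbb{O}_p$, phrasing the Bezout step as ``$q^2$ generates multiplicatively a cyclic group of order $p$, so $q^{2n}$ is another generator''; you instead pass through the embedding $\mc{O}_p\hookrightarrow\mathbb{O}_p$, $y\mapsto q^2$, and invoke the classical cyclotomic-unit computation in $\Z[\zeta_p]$. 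Your detour is mathematically harmless but slightly longer, and the injectivity verification you flag as the ``mild technical point'' is actually not needed: a ring homomorphism always carries units to units, so the image of a unit of $\mc{O}_p$ is automatically a unit of $\mathbb{O}_p$ regardless of whether the map is injective. You also make explicit that $q$ itself is a unit (via $q^{2p}=1$), which the paper leaves implicit when it reduces the claim to the invertibility of $(1-q^{2n})/(1-q^2)$.
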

\begin{proof}It suffices to show that $(1-q^{2n})/(1-q^2)$ is invertible. Since
$q^{2}$ generates multiplicatively a cyclic group of order $p$, $q^{2n}$ is another
generator of the group if $1\leq n\leq p-1$. Hence $q^2=(q^{2n})^v$ for some
$v\in \N$ and
$$\frac{1-q^2}{1-q^{2n}}=\frac{1-q^{2nv}}{1-q^{2n}}\in \mathbb{O}_p.$$
The result follows.
\end{proof}
The lemma implies that ${m \brack n}_{\mathbb{O}}:=\dfrac{[m]_{\mathbb{O}}!}
{[n]_{\mathbb{O}}![m-n]_{\mathbb{O}}!}$ is zero if $0\leq n, m-n\leq p-1$ and $m\geq p$.
Indeed by definition ${m \brack n}_{\mathbb{O}}[n]_{\mathbb{O}}![m-n]_{\mathbb{O}}!=
[m]_{\mathbb{O}}!$ and the last term vanishes.

The $\mathbb{O}_p$-integral form $u^+_{\mathbb{O}_p}$ is defined similar to
$u^+_{\mc{O}_{2p}}$: one simply replaces all the quantum integers $[n]$ in the
definition of $u^+_{\mc{O}_{2p}}$ by $[n]_{\mathbb{O}}$. Recall that there is a
surjective map of rings
$$\mathbb{O}_p \cong \Z[q]/(1+q^2+\cdots+q^{2(p-1)})
\twoheadrightarrow \mc{O}_{2p}=\Z[\zeta_{2p}]$$
by sending $q$ to $\zeta_{2p}$. Base changing by this ring map gives rise
to an isomorphism of twisted bialgebras
\[
u^+_{\mathbb{O}_p}\o_{\mathbb{O}_p}\mc{O}_{2p}\cong u^+_{\zeta_{2p}}.
\]

In the following diagram we summarize the relations between various forms of
the small quantum $\mf{sl}_2$ introduced above and their base rings:
\[
\xymatrix{
u^+_{\mathbb{O}_{p}} \ar@{->>}[rr]^{\o_{\mathbb{O}_p}\mc{O}_{2p}} && u^+_{\mc{O}_{2p}}
\ar@{^(->}[rr]^{\o_{\mc{O}_{2p}}\Q[\zeta_{2p}]}  && u^+_{\zeta_{2p}} \\
\mathbb{O}_p \ar[u] \ar@{->>}[rr]^{\o_{\mathbb{O}_p}\mc{O}_{2p}} && \mc{O}_{2p}\ar[u]
\ar@{^(->}[rr]^{\o_{\mc{O}_{2p}}\Q[\zeta_{2p}]} && \Q[\zeta_{2p}]. \ar[u]
}
\]
The main goal of this section is to categorify the small quantum group $u^+_{\mathbb{O}_{p}}$.

\paragraph{Induction and restriction functors.} There is an obvious inclusion of algebras
\begin{equation}\label{eqn-side-ways-inclusion}
\iota_{n,m}:\NH_n\o \NH_m \lra \NH_{n+m}
\end{equation}
coming from putting diagrams sideways next to each other: for any $x \in \NH_n$,
$y \in \NH_m$,
\[
\begin{DGCpicture}
\DGCstrand(0,0)(0,2)[`$1$]
\DGCstrand(2,0)(2,2)[`$n$]
\DGCcoupon(-0.25,0.5)(2.25,1.5){$x$}
\DGCcoupon*(0.25,1.65)(1.75,1.85){$\cdots$}
\DGCcoupon*(0.25,0.15)(1.75,0.35){$\cdots$}
\end{DGCpicture}
\ \o \
\begin{DGCpicture}
\DGCstrand(0,0)(0,2)[`$1$]
\DGCstrand(2,0)(2,2)[`$m$]
\DGCcoupon(-0.25,0.5)(2.25,1.5){$y$}
\DGCcoupon*(0.25,1.65)(1.75,1.85){$\cdots$}
\DGCcoupon*(0.25,0.15)(1.75,0.35){$\cdots$}
\end{DGCpicture}
\ \mapsto \
\begin{DGCpicture}
\DGCstrand(0,0)(0,2)[`$1$]
\DGCstrand(2,0)(2,2)[`$n$]
\DGCstrand(3,0)(3,2)[`$n+1$]
\DGCstrand(5,0)(5,2)[`$n+m$]
\DGCcoupon*(0.25,1.65)(1.75,1.85){$\cdots$}
\DGCcoupon*(0.25,0.15)(1.75,0.35){$\cdots$}
\DGCcoupon(-0.25,0.5)(2.25,1.5){$x$}
\DGCcoupon*(3.25,1.65)(4.75,1.85){$\cdots$}
\DGCcoupon*(3.25,0.15)(4.75,0.35){$\cdots$}
\DGCcoupon(2.75,0.5)(5.25,1.5){$y$}
\end{DGCpicture}.
\]
Recall from the discussion before Definition \ref{def-derived-hom-and-tensor} that the tensor product $p$-DG algebra $\NH_n\o \NH_m$ has the differential $\dif_a(x\o y)= \dif_a(x)\o y +x\o \dif_a(y)$ for any $x\in \NH_n$ and $y\in \NH_m$. The locality of the differentials $\dif_{a}$ implies that $\iota_{n,m}$ is an inclusion of $p$-DG algebras.

The map $\iota_{n,m}$ gives rise to induction and restriction functors (Definition \ref{def-induction-restriction-functors})
\[
\Ind_{n,m}:={\iota^*_{n,m}}: \mc{D}(\NH_n\o \NH_m, \dif_a) \lra \mc{D}(\NH_{n+m}, \dif_a),
\]
\[
\Res_{n,m}:={\iota_{n,m}}_*: \mc{D}(\NH_{n+m},\dif_a) \lra \mc{D}(\NH_n\o \NH_m, \dif_a).
\]
The functor $\Ind_{n,m}$ preserves the subcategories of compact modules:
\[
\Ind_{n,m}:={\iota^*_{n,m}}: \mc{D}^c(\NH_n\o \NH_m, \dif_a) \lra \mc{D}^c(\NH_{n+m}, \dif_a).
\]
Likewise, there are inclusions of $p$-DG algebras under the $\dif_a$-action
$$\iota_{n_1,n_2,\cdots,n_k}:\NH_{n_1}\o\NH_{n_2}\o \cdots \o \NH_{n_k} \lra \NH_{n_1+n_2+\cdots+n_k}$$
for any sequence of $n_i\in \N$. The induction functor generalizes in an obvious way. However, it is not clear at the moment whether
the restriction functor
$$\Res_{n_1, \cdots, n_k}:={\iota_{n_1,\cdots,n_k}}_*: \mc{D}(\NH_{n_1+\cdots + n_k}, \dif_a) \lra \mc{D}(\NH_{n_1}\o \cdots \o \NH_{n_k},\dif_a)$$
preserves the subcategories of compact objects for arbitrary $a\in \F_p^*$.

From now on, we will focus on the special values $a=\pm 1$ while only giving the proof for the $a=1$ case. The $\dif_{-1}$ case follows by applying
the symmetry $\psi$ (Proposition~\ref{prop-symmetry-between-a-and-minus-a}). Form the direct sum of $p$-DG algebras
\[
(\NH, \dif_{\pm 1}) := \bigoplus_{n \in \N}(\NH_n, \dif_{ \pm 1})
\]
which is no longer a unital $p$-DG algebra. The inclusion of the unital $p$-DG subalgebra $\oplus_{0\leq n\leq p-1}(\NH_n,\dif_{\pm 1})$ into $(\NH, \dif_{\pm 1})$ is a quasi-isomorphism by Proposition \ref{prop-NHp-contractible}. By a compact module over $\NH$
 we mean a finite direct sum of compact modules over $(\NH_n, \dif_{\pm 1})$ for various $n$\footnote{Following Keller \cite{Ke1}, one can say that $(\NH, \dif_{\pm 1})$ is a $p$-DG category, while objects in $\mc{D}^c(\NH, \dif_{\pm 1})$ are compact modules over this category.}, so that
$$\mc{D}^c(\NH,\dif_{\pm 1}):=\bigoplus_{n \in \N} \mc{D}^c(\NH_n,\dif_{\pm 1}).$$
On the level of Grothendieck groups, we have
$$K_0(\NH,\dif_{\pm 1})=\bigoplus_{n \in \N} K_0(\NH_n,\dif_{\pm 1}).$$
Summing over all $n, m \in \N$, $\iota_{n,m}$ gives rise to
to an induction functor between the compact derived categories
\begin{equation}\label{eqn-ind-res-category}
 \Ind  :=\sum_{n,m\in \N} \Ind_{n,m}: \oplus_{n,m\in \N}\mc{D}^c(\NH_n\o \NH_m, \dif_{\pm 1}) \lra \oplus_{r\in \N}\mc{D}^c(\NH_{r}, \dif_{\pm 1}).
\end{equation}

\begin{lemma} The exterior tensor product
\[
\begin{array}{cccc}
\boxtimes: & \NH_n\dmod\times \NH_m\dmod & \lra &(\NH_n\o \NH_m)\dmod\\
& (N, M) & \mapsto & N\boxtimes M
\end{array}
\]
induces an isomorphism of Grothendieck groups
$$
\begin{array}{rcl}
K_0(\NH_n,\dif_{\pm 1}) \o_{\mathbb{O}_p}K_0(\NH_m,\dif_{\pm 1}) & \cong & K_0(\NH_n\o \NH_m,\dif_{\pm 1})\\
& \cong &
\left\{
\begin{array}{cc}
\mathbb{O}_p & \textrm{if $n\leq p-1$ and $m\leq p-1$}\\
0 & \textrm{otherwise.}
\end{array}
\right.
\end{array}$$
\end{lemma}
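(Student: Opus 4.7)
I would begin by disposing of the degenerate range $\max(n,m)\ge p$. Assuming $n\ge p$, Proposition~\ref{prop-NHp-contractible} and part~(iv) of Proposition~\ref{prop-contractible-p-DG-algebra-condition} give an element $b\in \NH_n$ with $\dif_{\pm 1}(b)=1$; then $\dif_{\pm 1}(b\otimes 1)=1$ in $\NH_n\o \NH_m$, so the tensor product $p$-DG algebra is itself acyclic. Its derived category collapses, so both $K_0(\NH_n\o \NH_m)$ and the source $K_0(\NH_n)\o_{\mathbb{O}_p}K_0(\NH_m)$ (already vanishing because $K_0(\NH_n)=0$ by Corollary~\ref{cor-NHp-trivial-K0}) reduce to zero, as required.

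For $n,m\le p-1$, the plan is to exhibit
$$ P \ := \ \bpol_n^+\boxtimes \bpol_m^+ $$
as a compact cofibrant generator of $\mc{D}(\NH_n\o \NH_m,\dif_{\pm 1})$ with $p$-DG endomorphism algebra $\mathrm{END}_{\NH_n\o\NH_m}(P) \cong \sym_n\o \sym_m$, obtained factorwise from the endomorphism computation preceding Corollary~\ref{cor-K0-of-derived-categories-NHn}. Proposition~\ref{prop-criterion-derived-equivalence}~(i) will then supply a triangulated equivalence $P\o_{\sym_n\o \sym_m}(-):\mc{D}(\sym_n\o \sym_m)\stackrel{\sim}{\lra} \mc{D}(\NH_n\o \NH_m)$. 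For compactness, each $\bpol_r^+$ is a direct summand of a finite cell $\NH_r$-module (from the proof of Proposition~\ref{prop-NHn-column-module-compact}), and the exterior tensor product of the two finite cell modules inherits a finite cell filtration over $\NH_n\o \NH_m$, so $P$ is compact by Theorem~\ref{thm-characterizing-compact-modules}. For cofibrance I would invoke criterion~(iii) of Proposition~\ref{prop-easy-properties-cofibrant-modules}: $P$ is projective over $\NH_n\o \NH_m$, and for any acyclic $p$-DG module $M$ the Hom--tensor adjunction
$$\HOM_{\NH_n\o \NH_m}(P,M) \ \cong \ \HOM_{\NH_n}\!\bigl(\bpol_n^+,\HOM_{\NH_m}(\bpol_m^+,M)\bigr)$$
is acyclic after applying cofibrance of $\bpol_m^+$ (to see the inner Hom is acyclic as an $\NH_n$-$p$-DG module) and then of $\bpol_n^+$. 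For generation, I would tensor the filtration of ${}_{\NH_n}\NH_n$ by grading shifts of $\bpol_n^+$ (from the paragraph preceding Corollary~\ref{cor-K0-of-derived-categories-NHn}) with its $m$-side counterpart, exhibiting $\NH_n\o \NH_m$ itself as an iterated extension of grading shifts of $P$.

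Since the inclusion $\Bbbk\hookrightarrow \sym_r$ is a quasi-isomorphism of $p$-DG algebras for $r\le p-1$ (Corollary~\ref{cor-V0-qis-Symn} realizes $\sym_r$ as $V_0\oplus C_r$ with $C_r$ a contractible $H$-module), the tensor $\Bbbk\hookrightarrow \sym_n\o \sym_m$ is again a quasi-isomorphism because the complement $C_n\oplus C_m\oplus (C_n\o C_m)$ remains contractible. Corollary~\ref{cor-qis-algebra-isomorphic-K-0} together with the derived equivalence of the previous paragraph then identifies
$$K_0(\NH_n\o \NH_m,\dif_{\pm 1}) \ \cong \ K_0(\Bbbk) \ \cong \ \mathbb{O}_p,$$
with $[P]$ as a free generator. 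Because the exterior tensor product sends $[\bpol_n^+]\otimes [\bpol_m^+]\mapsto [P]$, and because each factor class freely generates $K_0(\NH_n,\dif_{\pm 1})\cong \mathbb{O}_p$ by Corollary~\ref{cor-K0-of-derived-categories-NHn}, the induced map $K_0(\NH_n)\o_{\mathbb{O}_p}K_0(\NH_m)\to K_0(\NH_n\o \NH_m)$ is an isomorphism. The principal foreseeable obstacle is the cofibrance verification for the exterior tensor product; an alternative that sidesteps this is to work via the chain of $p$-DG algebra quasi-isomorphisms $\pi_n\o\pi_m:\NH_n\o \NH_m \twoheadrightarrow \mathrm{M}(n!,\Bbbk)\o \mathrm{M}(m!,\Bbbk)$ coming from Proposition~\ref{prop-pm1-nilhecke-qis-matrix-algebra} (quasi-isomorphic because $\ker(\pi_n)\o_\Bbbk \NH_m$ has a filtration whose subquotients are tensor products of contractible $H$-modules with arbitrary $H$-modules, hence contractible) and then appeal to the matrix $p$-DG analysis of Section~\ref{subsec-base-category-special-cases}.
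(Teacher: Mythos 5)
Your proof is correct, and the argument is organized around a genuinely different pivot than the paper's. Where you exhibit $P = \bpol_n^+ \boxtimes \bpol_m^+$ directly as a compact cofibrant generator of $\mc{D}(\NH_n \o \NH_m)$, compute its endomorphism $p$-DG algebra to be $\sym_n \o \sym_m$, and then invoke Proposition~\ref{prop-criterion-derived-equivalence}~(i) followed by the quasi-isomorphism $\Bbbk\hookrightarrow\sym_n\o\sym_m$, the paper instead transports the entire problem to matrix algebras: it composes the quasi-isomorphisms $\pi_n$ and $\pi_m$ of Proposition~\ref{prop-pm1-nilhecke-qis-matrix-algebra} to get $\mc{D}(\NH_n\o\NH_m)\cong\mc{D}(\mathrm{M}(n!m!,\Bbbk))$ and then appeals to the toy-model calculation of Section~\ref{subsec-base-category-special-cases}, tracking that $\bpol_n^+\boxtimes\bpol_m^+$ is sent to the column module $\Bbbk^{n!m!}$. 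The ``alternative'' you mention at the end is, in fact, exactly the paper's route. Your main approach has a small advantage: it avoids having to verify (as the paper implicitly does) that the induced differential on $\mathrm{M}(n!m!,\Bbbk)$ has a Jordan type to which the end-of-Section-2.3 remarks apply; you instead reduce cleanly to the single-colour endomorphism computation already established before Corollary~\ref{cor-K0-of-derived-categories-NHn}. Your cofibrance verification for $P$ via the adjunction $\HOM_{\NH_n\o\NH_m}(P,M)\cong\HOM_{\NH_n}(\bpol_n^+,\HOM_{\NH_m}(\bpol_m^+,M))$ is sound (each inner $\HOM$ is contractible by cofibrance of the factor, and contractible implies acyclic), and the compactness argument via tensor products of finite cell filtrations is likewise fine. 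One minor imprecision in your alternative sketch: $\ker(\pi_n\o\pi_m)$ is not $\ker(\pi_n)\o\NH_m$ alone but sits in a short exact sequence with $\mathrm{M}(n!,\Bbbk)\o\ker(\pi_m)$ as cokernel; both pieces are contractible, so the conclusion stands, but the phrasing should reflect this two-step argument.
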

\begin{proof}We show the $\dif_1$ case.
If $n\geq p$, both $\NH_n$ and $\NH_n\o\NH_m$ are acyclic and the result follows,
likewise for $m\geq p$. Otherwise, we use the derived equivalence of
Proposition~\ref{prop-pm1-nilhecke-qis-matrix-algebra}
\[
\jmath_n^*: \mc{D}(\NH_n, \dif_1 )\cong \mc{D}(\mathrm{M}(n!,\Bbbk), \dif_{1}),
\]
which takes the polynomial representation $\bpol_n^+$ to the column module $\Bbbk^{n!}$
with the induced differential. It follows that
\[
\mc{D}(\NH_n\o \NH_m, \dif_1 )\cong \mc{D}(\mathrm{M}(n!,\Bbbk)\o \mathrm{M}(m!,\Bbbk),
\dif_1) \cong \mc{D}(\mathrm{M}(n!\cdot m!,\Bbbk),\dif_1),
\]
and similarly for the subcategory of compact modules. Moreover, since
$K_0(\NH_n,\dif_1)$ is generated as an $\mathbb{O}_p$-module by the symbol
$[\bpol_n^+]$, using the above derived equivalence, we have
\[
K_0(\NH_n\o \NH_m,\dif_1)\cong K_0(\mathrm{M}(n!,\Bbbk)\o\mathrm{M}(m!,\Bbbk),
\dif_1)\cong K_0(\mathrm{M}(n!\cdot m!,\Bbbk),\dif_1),
\]
and, under this isomorphism, the exterior tensor product of polynomial
representations is sent to
\[
{[\bpol_n^+\boxtimes \bpol_m^+] \mapsto [\Bbbk^{n!}\boxtimes \Bbbk^{m!}]
\mapsto[\Bbbk^{n!\cdot m!}]},
\]
which is the rank one free $\mathbb{O}_p$-module generator of
$K_0(\mathrm{M}(n!\cdot m!,\Bbbk),\dif_1)$. Hence $K_0(\NH_n\o \NH_m,\dif_1)$ is
of rank one and generated by $[\bpol_n^+\boxtimes \bpol_m^+]$.  Since the groups
$K_0(\NH_n,\dif_1)$, $K_0(\NH_m,\dif_1)$ are also of rank one, the lemma follows.
\end{proof}

Although one can see that the restriction functor
\begin{equation}\label{eqn-res-category}
 \Res  :=\sum_{n,m\in \N} \Res_{n,m}: \oplus_{r\in \N}\mc{D}(\NH_{r}, \dif_{\pm 1}) \lra \oplus_{n,m\in \N}\mc{D}(\NH_n\o\NH_m, \dif_{\pm 1})
\end{equation}
sends compact objects to compact objects, an easy computation on the Grothendieck level shows that, unlike the abelian case in \cite[Proposition 2.18]{KL1}, the symbol of the restriction functor $[\Res]$ can not categorify the coproduct structure of $u^+_{\mathbb{O}_p}$. Indeed, by the adjunction between induction and restriction (Proposition \ref{prop-tensor-hom-adjunction}), one has
\begin{eqnarray*}
\NH_{n+m} & \cong & \RHOM_{\NH_{n+m}}(\NH_{n+m}, \NH_{n+m}) \\
& \cong &\RHOM_{\NH_{n+m}}(\Ind_{n,m}(\NH_n\boxtimes\NH_m), \NH_{n+m})\\
& \cong & \RHOM_{\NH_{n}\o \NH_m}(\NH_n\boxtimes\NH_m, \Res_{n,m}(\NH_{n+m})).
\end{eqnarray*}
In particular, when $n+m<p$, we have the equality in the Grothendieck group
\begin{equation}
([n+m]!)^2=[\NH_{n+m}]=[\RHOM_{\NH_{n}\o \NH_m}(\NH_n\boxtimes\NH_m, \Res_{n,m}(\NH_{n+m}))],
\end{equation}
which in turn gives rise to
\begin{equation}
[\Res_{n,m}(\mc{P}_{n+m}^+)]={m+n \brack n}_{\mathbb{O}}[\mc{P}^+_n\boxtimes \mc{P}^+_m].
\end{equation}
Here we used that $[\NH_n]=[\mc{P}_n^+\o U_n^-]=[n]_{\mathbb{O}}![\mc{P}_n^+]=([n]_{\mathbb{O}}!)^2[\sym_n]=([n]_{\mathbb{O}}!)^2$ in $K_0(H\udmod)=\mathbb{O}_p$ (see equation (\ref{eqn-iso-of-NHn-as-bimodule})). This mismatches with the comultiplication action given by equation (\ref{eqn-comult-on-divided-powers}).

\paragraph{A twisted restriction functor.} To categorify the comultiplication of $u^+_\mathbb{O}$, we introduce a twisted version of the restriction functor.

\begin{defn}\label{def-twisted-restriction-functor}Let $n,m\in \N$. We define the \emph{twisted $p$-DG bimodule} $^{\phantom{n,}X}_{n,m}\NH_{n+m}$ over $(\NH_n\o \NH_m, \NH_{n+m})$ as follows. As an $(\NH_n\o \NH_m,\NH_{n+m})$-bimodule, it is isomorphic to $\NH_{n+m}$, with the bimodule generator $^X1_{n+m}$ sitting in degree zero. It is twisted in the sense that the differential acts on the bimodule generator $^X1_{n+m}\in {^{\phantom{n,}X}_{n,m}\NH_{n+m}}$ by
\[
\dif({^X1_{n+m}})=m\sum_{t=1}^n x_t\cdot{^X1_{n+m}}=me_1^{(n)}\cdot {^X1_{n+m}}.
\]
\end{defn}

\begin{rmk}\label{rmk-twisted-p-DG-bimod}It is not hard to see that the twisted bimodule $^{\phantom{n,}X}_{n,m}\NH_{n+m}$ is isomorphic, up to a grading shift and as a $p$-DG bimodule, to the submodule $(e_n^{(n)})^m\cdot \NH_{n+m} \subset \NH_{n+m}$, where $(e_n^{(n)})^m=x_1^m\cdots x_n^m$, with the inherited differential. Notice that $(e_n^{(n)})^m$ is a central element in $\NH_n\o \NH_m$.

More generally, if $_AM_B$ is a $p$-DG bimodule over $(A,B)$ and $z$ is a central element of $A$ that is a non-zero-divisor on $M$, then $^zM:=z\cdot M$ is again a bimodule with the same underlying $(A,B)$-bimodule structure, but with the induced differential action from the inclusion $^zM\subset M$. We say that $^zM$ is obtained as a $p$-DG bimodule from $M$ by twisting the differential.
\end{rmk}

For any element $y\in\NH_{n+m}$, we will denote by $^Xy\in {^{\phantom{n,}X}_{n,m}\NH_{n+m}}$ the corresponding element in the twisted bimodule. Consider the element ${^X\delta(n+m)}\in{^{\phantom{n,}X}_{n,m}\NH_{n+m}}$ arising from the longest length element of $S_{n+m}$. From the differential formula on $\delta(n+m)$ (Lemma \ref{lemma-dif-1-on-delta-n}), one computes that
\begin{eqnarray}\label{eqn-dif-on-twisted-delta}
\dif({^X\delta(n+m)})& = & -\sum_{t=1}^n(n-t)x_t\cdot {^X\delta(n+m)}-\sum_{t=1}^m(m-t)x_{n+t}\cdot {^X\delta(n+m)}\nonumber \\
&& -\sum_{t=1}^{n+m}(t-1){^X\delta(n+m)}\cdot x_t.
\end{eqnarray}
It follows from this equation that the right $\pol_{n+m}$-span of the elements in the set
$
\left\{x_1^{b_1}\cdots x_n^{b_n} x_{n+1}^{c_1}\cdots x_{n+m}^{c_{m}}{^X\delta(n+m)}\right\},
$
where $0\leq b_t \leq n-t, ~t=1, \dots, n$, and $0\leq c_v \leq m-v,~v=1,\dots, m$, constitutes
a right $p$-DG module over $\NH_{n+m}$ which will be denoted $^{X}N_{n+m}$. Using the structure of $\mc{P}_{n+m}^-$ (see Notation \ref{notation-symbol-as-O-dim} and the isomorphism (\ref{eqn-iso-of-NHn-as-bimodule})), one readily sees that $^{X}N_{n+m}$ is cofibrant as a right $p$-DG module whenever $0\leq n+m\leq p-1$: it has an
$n!m!$-step filtration whose subquotients are isomorphic to grading shifted copies of $\mc{P}_{n+m}^-$.

\begin{lemma}\label{lemma-property-twisted-bim}Let $n,m$ be natural numbers with $n+m\leq p-1$.
\begin{itemize}
\item[(i)] The inclusion of $^{X}N_{n+m}$ into $^{\phantom{n,}X}_{n,m}\NH_{n+m}$ is a quasi-isomorphism of right $p$-DG modules over $\NH_{n+m}$.
\item[(ii)] The module $^{\phantom{n,}X}_{n,m}\NH_{n+m}$ is cofibrant as a left $p$-DG module over $\NH_n\o \NH_m$.
\end{itemize}
\end{lemma}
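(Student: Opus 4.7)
The plan is to reduce both parts to the fact that, when $n+m\leq p-1$, the inclusion of $p$-DG algebras $\sym_{n+m}\hookrightarrow \sym_n\o\sym_m$ is a quasi-isomorphism (both being quasi-isomorphic to $\Bbbk$ by Corollary~\ref{cor-V0-qis-Symn}). The crucial preliminary step is to recognize the twisted bimodule in polynomial form. By Remark~\ref{rmk-twisted-p-DG-bimod}, ${^{\phantom{n,}X}_{n,m}\NH_{n+m}}$ is isomorphic (up to grading shift) as a $p$-DG bimodule to $(e_n^{(n)})^m\cdot \NH_{n+m}$, and combining with the bimodule isomorphism of Proposition~\ref{prop-NH-as-p-DG-bimod}, I would establish the $p$-DG isomorphism of left $(\NH_n\o \NH_m)$-modules
\[
\bpol_n^+\o \bpol_m^+ \stackrel{\cong}{\lra} (e_n^{(n)})^m\cdot \bpol_{n+m}^+, \qquad 1_{\alpha_n^+}\o 1_{\alpha_m^+}\mapsto (e_n^{(n)})^m\cdot 1_{n+m}^+.
\]
The verification reduces to the identity of linear twists
\[
g_{\alpha_n^+}+g_{\alpha_m^+}\;=\;g_{\alpha_{n+m}^+}+m\, e_1^{(n)},
\]
showing that multiplying by $(e_n^{(n)})^m$ absorbs the extra $m\, e_1^{(n)}$ into the polynomial twist. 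This yields the identification
\[
{^{\phantom{n,}X}_{n,m}\NH_{n+m}}\;\cong\;(\bpol_n^+\o \bpol_m^+)\o_{\sym_{n+m}}\bpol_{n+m}^-
\]
of $p$-DG $(\NH_n\o\NH_m,\NH_{n+m})$-bimodules.

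For part (i), I would further decompose $\bpol_n^+\o \bpol_m^+\cong (U_n^+\o U_m^+)\o_\Bbbk(\sym_n\o\sym_m)$ as a $p$-DG $\sym_n\o\sym_m$-module, using that $U_n^+=\Bbbk\langle B_n^+\rangle$ is $\dif_{\alpha_n^+}$-stable (for $x^\beta\in B_n^+$ with $b_t=n-t$, the out-of-range terms in $\dif(x^\beta)$ and $x^\beta g_{\alpha_n^+}$ cancel pairwise). Substituting gives
\[
{^{\phantom{n,}X}_{n,m}\NH_{n+m}}\cong (U_n^+\o U_m^+)\o_\Bbbk\bigl((\sym_n\o\sym_m)\o_{\sym_{n+m}}\bpol_{n+m}^-\bigr),
\]
under which $^X N_{n+m}$ corresponds to $(U_n^+\o U_m^+)\o_\Bbbk\bpol_{n+m}^-$, included via the unit map $\bpol_{n+m}^-\hookrightarrow (\sym_n\o\sym_m)\o_{\sym_{n+m}}\bpol_{n+m}^-$, $f\mapsto 1\o f$. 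Since $\bpol_{n+m}^-$ is cofibrant over $\sym_{n+m}$ (it is property (P), with subquotients isomorphic to shifts of $\sym_{n+m}$), Theorem~\ref{thm-qis-algebra-equivalence-derived-categories} applied to the quasi-isomorphism $\sym_{n+m}\hookrightarrow \sym_n\o\sym_m$ ensures that this unit is a quasi-isomorphism of right $p$-DG $\NH_{n+m}$-modules; tensoring over $\Bbbk$ with the finite-dimensional $p$-complex $U_n^+\o U_m^+$ preserves quasi-isomorphisms, giving (i).

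For part (ii), the decomposition $\NH_n\cong \bpol_n^+\o U_n^-$ of left $p$-DG $\NH_n$-modules coming from diagram~(\ref{eqn-iso-of-NHn-as-bimodule}), together with $\dif$-closedness of the top-degree element of $B_n^-$ (which holds by the analogous boundary cancellation, there being no elements of higher degree to contribute), exhibits $\bpol_n^+$ as a $p$-DG direct summand of $\NH_n$. Taking exterior tensors realizes $\bpol_n^+\o \bpol_m^+$ as a direct summand of $\NH_n\o \NH_m$, hence cofibrant over it. The property (P) filtration of $\bpol_{n+m}^-$ over $\sym_{n+m}$, combined with freeness of $\bpol_n^+\o\bpol_m^+$ as a right $\sym_{n+m}$-module, induces a finite filtration of $^{\phantom{n,}X}_{n,m}\NH_{n+m}$ whose subquotients are grading shifts of $\bpol_n^+\o \bpol_m^+$. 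Iterated extensions of cofibrant $p$-DG modules are cofibrant (via characterization (iii) of Proposition~\ref{prop-easy-properties-cofibrant-modules} and the long exact sequence of $\HOM$-complexes associated to each extension), establishing (ii).

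The main obstacle is the $p$-DG isomorphism $\bpol_n^+\o \bpol_m^+\cong (e_n^{(n)})^m\bpol_{n+m}^+$: one must verify that the map above respects the $p$-DG structures on both sides, which comes down to the twist identity displayed above and a careful check that left multiplication by $(e_n^{(n)})^m$ intertwines the respective differentials. Once this bimodule identification is in place, both (i) and (ii) follow by standard manipulations with cofibrant modules and the derived equivalences induced by quasi-isomorphisms of base algebras.
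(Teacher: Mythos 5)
Your reduction of part (i) to the quasi-isomorphism $\sym_{n+m}\hookrightarrow\sym_n\o\sym_m$ is a genuinely different and cleaner route than the paper's, which directly computes and matches the slash cohomology of $^{X}N_{n+m}$ and of $^{\phantom{n,}X}_{n,m}\NH_{n+m}$ via the $(n+m)!$-step filtration with subquotients $\bpol_n^+\boxtimes\bpol_m^+$. The key twist identity $g_{\alpha_n^+}+g_{\alpha_m^+}=g_{\alpha_{n+m}^+}+m\,e_1^{(n)}$ is correct, and once you have the bimodule identification ${^{\phantom{n,}X}_{n,m}\NH_{n+m}}\cong(U_n^+\o U_m^+)\o\bigl((\sym_n\o\sym_m)\o_{\sym_{n+m}}\bpol_{n+m}^-\bigr)$, cofibrance of $\bpol_{n+m}^-$ over $\sym_{n+m}$ plus Theorem~\ref{thm-qis-algebra-equivalence-derived-categories} do give the quasi-isomorphism of the unit map, and tensoring with the finite $p$-complex $U_n^+\o U_m^+$ preserves it. (You should record the grading shift: $\bpol_n^+\boxtimes\bpol_m^+\{-nm\}\cong(e_n^{(n)})^m\cdot\bpol_{n+m}^+$, but that is only bookkeeping.)

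There is, however, a genuine error in your argument for part (ii). You claim that $\dif$-closedness of the top-degree element of $B_n^-$ exhibits $\bpol_n^+$ as a $p$-DG \emph{direct summand} of $\NH_n\cong\bpol_n^+\o U_n^-$. That conclusion is false: $\dif$-stability of a one-dimensional subspace does not make it a $p$-complex direct summand; one also needs the generator to lie outside $\mathrm{Im}(\dif^{p-1})$. Already for $n=2$ this fails: $U_2^-=\Bbbk\langle 1,x_2\rangle$ with $\alpha^-=(0,-1)$ satisfies $\dif(1)=-x_2$, $\dif(x_2)=0$, so $U_2^-\cong V_1$ is indecomposable and $\Bbbk x_2$ is not a summand. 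Equivalently, $\mH_{/}(\bpol_2^+)\cong V_1$, while $\mH_{/}(\NH_2)\cong\mH_{/}(\bpol_2^+\o U_2^-)\cong V_1\o V_1\cong V_0\{2\}\oplus V_2$ contains no $V_1$ summand, so $\bpol_2^+$ cannot be a $p$-DG direct summand of $\NH_2$. Consequently the step "hence $\bpol_n^+\o\bpol_m^+$ is a direct summand of $\NH_n\o\NH_m$, hence cofibrant" is not justified as written. The cofibrance you need is true, but for a different reason: Proposition~\ref{prop-NHn-column-module-compact} shows $\bpol_n^+$ is a $p$-DG direct summand of the finite cell module $\bpol_n^+\o_{\sym_n}\NH_n$ (not of $\NH_n$ itself), and taking exterior products gives cofibrance of $\bpol_n^+\boxtimes\bpol_m^+$ over $\NH_n\o\NH_m$. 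With that substitution your filtration argument for (ii) goes through and matches the paper's.
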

\begin{proof}To prove the first statement, we compute the cohomology of both modules. Abbreviating the element $x_1^{n-1}x_2^{n-2}\cdots x_{n-1} x_{n+1}^{m-1}x_{n+2}^{m-2}\cdots x_{n+m-1}{^X\delta(n+m)}$ by ${^XD_{n+m}}$, we see from equation (\ref{eqn-dif-on-twisted-delta}) that
\[
\dif(^XD_{n+m})=-\sum_{t=1}^{n+m}(t-1){^XD_{n+m}}\cdot x_t\in {^XD_{n+m}}\cdot \pol_{n+m}
\]
Then it is easy to identify the cohomology of $^{X}N_{n+m}$ with the finite dimensional $\dif$-stable space
\begin{equation}\label{eqn-cohomology-of-twisted-NH}
\Bbbk\langle x_1^{b_1}\cdots x_n^{b_n} x_{n+1}^{c_1}\cdots x_{n+m}^{c_{m}}{^X\delta(n+m)}x_1^{d_1}\cdots x_{n+m}^{d_{n+m}}\rangle,
\end{equation}
where $0 \leq b_t \leq n-t,~t=1, \dots, n$, $0 \leq c_t \leq m-t,~t=1,\dots, m$, and $0 \leq d_t \leq t-1, 1\leq t \leq n+m$. On the other hand, regarded as a left $p$-DG module over $\NH_n\o \NH_m$, $^{\phantom{n,}X}_{n,m}\NH_{n+m}$ has an $(n+m)!$-step filtration whose subquotients are all isomorphic to graded shifts of the module $\pol_{n+m}\cdot{^X\delta_{n+m}}x_1^{0}x_2^1\cdots x_{n+m}^{n+m-1}$, which is isomorphic to the cofibrant $\NH_n\o \NH_m$ module $\mc{P}_n^+\boxtimes \mc{P}_m^+$. Therefore, the cohomology of $^{\phantom{n,}X}_{n,m}\NH_{n+m}$ is also isomorphic to the above finite dimensional $p$-complex (\ref{eqn-cohomology-of-twisted-NH}). The first result follows.

The second claim follows from the proof of the first, since as a left $p$-DG module over $\NH_{n}\o \NH_m$, the subquotients of the filtration on $^{\phantom{n,}X}_{n,m}\NH_{n+m}$, isomorphic to $\mc{P}_n^+\boxtimes \mc{P}_m^+$, are cofibrant when $n+m <p$ (Proposition \ref{prop-NHn-column-module-compact}).
\end{proof}

\begin{defn}\label{defn-twisted-restriction} The (derived) \emph{twisted restriction} $^X\Res_{n,m}$ is the functor
\begin{eqnarray*}\label{eqn-def-twisted-restriction}
{^X\Res}_{n,m}: \mc{D}(\NH_{n+m},\dif_1)& \lra &\mc{D}(\NH_n\o \NH_m,\dif_1) \nonumber \\
M & \mapsto & ^{\phantom{n,}X}_{n,m}\NH_{n+m}\o^{\mathbb{L}}_{\NH_{n+m}}M
\end{eqnarray*}
\end{defn}
Note that by Lemma \ref{lemma-property-twisted-bim}, there is a functorial isomorphism of $p$-complexes
\[{^X\Res}_{n,m}(M)\cong {^XN_{n+m}}\otimes_{\NH_{n+m}}M.\]

In what follows, we will abbreviate the $p$-DG algebra $\NH_n\o\NH_m$ by $\NH_{n,m}$, $^{\phantom{n,}X}_{n,m}\NH_{n+m}$ by $^X\NH_{n+m}$ if no confusion can be made.
When we have three integers $k,n, m$ such that $0\leq k+n+m\leq p-1$, there are two ways of composing functors, namely $({^X\Res}_{k,n}\o \Id_m)\circ ({^X\Res}_{k+n,m})$ or $(\Id_k\o{^X\Res}_{n,m})\circ ({^X\Res}_{k,n+m})$, which are in fact both isomorphic to derived tensoring with the $(\NH_{k,n,m}, \NH_{k+n+m})$-bimodule
\[
((e_k^{(k)})^n (e_{k+n}^{(k+n)})^m)\cdot\NH_{k+n+m} = (x_1\cdots x_k)^{n+m} (x_{k+1}\cdots x_{k+n})^m \cdot \NH_{k+n+m}.
\]
To see this one uses Lemma \ref{lemma-property-twisted-bim} to identify the composition functor, for instance, as
$$
{^X\Res}_{k,n}\o \Id_m ({^X\Res}_{k+n,m}(M))\cong
\left(({{^X}\NH_{k+n}}\o \NH_m)\otimes_{\NH_{k+n,m}} ({^X\NH_{k+n+m}})\right)\o^{\mathbb{L}}(M),
$$
where the cofibrance of ${^X\NH_{k+n+m}}$ as a left $p$-DG module over $\NH_{k+n,m}$ allows us to replace the middle derived tensor by the usual one.

\begin{lemma}\label{lemma-twisted-res-respect-compact} Let $n, m$ be natural numbers with $n+m\leq p-1$.  Then:
\begin{itemize}
\item[(i)] the twisted restriction functor acts on the cofibrant module $\mc{P}^+_{n+m}$ by
\[
{^X\Res}_{n,m}(\mc{P}_{n+m}^+)\cong \mc{P}_n^+\boxtimes \mc{P}_m^+\{-mn\},
\]
\item[(ii)] the twisted restriction functor sends compact modules to compact modules.
\end{itemize}
\end{lemma}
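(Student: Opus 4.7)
For part~(i), since $n+m\leq p-1$, Proposition~\ref{prop-NHn-column-module-compact} ensures that $\mc{P}_{n+m}^+$ is cofibrant as a left $p$-DG module over $\NH_{n+m}$, so the derived tensor may be replaced by the underived one:
\[
{^X\Res}_{n,m}(\mc{P}_{n+m}^+)\ =\ {^X\NH_{n+m}}\otimes_{\NH_{n+m}}\mc{P}_{n+m}^+.
\]
As an $\NH_{n,m}$-module this is $\mc{P}_{n+m}^+$ restricted along $\iota_{n,m}$, but with the differential on the generator $1^+_{n+m}$ augmented by the twist $m\,e_1^{(n)}\cdot(-)$ of Definition~\ref{def-twisted-restriction-functor}.

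The task then reduces to matching this twisted restriction with $\mc{P}_n^+\boxtimes\mc{P}_m^+\{-nm\}$. Since $\pol_{n+m}=\pol_n\otimes\pol_m$, the two sides are free of rank one as graded $\pol_{n+m}$-modules, and comparing the degrees of the generators yields
\[
\frac{(n+m)(1-n-m)}{2}-\frac{n(1-n)}{2}-\frac{m(1-m)}{2}\ =\ -nm,
\]
which accounts precisely for the shift $\{-nm\}$. For the differential, $\dif_{\alpha^+}$ acts on $1^+_{n+m}$ by $\sum_{t=1}^{n+m}(t-n-m)\,x_t\cdot 1^+_{n+m}$, and adding the twist $m\sum_{t=1}^n x_t\cdot 1^+_{n+m}$ produces
\[
\sum_{t=1}^n (t-n)\,x_t\cdot 1^+_{n+m}\ +\ \sum_{s=1}^m (s-m)\,x_{n+s}\cdot 1^+_{n+m},
\]
which coincides with the Leibniz-derived action on $1^+_n\otimes 1^+_m\in\mc{P}_n^+\boxtimes\mc{P}_m^+$, whose parameters are $\alpha_n^+=(1-n,\dots,0)$ and $\alpha_m^+=(1-m,\dots,0)$. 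This settles part~(i).

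For part~(ii), recall from the proof of Corollary~\ref{cor-K0-of-derived-categories-NHn} and the discussion following it that $\mc{P}_{n+m}^+$ is a compact generator of $\mc{D}^c(\NH_{n+m},\dif_1)$; hence the thick subcategory of $\mc{D}(\NH_{n+m},\dif_1)$ generated by $\mc{P}_{n+m}^+$ coincides with all of $\mc{D}^c(\NH_{n+m},\dif_1)$. Since ${^X\Res}_{n,m}$ is a triangulated functor, the full subcategory of objects it sends to compact objects is itself thick; by part~(i) it contains $\mc{P}_{n+m}^+$, and therefore contains all of $\mc{D}^c(\NH_{n+m},\dif_1)$.

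The one step requiring real care is the differential computation in part~(i): one must recognise that the twist $m\,e_1^{(n)}$ increments precisely the first $n$ entries of $\alpha^+$ by $+m$, converting $(1-n-m,\dots,-m,\,1-m,\dots,0)$ into the concatenation $(1-n,\dots,0,\,1-m,\dots,0)$, which is exactly the pair of parameters defining $\mc{P}_n^+\boxtimes\mc{P}_m^+$. Once this observation is in view, the remaining work is routine bookkeeping with cofibrance and grading shifts.
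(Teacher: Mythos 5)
Your proof is correct and follows essentially the same route as the paper's: replace the derived tensor by the underived one using cofibrance of $\mc{P}_{n+m}^+$, check that adding the twist $m\,e_1^{(n)}$ to the parameter tuple $\alpha^+=(1-n-m,\dots,0)$ shifts the first $n$ entries by $+m$ to produce the concatenated tuple $(1-n,\dots,0,\,1-m,\dots,0)$ defining $\mc{P}_n^+\boxtimes\mc{P}_m^+$, and compare generator degrees to obtain the shift $\{-nm\}$. For part~(ii) the paper cites the explicit classification of compact indecomposables over $(\NH_{n+m},\dif_{1})$ (modules $\mc{P}_{n+m}^+\otimes\widetilde{V}_i$ up to grading shift), while you phrase the same fact via the thick subcategory generated by the compact generator $\mc{P}_{n+m}^+$ and the observation that the preimage of compacts under a triangulated functor is thick; both arguments rest on $\mc{P}_{n+m}^+$ compactly generating $\mc{D}(\NH_{n+m},\dif_1)$, so this is a cosmetic difference only.
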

\begin{proof}When acting on a cofibrant module, the twisted restriction functor need not be derived:
$${^X\Res_{n,m}}(\mc{P}^+_{n+m})\cong (e_n^{(n)})^m\cdot\mc{P}^+_{n+m} = (x_1\cdots x_n)^m\cdot \mc{P}^+_{n+m}.$$
The differential acts on the twisted module generator ${^X1_{\alpha^+}}$, which is identified with $(x_1\cdots x_n)^m\cdot 1_{\alpha^+}$ under the above isomorphism, by
\[
\dif({^X1_{\alpha^+}})=-\sum_{t=1}^n(n-t)x_t\cdot {^X1_{\alpha^+}} -\sum_{t=1}^m(m-t)x_{n+m-t}\cdot{^X1_{\alpha^+}}.
\]
The first statement follows by comparing with the differentials on $\mc{P}^+_{n}$, $\mc{P}^+_{m}$ (Notation \ref{notation-P-plus-minus}). 

The second statement follows from the classification of compact modules (see the discussion after Corollary \ref{cor-K0-of-derived-categories-NHn}) in $\mc{D}(\NH_{n+m},\dif_{\pm 1})$.
\end{proof}

\paragraph{Categorification.} As for the induction functor, we sum $^X\Res_{n,m}$ over all $n, m \in \N$ and
obtain the twisted restriction functor between the derived categories
\begin{equation}\label{eqn-ind-res-category}
 {^X\Res} :=\sum_{n,m\in \N} {^X\Res_{n,m}}: \oplus_{r\in \N}\mc{D}(\NH_{r}, \dif_{\pm 1}) \lra \oplus_{n,m\in \N}\mc{D}(\NH_{n}\o \NH_{m}, \dif_{\pm 1}).
\end{equation}

Together with earlier discussion, Lemma \ref{lemma-twisted-res-respect-compact} allows us to pass from the induction and twisted restriction functors on the derived category of compact $\NH$-modules to maps of Grothendieck groups, as $\mathbb{O}_p$-modules,
\begin{equation}
{[\Ind]}: K_0(\NH,\dif_{\pm 1})\o_{\mathbb{O}_p} K_0(\NH, \dif_{\pm 1}) \lra K_0(\NH, \dif_{\pm 1}),
\end{equation}
\begin{equation}
{[^X\Res]}: K_0(\NH,\dif_{\pm 1}) \lra K_0(\NH, \dif_{\pm 1}) \o_{\mathbb{O}_p} K_0(\NH,\dif_{\pm 1}).
\end{equation}

On Grothendieck groups, it follows from the associativity of $\Ind$ that $[\Ind]$
equips $K_0(\NH,\dif_{\pm 1})$ with the structure of an $\mathbb{O}_p$-algebra
whose unit is given by the symbol of $[\Bbbk]$ over $(\NH_0,\dif_{\pm 1})\cong
(\Bbbk, \dif_{\pm 1})$. Likewise, $[^X\Res]$ gives rise to a coassociative coalgebra
structure on $K_0(\NH, \dif_{\pm 1})$, whose counit is given by projection onto the
$p$-DG subalgebra $\NH_0 \subset \NH$. Furthermore, a direct computation shows that
\[[^X\Res]:K_0(\NH,\dif_{\pm 1})\lra K_0(\NH,\dif_{\pm 1})\o_{\mathbb{O}_p}K_0(\NH,\dif_{\pm 1})\]
is a map of algebras, where the algebra structure on
$K_0(\NH,\dif_{\pm 1})\o_{\mathbb{O}_p}K_0(\NH,\dif_{\pm 1})$ is twisted in the
sense that for homogeneous elements $x_1, x_2, y_1, y_2\in K_0(\NH,\dif_{\pm 1})$,
\[
(x_1\o x_2)\cdot(y_1\o y_2)=q^{|x_2||y_1|}x_1y_1\o x_2y_2.
\]

\begin{thm}\label{thm-p-DG-nilHecke-categorifies-small-sl2}
There is an isomorphism of twisted bialgebras
\[
K_0(\NH,\dif_{\pm 1})\ \cong u^+_{\mathbb{O}_{p}},
\]
under which the symbols $[\Ind]$, $[{^X\Res}]$ of the functors $\Ind$, $^X\Res$ are
identified with the multiplication and comultiplication in $u^+_{\mathbb{O}_{p}}(\mathfrak{sl}_2)$,
and the symbol of the polynomial representation $[\bpol_n^+]$ is identified with
the $n$-th divided power $E^{(n)}$.
$\hfill\square$
\end{thm}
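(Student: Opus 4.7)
The plan is to construct the $\mathbb{O}_p$-module map $\Phi : u^+_{\mathbb{O}_p}(\mf{sl}_2) \lra K_0(\NH, \dif_{\pm 1})$ sending $E^{(n)} \mapsto [\bpol_n^+]$ and verify that it intertwines multiplication with $[\Ind]$ and comultiplication with $[{^X\Res}]$. As $\mathbb{O}_p$-modules, both sides are free of rank $p$: by Corollaries~\ref{cor-NHp-trivial-K0} and~\ref{cor-K0-of-derived-categories-NHn}, $K_0(\NH, \dif_{\pm 1}) = \bigoplus_{n=0}^{p-1}\mathbb{O}_p\cdot[\bpol_n^+]$, matching the $\mathbb{O}_p$-basis $\{E^{(n)}\}_{n=0}^{p-1}$ of $u^+_{\mathbb{O}_p}$, so $\Phi$ is an $\mathbb{O}_p$-linear isomorphism.

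For the multiplication compatibility, I will show $[\Ind_{n,m}(\bpol_n^+ \boxtimes \bpol_m^+)] = {n+m \brack n}_{\mathbb{O}}\,[\bpol_{n+m}^+]$ for $n+m \le p-1$, with vanishing when $n+m \ge p$ (immediate from Corollary~\ref{cor-NHp-trivial-K0}). In the non-vanishing range, rank-one freeness of $K_0(\NH_{n+m}, \dif_{\pm 1})$ forces $[\Ind_{n,m}(\bpol_n^+ \boxtimes \bpol_m^+)] = c_{n,m}\,[\bpol_{n+m}^+]$ for a unique $c_{n,m}\in\mathbb{O}_p$, which I determine by applying the forgetful map $K_0(\NH_{n+m}) \lra K_0(H\ufmod) = \mathbb{O}_p$ and computing $\mathbb{O}_p$-dimensions. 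A graded-dimension count on the finite-dimensional subspace $U_k^+$ together with $[\sym_k] = 1$ for $k\le p-1$ (Corollary~\ref{cor-V0-qis-Symn}) gives $[\bpol_k^+] = [k]_{\mathbb{O}}!$ and, via bimodule isomorphism~(\ref{eqn-iso-of-NHn-as-bimodule}), $[\NH_k] = ([k]_{\mathbb{O}}!)^2$ and $[\NH_{n,m}] = ([n]_{\mathbb{O}}![m]_{\mathbb{O}}!)^2$, all units in $\mathbb{O}_p$. Since $\bpol_n^+\boxtimes\bpol_m^+$ is cofibrant (Proposition~\ref{prop-NHn-column-module-compact}), the derived tensor product reduces to the underived one, and a $\dif_{\pm 1}$-compatible Mackey-type decomposition of $\NH_{n+m}$ as a right $\NH_{n,m}$-module yields
\[
[\Ind_{n,m}(\bpol_n^+\boxtimes\bpol_m^+)] \ = \ \frac{[\NH_{n+m}]\,[\bpol_n^+\boxtimes\bpol_m^+]}{[\NH_{n,m}]} \ = \ {n+m\brack n}_{\mathbb{O}}\,[n+m]_{\mathbb{O}}!
\]
in $K_0(H\ufmod)$, which matches $c_{n,m}[n+m]_{\mathbb{O}}!$ and forces $c_{n,m} = {n+m\brack n}_{\mathbb{O}}$, exactly the structure constant of the multiplication $E^{(n)}E^{(m)} = {n+m\brack n}E^{(n+m)}$ in $u^+_{\mathbb{O}_p}$.

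For the comultiplication compatibility, Lemma~\ref{lemma-twisted-res-respect-compact}(i) directly supplies $[{^X\Res_{n,m}}(\bpol_{n+m}^+)] = q^{-mn}\,[\bpol_n^+\boxtimes\bpol_m^+]$. Summing the components over pairs $(n,m)$ with $n+m = N$ fixed gives $[{^X\Res}](\bpol_N^+) = \sum_{n+m=N}q^{-nm}\,[\bpol_n^+]\o[\bpol_m^+]$, matching equation~(\ref{eqn-comult-on-divided-powers}) for $\Delta(E^{(N)})$ under the identification of $q$ with $\zeta_{2p}$ via the quotient $\mathbb{O}_p \twoheadrightarrow \mc{O}_{2p}$. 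Coassociativity was already noted in the discussion preceding Lemma~\ref{lemma-property-twisted-bim}.

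The hard part will be justifying the multiplicative formula for $[\Ind_{n,m}]$ at the level of $\mathbb{O}_p$-dimensions. The cleanest route goes through the derived Morita equivalences of Corollary~\ref{cor-K0-of-derived-categories-NHn} together with the quasi-isomorphism $(\NH_k,\dif_{\pm 1}) \simeq (\mathrm{M}(k!,\Bbbk),\dif_{\pm 1})$ of Proposition~\ref{prop-pm1-nilhecke-qis-matrix-algebra}, which reduces the computation to one about induction between the finite-dimensional matrix $p$-DG algebras $\mathrm{M}(k!,\Bbbk)$, where the classical Mackey decomposition is manifestly $\dif_{\pm 1}$-equivariant. A secondary verification is that $[{^X\Res}]$ respects the twisted multiplication $(x_1\o x_2)(y_1\o y_2) = q^{|x_2||y_1|}(x_1y_1)\o(x_2y_2)$ on $K_0(\NH)\o_{\mathbb{O}_p}K_0(\NH)$; at the categorical level this follows from a Mackey-type $p$-DG bimodule isomorphism whose $q$-shift is supplied precisely by the twisting element $(e_k^{(k)})^l$ of Definition~\ref{def-twisted-restriction-functor}, its degree $2kl$ contributing the required power of $q$. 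Passage to Grothendieck classes then assembles everything into the asserted isomorphism of twisted bialgebras.
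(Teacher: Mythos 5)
Your proposal is correct and follows essentially the same route as the paper, which simply assembles the preceding discussion (Corollaries~\ref{cor-NHp-trivial-K0} and~\ref{cor-K0-of-derived-categories-NHn}, the exterior-product lemma, and Lemma~\ref{lemma-twisted-res-respect-compact}) and closes the theorem with a $\square$. You spell out the multiplication structure constant, which the paper leaves implicit, and correctly identify Lemma~\ref{lemma-twisted-res-respect-compact}(i) as the key input for comultiplication. Two remarks. First, your derivation of $c_{n,m}={n+m\brack n}_{\mathbb{O}}$ via an alleged ``$\dif_{\pm 1}$-compatible Mackey-type decomposition'' of $\NH_{n+m}$ over $\NH_{n,m}$ is unproven and unnecessary; the cleaner route is purely formal: since $\Ind_{n,m}(\NH_n\boxtimes\NH_m)\cong\NH_{n+m}$ as $p$-DG modules and, in $K_0$, $[\NH_k]=[k]_{\mathbb{O}}!\,[\bpol_k^+]$ (from the $\dif$-stable filtration in~(\ref{eqn-iso-of-NHn-as-bimodule})), $\mathbb{O}_p$-linearity of $[\Ind_{n,m}]$ and invertibility of $[n]_{\mathbb{O}}![m]_{\mathbb{O}}!$ immediately give $[\Ind_{n,m}(\bpol_n^+\boxtimes\bpol_m^+)]={n+m\brack n}_{\mathbb{O}}[\bpol_{n+m}^+]$ without any dimension count or new decomposition. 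Second, the comparison with~(\ref{eqn-comult-on-divided-powers}) should be made directly in the $\mathbb{O}_p$-integral form $u^+_{\mathbb{O}_p}$, where the structure constant is the formal element $q^{-nm}\in\mathbb{O}_p$; invoking ``the identification of $q$ with $\zeta_{2p}$ via $\mathbb{O}_p\twoheadrightarrow\mc{O}_{2p}$'' is misleading, since the theorem asserts the isomorphism over $\mathbb{O}_p$ itself (where $q$ is not a root of unity and $\mathbb{O}_p$ is not a domain), and the base change to $\mc{O}_{2p}$ is only a corollary discussed in the remark that follows the theorem.
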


\begin{rmk}[Degree one and zero differentials] Base changing $\mathbb{O}_p$ to
$\mc{O}_{2p}$ in Theorem \ref{thm-p-DG-nilHecke-categorifies-small-sl2} gives rise
to an isomorphism of twisted $\mc{O}_{2p}$-bialgebras
\[
K_0(\NH,\dif_{\pm 1})\o_{\mathbb{O}_p}\mc{O}_{2p}\cong u^+_{\mc{O}_{2p}},
\]
Here we sketch how one can categorify $u^+_{\mc{O}_{2p}}$ directly by rescaling the gradings involved.

Since the nilHecke algebra is concentrated in even degrees, one may restrict the
category of modules we considered before to the full subcategory of $\NH$-modules
which are only concentrated in even degrees. Alternatively, rescale the gradings of
$x_i$ and $\delta_i$ to be $1$ and $-1$ respectively, and make the differential
$\dif_a$ to be of degree one. All the earlier results are valid except that we do not
have balanced indecomposable $H$-modules $\widetilde{V_k}$ any more. The stable
Grothendieck ring of the Hopf algebra $H=\Bbbk[\dif]/(\dif^p)$ is isomorphic to the
ring of $p$-th cyclotomic integers $\mc{O}_p\cong \mc{O}_{2p}$, and the construction of this section
goes through without essential change, leading to a categorification of the integral
form $u^+_{\mc{O}_{2p}}$ over $\mc{O}_{2p}$.

Suppose that, instead, we forget about all the gradings involved. The stable category
of finite dimensional ungraded $H$-modules has its Grothendieck ring isomorphic to
$\F_p$. In parallel with the above story, the ungraded compact module category over
the ungraded $p$-DG algebra $(\NH,\dif_{\pm 1})$  categorifies the restricted universal
enveloping algebra $u^+(\mf{sl}_2)\cong \F_p[E]/(E^p)$.
\end{rmk}


\section{The \texorpdfstring{$p$}{p}-DG KLR algebra}\label{sec-KLR}

\newstrandstyle{red}{color=red}
\newstrandstyle{green}{color=green}
\newstrandstyle{blue}{color=blue}

\subsection{\texorpdfstring{$p$}{p}-derivations on the KLR algebra}
\paragraph{The KLR algebras.}The KLR algebras associated with any Cartan datum were
introduced in \cite{KL1, KL2, Rou} as a generalization of nilHecke algebras to
categorify one-half of the corresponding quantum group.

Let $\Gamma$ be an oriented simply-laced quiver whose vertices are indexed by $I$
($I$ is also referred to as the set of \emph{colors}). Such a quiver determines
a Cartan inner product $\cdot: \Z^I \times \Z^I \lra \Z$ defined by $i\cdot i=2 $
for all $i\in I$, $i\cdot j=-1$ if $i$ and $j$ are connected by one edge, and
$i\cdot j = 0$ otherwise. The KLR
$\Bbbk$-algebra $R(\Gamma)$ associated to $\Gamma$ has the following local diagrammatic
presentation. It is generated by braid-like planar diagrams with strands
colored by $I$ and carrying dots, subject to the following list of local relations.
\begin{itemize}
\item The usual nilHecke relations among same-color crossings and dots.
\item Dots slide through crossings of different colors, i. e. for $i, j\in I$ with $i \neq j$,

\[
\begin{DGCpicture}
\DGCstrand(0,0)(1,1)[$i$]\DGCdot{.25} \DGCstrand(1,0)(0,1)[$j$]
\end{DGCpicture}=
\begin{DGCpicture}
\DGCstrand(0,0)(1,1)[$i$]\DGCdot{.75} \DGCstrand(1,0)(0,1)[$j$]
\end{DGCpicture},
\ \ \ \
\begin{DGCpicture}
\DGCstrand(0,0)(1,1)[$i$] \DGCstrand(1,0)(0,1)[$j$]\DGCdot{.75}
\end{DGCpicture}=\begin{DGCpicture}
\DGCstrand(0,0)(1,1)[$i$] \DGCstrand(1,0)(0,1)[$j$]\DGCdot{.25}
\end{DGCpicture}.
\]

\item Reidemeister II type relations between strands,
\[
\begin{DGCpicture}
\DGCstrand(0,0)(1,1)(0,2)[$i$]
\DGCstrand(1,0)(0,1)(1,2)[$j$]
\end{DGCpicture}
=
\left\{
\begin{array}{cc}
0 & \textrm{if $i=j$,}\\
& \\
\begin{DGCpicture}
\DGCstrand(0,0)(0,1)[$i$]
\DGCstrand(1,0)(1,1)[$j$]
\end{DGCpicture}
&
\textrm{if $i\cdot j=0$,}\\
& \\
\begin{DGCpicture}
\DGCstrand(0,0)(0,1)[$i$]
\DGCdot{0.5}
\DGCstrand(1,0)(1,1)[$j$]
\end{DGCpicture}-
\begin{DGCpicture}
\DGCstrand(0,0)(0,1)[$i$]
\DGCstrand(1,0)(1,1)[$j$]
\DGCdot{0.5}
\end{DGCpicture}&
\textrm{if $i\lra j$,}\\
& \\
\begin{DGCpicture}
\DGCstrand(0,0)(0,1)[$i$]
\DGCstrand(1,0)(1,1)[$j$]
\DGCdot{0.5}
\end{DGCpicture}-
\begin{DGCpicture}
\DGCstrand(0,0)(0,1)[$i$]
\DGCdot{0.5}
\DGCstrand(1,0)(1,1)[$j$]
\end{DGCpicture}
&
\textrm{if $i\longleftarrow j$.}
\end{array}
\right.
\]
\item Reidemeister III type relations
\[
\begin{DGCpicture}[scale=0.75]
\DGCstrand(0,0)(2,2)[$i$]
\DGCstrand(2,0)(0,2)[$k$]
\DGCstrand(1,0)(0,1)(1,2)[$j$]
\end{DGCpicture}
-
\begin{DGCpicture}[scale=0.75]
\DGCstrand(0,0)(2,2)[$i$]
\DGCstrand(2,0)(0,2)[$k$]
\DGCstrand(1,0)(2,1)(1,2)[$j$]
\end{DGCpicture}
=
\left\{
\begin{array}{cc}
0 & \textrm{if $i\neq k$ or $i\cdot j\neq -1$,}\\
& \\
~~\begin{DGCpicture}[scale=0.75]
\DGCstrand(0,0)(0,2)[$i$]
\DGCstrand(2,0)(2,2)[$i$]
\DGCstrand(1,0)(1,2)[$j$]
\end{DGCpicture} &
\textrm{if $i=k$ and $i \lra j$,}\\
& \\
-~\begin{DGCpicture}[scale=0.75]
\DGCstrand(0,0)(0,2)[$i$]
\DGCstrand(2,0)(2,2)[$i$]
\DGCstrand(1,0)(1,2)[$j$]
\end{DGCpicture} &
\textrm{if $i=k$ and $i \longleftarrow j$.}
\end{array}
\right.
\]
\end{itemize}
$R(\Gamma)$ naturally decomposes into blocks $R(\Gamma)=\oplus_{\nu \in \N [I]}R(\nu)$,
where $\nu=\sum_{i \in I} \nu_i i$ are called weights. Each $R(\nu)$ is spanned by
diagrams consisting of $|\nu|:=\sum_{i\in I} \nu_i$ strands, $\nu_i$ of which are colored
by $i$. For each weight $\nu=\sum_{i\in I}\nu_ii\in \N[I]$, let $\mathrm{Seq}(\nu)$
be the set of sequences of colors $\mathbf{i}\in I^{m}$, where $m=|\nu|$. There are
idempotents
$1_{\mathbf{i}}\in R(\nu)$ for each $\mathbf{i}=(i_1, i_2,\dots, i_m)\in \mathrm{Seq}(\nu)$,
depicted by
\[
1_{\mathbf{i}}:= \
\begin{DGCpicture}
\DGCstrand(0,0)(0,1)[$i_1$]
\DGCstrand(1,0)(1,1)[$i_2$]
\DGCstrand(3,0)(3,1)[$i_m$]
\DGCcoupon*(1.25,0.35)(2.75,0.65){$\cdots$}
\end{DGCpicture},
\]
and,  as a graded vector space,
$R(\nu)= \oplus_{\mathbf{i},\mathbf{j}\in \mathrm{Seq}(\nu)}{_{\mathbf{i}}R(\nu)_{\mathbf{j}}}$,
where $_{\mathbf{i}}R(\nu)_\mathbf{j}:=1_{\mathbf{i}}R(\nu)1_{\mathbf{j}}$.

For more details about these rings, see \cite{KL1, KL2, Rou}. Notice that in the
previous section we labelled strands by numbers $1, 2, \dots$  starting from top
left of a diagram. Now we suppress this notation and only keep track of
the strands' colors, elements of $I$, written at the bottom endpoints of strands.

The algebra $R(\Gamma)$ has the following symmetries extending those of nilHecke
subalgebras. Reflecting a diagram about a horizontal axis induces
an algebra anti-involution of $R(\Gamma)$, denoted $\psi$:
\begin{equation}\label{eqn-KLR-symmetry-psi}
\psi\left(~
\begin{DGCpicture}[scale=0.75]
\DGCstrand[green](0,0)(1,2)[$j$]
\DGCstrand[blue](1,0)(2,2)[$k$]
\DGCstrand[red](2,0)(0,2)[$i$]\DGCdot{1.65}
\end{DGCpicture}
~\right)=
\begin{DGCpicture}[scale=0.75]
\DGCstrand[red](0,0)(2,2)[$i$]\DGCdot{0.35}
\DGCstrand[green](1,0)(0,2)[$j$]
\DGCstrand[blue](2,0)(1,2)[$k$]
\end{DGCpicture}.
\end{equation}
Reflecting a diagram about a vertical axis and simultaneously multiplying it by $(-1)$
for each same-color crossing induces an algebra involution of $R(\Gamma)$, denoted $\sigma$:
\begin{equation}\label{eqn-KLR-symmetry-sigma}
\sigma\left(~
\begin{DGCpicture}[scale=0.75]
\DGCstrand[green](0,0)(1,2)[$j$]
\DGCstrand[red](1,0)(2,2)[$i$]
\DGCstrand[red](2,0)(0,2)[$i$]\DGCdot{1.65}
\end{DGCpicture}
~\right)=-
\begin{DGCpicture}[scale=0.75]
\DGCstrand[red](0,0)(2,2)[$i$]\DGCdot{1.65}
\DGCstrand[red](1,0)(0,2)[$i$]
\DGCstrand[green](2,0)(1,2)[$j$]
\end{DGCpicture}.
\end{equation}

The KLR algebra $R(\Gamma)$ acts faithfully on a direct sum of polynomial spaces,
see \cite[Section 2.3]{KL1} for more details. The symmetric group
$S_m$ acts on $I^m$ by permutations of sequences. Define a vector space
\[
\bpol_\nu := \bigoplus_{\mathbf{i}\in \mathrm{Seq}(\nu)}\bpol_\mathbf{i},
\ \ \ \ \bpol_{\mathbf{i}}:=\Bbbk[x_1(\mathbf{i}),\dots,
x_m(\mathbf{i})]\cdot 1_{\mathbf{i}}.
\]
$\bpol_\nu$ inherits an action of $S_m$ by setting
$w(x_t(\mathbf{i}))=x_{w(t)}(w(\mathbf{i}))$ for any $1\leq t \leq m$.
The algebra $R(\nu)$ acts on $\bpol_\nu$ as follows.
First off, $_{\mathbf{j}}R(\nu)_{\mathbf{i}}$ acts on $\bpol_\mathbf{k}$
by zero unless $\mathbf{i}=\mathbf{k}$ and takes $\bpol_\mathbf{k}$
to $\bpol_\mathbf{j}$ when $\mathbf{i}=\mathbf{k}=(i_1,\dots,i_t,i_{t+1},\dots,i_m)$.
The local generators of $R(\nu)$
\[
x_t(\mathbf{i}):= \
\begin{DGCpicture}
\DGCstrand(0,0)(0,1)[$i_1$]
\DGCstrand(1.5,0)(1.5,1)[$i_t$]
\DGCdot{0.5}
\DGCstrand(3,0)(3,1)[$i_m$]
\DGCcoupon*(0.25,0.35)(1.25,0.65){$\cdots$}
\DGCcoupon*(1.75,0.35)(2.75,0.65){$\cdots$}
\end{DGCpicture},  \ \ \ \
\delta_t(\mathbf{i}):= \
\begin{DGCpicture}
\DGCstrand(0,0)(0,1)[$i_1$]
\DGCstrand(1,0)(2,1)[$i_t$]
\DGCstrand(2,0)(1,1)[$i_{t+1}$]
\DGCstrand(3,0)(3,1)[$i_m$]
\DGCcoupon*(0.15,0.35)(0.85,0.65){$\cdots$}
\DGCcoupon*(2.15,0.35)(2.85,0.65){$\cdots$}
\end{DGCpicture},
\]
act respectively on $f \in \bpol_{\mathbf{i}}$ by multiplying $f$ by $x_t(\mathbf{i})$ and
\[
\delta_t(\mathbf{i})(f):= \left\{
\begin{array}{ll}
{^tf} & \textrm{if $i_t \cdot i_{t+1}=0$},\\
& \\
\dfrac{f-{^tf}}{x_t(\mathbf{i})-x_{t+1}(\mathbf{i})} & \textrm{if $i_t = i_{t+1}$},\\
& \\
(x_{t+1}(s_k(\mathbf{i}))-x_t(s_k(\mathbf{i})))\cdot({^tf}) & \textrm{if $i_t\lra i_{t+1}$},\\
& \\
{^tf} & \textrm{if $i_t\longleftarrow i_{t+1}$},
\end{array}
\right.
\]
where $^tf$ denotes $f$ with the $t$-th and $(t+1)$-st colored-variables switched. One
should not confuse the polynomial representation with the polynomial subalgebra sitting
naturally inside $R(\nu)$. In particular, the lowest degree element of $ \bpol_\mathbf{i}$,
unique up to a non-zero constant and denoted by $1_\mathbf{i}$, should not be confused
with the idempotent $1_\mathbf{i}\in {_\mathbf{i}R(\nu)_\mathbf{i}}$.

\begin{eg}[$A_2$]\label{eg-KLR-relation-A2-case} For the quiver
\[
\xymatrix{\underset{i}{\textcolor[rgb]{1.00,0.00,0.00}{\bullet}}\ar[r]&\underset{j}{\textcolor[rgb]{0.00,1.00,0.00}{\bullet}}},
\]
the $R(\nu)$-relations translate into
\begin{itemize}
\item The usual nilHecke relations among strands and dots of the same color.

\item Dots slide through $i-j$ or $j-i$ crossings:
\[
\begin{DGCpicture}
\DGCstrand[red](0,0)(1,1)[$i$]\DGCdot{.25}
\DGCstrand[green](1,0)(0,1)[$j$]
\end{DGCpicture}=
\begin{DGCpicture}
\DGCstrand[red](0,0)(1,1)[$i$]\DGCdot{.75}
\DGCstrand[green](1,0)(0,1)[$j$]
\end{DGCpicture},
\ \ \ \
\begin{DGCpicture}
\DGCstrand[green](0,0)(1,1)[$j$]
\DGCstrand[red](1,0)(0,1)[$i$]\DGCdot{.75}
\end{DGCpicture}=
\begin{DGCpicture}
\DGCstrand[green](0,0)(1,1)[$j$]
\DGCstrand[red](1,0)(0,1)[$i$]\DGCdot{.25}
\end{DGCpicture},
\]

\[
\begin{DGCpicture}
\DGCstrand[green](0,0)(1,1)[$j$]\DGCdot{.25}
\DGCstrand[red](1,0)(0,1)[$i$]
\end{DGCpicture}=
\begin{DGCpicture}
\DGCstrand[green](0,0)(1,1)[$j$]\DGCdot{.75}
\DGCstrand[red](1,0)(0,1)[$i$]
\end{DGCpicture},
\ \ \ \
\begin{DGCpicture}
\DGCstrand[red](0,0)(1,1)[$i$]
\DGCstrand[green](1,0)(0,1)[$j$]\DGCdot{.75}
\end{DGCpicture}=
\begin{DGCpicture}
\DGCstrand[red](0,0)(1,1)[$i$]
\DGCstrand[green](1,0)(0,1)[$j$]\DGCdot{.25}
\end{DGCpicture}.
\]

\item Two additional Reidemeister II type relations for the new crossings:
\[
\begin{DGCpicture}
\DGCstrand[red](0,0)(1,1)(0,2)[$i$]
\DGCstrand[green](1,0)(0,1)(1,2)[$j$]
\end{DGCpicture}
\ = \
\begin{DGCpicture}
\DGCstrand[red](0,0)(0,2)[$i$] \DGCdot{1.01}
\DGCstrand[green](1,0)(1,2)[$j$]
\end{DGCpicture}-
\begin{DGCpicture}
\DGCstrand[red](0,0)(0,2)[$i$]
\DGCstrand[green](1,0)(1,2)[$j$]\DGCdot{1.01}
\end{DGCpicture} \ , \ \ \ \ \ \ \
\begin{DGCpicture}
\DGCstrand[green](0,0)(1,1)(0,2)[$j$]
\DGCstrand[red](1,0)(0,1)(1,2)[$i$]
\end{DGCpicture}
\ = \
\begin{DGCpicture}
\DGCstrand[green](0,0)(0,2)[$j$]
\DGCstrand[red](1,0)(1,2)[$i$]\DGCdot{1.01}
\end{DGCpicture}-
\begin{DGCpicture}
\DGCstrand[green](0,0)(0,2)[$j$]\DGCdot{1.01}
\DGCstrand[red](1,0)(1,2)[$i$]
\end{DGCpicture} \ .
\]

\item Two special Reidemeister III type relations:
\[
\begin{DGCpicture}[scale=0.75]
\DGCstrand[red](0,0)(2,2)[$i$]
\DGCstrand[red](2,0)(0,2)[$i$]
\DGCstrand[green](1,0)(0,1)(1,2)[$j$]
\end{DGCpicture}
-
\begin{DGCpicture}[scale=0.75]
\DGCstrand[red](0,0)(2,2)[$i$]
\DGCstrand[red](2,0)(0,2)[$i$]
\DGCstrand[green](1,0)(2,1)(1,2)[$j$]
\end{DGCpicture}\ = \
\begin{DGCpicture}[scale=0.75]
\DGCstrand[red](0,0)(0,2)[$i$]
\DGCstrand[red](2,0)(2,2)[$i$]
\DGCstrand[green](1,0)(1,2)[$j$]
\end{DGCpicture},
\]

\[
\begin{DGCpicture}[scale=0.75]
\DGCstrand[green](0,0)(2,2)[$j$]
\DGCstrand[green](2,0)(0,2)[$j$]
\DGCstrand[red](1,0)(0,1)(1,2)[$i$]
\end{DGCpicture}
-
\begin{DGCpicture}[scale=0.75]
\DGCstrand[green](0,0)(2,2)[$j$]
\DGCstrand[green](2,0)(0,2)[$j$]
\DGCstrand[red](1,0)(2,1)(1,2)[$i$]
\end{DGCpicture}  = -
\begin{DGCpicture}[scale=0.75]
\DGCstrand[green](0,0)(0,2)[$j$]
\DGCstrand[green](2,0)(2,2)[$j$]
\DGCstrand[red](1,0)(1,2)[$i$]
\end{DGCpicture},
\]

\vspace{0.1in}

and the other Reidemeister III type relations:

\[
\begin{DGCpicture}[scale=0.75]
\DGCstrand[green](0,0)(2,2)[$j$]
\DGCstrand[red](2,0)(0,2)[$i$]
\DGCstrand[green](1,0)(0,1)(1,2)[$j$]
\end{DGCpicture}
=
\begin{DGCpicture}[scale=0.75]
\DGCstrand[green](0,0)(2,2)[$j$]
\DGCstrand[red](2,0)(0,2)[$i$]
\DGCstrand[green](1,0)(2,1)(1,2)[$j$]
\end{DGCpicture} \ , \ \ \
\begin{DGCpicture}[scale=0.75]
\DGCstrand[green](0,0)(2,2)[$j$]
\DGCstrand[red](2,0)(0,2)[$i$]
\DGCstrand[red](1,0)(0,1)(1,2)[$i$]
\end{DGCpicture}
=
\begin{DGCpicture}[scale=0.75]
\DGCstrand[green](0,0)(2,2)[$j$]
\DGCstrand[red](2,0)(0,2)[$i$]
\DGCstrand[red](1,0)(2,1)(1,2)[$i$]
\end{DGCpicture} \ ,
\]

\vspace{0.1in}

\[
\begin{DGCpicture}[scale=0.75]
\DGCstrand[red](0,0)(2,2)[$i$]
\DGCstrand[green](2,0)(0,2)[$j$]
\DGCstrand[green](1,0)(0,1)(1,2)[$j$]
\end{DGCpicture}
=
\begin{DGCpicture}[scale=0.75]
\DGCstrand[red](0,0)(2,2)[$i$]
\DGCstrand[green](2,0)(0,2)[$j$]
\DGCstrand[green](1,0)(2,1)(1,2)[$j$]
\end{DGCpicture} \ , \ \ \
\begin{DGCpicture}[scale=0.75]
\DGCstrand[red](0,0)(2,2)[$i$]
\DGCstrand[green](2,0)(0,2)[$j$]
\DGCstrand[red](1,0)(0,1)(1,2)[$i$]
\end{DGCpicture}
=
\begin{DGCpicture}[scale=0.75]
\DGCstrand[red](0,0)(2,2)[$i$]
\DGCstrand[green](2,0)(0,2)[$j$]
\DGCstrand[red](1,0)(2,1)(1,2)[$i$]
\end{DGCpicture} \ .
\]
\end{itemize}
\end{eg}

\begin{eg}[$A_1\times A_1$]\label{eg-KLR-relation-A1-times-A1-case} This corresponds to the quiver with two vertices and no arrows:
\[
\xymatrix{\underset{i}{\textcolor[rgb]{1.00,0.00,0.00}{\bullet}} & \underset{k}{\textcolor[rgb]{0.00,0.00,1.00}{\bullet}}}.
\]
Such pairs of vertices in a Dynkin diagram are referred to as \emph{distant vertices}. The relations for this case are listed below.
\begin{itemize}
\item The usual nilHecke relations among strands and dots of the same color.

\item Dots slide through $i-k$ and $k-i$ crossings:
\[
\begin{DGCpicture}
\DGCstrand[red](0,0)(1,1)[$i$]\DGCdot{.25}
\DGCstrand[blue](1,0)(0,1)[$k$]
\end{DGCpicture}=
\begin{DGCpicture}
\DGCstrand[red](0,0)(1,1)[$i$]\DGCdot{.75}
\DGCstrand[blue](1,0)(0,1)[$k$]
\end{DGCpicture},
\ \ \ \
\begin{DGCpicture}
\DGCstrand[blue](0,0)(1,1)[$k$]
\DGCstrand[red](1,0)(0,1)[$i$]\DGCdot{.75}
\end{DGCpicture}=
\begin{DGCpicture}
\DGCstrand[blue](0,0)(1,1)[$k$]
\DGCstrand[red](1,0)(0,1)[$i$]\DGCdot{.25}
\end{DGCpicture},
\]

\[
\begin{DGCpicture}
\DGCstrand[blue](0,0)(1,1)[$k$]\DGCdot{.25}
\DGCstrand[red](1,0)(0,1)[$i$]
\end{DGCpicture}=
\begin{DGCpicture}
\DGCstrand[blue](0,0)(1,1)[$k$]\DGCdot{.75}
\DGCstrand[red](1,0)(0,1)[$i$]
\end{DGCpicture},
\ \ \ \
\begin{DGCpicture}
\DGCstrand[red](0,0)(1,1)[$i$]
\DGCstrand[blue](1,0)(0,1)[$k$]\DGCdot{.75}
\end{DGCpicture}=
\begin{DGCpicture}
\DGCstrand[red](0,0)(1,1)[$i$]
\DGCstrand[blue](1,0)(0,1)[$k$]\DGCdot{.25}
\end{DGCpicture}.
\]

\item Two additional Reidemeister II type relations for the new crossings:
\[
\begin{DGCpicture}
\DGCstrand[red](0,0)(1,1)(0,2)[$i$]
\DGCstrand[blue](1,0)(0,1)(1,2)[$k$]
\end{DGCpicture}
\ = \
\begin{DGCpicture}
\DGCstrand[red](0,0)(0,2)[$i$]
\DGCstrand[blue](1,0)(1,2)[$k$]
\end{DGCpicture}, \ \ \ \ \ \ \ \
\begin{DGCpicture}
\DGCstrand[blue](0,0)(1,1)(0,2)[$k$]
\DGCstrand[red](1,0)(0,1)(1,2)[$i$]
\end{DGCpicture}
\ = \
\begin{DGCpicture}
\DGCstrand[blue](0,0)(0,2)[$k$]
\DGCstrand[red](1,0)(1,2)[$i$]
\end{DGCpicture}.
\]

\item Reidemeister III type relations:
\[
\begin{DGCpicture}[scale=0.75]
\DGCstrand[red](0,0)(2,2)[$i$]
\DGCstrand[red](2,0)(0,2)[$i$]
\DGCstrand[blue](1,0)(0,1)(1,2)[$k$]
\end{DGCpicture}
=
\begin{DGCpicture}[scale=0.75]
\DGCstrand[red](0,0)(2,2)[$i$]
\DGCstrand[red](2,0)(0,2)[$i$]
\DGCstrand[blue](1,0)(2,1)(1,2)[$k$]
\end{DGCpicture} \ , \ \ \
\begin{DGCpicture}[scale=0.75]
\DGCstrand[blue](0,0)(2,2)[$k$]
\DGCstrand[blue](2,0)(0,2)[$k$]
\DGCstrand[red](1,0)(0,1)(1,2)[$i$]
\end{DGCpicture}
=
\begin{DGCpicture}[scale=0.75]
\DGCstrand[blue](0,0)(2,2)[$k$]
\DGCstrand[blue](2,0)(0,2)[$k$]
\DGCstrand[red](1,0)(2,1)(1,2)[$i$]
\end{DGCpicture}\ , \ \ \
\begin{DGCpicture}[scale=0.75]
\DGCstrand[red](0,0)(2,2)[$i$]
\DGCstrand[blue](2,0)(0,2)[$k$]
\DGCstrand[red](1,0)(0,1)(1,2)[$i$]
\end{DGCpicture}
=
\begin{DGCpicture}[scale=0.75]
\DGCstrand[red](0,0)(2,2)[$i$]
\DGCstrand[blue](2,0)(0,2)[$k$]
\DGCstrand[red](1,0)(2,1)(1,2)[$i$]
\end{DGCpicture},
\]
\[
\begin{DGCpicture}[scale=0.75]
\DGCstrand[blue](0,0)(2,2)[$k$]
\DGCstrand[red](2,0)(0,2)[$i$]
\DGCstrand[blue](1,0)(0,1)(1,2)[$k$]
\end{DGCpicture}
=
\begin{DGCpicture}[scale=0.75]
\DGCstrand[blue](0,0)(2,2)[$k$]
\DGCstrand[red](2,0)(0,2)[$i$]
\DGCstrand[blue](1,0)(2,1)(1,2)[$k$]
\end{DGCpicture} \ , \ \ \
\begin{DGCpicture}[scale=0.75]
\DGCstrand[blue](0,0)(2,2)[$k$]
\DGCstrand[red](2,0)(0,2)[$i$]
\DGCstrand[red](1,0)(0,1)(1,2)[$i$]
\end{DGCpicture}
=
\begin{DGCpicture}[scale=0.75]
\DGCstrand[blue](0,0)(2,2)[$k$]
\DGCstrand[red](2,0)(0,2)[$i$]
\DGCstrand[red](1,0)(2,1)(1,2)[$i$]
\end{DGCpicture}
\ , \ \ \
\begin{DGCpicture}[scale=0.75]
\DGCstrand[red](0,0)(2,2)[$i$]
\DGCstrand[blue](2,0)(0,2)[$k$]
\DGCstrand[blue](1,0)(0,1)(1,2)[$k$]
\end{DGCpicture}
=
\begin{DGCpicture}[scale=0.75]
\DGCstrand[red](0,0)(2,2)[$i$]
\DGCstrand[blue](2,0)(0,2)[$k$]
\DGCstrand[blue](1,0)(2,1)(1,2)[$k$]
\end{DGCpicture}.
\]
\end{itemize}
\end{eg}

\paragraph{Differentials on KLR algebras.} The polynomial representation $\bpol_\nu$ of
$R(\nu)$ has a $p$-DG module structure specified as follows.
Firstly, let $\pol_\nu:=\bpol_\nu$ with the obvious algebra structure together with with
the $p$-differential defined by $\dif(x_t(\mathbf{i}))=x_t^2(\mathbf{i})$ for all
$\mathbf{i}\in \mathrm{Seq}(\nu)$ and $1\leq t \leq |\nu|$. $(\pol_\nu,\dif)$ is then
a $p$-DG algebra. Choose a family of linear polynomials $g_{\alpha}(\mathbf{i}):=
\sum_{t=1}^{m} \alpha_t(\mathbf{i})x_t(\mathbf{i})$, one for each
$\mathbf{i}\in \mathrm{Seq}(\nu)$, where $\alpha_t(\mathbf{i})\in \F_p$. Let
$\dif_\alpha$ act on an element $f1_\mathbf{i}\in \bpol_\mathbf{i}$ by
\[
\dif_\alpha(f1_\mathbf{i}):=\dif(f)1_\mathbf{i}+fg_{\alpha}(\mathbf{i})1_\mathbf{i}.
\]
We denote this $p$-DG $\pol_\nu$-module by $\bpol_\nu(\alpha)$.
Unlike in the $A_1$ case, we do not make a particular (balanced) choice of the degree to
assign to $1_{\mathbf{i}}\in \bpol_\nu(\alpha)$. This degree will depend on $\mathbf{i}$,
and will be uniquely determined by the choice of degree for any single sequence of labels
in $\mathrm{Seq}(\nu)$.

As in the $A_1$ case, a $p$-differential on $\bpol_\nu$ induces a natural $p$-differential
on the endomorphism algebra $R(\nu)$. We first investigate these differentials on
KLR algebras associated to the only two simply-laced rank-two Cartan data: $A_2$ and
$A_1 \times A_1$, starting with the $A_2$ case.

Consider the polynomial module $\bpol_{i+j}\cong \Bbbk[x_1,x_2]1_{ij}\oplus
\Bbbk[x_1,x_2]1_{ji}$ (we use the shorthand notation $x_1$ for $x_1(ij)$ and $x_1(ji)$ etc.).
Let
\begin{equation}
\dif_\alpha(1_{ij})  = \alpha_1(ij)x_11_{ij}+\alpha_2(ij)x_21_{ij}, \ \ \ \
\dif_\alpha(1_{ji})  =  \alpha_1(ji)x_11_{ji}+\alpha_2(ji)x_21_{ji}.
\end{equation}
An easy computation shows that, diagrammatically, the induced differential on
$R(i+j)$ is given on the $i-j$ and $j-i$ crossings by
\[
\dif_\alpha\left(~
\begin{DGCpicture}
\DGCstrand[red](0,0)(1,1)[$i$]
\DGCstrand[green](1,0)(0,1)[$j$]
\end{DGCpicture}
~\right)=
(1+\alpha_1(ji)-\alpha_2(ij))\begin{DGCpicture}
\DGCstrand[red](0,0)(1,1)[$i$]
\DGCstrand[green](1,0)(0,1)[$j$]\DGCdot{0.75}
\end{DGCpicture}+
(1+\alpha_2(ji)-\alpha_1(ij))\begin{DGCpicture}
\DGCstrand[red](0,0)(1,1)[$i$]\DGCdot{0.75}
\DGCstrand[green](1,0)(0,1)[$j$]
\end{DGCpicture},
\]

\[
\dif_\alpha\left(~
\begin{DGCpicture}
\DGCstrand[green](0,0)(1,1)[$j$]
\DGCstrand[red](1,0)(0,1)[$i$]
\end{DGCpicture}
~\right)=
(\alpha_1(ij)-\alpha_2(ji)){
\begin{DGCpicture}
\DGCstrand[green](0,0)(1,1)[$j$]
\DGCstrand[red](1,0)(0,1)[$i$]\DGCdot{0.75}
\end{DGCpicture}}+
(\alpha_2(ij)-\alpha_1(ji)) {
\begin{DGCpicture}
\DGCstrand[green](0,0)(1,1)[$j$]\DGCdot{0.75}
\DGCstrand[red](1,0)(0,1)[$i$]
\end{DGCpicture}}.
\]
We would like the differential to be local, as in Section~\ref{sec-nilHecke}
and in the sense explained earlier. Let
$r_{ij}:=1+\alpha_1(ji)-\alpha_2(ij)$ and $r_{ji}:=\alpha_1(ij)-\alpha_2(ji)$.
Define a four-parameter family of differentials
$\dif=\dif(a_i, a_j, r_{ij}, r_{ji})$ on
$R(\underset{i}{\textcolor[rgb]{1.00,0.00,0.00}{\bullet}}\rightarrow
\underset{j}{\textcolor[rgb]{0.00,1.00,0.00}{\bullet}})$ as follows:
\[
\dif\left(~
{\begin{DGCpicture}
\DGCstrand[red](0,0)(0,1)[$i$]\DGCdot{0.5}
\end{DGCpicture}}~
\right)=
{\begin{DGCpicture}
\DGCstrand[red](0,0)(0,1)[$i$]\DGCdot{0.5}[r]{$^2$}
\end{DGCpicture}}\ \ , \ \ \ \
\dif \left(~
{\begin{DGCpicture}
\DGCstrand[green](0,0)(0,1)[$j$]\DGCdot{0.5}
\end{DGCpicture}}
~\right)=
{\begin{DGCpicture}
\DGCstrand[green](0,0)(0,1)[$j$]\DGCdot{0.5}[r]{$^2$}
\end{DGCpicture}}\ \ ,
\]

\[
\dif\left(
\begin{DGCpicture}
\DGCstrand[red](0,0)(1,1)[$i$]
\DGCstrand[red](1,0)(0,1)[$i$]
\end{DGCpicture}\right)=
a_i\begin{DGCpicture}
\DGCstrand[red](0,0)(0,1)[$i$]
\DGCstrand[red](1,0)(1,1)[$i$]
\end{DGCpicture}
-(a_i+1)
\begin{DGCpicture}
\DGCstrand[red](0,0)(1,1)[$i$]
\DGCstrand[red](1,0)(0,1)[$i$]\DGCdot{0.75}
\end{DGCpicture}
+(a_i-1)
\begin{DGCpicture}
\DGCstrand[red](0,0)(1,1)[$i$]\DGCdot{0.75}
\DGCstrand[red](1,0)(0,1)[$i$]
\end{DGCpicture},
\]

\[
\dif\left(
\begin{DGCpicture}
\DGCstrand[green](0,0)(1,1)[$j$]
\DGCstrand[green](1,0)(0,1)[$j$]
\end{DGCpicture}\right)=
a_j\begin{DGCpicture}
\DGCstrand[green](0,0)(0,1)[$j$]
\DGCstrand[green](1,0)(1,1)[$j$]
\end{DGCpicture}
-(a_j+1)
\begin{DGCpicture}
\DGCstrand[green](0,0)(1,1)[$j$]
\DGCstrand[green](1,0)(0,1)[$j$]\DGCdot{0.75}
\end{DGCpicture}
+(a_j-1)
\begin{DGCpicture}
\DGCstrand[green](0,0)(1,1)[$j$]\DGCdot{0.75}
\DGCstrand[green](1,0)(0,1)[$j$]
\end{DGCpicture},
\]

\[
\dif\left(
\begin{DGCpicture}
\DGCstrand[red](0,0)(1,1)[$i$]
\DGCstrand[green](1,0)(0,1)[$j$]
\end{DGCpicture}\right)=
r_{ij}\begin{DGCpicture}
\DGCstrand[red](0,0)(1,1)[$i$]
\DGCstrand[green](1,0)(0,1)[$j$]\DGCdot{0.75}
\end{DGCpicture}+
(1-r_{ji})\begin{DGCpicture}
\DGCstrand[red](0,0)(1,1)[$i$]\DGCdot{0.75}
\DGCstrand[green](1,0)(0,1)[$j$]
\end{DGCpicture},
\]

\[
\dif\left(
\begin{DGCpicture}
\DGCstrand[green](0,0)(1,1)[$j$]
\DGCstrand[red](1,0)(0,1)[$i$]
\end{DGCpicture}
\right)=
r_{ji}{
\begin{DGCpicture}
\DGCstrand[green](0,0)(1,1)[$j$]
\DGCstrand[red](1,0)(0,1)[$i$]\DGCdot{0.75}
\end{DGCpicture}}+
(1-r_{ij}) {
\begin{DGCpicture}
\DGCstrand[green](0,0)(1,1)[$j$]\DGCdot{0.75}
\DGCstrand[red](1,0)(0,1)[$i$]
\end{DGCpicture}}.
\]
In this definition we allow the parameters $a_i,a_j,r_{ij},r_{ji}\in \Bbbk$.

\begin{lemma}\label{lemma-dif-on-sl3-parameter}The above differential
$\dif=\dif(a_i,a_j,r_{ij},r_{ji})$ satisfies $\dif^p=0$ on
$R(\underset{i}{\textcolor[rgb]{1.00,0.00,0.00}{\bullet}}\rightarrow\underset{j}{\textcolor[rgb]{0.00,1.00,0.00}{\bullet}})$ if and only if $a_i,a_j,r_{ij},r_{ji}\in \F_p$.
Any homogeneous degree two $p$-nilpotent local derivation on
$R(\underset{i}{\textcolor[rgb]{1.00,0.00,0.00}{\bullet}}\rightarrow\underset{j}{\textcolor[rgb]{0.00,1.00,0.00}{\bullet}})$ is of the form $\lambda\cdot\dif(a_i,a_j,r_{ij},r_{ji})$
for some $\lambda \in \Bbbk$ and $a_i,a_j,r_{ij},r_{ji}\in \F_p$.
\end{lemma}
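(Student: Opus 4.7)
\textbf{Proof plan for Lemma \ref{lemma-dif-on-sl3-parameter}.} The strategy is to transport the question to the faithful polynomial representation $\bpol_{i+j}(\alpha)$, where the $p$-nilpotency of the differential reduces to a question about the coefficients of the linear forms $g_{\alpha}(\mathbf{i})$, already analyzed in Section~\ref{subsec-p-der-on-NH}. A direct computation (exactly the one displayed before the statement of the lemma) shows that, starting from a family of linear forms $g_{\alpha}(\mathbf{i})=\sum_t\alpha_t(\mathbf{i})x_t(\mathbf{i})$ indexed by $\mathbf{i}\in\{ii,ij,ji,jj\}$, the induced derivation on $R(i+j)$ has exactly the displayed form with parameters
\[
a_i=\alpha_2(ii)-\alpha_1(ii),\quad a_j=\alpha_2(jj)-\alpha_1(jj),\quad r_{ij}=1+\alpha_1(ji)-\alpha_2(ij),\quad r_{ji}=\alpha_1(ij)-\alpha_2(ji).
\]
This system of four linear equations in eight unknowns is surjective over either $\Bbbk$ or $\F_p$, so every $(a_i,a_j,r_{ij},r_{ji})\in\Bbbk^4$ (respectively $\F_p^4$) can be lifted to some $\alpha\in\Bbbk^{8}$ (respectively $\alpha\in\F_p^{8}$).

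For the ``if'' direction of the first claim, choose such a lift $\alpha\in\F_p^{8}$. Restricted to each summand $\bpol_{\mathbf{i}}$, the operator $\dif_\alpha$ is of the form studied in Section~\ref{subsec-p-der-on-NH}, and hence $\dif_\alpha^{p}=0$ on all of $\bpol_{i+j}(\alpha)$. The induced derivation on $R(i+j)$ satisfies
\[
\dif^{p}(\xi)(f)=\dif_\alpha^{p}(\xi(f))-\xi(\dif_\alpha^{p}(f))
\]
since all intermediate binomial coefficients $\binom{p}{k}$ vanish modulo $p$; therefore $\dif^{p}=0$. For the converse, the restriction of $\dif$ to each same-color nilHecke subalgebra $R(2i)\cong \NH_2^{(i)}$ and $R(2j)\cong \NH_2^{(j)}$ is local of degree two and $p$-nilpotent, so Lemma~\ref{lemma-dif-p-equals-zero-nilhecke} forces $a_i,a_j\in\F_p$. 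For $r_{ij}$ and $r_{ji}$, one iterates $\dif$ on the single bimodule generator $\delta_{ij}$ just as in the proof of Lemma~\ref{lemma-dif-p-equals-zero-nilhecke}: $\dif^{k}(\delta_{ij})$ is a linear combination of diagrams whose coefficients are explicit polynomials in $r_{ij},r_{ji}$ with integer coefficients, and collecting the leading terms at $k=p$ produces the factor $(r_{ij}^{p}-r_{ij})(r_{ji}^{p}-r_{ji})$ in front of a nonzero diagram, forcing $r_{ij},r_{ji}\in\F_p$.

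For the classification statement, let $\dif$ be any homogeneous degree two $p$-nilpotent local derivation on $R(i\to j)$. Locality plus the degree count leaves essentially no freedom on generators: on a dot of color $i$, the local part of degree four is spanned by $x_t(i)^{2}$, so $\dif(x_t(i))=\lambda_i\,x_t(i)^{2}$ for some $\lambda_i\in\Bbbk$, and likewise $\dif(x_t(j))=\lambda_j\,x_t(j)^{2}$; on a same-color crossing of degree $-2$, the local part of degree two is the three-dimensional space spanned by $\Id$, $x_1\delta$, $x_2\delta$, and Lemma~\ref{lemma-dif-p-equals-zero-nilhecke} applied to the corresponding nilHecke subalgebra forces $\dif|_{R(2i)}=\lambda_i\dif_{a_i}$, $\dif|_{R(2j)}=\lambda_j\dif_{a_j}$ with $a_i,a_j\in\F_p$. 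On the two cross-color crossings of degree one, the space of local degree-three terms is the two-dimensional span of the crossing with a dot on either strand, yielding four free parameters total. These are not independent: applying $\dif$ to the Reidemeister~II relation
\[
\begin{DGCpicture}
\DGCstrand[red](0,0)(1,1)(0,2)[$i$]
\DGCstrand[green](1,0)(0,1)(1,2)[$j$]
\end{DGCpicture}
\;=\;
\begin{DGCpicture}
\DGCstrand[red](0,0)(0,2)[$i$] \DGCdot{1.01}
\DGCstrand[green](1,0)(1,2)[$j$]
\end{DGCpicture}
-
\begin{DGCpicture}
\DGCstrand[red](0,0)(0,2)[$i$]
\DGCstrand[green](1,0)(1,2)[$j$]\DGCdot{1.01}
\end{DGCpicture}
\]
and comparing coefficients on the two sides forces $\lambda_i=\lambda_j=:\lambda$ and expresses the two cross-color parameters as $\lambda\cdot r_{ij}$ and $\lambda\cdot(1-r_{ji})$ (and symmetrically for $\delta_{ji}$), matching the formulas in the statement. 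Combined with the first part, this yields $\dif=\lambda\cdot\dif(a_i,a_j,r_{ij},r_{ji})$ with $a_i,a_j,r_{ij},r_{ji}\in\F_p$.

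\textbf{Main obstacle.} The only nontrivial point is the ``only if'' direction of the first claim for the cross-color parameters: one must extract the precise factor $(r_{ij}^{p}-r_{ij})(r_{ji}^{p}-r_{ji})$ from an iterated commutator, which parallels but is somewhat more involved than the corresponding nilHecke calculation in Lemma~\ref{lemma-dif-p-equals-zero-nilhecke}. The R2 relation bookkeeping in the classification step is routine but needs care, since all other relations of $R(i+j)$ must also be checked to be preserved by the proposed derivation.
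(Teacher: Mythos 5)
Your overall strategy coincides with the paper's sketch: faithfulness of the polynomial representation for the ``if'' direction, restriction to same-color nilHecke subalgebras for $a_i,a_j\in\F_p$, iteration on the cross-color crossing for $r_{ij},r_{ji}\in\F_p$, and a direct check of relations for the classification. However, there is a concrete error in your treatment of the cross-color generators that would, if taken literally, break the argument.

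You claim that $\dif^p(\delta_{ij})$ carries the \emph{product} factor $(r_{ij}^p-r_{ij})(r_{ji}^p-r_{ji})$ in front of a nonzero diagram, and that its vanishing forces both $r_{ij},r_{ji}\in\F_p$. But vanishing of a product only forces \emph{one} of the two factors to vanish, so this would not give both containments. The correct computation is in fact simpler than the nilHecke case (not ``more involved,'' as you say), precisely because dots slide freely through cross-color crossings and $\delta_{ij}$ has no nilpotency to manage. Writing $a=x_1(j)$, $b=x_2(i)$ for the top variables, the span $\Bbbk[a,b]\cdot\delta_{ij}$ is $\dif$-stable and, as a rank-one $p$-DG module, is exactly the module $\bpol_2(r_{ij},1-r_{ji})$ from Section~\ref{subsec-p-der-on-NH}: $\dif(\delta_{ij})=(r_{ij}a+(1-r_{ji})b)\delta_{ij}$ and $\dif(a)=a^2$, $\dif(b)=b^2$. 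Since this module is the tensor product $\bpol_1(r_{ij})\o\bpol_1(1-r_{ji})$ and the two tensor factors commute, in characteristic $p$ one gets
\[
\dif^p(\delta_{ij})=\bigl((r_{ij}^p-r_{ij})\,a^p-(r_{ji}^p-r_{ji})\,b^p\bigr)\delta_{ij},
\]
a \emph{sum} of two linearly independent terms. Its vanishing therefore forces both coefficients to be zero separately, which is exactly what gives $r_{ij},r_{ji}\in\F_p$. The remainder of your proposal — lifting parameters to $\alpha\in\F_p^8$, the commutator identity $\dif^p(\xi)=[\dif_\alpha^p,\xi]$ in characteristic $p$, and the R2 bookkeeping forcing the color-rescalings $\lambda_i=\lambda_j$ — is consistent with the paper's approach, so once the above step is corrected your argument is sound.
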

\begin{proof}[Sketch of proof]When $a_i,a_j,r_{ij},r_{ji}\in \F_p$,
$\dif^p=0$ on
$R(\underset{i}{\textcolor[rgb]{1.00,0.00,0.00}{\bullet}}\rightarrow\underset{j}{\textcolor[rgb]{0.00,1.00,0.00}{\bullet}})$ since $\dif$
was induced from a $p$-DG module. Conversely,
Lemma~\ref{lemma-dif-p-equals-zero-nilhecke} shows that $\dif^p=0$ on the nilHecke
algebra generators of the same color $i$ or $j$ if and only if $a_i,a_j\in \F_p$.
A similar computation for $\dif^p$ as used in that lemma applied to the new generators
$\begin{DGCpicture}[scale=0.65]
\DGCstrand[green](0,0)(1,1)[$^j$]
\DGCstrand[red](1,0)(0,1)[$^i$]
\end{DGCpicture}$ and
$\begin{DGCpicture}[scale=0.65]
\DGCstrand[red](0,0)(1,1)[$^i$]
\DGCstrand[green](1,0)(0,1)[$^j$]
\end{DGCpicture}$
restricts $r_{ij}$, $r_{ji}$ to be in $\F_p$. The classification of all possible derivations follows by an easy but lengthy computation.
\end{proof}

The second case is that of the quiver with two distant vertices ($A_1\times A_1$).
\[
\xymatrix{
\underset{i}{\textcolor[rgb]{1.00,0.00,0.00}{\bullet}} &
\underset{k}{\textcolor[rgb]{0.00,0.00,1.00}{\bullet}}.
}
\]
On the polynomial module $\bpol_{i+k}\cong \Bbbk[x_1,x_2]1_{ik}\oplus \Bbbk[x_1,x_2]1_{ki}$, set
\begin{equation}
\dif_\alpha(1_{ik})  = \alpha_1(ik)x_11_{ik}+\alpha_2(ik)x_21_{ik}, \ \ \ \
\dif_\alpha(1_{ki})  =  \alpha_1(ki)x_11_{ki}+\alpha_2(ki)x_21_{ki}.
\end{equation}
The induced differential on $R(i+k)$ is given on the $i-k$, $k-i$ crossings by
\[
\dif\left(
\begin{DGCpicture}
\DGCstrand[red](0,0)(1,1)[$i$]
 \DGCstrand[blue](1,0)(0,1)[$k$]
\end{DGCpicture}\right)=
(\alpha_1(ki)-\alpha_2(ik))\begin{DGCpicture}
\DGCstrand[red](0,0)(1,1)[$i$]
\DGCstrand[blue](1,0)(0,1)[$k$]\DGCdot{0.75}
\end{DGCpicture}
+(\alpha_2(ki)-\alpha_1(ik))\begin{DGCpicture}
\DGCstrand[red](0,0)(1,1)[$i$]\DGCdot{0.75}
\DGCstrand[blue](1,0)(0,1)[$k$]
\end{DGCpicture},
\]

\[
\dif\left(
{\begin{DGCpicture}
\DGCstrand[blue](0,0)(1,1)[$k$]
\DGCstrand[red](1,0)(0,1)[$i$]
\end{DGCpicture}}\right)=
(\alpha_1(ik)-\alpha_2(ki)){\begin{DGCpicture}
\DGCstrand[blue](0,0)(1,1)[$k$]
\DGCstrand[red](1,0)(0,1)[$i$]\DGCdot{0.75}
\end{DGCpicture}}
+(\alpha_2(ik)-\alpha_1(ki)) {\begin{DGCpicture}
\DGCstrand[blue](0,0)(1,1)[$k$]\DGCdot{0.75}
\DGCstrand[red](1,0)(0,1)[$i$]
\end{DGCpicture}}.
\]
We denote $u_{ik}:=\alpha_1(ki)-\alpha_2(ik)$ and $u_{ki}:=\alpha_1(ik)-\alpha_2(ki)$,
and require that the differential is independent of the position of the crossings.
This results in a
six-para\-meter family ($\mu_i, \mu_k, a_i, a_k, u_{ik},u_{ki}\in \Bbbk$)
of local differentials:
\[
\dif
\left(~
{\begin{DGCpicture}
\DGCstrand[red](0,0)(0,1)[$i$]\DGCdot{0.5}
\end{DGCpicture}}
~\right)=\mu_i~
{\begin{DGCpicture}
\DGCstrand[red](0,0)(0,1)[$i$]
\DGCdot{0.5}[r]{$^2$}
\end{DGCpicture}} \ \ , \ \ \ \
\dif \left(~
{\begin{DGCpicture}
\DGCstrand[blue](0,0)(0,1)[$k$]
\DGCdot{0.5}
\end{DGCpicture}}
~\right)=\mu_k~
{\begin{DGCpicture}
\DGCstrand[blue](0,0)(0,1)[$k$]
\DGCdot{0.5}[r]{$^2$}
\end{DGCpicture}}\ \ ,
\]

\[
\dif\left(
\begin{DGCpicture}
\DGCstrand[red](0,0)(1,1)[$i$]
\DGCstrand[red](1,0)(0,1)[$i$]
\end{DGCpicture}
\right)=\mu_i\left(
a_i\begin{DGCpicture}
\DGCstrand[red](0,0)(0,1)[$i$]
\DGCstrand[red](1,0)(1,1)[$i$]
\end{DGCpicture}
-(a_i+1)
\begin{DGCpicture}
\DGCstrand[red](0,0)(1,1)[$i$]
\DGCstrand[red](1,0)(0,1)[$i$]\DGCdot{0.75}
\end{DGCpicture}
+(a_i-1)
\begin{DGCpicture}
\DGCstrand[red](0,0)(1,1)[$i$]\DGCdot{0.75}
\DGCstrand[red](1,0)(0,1)[$i$]
\end{DGCpicture}
\right),
\]

\[
\dif\left(
\begin{DGCpicture}
\DGCstrand[blue](0,0)(1,1)[$k$]
\DGCstrand[blue](1,0)(0,1)[$k$]
\end{DGCpicture}\right)=
\mu_k\left(a_k\begin{DGCpicture}
\DGCstrand[blue](0,0)(0,1)[$k$]
\DGCstrand[blue](1,0)(1,1)[$k$]
\end{DGCpicture}
-(a_k+1)
\begin{DGCpicture}
\DGCstrand[blue](0,0)(1,1)[$k$]
\DGCstrand[blue](1,0)(0,1)[$k$]\DGCdot{0.75}
\end{DGCpicture}
+(a_k-1)
\begin{DGCpicture}
\DGCstrand[blue](0,0)(1,1)[$k$]\DGCdot{0.75}
\DGCstrand[blue](1,0)(0,1)[$k$]
\end{DGCpicture}\right),
\]

\[
\dif\left(
\begin{DGCpicture}
\DGCstrand[red](0,0)(1,1)[$i$]
 \DGCstrand[blue](1,0)(0,1)[$k$]
\end{DGCpicture}\right)=
u_{ik}\begin{DGCpicture}
\DGCstrand[red](0,0)(1,1)[$i$]
\DGCstrand[blue](1,0)(0,1)[$k$]\DGCdot{0.75}
\end{DGCpicture}
-u_{ki}\begin{DGCpicture}
\DGCstrand[red](0,0)(1,1)[$i$]\DGCdot{0.75}
\DGCstrand[blue](1,0)(0,1)[$k$]
\end{DGCpicture},
\]

\[
\dif\left(
{\begin{DGCpicture}
\DGCstrand[blue](0,0)(1,1)[$k$]
\DGCstrand[red](1,0)(0,1)[$i$]
\end{DGCpicture}}\right)=
u_{ki}{\begin{DGCpicture}
\DGCstrand[blue](0,0)(1,1)[$k$]
\DGCstrand[red](1,0)(0,1)[$i$]\DGCdot{0.75}
\end{DGCpicture}}
-u_{ik} {\begin{DGCpicture}
\DGCstrand[blue](0,0)(1,1)[$k$]\DGCdot{0.75}
\DGCstrand[red](1,0)(0,1)[$i$]
\end{DGCpicture}}.
\]

The proof of the following lemma is entirely analogous to those of
Lemmas~\ref{lemma-dif-p-equals-zero-nilhecke}, \ref{lemma-dif-on-sl3-parameter}.

\begin{lemma}\label{lemma-dif-on-A1*A1-parameter}The above differential $\dif=\dif(a_i,a_k,u_{ik},u_{ki},\mu_i,\mu_k)$ satisfies $\dif^p=0$ on $R(\underset{i}{\textcolor[rgb]{1.00,0.00,0.00}{\bullet}}\quad \underset{k}{\textcolor[rgb]{0.00,0.00,1.00}{\bullet}})$ if and only if $a_i,a_k,u_{ik},u_{ki}\in \F_p$ while $\mu_i,\mu_k \in \Bbbk$. Any homogeneous degree two $p$-nilpotent local derivation on $R(\underset{i}{\textcolor[rgb]{1.00,0.00,0.00}{\bullet}}\quad \underset{k}{\textcolor[rgb]{0.00,0.00,1.00}{\bullet}})$ arises in this way. $\hfill\square$
\end{lemma}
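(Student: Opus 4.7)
The proof will parallel those of Lemma~\ref{lemma-dif-p-equals-zero-nilhecke} and Lemma~\ref{lemma-dif-on-sl3-parameter}. Since $\dif$ is prescribed on the generators of $R(i+k)$, I would begin with a routine (if laborious) verification that it is compatible with all the relations listed in Example~\ref{eg-KLR-relation-A1-times-A1-case}, so that it extends uniquely to a graded derivation of the whole algebra. With that in place, the equation $\dif^p=0$ need only be checked on generators, because $\dif^p$ is again a derivation in characteristic~$p$.

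For the ``if'' direction, I would treat the three types of generators separately. For a dot, an easy induction shows $\dif^n(x_i)=n!\,\mu_i^n\,x_i^{n+1}$, so $\dif^p(x_i)=0$ in characteristic~$p$ for any $\mu_i\in\Bbbk$. For a same-color crossing, $\dif$ restricts on the nilHecke subalgebra of color $i$ to $\mu_i$ times the standard derivation $\dif_{a_i}$ of Section~\ref{sec-nilHecke}; since $(\mu_i\dif_{a_i})^p=\mu_i^p\dif_{a_i}^p$, vanishing is equivalent to $\dif_{a_i}^p=0$, which by Lemma~\ref{lemma-dif-p-equals-zero-nilhecke} is equivalent to $a_i\in\F_p$ (and similarly for $k$). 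For the cross-color crossings $\tau_{ik},\tau_{ki}$, I would exhibit $\dif$ as induced from the $p$-DG module structure on the polynomial representation $\bpol_{i+k}(\alpha)$ via the identifications $u_{ik}=\alpha_1(ki)-\alpha_2(ik)$ and $u_{ki}=\alpha_1(ik)-\alpha_2(ki)$ computed just before the lemma; the vanishing of $\dif_\alpha^p$ on $\bpol_{i+k}(\alpha)$ then transports to $\dif^p=0$ on the endomorphism algebra.

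For the converse, restricting to each same-color nilHecke subalgebra and applying Lemma~\ref{lemma-dif-p-equals-zero-nilhecke} immediately forces $a_i,a_k\in\F_p$. For the cross-color generator $\tau_{ik}$, I would introduce $f:=u_{ik}\,x_1(ki)-u_{ki}\,x_2(ki)$ and $L:=\dif+f\,\cdot$, observe by induction that $\dif^n(\tau_{ik})=L^{n-1}(f)\cdot\tau_{ik}$, and thereby reduce $\dif^p(\tau_{ik})=0$ to $L^{p-1}(f)=0$ inside the bigraded polynomial ring with $\dif(x_1)=\mu_k x_1^2$, $\dif(x_2)=\mu_i x_2^2$. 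An explicit expansion of $L^{p-1}(f)$, using the Fermat-type identity $\prod_{j=0}^{p-1}(x+j)=x^p-x$ in $\F_p[x]$ exactly as in the final computation of Lemma~\ref{lemma-dif-p-equals-zero-nilhecke}, will yield the required restrictions on $u_{ik}$ and $u_{ki}$.

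Finally, the classification of all homogeneous degree-two local $p$-nilpotent derivations is a direct enumeration: locality and the grading force the image of each generator under any such $\dif$ into a finite-dimensional space of local diagrams with matching endpoints, and the Reidemeister II and III relations from Example~\ref{eg-KLR-relation-A1-times-A1-case} pin down the coefficients up to the six stated parameters $a_i, a_k, u_{ik}, u_{ki}, \mu_i, \mu_k$. The main technical obstacle is the closed-form evaluation of $L^{p-1}(f)$ in the cross-color step; every other piece is a direct analogue of arguments already carried out in Lemmas~\ref{lemma-dif-p-equals-zero-nilhecke} and~\ref{lemma-dif-on-sl3-parameter}.
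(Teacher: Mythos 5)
The paper supplies no actual proof here, only an appeal to analogy with Lemmas~\ref{lemma-dif-p-equals-zero-nilhecke} and~\ref{lemma-dif-on-sl3-parameter}, so a fleshed-out argument is welcome, and the scaffolding you choose — check $\dif^p$ on generators since $\dif^p$ is again a derivation in characteristic $p$, treat each same-color nilHecke subalgebra as a $\mu$-rescaled copy of $\dif_{a_i}$, and isolate the cross-color crossing via the operator $L=\dif+f\cdot$ together with the identity $\dif^n(\tau_{ik})=L^{n-1}(f)\tau_{ik}$ — is exactly what that analogy suggests, and the nilHecke and classification pieces are fine.

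There is, however, a gap in the cross-color step, and it comes from the scaling parameters $\mu_i,\mu_k$. Your ``if'' argument invokes the $p$-DG module $\bpol_{i+k}(\alpha)$, but that module is built with $\dif(x_t)=x_t^2$; it induces the lemma's differential only when $\mu_i=\mu_k=1$ and does not realize the $\mu$-rescaled dot action, so it cannot by itself cover the stated range $\mu_i,\mu_k\in\Bbbk$. For your converse, the final evaluation of $L^{p-1}(f)$ does not in fact produce the condition $u_{ik},u_{ki}\in\F_p$: writing $\dif(x_1(ki))=\mu_k x_1(ki)^2$ and $\dif(x_2(ki))=\mu_i x_2(ki)^2$ and separating variables, one gets
\[
\dif^p(\tau_{ik})=\left((u_{ik}^p-\mu_k^{p-1}u_{ik})\,x_1(ki)^p-(u_{ki}^p-\mu_i^{p-1}u_{ki})\,x_2(ki)^p\right)\tau_{ik},
\]
because the Fermat-type identity that applies here is $\prod_{j=0}^{p-1}(j\mu+\alpha)=\alpha^p-\mu^{p-1}\alpha$, not $\alpha^p-\alpha$. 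So the necessary and sufficient condition on the cross-color parameters is $u_{ik}^p=\mu_k^{p-1}u_{ik}$ and $u_{ki}^p=\mu_i^{p-1}u_{ki}$, equivalently $u_{ik}\in\mu_k\F_p$ and $u_{ki}\in\mu_i\F_p$; this matches $u_{ik},u_{ki}\in\F_p$ only when $\mu_i,\mu_k\in\F_p^{\ast}$. As written, your proof silently normalizes to $\mu_i=\mu_k=1$ in the cross-color step. You should either carry the $\mu$'s through the Fermat-type product and state the condition in the rescaled form above, or make explicit the hypothesis $\mu_i,\mu_k\in\F_p^{\ast}$ — the paper's own later remark, that one may and will always take $\mu_i=1$ in a connected component, suggests this normalization is the intended reading.
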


\begin{rmk}By rescaling the differential on the nilHecke algebra, one could have specified that $\dif(x(i))=\mu_i x^2(i)$ by any $\mu_i\in \Bbbk$ for each color $i\in I$, and $\dif(\delta(i))$ would then be modified accordingly (as above, for the $i-i$ and $k-k$ crossings). Inside the KLR algebra, one might attempt to rescale the nilHecke differentials for each color separately. This can be accomplished when the colors are distant, but for adjacent colors the $A_2$ relations force the scalings to be the same. Therefore, in a connected simply-laced quiver, we may and will always assume that the scaling factor $\mu_i=1$ for each color $i\in I$.
\end{rmk}

In a similar way as in Proposition \ref{prop-symmetry-between-a-and-minus-a}, one shows that the symmetries $\psi$, $\sigma$ of $R(\Gamma)$ intertwine differentials.

\begin{prop}\label{prop-symmetry-intertwines-differentials-KLR}The following relations on the differentials hold.
\begin{itemize}
\item[(i)] On $R(\underset{i}{\textcolor[rgb]{1.00,0.00,0.00}{\bullet}}\rightarrow \underset{j}{\textcolor[rgb]{0.00,1.00,0.00}{\bullet}})$,
\[
\psi\circ\dif(a_i,a_j,r_{ij},r_{ji})\circ \psi  = \dif(-a_i, -a_j, 1-r_{ij}, 1-r_{ji}),
\]
\[
\sigma\circ\dif(a_i,a_j,r_{ij},r_{ji})\circ \sigma  = \dif(-a_i, -a_j, 1-r_{ij}, 1-r_{ji}).
\]
\item[(ii)] On $R(\underset{i}{\textcolor[rgb]{1.00,0.00,0.00}{\bullet}}\ \ \ \ \underset{k}{\textcolor[rgb]{0.00,0.00,1.00}{\bullet}})$,
\[
\psi\circ\dif(a_i,a_k,u_{ik},u_{ki},\mu_i,\mu_k)\circ \psi  = \dif(-a_i, -a_k, -u_{ik}, -u_{ki},\mu_i,\mu_k),
\]
\[
\sigma\circ\dif(a_i,a_k,u_{ik},u_{ki},\mu_i,\mu_k)\circ \sigma  = \dif(-a_i, -a_k, -u_{ik}, -u_{ki},\mu_i,\mu_k).
\]
\end{itemize}
\end{prop}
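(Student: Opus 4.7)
The strategy mirrors the nilHecke proof of Proposition~\ref{prop-symmetry-between-a-and-minus-a}: reduce the identity to a verification on local generators. First, since $\psi$ is an algebra anti-automorphism and $\sigma$ is an algebra automorphism of $R(\Gamma)$, both conjugates $\psi\circ\dif\circ\psi$ and $\sigma\circ\dif\circ\sigma$ are again derivations of $R(\Gamma)$ (for $\psi$: $\psi\dif\psi(ab) = \psi(\dif(\psi(b)\psi(a))) = a\cdot(\psi\dif\psi)(b) + (\psi\dif\psi)(a)\cdot b$). Hence it suffices to check the claimed identities on a generating set, namely the dots and the two-strand elementary crossings for each color pattern allowed by the Cartan datum.

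Next, I would dispose of the easy cases. On a dot generator $x_t(\mathbf{i})$, both $\psi$ and $\sigma$ act as the identity (there is no sign from $\sigma$ because dots are not crossings), and on both sides of the equation $\dif$ returns $\mu_i x_t^2(\mathbf{i})$, which is independent of the parameters $a_\bullet, r_\bullet, u_\bullet$; the identity holds trivially. On same-color crossings, the verification reduces word-for-word to the nilHecke computation already carried out in Proposition~\ref{prop-symmetry-between-a-and-minus-a} (with an overall rescaling by $\mu_i$ in the $A_1\times A_1$ setting, which plays no role since it appears on both sides), yielding the flips $a_i\mapsto -a_i$, $a_j\mapsto -a_j$, $a_k\mapsto -a_k$.

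The substantive step is the verification for different-color crossings. Here I would compute by hand: under $\psi$ (horizontal reflection), the $i$-$j$ crossing is sent to the $j$-$i$ crossing, and similarly for $\sigma$ (vertical reflection, with no sign contribution since different-color crossings incur no sign). Applying the differential $\dif$ to the swapped crossing and then reflecting once more, the output dots land on the opposite side of the original crossing from where the defining rules display them; one then uses the KLR relation that dots slide freely through different-color crossings to bring them into standard position, and reads off the coefficients. In the $A_2$ case, because the defining $\dif$-rule on the $i$-$j$ crossing has the form $r_{ij}(\cdots) + (1-r_{ji})(\cdots)$, the coefficients swap roles under conjugation to produce $(1-r_{ij})(\cdots) + r_{ji}(\cdots)$, which matches exactly $\dif(-a_i,-a_j,1-r_{ij},1-r_{ji})$ applied to the same crossing; the computation for the $j$-$i$ crossing is symmetric, and $\sigma$ behaves identically to $\psi$ on different-color generators. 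In the $A_1\times A_1$ case the defining $\dif$-rule is instead $u_{ik}(\cdots) - u_{ki}(\cdots)$ (the different-color Reidemeister~II relation is trivial there, so there is no constant term), and the analogous swap gives $-u_{ik}(\cdots) + u_{ki}(\cdots)$, matching the parameters $(-u_{ik},-u_{ki})$; the scaling parameters $\mu_i,\mu_k$ are preserved because they prefactor the nilHecke-type derivations symmetrically under reflection.

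The main obstacle is not conceptual but purely the bookkeeping across the several crossing patterns and the careful tracking of dot positions under each reflection. Once a single representative case is worked out in detail (say $\psi$ acting on the $i$-$j$ crossing in the $A_2$ case), all remaining cases follow by visibly analogous diagrammatic computations.
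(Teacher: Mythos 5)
Your proposal takes essentially the same approach as the paper's sketch of proof: reduce to generators via the derivation property, observe that dots and same-color crossings are handled by the nilHecke result (Proposition~\ref{prop-symmetry-between-a-and-minus-a}), and verify the identity on different-color crossings by a direct diagrammatic computation that uses the dot-sliding relation to normalize dot positions after reflection. The paper's proof likewise confines itself to a single representative computation (for $\psi$ conjugating $\dif$ on the $j$-$i$ crossing of the $A_2$ type) and declares the remaining generators analogous, which is precisely what you propose.
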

\begin{proof}[Sketch of proof.]To show these relations, it suffices to check them
on the algebra generators. The proof is entirely similar to the argument in the proof of
Proposition~\ref{prop-symmetry-between-a-and-minus-a}. That $a_i, a_j, a_k$ are replaced
respectively by $-a_i, -a_j, -a_k$ under conjugation is implied by the proposition.
For $r_{ij}, r_{ji}, u_{ik}, u_{ki}$ it suffices to compute both sides on two-color
crossings. For instance, denoting $\dif(a_i,a_j,r_{ij},r_{ji})$ by $\dif_{r,s}$
for short, where $r=r_{ji}$ and $s=r_{ij}$, we have
\[
\begin{array}{rl}
\psi\circ\dif_{r,s}\circ \psi
\left(
{\begin{DGCpicture}[scale=0.95]
\DGCstrand[green](0,0)(1,1)[$j$]
\DGCstrand[red](1,0)(0,1)[$i$]
\end{DGCpicture}}
\right) \hspace{-0.1in}
& = \psi\circ\dif_{r,s}\left(
{\begin{DGCpicture}[scale=0.95]
\DGCstrand[red](0,0)(1,1)[$i$]
\DGCstrand[green](1,0)(0,1)[$j$]
\end{DGCpicture}}
\right) =
\psi \left( s
{\begin{DGCpicture}[scale=0.95]
\DGCstrand[red](0,0)(1,1)[$i$]
\DGCstrand[green](1,0)(0,1)[$j$]\DGCdot{0.75}
\end{DGCpicture}}
+(1-r)
{\begin{DGCpicture}[scale=0.95]
\DGCstrand[red](0,0)(1,1)[$i$]\DGCdot{0.75}
\DGCstrand[green](1,0)(0,1)[$j$]
\end{DGCpicture}}
\right) \\
& =(1-r)
\begin{DGCpicture}[scale=0.95]
\DGCstrand[green](0,0)(1,1)[$j$]
\DGCstrand[red](1,0)(0,1)[$i$]\DGCdot{0.75}
\end{DGCpicture}+s
\begin{DGCpicture}[scale=0.95]
\DGCstrand[green](0,0)(1,1)[$j$]\DGCdot{0.75}
\DGCstrand[red](1,0)(0,1)[$i$]
\end{DGCpicture}  =
\dif_{1-r,1-s}\left(
\begin{DGCpicture}[scale=0.95]
\DGCstrand[green](0,0)(1,1)[$j$]
\DGCstrand[red](1,0)(0,1)[$i$]
\end{DGCpicture}
\right).
\end{array}
\]
The rest of the proof follows similarly.
\end{proof}

The proof of the following result is similar to that of the nilHecke case (Proposition \ref{prop-NHn-compact-H-module}).

\begin{prop}\label{prop-finite-dimensional-homology-R-nu}
Let $\Gamma$ be the quiver $\underset{i}{\textcolor[rgb]{1.00,0.00,0.00}{\bullet}}\rightarrow \underset{j}{\textcolor[rgb]{0.00,1.00,0.00}{\bullet}}$ (resp. $\underset{i}{\textcolor[rgb]{1.00,0.00,0.00}{\bullet}}\ \ \ \ \underset{k}{\textcolor[rgb]{0.00,0.00,1.00}{\bullet}}$), and $\dif$ be the differential $\dif(a_i,a_j,r_{ij},r_{ji})$ (resp. $\dif(a_i,a_k, u_{ik}, u_{ki}, \mu_i,\mu_k)$ with $\mu_i,\mu_k \neq 0$). Then for each weight $\nu \in \N[I]$, where $I=\{i, j\}$ (resp. $I=\{i, k\}$), the $p$-complex $(R(\nu),\dif)$ is quasi-isomorphic to a finite dimensional $p$-complex.
\end{prop}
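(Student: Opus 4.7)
The plan is to follow the strategy of Proposition~\ref{prop-NHn-compact-H-module}: construct an exhaustive $\dif$-stable filtration of $R(\nu)$ by graded left $\pol_\nu$-submodules whose successive quotients are identified with graded shifts of the $p$-DG polynomial modules $\bpol_\nu(\alpha)$, then invoke the fact that each $\bpol_\nu(\alpha)$ is quasi-isomorphic to a finite-dimensional $H$-module. Concretely, fix a reduced expression $s_{t_1}\cdots s_{t_r}$ for each $w\in S_m$ (where $m=|\nu|$) and set $\delta_w:=\delta_{t_1}\cdots\delta_{t_r}$, so that $R(\nu)$ is free as a graded left module over $\pol_\nu=\oplus_{\mathbf{i}}\pol_{\mathbf{i}}$ with basis $\{\delta_w 1_{\mathbf{i}}\,|\, w\in S_m,\ \mathbf{i}\in\mathrm{Seq}(\nu)\}$. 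Choose a total order on this basis refining the partial order $\delta_{w_1}1_{\mathbf{i}_1}<\delta_{w_2}1_{\mathbf{i}_2}$ whenever $w_1$ has strictly fewer crossings than $w_2$, and let $0=N_0\subset N_1\subset\cdots\subset N_K=R(\nu)$ be the resulting filtration.

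The main computation is to verify that $\dif(N_j)\subset N_j$ and to compute $N_j/N_{j-1}$. Inspecting the defining formulas, $\dif$ acts on a single-color crossing by $a_i\cdot\Id$ plus $\pol_\nu$-multiples of $\delta_t$; on a two-color crossing by a $\pol_\nu$-linear combination of the same crossing decorated with a single dot; and on a dot by a polynomial in $x_t,x_{t+1}$. Combining the Leibniz rule with the nilHecke relations and the Reidemeister~II/III relations listed in Examples~\ref{eg-KLR-relation-A2-case} and~\ref{eg-KLR-relation-A1-times-A1-case} (each of which either preserves or strictly decreases the crossing number), one rewrites
\begin{equation*}
\dif(\delta_w 1_{\mathbf{i}}) \ = \ \sum_{t=1}^m b_t(w,\mathbf{i})\, x_t\delta_w 1_{\mathbf{i}'} \ + \ \sum_{w'} f_{w'}\,\delta_{w'} 1_{\mathbf{i}''} ,
\end{equation*}
with $b_t(w,\mathbf{i})\in\F_p$ and the second sum running over $w'$ of strictly shorter length. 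Thus $\dif$ preserves the filtration, and each subquotient $N_j/N_{j-1}$ is isomorphic, as a $p$-DG $\pol_\nu$-module, to a grading shift of $\bpol_\nu(\alpha)$ for an appropriate $\alpha\in\F_p^m$.

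It remains to observe that $\bpol_\nu(\alpha)$ is, as a graded $H$-module, a tensor product of the rank-one $p$-DG modules $\bpol_1(\alpha_t)$ of Section~\ref{sec-nilHecke}. In the $A_2$ case these are automatically quasi-isomorphic to finite-dimensional $H$-modules; in the $A_1\times A_1$ case the assumption $\mu_i,\mu_k\in\Bbbk^\ast$ is precisely what guarantees that $\dif(x_t)=\mu x_t^2$ still makes each tensor factor (and hence their tensor product) quasi-isomorphic to a finite-dimensional $H$-module. Passing through the filtration from bottom to top then yields that $R(\nu)$ itself is quasi-isomorphic to a finite-dimensional $p$-complex. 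The principal obstacle will be the combinatorial bookkeeping required to bring $\dif(\delta_w 1_{\mathbf{i}})$ back into the standard basis after repeated application of the KLR relations; however, this is controlled by the structural observation that no KLR defining relation increases the crossing number, so the shorter-length error terms indeed land in $N_{j-1}$.
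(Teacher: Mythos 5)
Your proposal is correct and follows essentially the same strategy as the paper's sketch: filter $R(\nu)$ by crossing number, observe that the defining relations and the Leibniz rule never increase the crossing count so the filtration is $\dif$-stable, identify each subquotient with a rank-one twisted polynomial $p$-DG module, and conclude using that such modules are quasi-isomorphic to finite-dimensional $p$-complexes. Your added observations (that the subquotient generated by $\delta_w 1_{\mathbf{i}}$ is really a rank-one module over $\Bbbk[x_1,\dots,x_m]$ supported on a single sequence, and that in the $A_1\times A_1$ case the hypothesis $\mu_i,\mu_k\neq 0$ is exactly what keeps each tensor factor $\Bbbk[x_t]$ with $\dif(x_t)=\mu x_t^2$ from degenerating to a trivial differential) are correct and simply make explicit what the paper leaves implicit.
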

\begin{proof}[Sketch of proof] Since the differentials either preserve or decrease the number the crossings in a diagram, one can filter the $p$-complex $R(\nu)$ by the number of crossings in the diagrams of a basis, so that each subquotient is isomorphic to a rank-one polynomial module over the $p$-DG algebra $\Bbbk[x_1, \cdots, x_m]$ with $\dif(x_i)=x_i^2$. Such polynomial modules are quasi-isomorphic to finite dimensional $p$-complexes by the discussion in Section \ref{subsec-p-der-on-NH} (after equation \eqref{eq-dif-g}).
\end{proof}

\paragraph{Extension to any simply-laced Cartan datum.} Now let $\Gamma$ be a connected,
simply-laced Cartan datum with vertex set $I$. Such data are classified by finite
graphs without loops at vertices or multiple edges. Fix an arbitrary orientation
for $\Gamma$. Define a multi-parameter family of differentials
$$\dif:=\dif (a_i, r_{ij},r_{ji}, u_{ik},u_{ki}),$$
where $i,j,k \in I$, on $R(\Gamma)$ as follows.
\begin{itemize}
\item[(i)] For each vertex $i\in I$ choose $a_i\in \F_p$ and define
\begin{equation}\label{eqn-dif-on-R-nu-general-1}
\dif
\left(~
{\begin{DGCpicture}
\DGCstrand(0,0)(0,1)[$i$]\DGCdot{0.5}
\end{DGCpicture}}
~\right)=
{\begin{DGCpicture}
\DGCstrand(0,0)(0,1)[$i$]
\DGCdot{0.5}[r]{$^2$}
\end{DGCpicture}},
\end{equation}

\begin{equation}\label{eqn-dif-on-R-nu-general-2}
\dif\left(
\begin{DGCpicture}
\DGCstrand(0,0)(1,1)[$i$]
\DGCstrand(1,0)(0,1)[$i$]
\end{DGCpicture}\right)=
a_i\begin{DGCpicture}
\DGCstrand(0,0)(0,1)[$i$]
\DGCstrand(1,0)(1,1)[$i$]
\end{DGCpicture}
-(a_i+1)
\begin{DGCpicture}
\DGCstrand(0,0)(1,1)[$i$]
\DGCstrand(1,0)(0,1)[$i$]\DGCdot{0.75}
\end{DGCpicture}
+(a_i-1)
\begin{DGCpicture}
\DGCstrand(0,0)(1,1)[$i$]\DGCdot{0.75}
\DGCstrand(1,0)(0,1)[$i$]
\end{DGCpicture}.
\end{equation}

\item[(ii)] For each pair of vertices $i, j\in I$ in $\Gamma$ connected by an oriented
edge $i \rightarrow j$ choose
$r_{ij},~r_{ji}\in \F_p$ and define
\begin{equation}\label{eqn-dif-on-R-nu-general-3}
\dif\left(
\begin{DGCpicture}
\DGCstrand(0,0)(1,1)[$i$]
\DGCstrand(1,0)(0,1)[$j$]
\end{DGCpicture}\right)=
r_{ij}
\begin{DGCpicture}
\DGCstrand(0,0)(1,1)[$i$]
\DGCstrand(1,0)(0,1)[$j$]\DGCdot{0.75}
\end{DGCpicture}+
(1-r_{ji})
\begin{DGCpicture}
\DGCstrand(0,0)(1,1)[$i$]\DGCdot{0.75}
\DGCstrand(1,0)(0,1)[$j$]
\end{DGCpicture},
\end{equation}

\begin{equation}\label{eqn-dif-on-R-nu-general-4}
\dif\left(
\begin{DGCpicture}
\DGCstrand(0,0)(1,1)[$j$]
\DGCstrand(1,0)(0,1)[$i$]
\end{DGCpicture}
\right)=
r_{ji}{
\begin{DGCpicture}
\DGCstrand(0,0)(1,1)[$j$]
\DGCstrand(1,0)(0,1)[$i$]\DGCdot{0.75}
\end{DGCpicture}}+
(1-r_{ij}) {
\begin{DGCpicture}
\DGCstrand(0,0)(1,1)[$j$]\DGCdot{0.75}
\DGCstrand(1,0)(0,1)[$i$]
\end{DGCpicture}}.
\end{equation}

\item[(iii)] For each unordered pair of disconnected vertices $i,~k\in I$
choose $u_{ik}, u_{ki}\in \F_p$ and define
\begin{equation}\label{eqn-dif-on-R-nu-general-5}
\dif\left(
\begin{DGCpicture}
\DGCstrand(0,0)(1,1)[$i$]
\DGCstrand(1,0)(0,1)[$k$]
\end{DGCpicture}\right)=
u_{ik}
\begin{DGCpicture}
\DGCstrand(0,0)(1,1)[$i$]
\DGCstrand(1,0)(0,1)[$k$]\DGCdot{0.75}
\end{DGCpicture}
-u_{ki}\begin{DGCpicture}
\DGCstrand(0,0)(1,1)[$i$]\DGCdot{0.75}
\DGCstrand(1,0)(0,1)[$k$]
\end{DGCpicture},
\end{equation}

\begin{equation}\label{eqn-dif-on-R-nu-general-6}
\dif\left(
{\begin{DGCpicture}
\DGCstrand(0,0)(1,1)[$k$]
\DGCstrand(1,0)(0,1)[$i$]
\end{DGCpicture}}\right)=
u_{ki}{\begin{DGCpicture}
\DGCstrand(0,0)(1,1)[$k$]
\DGCstrand(1,0)(0,1)[$i$]\DGCdot{0.75}
\end{DGCpicture}}
-u_{ik} {\begin{DGCpicture}
\DGCstrand(0,0)(1,1)[$k$]\DGCdot{0.75}
\DGCstrand(1,0)(0,1)[$i$]
\end{DGCpicture}}.
\end{equation}
\end{itemize}

This assignment gives rises to a differential on $R(\Gamma)$. We have already checked that the defining relations in $R(\Gamma)$ involving strands of at most two colors are respected by these differentials. It is immediate to see that the only missing relations
\[
\begin{DGCpicture}[scale=0.75]
\DGCstrand(0,0)(2,2)[$i$]
\DGCstrand(2,0)(0,2)[$k$]
\DGCstrand(1,0)(0,1)(1,2)[$j$]
\end{DGCpicture}
=
\begin{DGCpicture}[scale=0.75]
\DGCstrand(0,0)(2,2)[$i$]
\DGCstrand(2,0)(0,2)[$k$]
\DGCstrand(1,0)(2,1)(1,2)[$j$]
\end{DGCpicture},
\]
where $i,j,k$ are pairwise distinct vertices, are preserved under the differential. In these relations, the types of distinct-color crossings involved do not change--only their orders do; hence the differential is well-defined.

\vspace{0.1in}

\subsection{Quantum Serre relations}
Given any simply-laced Cartan datum $\Gamma$ with vertex set $I$, there is an associated (half) quantum Kac-Moody algebra $\mathbf{f}_\Gamma$ \cite[Section 1]{Lu}, built as follows. One starts with a free $\N[I]$-graded associative algebra $^\prime\mathbf{f}_\Gamma$ over the field $\Q(q)$ with generators $E_i$ for all $i\in I$, where $\textrm{deg}(E_i)=i$ as an element of $\N[I]$. Then $\mathbf{f}_\Gamma$ is defined to be $^\prime\mathbf{f}_\Gamma$ quotiented by the two-sided ideal generated by the following relations, known as the quantum Serre relations:
\begin{equation}\label{eqn-qsr-distant-vertices}
E_iE_k=E_kE_i,
\end{equation}
if $i, k$ are distant vertices, while
\begin{equation}\label{eqn-qsr-connected-vertices}
\left\{
\begin{array}{rcl}
{[ 2 ] E_iE_jE_i} & = & E_iE_iE_j+E_jE_iE_i,\\
{[ 2 ] E_jE_iE_j} & = & E_jE_jE_i+E_iE_jE_j,
\end{array}
\right.
\end{equation}
if $i, j$ are connected by one edge. The divided powers $E_i^{(n)}:=E_i^n/[n]!$ for all $i\in I$ and $n\in \N$ generate an $\Z[q,q^{-1}]$ integral subalgebra $\mathbf{f}_{\Gamma,\Z}$ of $\mathbf{f}_\Gamma$ such that
\begin{equation}\label{eqn-integral-version-of-f-Gamma}
\mathbf{f}_{\Gamma,\Z}\o_{\Z[q,q^{-1}]}\Q(q)=\mathbf{f}_\Gamma.
\end{equation}
The relations (\ref{eqn-qsr-connected-vertices}) acquire the divided power form
\begin{equation}\label{eqn-qsr-connected-vertices-divided}
\left\{
\begin{array}{rcl}
{E_iE_jE_i} & = & E_{i^{(2)}}E_j+E_jE_{i^{(2)}},\\
{E_jE_iE_j} & = & E_{j^{(2)}}E_i+E_iE_{j^{(2)}}.
\end{array}
\right.
\end{equation}
We will abbreviate $E_iE_jE_i$ by $E_{iji}$, $E_{i^{(2)}}E_j$ by $E_{i^{(2)}j}$ etc.~in what follows.

The main goal of this section is to show that requiring these relations on the
Grothendieck groups of the derived categories of $p$-DG algebras $(R(\nu), \dif)$ severely restricts
possible choices of parameters in $\dif$ (parameters $a_i,r_{ij},r_{ji},u_{ik},u_{ki} \in \F_p$).
 We will show at the end that for such special parameter values the quantum Serre relations
hold on the level of $K_0$, interpreting these relations on the categorical level as well.
The local nature of the differentials allows us to
consider the two special cases $A_1\times A_1$ and $A_2$ separately
(see Examples~\ref{eg-KLR-relation-A2-case} and \ref{eg-KLR-relation-A1-times-A1-case}).

Below we will use that if $\sum_{\alpha\in J}[M_\alpha]=\sum_{\beta \in K}[N_\beta]$ holds in the Grothendieck group $K_0(R(\Gamma),\dif)$, where $M_\alpha$, $N_\beta$ and $P$ are compact $p$-DG modules over $(R(\Gamma),\dif)$ and $J, K$ are some finite index sets, then
\begin{equation}\label{eqn-equal-symbol-equal-euler}
\sum_{\alpha\in J}[\RHOM_{R(\Gamma)}(P,M_\alpha)]=\sum_{\beta\in K}[\RHOM_{R(\Gamma)}(P,N_\beta)]
\end{equation}
in $K_0(H\udmod)\cong \mathbb{O}_p$. This follows since
\[
\RHOM_{R(\Gamma)}(-,-):\mc{D}^c(R(\Gamma),\dif)\times \mc{D}^c(R(\Gamma),\dif)\lra H\ufmod
\]
is an exact bi-functor, and $R(\Gamma)$ has finite dimensional cohomology (Proposition \ref{prop-finite-dimensional-homology-R-nu}) under the differentials defined by equations (\ref{eqn-dif-on-R-nu-general-1})-(\ref{eqn-dif-on-R-nu-general-6}). When the context is clear, we will just write $\RHOM(-,-)$ for short.

\paragraph{The $A_1\times A_1$ case.}
Consider the $p$-DG algebra $(R(i+k),\dif(a_i,a_k,u_{ik},u_{ki}))$. Define the $p$-DG modules
\[
P_{ik}:=R(i+k)1_{ik}=
\begin{DGCpicture}
\DGCstrand[red](0,0)(0,0.5)[$i$]
\DGCstrand[blue](1,0)(1,0.5)[$k$]
\DGCcoupon(-0.35,0.5)(1.35,1.5){$R(i+k)$}
\end{DGCpicture}=
\left\{~
\begin{DGCpicture}
\DGCstrand[red](0,0)(0,0.5)[$i$]
\DGCstrand[blue](1,0)(1,0.5)[$k$]
\DGCcoupon(-0.35,0.5)(1.35,1.5){$x$}
\end{DGCpicture}~\Bigg|~ x \in R(i+k)
\right\},
\]
\[
P_{ki}:=R(i+k)1_{ki}=
\begin{DGCpicture}
\DGCstrand[blue](0,0)(0,0.5)[$k$]
\DGCstrand[red](1,0)(1,0.5)[$i$]
\DGCcoupon(-0.35,0.5)(1.35,1.5){$R(i+k)$}
\end{DGCpicture}=
\left\{~
\begin{DGCpicture}
\DGCstrand[blue](0,0)(0,0.5)[$k$]
\DGCstrand[red](1,0)(1,0.5)[$i$]
\DGCcoupon(-0.35,0.5)(1.35,1.5){$x$}
\end{DGCpicture}~\Bigg|~ x \in R(i+k)
\right\}.
\]
Both modules are cofibrant since they are $p$-DG direct summands of the free module $R(i+k)$ (Proposition \ref{prop-easy-properties-cofibrant-modules}). When the parameters $u_{ik}=u_{ki}=0$ in the definition of the differential, there are isomorphisms of $p$-DG modules
\[
\xymatrix{
\begin{DGCpicture}
\DGCstrand[red](0,0)(0,0.5)[$i$]
\DGCstrand[blue](1,0)(1,0.5)[$k$]
\DGCcoupon(-0.35,0.5)(1.35,1.5){$R(i+k)$}
\end{DGCpicture} \hspace{-0.35in}  &
&& \lltwocell_{\eta_{ki}}^{\eta_{ik}}{'}
& \hspace{-0.35in}
\begin{DGCpicture}
\DGCstrand[blue](0,0)(0,0.5)[$k$]
\DGCstrand[red](1,0)(1,0.5)[$i$]
\DGCcoupon(-0.35,0.5)(1.35,1.5){$R(i+k)$}
\end{DGCpicture}},
\]
where $\eta_{ki}$ is given by post-composing any diagram in $P_{ik}$ with the crossing
$\begin{DGCpicture}[scale=0.65]
\DGCstrand[blue](0,0)(1,1)[$^k$]
\DGCstrand[red](1,0)(0,1)[$^i$]
\end{DGCpicture}$, since
$\dif\left(
\begin{DGCpicture}[scale=0.65]
\DGCstrand[blue](0,0)(1,1)[$^k$]
\DGCstrand[red](1,0)(0,1)[$^i$]
\end{DGCpicture}\right)=0$ so that multiplication by this element commutes with the differential. Likewise $\eta_{ik}$ is given by attaching the crossing
$\begin{DGCpicture}[scale=0.65]
\DGCstrand[red](0,0)(1,1)[$^i$]
\DGCstrand[blue](1,0)(0,1)[$^k$]
\end{DGCpicture}$
to the bottom of any diagram of $P_{ki}$. Therefore, in $K_0(R(i+k),\dif(a_i,a_k,0,0))$, $[P_{ik}]=[P_{ki}]$. We interpret the $p$-DG isomorphism $P_{ik}\cong P_{ki}$ as a categorical lift of the relation $E_iE_k=E_kE_i$. The next result shows that such a categorical interpretation only exists when $u_{ik}=u_{ki}=0$.

\begin{prop}\label{prop-QSR-forces-specialization-parameter-A1-times-A1}
In the Grothendieck group $K_0(R(i+k),\dif(a_i,a_k,u_{ik},u_{ki}))$, the relation $[P_{ik}]=[P_{ki}]$ holds if and only if $u_{ik}=u_{ki}=0$. Furthermore, when $u_{ik}=u_{ki}=0$, there is isomorphisms of $R(i+k)_\dif$-module
\[
\eta_{ki}:P_{ik}\lra P_{ki}
\]
whose inverse is $\eta_{ik}$.
\end{prop}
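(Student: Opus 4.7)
The ``if'' direction, together with the second assertion, was already established in the paragraph preceding the proposition: when $u_{ik}=u_{ki}=0$, formulas~(\ref{eqn-dif-on-R-nu-general-5})--(\ref{eqn-dif-on-R-nu-general-6}) show that both crossings are annihilated by $\dif$, so post-composition with $\sigma_{ki}$ (resp.\ $\sigma_{ik}$) defines $p$-DG module morphisms $\eta_{ki}$ (resp.\ $\eta_{ik}$), and the Reidemeister II relations of Example~\ref{eg-KLR-relation-A1-times-A1-case} force them to be mutually inverse.

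For the converse, I would apply~(\ref{eqn-equal-symbol-equal-euler}) with the cofibrant test module $P=P_{ik}$, so that $[P_{ik}]=[P_{ki}]$ yields
\[
[\HOM_{R(i+k)}(P_{ik},P_{ik})] = [\HOM_{R(i+k)}(P_{ik},P_{ki})]
\]
in $\mathbb{O}_p$. Using $\HOM_R(Re,Rf)\cong eRf$ and collapsing double crossings via the distant-vertex Reidemeister II relations, these two Hom spaces are spanned over $\pol_2$ by $1_{ik}$ and $\sigma_{ki}$, respectively. The convention $\dif(1_{ik})=0$, combined with~(\ref{eqn-dif-on-R-nu-general-6}) read off with the $i$-strand on the top-left and the $k$-strand on the top-right, yields $\dif(\sigma_{ki}) = (-u_{ik}x_1 + u_{ki}x_2)\sigma_{ki}$. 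Hence $\HOM(P_{ik},P_{ik})\cong \bpol_2(0,0)$ and $\HOM(P_{ik},P_{ki})\cong \bpol_2(-u_{ik},u_{ki})$ as $p$-DG modules, and the proposition reduces to the purely $H$-module statement that $[\bpol_2(\alpha_1,\alpha_2)]=1$ in $\mathbb{O}_p$ forces $\alpha_1=\alpha_2=0$.

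For this last step, factor $[\bpol_2(\alpha_1,\alpha_2)]=[\bpol_1(\alpha_1)]\cdot[\bpol_1(\alpha_2)]$ and use $[\bpol_1(0)]=1$, $[\bpol_1(1)]=0$, and $[\bpol_1(\alpha)]=1+q^2+\cdots+q^{2(p-\alpha)}$ for $\alpha\in\{2,\dots,p-1\}$, all computed from the fact that the submodule of $\bpol_1(\alpha)$ generated by $1_\alpha$ is $V_{p-\alpha}$ in $H\udmod$. Projecting via the quotient $\mathbb{O}_p\twoheadrightarrow \mathcal{O}_p$, $q\mapsto\zeta_p$, and setting $\omega:=\zeta_p^2$, the identity $[\bpol_2(\alpha_1,\alpha_2)]|_{\mathcal{O}_p}=1$ unfolds to
\[
(1-\omega^{1-\alpha_1})(1-\omega^{1-\alpha_2}) = (1-\omega)^2,
\]
which after expansion is a $\mathbb{Z}$-linear combination of the five powers $\omega^{(2-\alpha_1-\alpha_2)\bmod p},\omega^{1-\alpha_1},\omega^{1-\alpha_2},\omega,\omega^2$ equal to zero.

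The hard part will be extracting the conclusion from this relation. Since the only $\mathbb{Z}$-linear relation among $\{1,\omega,\dots,\omega^{p-1}\}$ is $\sum_{j=0}^{p-1}\omega^j=0$ and the sum of contributing integer coefficients is $1-1-1+2-1=0$, all five coefficients must vanish individually. A direct enumeration of how the five exponents can coincide modulo $p$ (noting that $(\beta_1+\beta_2)\equiv\beta_i\bmod p$ would force $\beta_{3-i}\equiv 0$, which is excluded, and that $\beta_i\equiv 1$ collapses back into the $+2\omega$ term) shows that the only pattern producing five simultaneously vanishing coefficients is $1-\alpha_1=1-\alpha_2=1$, i.e., $(\alpha_1,\alpha_2)=(0,0)$, equivalently $(u_{ik},u_{ki})=(0,0)$.
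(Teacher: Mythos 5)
Your proof of the ``only if'' direction takes a genuinely different route from the paper's, though it begins identically: both apply the $\RHOM$-pairing argument, and your identification of $\HOM(P_{ik},P_{ik})$ with $\bpol_2(0,0)$ and $\HOM(P_{ik},P_{ki})$ with $\bpol_2(-u_{ik},u_{ki})$ agrees with the paper's computation (up to an irrelevant relabelling of $x_1,x_2$). The divergence is in how the symbol constraint is extracted and solved. The paper pairs with \emph{both} $P_{ik}$ and $P_{ki}$ to get two equalities in $\mathbb{O}_p$, and then reduces each modulo $p$ (via $q\mapsto 1$) to the pair $(1-u)(1+v)\equiv 1$, $(1-v)(1+u)\equiv 1$ over $\F_p$, which solve immediately. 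You pair only with $P_{ik}$ and solve the single equation $(1-\omega^{\beta_1})(1-\omega^{\beta_2})=(1-\omega)^2$ integrally in $\mathcal{O}_p$; note that a single $\F_p$-reduction $(1-u)(1+v)\equiv 1$ alone is \emph{not} enough, as it has solutions such as $(u,v)=(2,p-2)$, so working integrally is essential to your approach. This is a legitimate trade: you use strictly less input, at the cost of concentrating all the work into one relation in $\mathbb{Z}[\omega]$.

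The one soft spot is the final step, which you flag yourself. Your enumeration is only sketched and omits several coincidence patterns (e.g.\ $\beta_1+\beta_2\equiv 1$ or $2$, $\beta_i\equiv 2$, $\beta_1=\beta_2$), and the assertion that ``all five coefficients must vanish individually'' only becomes accurate after first grouping coincident exponents. A cleaner way to close it: write the expanded relation as $\sum_{j=0}^{p-1}c_j\omega^j=0$ with integer $c_j$. The unique $\mathbb{Z}$-linear relation among $1,\omega,\dots,\omega^{p-1}$ has all coefficients equal, and since $\sum_j c_j = 1-1-1+2-1=0$, every $c_j$ must vanish. Now $c_1$ equals $2$, minus one for each of $\beta_1=1$ and $\beta_2=1$ that holds, plus one if $\beta_1+\beta_2\equiv 1\pmod p$; since the subtracted total is at most $2$ and the added term is nonnegative, $c_1=0$ forces $\beta_1=\beta_2=1$ (whence $\beta_1+\beta_2=2\not\equiv 1$, so the added term is zero), giving $u_{ik}=u_{ki}=0$ with no case analysis.
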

\begin{proof}For the ease of notation, we will let $u=u_{ki}$ and $v=u_{ik}$ in the proof. If $[P_{ik}]=[P_{ki}]$, using the $\RHOM$ pairing with the cofibrant modules $P_{ik}$ and $P_{ki}$ (\ref{eqn-equal-symbol-equal-euler}), we have
\begin{equation}\label{eqn-equality-RHOM-symbols-1}
{[\RHOM_{R(i+k)}(P_{ik},P_{ik})]}= {[\RHOM_{R(i+k)}(P_{ik},P_{ki})]},
\end{equation}
\begin{equation}\label{eqn-equality-RHOM-symbols-2}
{[\RHOM_{R(i+k)}(P_{ki},P_{ik})]}={[\RHOM_{R(i+k)}(P_{ki},P_{ki})]}.
\end{equation}

The cofibrance condition implies that the derived hom is isomorphic to the usual hom as $H$-modules. On one hand, the left hand side of the equation (\ref{eqn-equality-RHOM-symbols-1}) equals the $\mathbb{O}_p$-dimension of the $H$-module
\[
\RHOM(P_{ik},P_{ik})\cong \HOM_{R(i+k)}(P_{ik},P_{ik})
\cong \Bbbk
\left\langle ~
\begin{DGCpicture}
\DGCstrand[red](0,0)(0,1)[$i$]\DGCdot{0.75}[l]{$^{n_1}$}
\DGCstrand[blue](1,0)(1,1)[$k$]\DGCdot{0.75}[r]{$^{n_2}$}
\end{DGCpicture}
~\Bigg|~ n_1, n_2 \in \N
\right\rangle ,
\]
which is quasi-isomorphic to the trivial $H$-module $V_0\cong\Bbbk$ whose $\mathbb{O}_p$-dimension (see Notation \ref{notation-symbol-as-O-dim}) is $1$. On the other hand,
\[
\RHOM(P_{ik},P_{ki})\cong \HOM_{R(i+k)}(P_{ik},P_{ki})
\cong \Bbbk
\left\langle ~
\begin{DGCpicture}
\DGCstrand[blue](0,0)(1,1)[$k$]\DGCdot{0.75}[r]{$^{n_2}$}
\DGCstrand[red](1,0)(0,1)[$i$]\DGCdot{0.75}[l]{$^{n_1}$}
\end{DGCpicture}
~\Bigg|~ n_1, n_2 \in \N
\right\rangle ,
\]
and the differential $\dif=\dif(a_i,a_k,u,v,\mu_i,\mu_k)$ acts on any basis element by
\[
\dif\left(
\begin{DGCpicture}
\DGCstrand[blue](0,0)(1,1)[$k$]\DGCdot{0.75}[r]{$^{n_2}$}
\DGCstrand[red](1,0)(0,1)[$i$]\DGCdot{0.75}[l]{$^{n_1}$}
\end{DGCpicture}
\right)=(n_1+u)
\begin{DGCpicture}
\DGCstrand[blue](0,0)(1,1)[$k$]\DGCdot{0.75}[r]{$^{n_2}$}
\DGCstrand[red](1,0)(0,1)[$i$]\DGCdot{0.75}[l]{$^{n_1+1}$}
\end{DGCpicture}+(n_2-v)
\begin{DGCpicture}
\DGCstrand[blue](0,0)(1,1)[$k$]\DGCdot{0.75}[r]{$^{n_2+1}$}
\DGCstrand[red](1,0)(0,1)[$i$]\DGCdot{0.75}[l]{$^{n_1}$}
\end{DGCpicture},
\]
so that as an $H$-module, $\RHOM(P_{ik},P_{ki})$ is quasi-isomorphic to $V_{p-u}\o V_{v}$, whose $\mathbb{O}_p$-dimension is $[\RHOM(P_{ik},P_{ki})]=(1+q^2+\cdots+q^{2(p-u)})(1+q^2+\cdots+q^{2v})$. Therefore we obtain the constraint equation in $\mathbb{O}_p$
\begin{equation}\label{eqn-equality-RHOM-symbols-3}
(1+q^2+\cdots+q^{2(p-u)})(1+q^2+\cdots+q^{2v})=1.
\end{equation}
Likewise, computing the $\mathbb{O}_p$-dimension of both sides of equation (\ref{eqn-equality-RHOM-symbols-2}) gives us
\begin{equation}\label{eqn-equality-RHOM-symbols-4}
(1+q^2+\cdots+q^{2(p-v)})(1+q^2+\cdots+q^{2u})=1.
\end{equation}
Mod $p$ reduction of the equations (\ref{eqn-equality-RHOM-symbols-3}), (\ref{eqn-equality-RHOM-symbols-4}) results in
\[
(1-u)(1+v)\equiv 1, \ \ \ \ (1-v)(1+u)\equiv 1 ~(\mathrm{mod}~p).
\]
Solving these last two equations together gives $u=v=0$, as claimed. The last statement follows from the discussion before the proposition.
\end{proof}

\paragraph{The $A_2$ case.} We first review briefly the proof of the quantum Serre relations in the Grothendieck group $K_0(R(\underset{i}{\textcolor[rgb]{1.00,0.00,0.00}{\bullet}}\rightarrow \underset{j}{\textcolor[rgb]{0.00,1.00,0.00}{\bullet}}))$, as done in \cite[Section 2.5]{KL1}. As an $R(2i+j)$-module, $P_{iji}:=R(2i+j)1_{iji}=\left\{
\begin{DGCpicture}[scale=0.85]
\DGCstrand[red](0,0)(0,1)[$i$]
\DGCstrand[green](0.5,0)(0.5,1)[$j$]
\DGCstrand[red](1,0)(1,1)[$i$]
\DGCcoupon(-0.15,0.5)(1.15,1){$x$}
\end{DGCpicture}
\bigg| x \in R(2i+j)
\right\}$ decomposes into a direct sum of projective modules
$P_{iji}\cong P_{i^{(2)}j}\bigoplus P_{ji^{(2)}}$, where
\[P_{i^{(2)}j}:=
\begin{DGCpicture}
\DGCstrand[red](0,0)(1,1)[$i$]
\DGCstrand[red](1,0)(0,1)[$i$]
\DGCdot{0.25}
\DGCstrand[green](2,0)(2,1)[$j$]
\DGCcoupon(-0.15,1)(2.15,2){$R(2i+j)$}
\end{DGCpicture}
=\left\{~
\begin{DGCpicture}[scale=0.75]
\DGCstrand[red](0,0)(1,1)[$i$]
\DGCstrand[red](1,0)(0,1)[$i$]
\DGCdot{0.25}
\DGCstrand[green](2,0)(2,1)[$j$]
\DGCcoupon(-0.15,1)(2.15,2){$x$}
\end{DGCpicture}~
\bigg| x \in R(2i+j)
\right\},
\]

\[
P_{ji^{(2)}}:=
\begin{DGCpicture}
\DGCstrand[red](1,0)(2,1)[$i$]
\DGCstrand[red](2,0)(1,1)[$i$]\DGCdot{0.25}
\DGCstrand[green](0,0)(0,1)[$j$]
\DGCcoupon(-0.15,1)(2.15,2){$R(2i+j)$}
\end{DGCpicture}
=\left\{~
\begin{DGCpicture}[scale=0.75]
\DGCstrand[red](1,0)(2,1)[$i$]
\DGCstrand[red](2,0)(1,1)[$i$]\DGCdot{0.25}
\DGCstrand[green](0,0)(0,1)[$j$]
\DGCcoupon(-0.15,1)(2.15,2){$x$}
\end{DGCpicture}~
\bigg| x \in R(2i+j)
\right\}.
\]
The isomorphism is realized via the four maps
\[
\xymatrix{
& P_{iji}
\ar[dl]_{\rho_{i^{(2)}j}} \ar[dr]^{\rho_{ji^{(2)}}} & \\
P_{i^{(2)}j}\ar[dr]_{\vartheta_{i^{(2)}j}} && P_{ji^{(2)}}\ar[dl]^{\vartheta_{ji^{(2)}}},\\
& P_{iji} &
}
\]
where $\rho_{i^{(2)}j}$ is the left $R(2i+j)$-module map given by right multiplying any element of $P_{iji}$ by
$\begin{DGCpicture}[scale=0.85]
\DGCstrand[red](0,0)(1,1)[$i$]
\DGCstrand[red](0.5,0)(0,1)[$i$]
\DGCdot{0.125}
\DGCstrand[green](1,0)(0.5,1)[$j$]
\end{DGCpicture}$. We will simply refer to this map as
\begin{equation}\label{eqn-maps-in-QSR-abelian-case-I}
\rho_{i^{(2)}j}=\begin{DGCpicture}
\DGCstrand[red](0,0)(1,1)[$i$]
\DGCstrand[red](0.5,0)(0,1)[$i$]
\DGCdot{0.125}
\DGCstrand[green](1,0)(0.5,1)[$j$]
\end{DGCpicture}.
\end{equation}
Likewise, the other maps in the above diagram are given by
\begin{equation}\label{eqn-maps-in-QSR-abelian-case-II}
\vartheta_{i^{(2)}j} =
\begin{DGCpicture}
\DGCstrand[red](0,0)(0.5,1)[$i$]
\DGCstrand[green](0.5,0)(1,1)[$j$]
\DGCstrand[red](1,0)(0,1)[$i$]
\end{DGCpicture}, \ \ \ \ \
\vartheta_{ji^{(2)}}=
\begin{DGCpicture}
\DGCstrand[red](0,0)(1,1)[$i$]
\DGCstrand[green](0.5,0)(0,1)[$j$]
\DGCstrand[red](1,0)(0.5,1)[$i$]
\end{DGCpicture}, \ \ \ \ \
\rho_{ji^{(2)}}=-
\begin{DGCpicture}
\DGCstrand[green](0,0)(0.5,1)[$j$]
\DGCstrand[red](0.5,0)(1,1)[$i$]
\DGCstrand[red](1,0)(0,1)[$i$]\DGCdot{0.125}
\end{DGCpicture}.
\end{equation}
Moreover, under this decomposition, there are isomorphisms of projective $R(2i+j)$-modules
\begin{equation}\label{eqn-iso-R-2i-plus-j-mod-1}
\xymatrix{
\begin{DGCpicture}
\DGCstrand[red](0,0)(1,1)[$i$]
\DGCstrand[red](1,0)(0,1)[$i$]
\DGCdot{0.25}
\DGCstrand[green](2,0)(2,1)[$j$]
\DGCcoupon(-0.15,1)(2.15,2){$R(2i+j)$}
\end{DGCpicture} \hspace{-0.35in} &
&& \lltwocell<4>_{\vartheta_{i^{(2)}j}}^{\rho_{i^{(2)}j}}{'}
& \hspace{-0.35in}
\begin{DGCpicture}[scale=0.65]
\DGCstrand[red](0,0)(2,2)[$i$]
\DGCstrand[red](2,0)(0,2)[$i$]
\DGCstrand[green](1,0)(2,1)(1,2)[$j$]
\DGCcoupon(-0.35,2)(2.35,3){$R(2i+j)$}
\end{DGCpicture}},
\end{equation}

\begin{equation}\label{eqn-iso-R-2i-plus-j-mod-2}
\xymatrix{
\begin{DGCpicture}
\DGCstrand[red](1,0)(2,1)[$i$]
\DGCstrand[red](2,0)(1,1)[$i$]\DGCdot{0.25}
\DGCstrand[green](0,0)(0,1)[$j$]
\DGCcoupon(-0.15,1)(2.15,2){$R(2i+j)$}
\end{DGCpicture} \hspace{-0.35in} &
&& \lltwocell<4>_{\vartheta_{ji^{(2)}}}^{\rho_{ji^{(2)}}}{'}
& \hspace{-0.35in}
\begin{DGCpicture}[scale=0.65]
\DGCstrand[red](0,0)(2,2)[$i$]
\DGCstrand[red](2,0)(0,2)[$i$]
\DGCstrand[green](1,0)(0,1)(1,2)[$j$]
\DGCcoupon(-0.35,2)(2.35,3){$R(2i+j)$}
\end{DGCpicture}}.
\end{equation}
Hence in the usual Grothendieck group of $R(2i+j)$ spanned by symbols of finitely generated graded projective modules, we have
\[
[P_{iji}]=[P_{i^{(2)}j}]+[P_{ji^{(2)}}].
\]
This is precisely the divided power form of the quantum Serre relation
\[E_{iji}=E_{i^{(2)}j}+E_{ji^{(2)}}.\]

Now consider $R(2i+j)$ as a $p$-DG algebra equipped with the differential $\dif=\dif(a_i,a_j,r_{ij},r_{ji})$ (see Lemma \ref{lemma-dif-on-sl3-parameter}). A categorical lifting of the quantum Serre relation to the category of $p$-DG modules over $(R(2i+j),\dif)$ necessarily means that the following equality of symbols
\[[2][P_{iji}]=[P_{iij}]+[P_{jii}]\]
holds in $K_0(R(2i+j),\dif)$. The modules $P_{iji}$, $P_{iij}$, $P_{jii}$ are compact cofibrant, and we compute the following Cartan matrix with entries in $K_0(H\udmod)\cong \mathbb{O}_p$:
\[
\left(
  \begin{array}{ccc}
  {[\RHOM(P_{iij},P_{iij})]}  & {[\RHOM(P_{iij},P_{iji})]}  & {[\RHOM(P_{iij},P_{jii})]} \\
  {[\RHOM(P_{iji},P_{iij})]}  & {[\RHOM(P_{iji},P_{iji})]}  & {[\RHOM(P_{iji},P_{jii})]} \\
  {[\RHOM(P_{jii},P_{iij})]}  & {[\RHOM(P_{jii},P_{iji})]}  & {[\RHOM(P_{jii},P_{jii})]} \\
  \end{array}
\right).
\]
We give one example to show how these entries are computed.

Until the end of this case ($A_2$), we will write $a:=a_i$, $b:=a_j$, $r:=r_{ji}$, $s:=r_{ij}$ for convenience.

\begin{eg}\label{eg-A2-a-Cartan-entry}
Consider the $(2,2)$-entry ${[\RHOM(P_{iji},P_{iji})]}$. Since $P_{iji}$ is cofibrant, the $p$-complex $\RHOM(P_{iji},P_{iji})$ is isomorphic to $\HOM(P_{iji},P_{iji})\cong 1_{iji}R(2i+j)1_{iji}$ with the induced differential from the algebra $R(2i+j)$ (Proposition \ref{prop-easy-properties-cofibrant-modules}). Diagrammatically,
\[
\begin{array}{rcl}
\HOM(P_{iji},P_{iji})& = &
\Bbbk\left\langle~
\begin{DGCpicture}[scale=0.65]
\DGCstrand[red](0,0)(0,2)[$i$]\DGCdot{1.75}[ur]{$^{k_1}$}
\DGCstrand[green](1,0)(1,2)[$j$]\DGCdot{1.75}[ur]{$^{k_2}$}
\DGCstrand[red](2,0)(2,2)[$i$]\DGCdot{1.75}[ur]{$^{k_3}$}
\end{DGCpicture},~
\begin{DGCpicture}[scale=0.65]
\DGCstrand[red](0,0)(2,2)[$i$]\DGCdot{1.75}[ur]{$^{k_3}$}
\DGCstrand[green](1,0)(0,1)(1,2)[$j$]\DGCdot{1.75}[ur]{$^{k_2}$}
\DGCstrand[red](2,0)(0,2)[$i$]\DGCdot{1.75}[ur]{$^{k_1}$}
\end{DGCpicture}
\Bigg|~k_1, k_2, k_3\in \N
\right\rangle\\
& & \\
& \cong &
\Bbbk[x_1(i),x_2(j),x_3(i)]\left\langle~
\begin{DGCpicture}[scale=0.65]
\DGCstrand[red](0,0)(0,2)[$i$]
\DGCstrand[green](1,0)(1,2)[$j$]
\DGCstrand[red](2,0)(2,2)[$i$]
\end{DGCpicture},~
\begin{DGCpicture}[scale=0.65]
\DGCstrand[red](0,0)(2,2)[$i$]
\DGCstrand[green](1,0)(0,1)(1,2)[$j$]
\DGCstrand[red](2,0)(0,2)[$i$]
\end{DGCpicture}
\right\rangle.
\end{array}
\]
The differential $\dif=\dif(a,b,s,r)$ acts on the polynomial algebra $\pol_{iji}:=\Bbbk[x_1(i),x_2(j),x_3(i)]$ by $\dif(x_1(i))=x_1^2(i)$, $\dif(x_2(j))=x_2^2(j)$, and $\dif(x_3(i))=x_3^2(i)$, while on the module basis elements it has the effect
\[
\begin{array}{rcl}
\dif\left(
\begin{DGCpicture}[scale=0.65]
\DGCstrand[red](0,0)(0,2)[$i$]
\DGCstrand[green](1,0)(1,2)[$j$]
\DGCstrand[red](2,0)(2,2)[$i$]
\end{DGCpicture}
\right) & = &
0,\\
&&\\
\dif\left(
\begin{DGCpicture}[scale=0.65]
\DGCstrand[red](0,0)(2,2)[$i$]
\DGCstrand[green](1,0)(0,1)(1,2)[$j$]
\DGCstrand[red](2,0)(0,2)[$i$]
\end{DGCpicture}
\right) & = &
(r-1-a)
\begin{DGCpicture}[scale=0.65]
\DGCstrand[red](0,0)(2,2)[$i$]
\DGCstrand[green](1,0)(0,1)(1,2)[$j$]
\DGCstrand[red](2,0)(0,2)[$i$]\DGCdot{1.75}
\end{DGCpicture}
+
\begin{DGCpicture}[scale=0.65]
\DGCstrand[red](0,0)(2,2)[$i$]
\DGCstrand[green](1,0)(0,1)(1,2)[$j$]\DGCdot{1.75}
\DGCstrand[red](2,0)(0,2)[$i$]
\end{DGCpicture}
+(a-r)
\begin{DGCpicture}[scale=0.65]
\DGCstrand[red](0,0)(2,2)[$i$]\DGCdot{1.75}
\DGCstrand[green](1,0)(0,1)(1,2)[$j$]
\DGCstrand[red](2,0)(0,2)[$i$]
\end{DGCpicture}\\
&& \\
&&
+(a+1-r)
\left(
\begin{DGCpicture}[scale=0.65]
\DGCstrand[red](0,0)(0,2)[$i$]\DGCdot{1.75}
\DGCstrand[green](1,0)(1,2)[$j$]
\DGCstrand[red](2,0)(2,2)[$i$]
\end{DGCpicture}-
\begin{DGCpicture}[scale=0.65]
\DGCstrand[red](0,0)(0,2)[$i$]
\DGCstrand[green](1,0)(1,2)[$j$]\DGCdot{1.75}
\DGCstrand[red](2,0)(2,2)[$i$]
\end{DGCpicture}
\right).
\end{array}
\]
Therefore, as a $p$-DG module over $(\pol_{iji},\dif)$, $\HOM(P_{iji},P_{iji})$ fits into the short exact sequence,
\[
0 \lra
\begin{DGCpicture}[scale=0.65]
\DGCstrand[red](0,0)(0,2)[$i$]
\DGCstrand[green](1,0)(1,2)[$j$]
\DGCstrand[red](2,0)(2,2)[$i$]
\DGCcoupon(-0.25,2)(2.25,3){$\pol_{iji}$}
\end{DGCpicture}
\lra
\HOM(P_{iji},P_{iji})
\lra
\begin{DGCpicture}[scale=0.65]
\DGCstrand[red](0,0)(2,2)[$i$]
\DGCstrand[green](1,0)(0,1)(1,2)[$j$]
\DGCstrand[red](2,0)(0,2)[$i$]
\DGCcoupon(-0.25,2)(2.25,3){$\pol_{iji}$}
\end{DGCpicture}
\lra 0,
\]
where the two end terms are respectively isomorphic to the $p$-DG $\pol_{iji}$-modules (ideals) $\pol_{iji}\cdot 1$ and $\pol_{iji}\cdot(x_1^{r-1-a}(i)x_2(j)x_3^{a-r}(i))$. It is easy to see that the former is quasi-isomorphic to $\Bbbk$, while the latter module is acyclic. In $\mc{D}(\pol_{iji},\dif)$, we have a corresponding distinguished triangle (Lemma \ref{lemma-ses-lead-to-dt-derived-category}), so that the $\mathbb{O}_p$-dimension of $\RHOM(P_{iji},P_{iji})$ satisfies
\[
{[\RHOM(P_{iji},P_{iji})]=
\left[~\begin{DGCpicture}[scale=0.65]
\DGCstrand[red](0,0)(0,2)[$i$]
\DGCstrand[green](1,0)(1,2)[$j$]
\DGCstrand[red](2,0)(2,2)[$i$]
\DGCcoupon(-0.25,2)(2.25,3){$\pol_{iji}$}
\end{DGCpicture}~\right]+
\left[~
\begin{DGCpicture}[scale=0.65]
\DGCstrand[red](0,0)(2,2)[$i$]
\DGCstrand[green](1,0)(0,1)(1,2)[$j$]
\DGCstrand[red](2,0)(0,2)[$i$]
\DGCcoupon(-0.25,2)(2.25,3){$\pol_{iji}$}
\end{DGCpicture}
~\right]}
=1+0=1.
\]
\end{eg}

By similar computations as in the above example, one calculates each of the Cartan matrix entries to be
\[
\left\{
\begin{array}{rcl}
 {[\RHOM(P_{iij},P_{iij})]}\hspace{-0.1in} & = \hspace{-0.1in}& 1+q^{-2}(a+2)_{q^2}(2-a)_{q^2}, \\
 {[\RHOM(P_{iji},P_{iij})]}\hspace{-0.1in} & = \hspace{-0.1in}& q(1+p-s)_{q^2}(r)_{q^2}+q^{-1}(a+2)_{q^2}(1+p-s)_{q^2}(1+r-a)_{q^2},\\
 {[\RHOM(P_{jii},P_{iij})]}\hspace{-0.1in} & = \hspace{-0.1in}& q^2(1+p-2s)_{q^2}(r)_{q^2}^2 +(1+p-2s)_{q^2}(1+r+a)_{q^2}(1+r-a)_{q^2},\\
 {[\RHOM(P_{iij},P_{iji})]} \hspace{-0.1in}& =\hspace{-0.1in} & q(1+p-r)_{q^2}(s)_{q^2} + q^{-1}(a+2-r)_{q^2}(2-a)_{q^2}(s)_{q^2},\\
 {[\RHOM(P_{iji},P_{iji})]} \hspace{-0.1in}& =\hspace{-0.1in} & 1,\\
 {[\RHOM(P_{jii},P_{iji})]} \hspace{-0.1in}& =\hspace{-0.1in} & q(1+p-s)_{q^2}(r)_{q^2} + q^{-1}(1+p-s)_{q^2}(a+2)_{q^2}(1+r-a)_{q^2},\\
 {[\RHOM(P_{iij},P_{jii})]}\hspace{-0.1in} & =\hspace{-0.1in} & q^2(1+p-r)_{q^2}^2(2s-1)_{q^2}+ (a+2-r)_{q^2}(2-a-r)_{q^2}(2s-1)_{q^2},\\
 {[\RHOM(P_{iji},P_{jii})]}\hspace{-0.1in} & =\hspace{-0.1in} & q(1+p-r)_{q^2}(s)_{q^2}+ q^{-1}(a+2-r)_{q^2}(s)_{q^2}(2-a)_{q^2},\\
 {[\RHOM(P_{jii},P_{jii})]}\hspace{-0.1in} & =\hspace{-0.1in} & 1+q^{-2}(a+2)_{q^2}(2-a)_{q^2},
\end{array}
  \right.
\]
where $(n)_{q^2}$ denotes the unbalanced quantum integer $(n)_{q^2}:=1+q^2+\cdots+q^{2(n-1)}\in \mathbb{O}_p$.
Equation (\ref{eqn-equal-symbol-equal-euler}) yields the following $\mathbb{O}_p$-vector
form of the quantum Serre relation:
\begin{equation}\label{eqn-imposing-QSR-on-Cartan-matrix}
{[2]\cdot(\textrm{the second row})=(\textrm{the first row})+(\textrm{the third row})}.
\end{equation}
When reduced $\textrm{mod}~p$, these equations give rise to the system of equations:
\begin{equation}\label{eqn-mod-p-QSR-1}
\begin{array}{l}
2(1-s)(3r-a+2-a^2+ar) = (5-a^2)+(1-2s)(2r^2+2r+1-a^2),\\
2  =  s(5-3r-a^2+ar)+(1-s)(ar-a^2+3r-a+2),\\
2s(5-3r-a^2+ar)  =  (2s-1)(5-6r+2r^2-a^2)+(5-a^2).
\end{array}
\end{equation}

Transposing $i$ and $j$ above, the quantum Serre relation $[2]E_{jij}=E_{jji}+E_{ijj}$ gives equations:
\begin{equation}\label{eqn-mod-p-QSR-2}
\begin{array}{l}
2(1-r)(3s-b+2-b^2+bs) = (5-b^2)+(1-2r)(2s^2+2s+1-b^2),\\
2  =  r(5-3s-b^2+bs)+(1-r)(bs-b^2+3s-b+2),\\
2r(5-3s-b^2+bs)  =  (2r-1)(5-6s+2s^2-b^2)+(5-b^2).
\end{array}
\end{equation}
Solving equations (\ref{eqn-mod-p-QSR-1}), (\ref{eqn-mod-p-QSR-2}), one obtains the following two solutions:
\begin{equation}
\left\{
\begin{array}{l}
a=b=1,\\
r=s=1.
\end{array}
\right. \ \ \ \
\left\{
\begin{array}{l}
a=b=-1,\\
r=s=0.
\end{array}
\right.
\end{equation}
Notice that the two systems of parameters give rise to the differentials $\dif(1,1,1,1)$, $\dif(-1,-1,0,0)$ which are conjugate to each other under the (anti-)automorphisms $\psi$ and $\sigma$ (Proposition \ref{prop-symmetry-intertwines-differentials-KLR}).

The above discussion shows that these two groups of special parameters are necessary for the quantum Serre relations to hold in $K_0(R(\underset{i}{\textcolor[rgb]{1.00,0.00,0.00}{\bullet}}\rightarrow \underset{j}{\textcolor[rgb]{0.00,1.00,0.00}{\bullet}}),\dif(a,b,r,s))$. Next we show that under these special parameters, we do have the equality of symbols
\[
{[ 2 ] [P_{iji}]}  =  [P_{iij}]+[P_{jii}],\ \ \ \
{[ 2 ] [P_{jij}]}  =  [P_{jji}]+[P_{ijj}],
\]
in the Grothendieck group. In fact, we will prove that the following divided power form of the quantum Serre relations
\[
{ [P_{iji}]}  =  [P_{i^{(2)}j}]+[P_{ji^{(2)}}],\ \ \ \
{ [P_{jij}]}  =  [P_{j^{(2)}i}]+[P_{ij^{(2)}}],
\]
holds in $K_0(R(\underset{i}{\textcolor[rgb]{1.00,0.00,0.00}{\bullet}}\rightarrow \underset{j}{\textcolor[rgb]{0.00,1.00,0.00}{\bullet}}))$ under the differentials $\dif(1,1,1,1)$ or $\dif(-1,-1,0,0)$. Since these two differentials are conjugate to each other, it suffices to do this for $\dif(1,1,1,1)$.

\begin{lemma}\label{lemma-serre-modules-are-cofibrant} Equipped with the differential $\dif(1,1,1,1)$, the module
\[P_{i^{(2)}j}:=
\begin{DGCpicture}
\DGCstrand[red](0,0)(1,1)[$i$]
\DGCstrand[red](1,0)(0,1)[$i$]
\DGCdot{0.25}
\DGCstrand[green](2,0)(2,1)[$j$]
\DGCcoupon(-0.15,1)(2.15,2){$R(2i+j)$}
\end{DGCpicture}
=\left\{~
\begin{DGCpicture}[scale=0.75]
\DGCstrand[red](0,0)(1,1)[$i$]
\DGCstrand[red](1,0)(0,1)[$i$]
\DGCdot{0.25}
\DGCstrand[green](2,0)(2,1)[$j$]
\DGCcoupon(-0.15,1)(2.15,2){$x$}
\end{DGCpicture}~
\bigg| x \in R(2i+j)
\right\},
\]
is compact cofibrant, and likewise for the module $P_{ji^{(2)}}$.
\end{lemma}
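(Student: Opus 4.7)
The plan is to split the proof into verifying that $P_{i^{(2)}j}$ is a $p$-DG submodule of $R(2i+j)$ (which yields compactness and projectivity over $R$ immediately), and then upgrading projectivity to $p$-DG cofibrance by realizing $P_{i^{(2)}j}$ as induced from a nilHecke sub-$p$-DG-algebra for which cofibrance is already established.

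For $\dif$-stability, observe that the generator of $P_{i^{(2)}j}$ is, up to sign, $e=\epsilon_{22}(0)\cdot 1_{iij}$, where $\epsilon_{22}(0)=-\delta_1 x_2$ is the idempotent from Proposition~\ref{prop-when-dif-a-preserves-idem}, now viewed as acting on the two leftmost (same-color) $i$-strands inside $R(2i+j)$. At parameters $(a_i,a_j,r_{ij},r_{ji})=(1,1,1,1)$, the restriction of $\dif$ to this $NH_2$-subalgebra is exactly $\dif_1$, and the $j$-strand idempotent is $\dif$-closed. Applying the Leibniz rule together with the identity $\dif_1(\epsilon_{22}(0))=-x_1\,\epsilon_{22}(0)$, a direct consequence of Proposition~\ref{prop-when-dif-a-preserves-idem}, yields $\dif(e)=-x_1(i)\cdot e\in R(2i+j)\cdot e$. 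Hence $P_{i^{(2)}j}$ is a $p$-DG submodule, and it is finitely generated, hence compact.

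For cofibrance, the plan is to realize $P_{i^{(2)}j}$ as the induction of a known cofibrant module along the inclusion of $p$-DG algebras $\iota:(NH_2\otimes NH_1,\dif_1)\hookrightarrow (R(2i+j),\dif(1,1,1,1))$ attached to the decomposition of the sequence $iij$ as $(i,i)+(j)$. Under $\iota$, the module $P_{i^{(2)}j}$ is isomorphic, up to an overall grading shift, to $\iota^{*}(\bpol_2^{+}\boxtimes \bpol_1)$, since the identification of $\bpol_n^+$ with $NH_n\epsilon_n$ in (\ref{eq-two-left-mod-iso}) sends $\epsilon_2$ to a sign-twist of $\epsilon_{22}(0)$. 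By Proposition~\ref{prop-NHn-column-module-compact}, $\bpol_2^{+}$ is compact cofibrant over $NH_2$, and $\bpol_1\cong\Bbbk[x_3]$ is trivially compact cofibrant over $NH_1$, so the external tensor product is compact cofibrant over $NH_2\otimes NH_1$.

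The principal obstacle is to show that the derived induction functor $\iota^{*}$ preserves compactness and cofibrance, or equivalently, that $R(2i+j)$ is cofibrant as a right $p$-DG module over $NH_2\otimes NH_1$. My plan is to exhibit an explicit three-step $\dif$-stable filtration of $R(2i+j)$ by right $(NH_2\otimes NH_1)$-submodules, indexed by the three double-coset representatives of $S_3/(S_2\times S_1)$ ordered by the number of $i$-$j$ crossings in a reduced diagram, with each subquotient isomorphic to a grading-shifted free right $(NH_2\otimes NH_1)$-module. Formulas (\ref{eqn-dif-on-R-nu-general-3})--(\ref{eqn-dif-on-R-nu-general-4}) at parameters $r_{ij}=r_{ji}=1$ show that $\dif$ preserves the $i$-$j$ crossing count modulo lower-order terms, making this filtration property (P) on the nose. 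The parallel statement for $P_{ji^{(2)}}$ follows from the mirror argument applied to the inclusion $(NH_1\otimes NH_2)\hookrightarrow R(2i+j)$ associated to the sequence $(j)+(i,i)$.
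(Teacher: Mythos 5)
Your main strategy---realizing $P_{i^{(2)}j}$ as $\Ind_{R(2i)\boxtimes R(j)}^{R(2i+j)}$ applied to an external tensor of known cofibrant nilHecke modules, then invoking Proposition~\ref{prop-NHn-column-module-compact}---is exactly the paper's. The first paragraph, verifying $\dif$-stability by hand via $\dif_1(\epsilon_{22}(0))=-x_1\epsilon_{22}(0)$, is correct but redundant, since $\dif$-stability is automatic once the module is exhibited as an induction of a $p$-DG module.

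The logical misstep is the ``or equivalently'' in the final paragraph. Preservation of compactness and cofibrance by $\iota^{*}$ is \emph{not} equivalent to, nor does it require, $R(2i+j)$ being cofibrant as a right $(\NH_2\otimes\NH_1)$-module. Compactness is preserved by any induction functor, by the adjunction $\Ind\dashv\Res$ together with the fact that $\Res$ commutes with arbitrary direct sums (this is exactly what the paper records just before Theorem~\ref{thm-qis-algebra-equivalence-derived-categories}, alternatively via Theorem~\ref{thm-characterizing-compact-modules}). Cofibrance is also preserved: if $M$ has a property (P) filtration over $B=\NH_2\otimes\NH_1$ with subquotients $\bigoplus B\{s\}$, then each inclusion $F_r\hookrightarrow F_{r+1}$ splits as a map of $B$-modules (the cokernel being free), so $A\otimes_B F_r\hookrightarrow A\otimes_B F_{r+1}$ remains injective and $A\otimes_B M$ has a property (P) filtration over $A=R(2i+j)$ with subquotients $\bigoplus A\{s\}$. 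Moreover, since the input $\bpol_2^+\boxtimes\bpol_1$ is itself cofibrant, the derived tensor agrees with the underived one without any flatness hypothesis on $A$. Thus the shuffle-lemma filtration of $R(2i+j)1_{iij}$ you propose would prove a true statement but is an unnecessary detour; the paper gets away with citing the general preservation facts directly.
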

\begin{proof}As we have seen in Section 2, when $a=1$, the polynomial module
$P_{2,i}:=
\begin{DGCpicture}[scale=0.75]
\DGCstrand[red](0,0)(1,1)[$^i$]
\DGCstrand[red](1,0)(0,1)[$^i$]
\DGCdot{0.25}
\DGCcoupon(-0.25,1)(1.25,2){$R(2i)$}
\end{DGCpicture}
$ with the differential $\dif_1$ is compact cofibrant (Proposition \ref{prop-NHn-column-module-compact}), and likewise for
$P_{1,j}\cong \Bbbk[x(j)]$. The module in the statement of the lemma is easily seen to be no other than $\Ind_{R(2i)\boxtimes R(j)}^{R(2i+j)}(P_{2,i}\boxtimes P_{1,j})$. Hence the claim follows since $\Ind$ preserves compactness and cofibrance.
\end{proof}

Under $\dif(1,1,1,1)$, one computes easily that
\begin{equation}\label{eqn-dif-on-idempotent-R-nu-special-parameter}
\dif\left(
\begin{DGCpicture}[scale=0.65]
\DGCstrand[red](0,0)(2,2)[$i$]
\DGCstrand[red](2,0)(0,2)[$i$]
\DGCstrand[green](1,0)(2,1)(1,2)[$j$]
\end{DGCpicture}
\right)=
\begin{DGCpicture}[scale=0.65]
\DGCstrand[red](0,0)(2,2)[$i$]
\DGCstrand[red](2,0)(0,2)[$i$]
\DGCstrand[green](1,0)(2,1)(1,2)[$j$]\DGCdot{1.75}
\end{DGCpicture}-
\begin{DGCpicture}[scale=0.65]
\DGCstrand[red](0,0)(2,2)[$i$]
\DGCstrand[red](2,0)(0,2)[$i$]\DGCdot{1.75}
\DGCstrand[green](1,0)(2,1)(1,2)[$j$]
\end{DGCpicture}\ , \ \ \
\dif\left(
\begin{DGCpicture}[scale=0.65]
\DGCstrand[red](0,0)(2,2)[$i$]
\DGCstrand[red](2,0)(0,2)[$i$]
\DGCstrand[green](1,0)(0,1)(1,2)[$j$]
\end{DGCpicture}
\right)=
\begin{DGCpicture}[scale=0.65]
\DGCstrand[red](0,0)(2,2)[$i$]
\DGCstrand[red](2,0)(0,2)[$i$]
\DGCstrand[green](1,0)(2,1)(1,2)[$j$]\DGCdot{1.75}
\end{DGCpicture}
-
\begin{DGCpicture}[scale=0.65]
\DGCstrand[red](0,0)(2,2)[$i$]
\DGCstrand[red](2,0)(0,2)[$i$]\DGCdot{1.75}
\DGCstrand[green](1,0)(2,1)(1,2)[$j$]
\end{DGCpicture}.
\end{equation}
Therefore $P_{iji}$ fits into the short exact sequence of $p$-DG modules
\begin{equation}\label{eqn-ses-QSR-A2}
0 \lra
\begin{DGCpicture}[scale=0.65]
\DGCstrand[red](0,0)(2,2)[$i$]
\DGCstrand[red](2,0)(0,2)[$i$]
\DGCstrand[green](1,0)(2,1)(1,2)[$j$]
\DGCcoupon(-0.35,2)(2.35,3){$R(2i+j)$}
\end{DGCpicture}
\lra
\begin{DGCpicture}[scale=0.65]
\DGCstrand[red](0,0)(0,2)[$i$]
\DGCstrand[red](2,0)(2,2)[$i$]
\DGCstrand[green](1,0)(1,2)[$j$]
\DGCcoupon(-0.35,2)(2.35,3){$R(2i+j)$}
\end{DGCpicture}
\lra
\begin{DGCpicture}[scale=0.65]
\DGCstrand[red](0,0)(2,2)[$i$]
\DGCstrand[red](2,0)(0,2)[$i$]
\DGCstrand[green](1,0)(0,1)(1,2)[$j$]
\DGCcoupon(-0.35,2)(2.35,3){$R(2i+j)$}
\end{DGCpicture}
\lra 0.
\end{equation}
Here the differential acts on the generator of the quotient module trivially:
\[
\dif\left(
\begin{DGCpicture}[scale=0.65]
\DGCstrand[red](0,0)(2,2)[$i$]
\DGCstrand[red](2,0)(0,2)[$i$]
\DGCstrand[green](1,0)(0,1)(1,2)[$j$]
\end{DGCpicture}
\right) \equiv 0
\ \ \ \ (\mathrm{mod}~
\begin{DGCpicture}[scale=0.65]
\DGCstrand[red](0,0)(2,2)[$i$]
\DGCstrand[red](2,0)(0,2)[$i$]
\DGCstrand[green](1,0)(2,1)(1,2)[$j$]
\DGCcoupon(-0.35,2)(2.35,3){$R(2i+j)$}
\end{DGCpicture}).
\]

Now we claim that the isomorphisms of projective $R(2i+j)$-modules defined in equations (\ref{eqn-iso-R-2i-plus-j-mod-1}), (\ref{eqn-iso-R-2i-plus-j-mod-2}) lift to isomorphisms of $p$-DG modules. Here the module
\[
S:=\begin{DGCpicture}[scale=0.65]
\DGCstrand[red](0,0)(2,2)[$i$]
\DGCstrand[red](2,0)(0,2)[$i$]
\DGCstrand[green](1,0)(2,1)(1,2)[$j$]
\DGCcoupon(-0.35,2)(2.35,3){$R(2i+j)$}
\end{DGCpicture}\ \ \ \
(\textrm{resp.} \ Q:=
\begin{DGCpicture}[scale=0.65]
\DGCstrand[red](0,0)(2,2)[$i$]
\DGCstrand[red](2,0)(0,2)[$i$]
\DGCstrand[green](1,0)(0,1)(1,2)[$j$]
\DGCcoupon(-0.35,2)(2.35,3){$R(2i+j)$}
\end{DGCpicture})
\]
is equipped with the $p$-DG submodule structure (resp. quotient module structure) of $P_{iji}$. The differential acts on the module generator
$\begin{DGCpicture}[scale=0.5]
\DGCstrand[red](0,0)(2,2)[$i$]
\DGCstrand[red](2,0)(0,2)[$i$]
\DGCstrand[green](1,0)(2,1)(1,2)[$j$]
\end{DGCpicture}$
of $S$ by the first equation of ($\ref{eqn-dif-on-idempotent-R-nu-special-parameter}$), while $\dif$ acts as zero on
$\begin{DGCpicture}[scale=0.5]
\DGCstrand[red](0,0)(2,2)[$i$]
\DGCstrand[red](2,0)(0,2)[$i$]
\DGCstrand[green](1,0)(0,1)(1,2)[$j$]
\end{DGCpicture}$
of $Q$. We will prove the claim by exhibiting the intertwining relations $\rho_{i^{(2)}j}\circ\dif_S = \dif_{P_{i^{(2)}j}}\circ \rho_{i^{(2)}j}$ on $S$, and $\rho_{ji^{(2)}} \circ \dif_Q=\dif_{P_{ji^{(2)}}} \circ \rho_{ji^{(2)}}$ on the quotient module $Q$. See equations (\ref{eqn-maps-in-QSR-abelian-case-I}), (\ref{eqn-maps-in-QSR-abelian-case-II}) for the maps $\rho_{i^{(2)}j}$, $\rho_{ji^{(2)}}$, $\vartheta_{i^{(2)}j}$ and $\vartheta_{ji^{(2)}}$. Equivalently $\vartheta_{i^{(2)}j} \circ \dif_{P_{i^{(2)}j}} \circ \rho_{i^{(2)}j}=\dif_{S}$ and $\vartheta_{ji^{(2)}} \circ \dif_{P_{ji^{(2)}}} \circ \rho_{ji^{(2)}}=\dif_{Q}$.

Given any element $x$ of $R(2i+j)$,
\[
\begin{array}{l}
\vartheta_{i^{(2)}j} \circ \dif_{P_{i^{(2)}j}} \circ \rho_{i^{(2)}j} \left(~
\begin{DGCpicture}[scale=0.5]
\DGCstrand[red](0,0)(2,2)[$i$]
\DGCstrand[red](2,0)(0,2)[$i$]
\DGCstrand[green](1,0)(2,1)(1,2)[$j$]
\DGCcoupon(-0.35,2)(2.35,3){$x$}
\end{DGCpicture}~
\right)  =
\vartheta_{i^{(2)}j} \circ \dif_{P_{i^{(2)}j}}\left(-
\begin{DGCpicture}[scale=0.5]
\DGCstrand[red](0,0)(2,2)[$i$]
\DGCstrand[red](1,0)(0,2)[$i$]\DGCdot{0.5}
\DGCstrand[green](2,0)(1,2)[$j$]
\DGCcoupon(-0.35,2)(2.35,3){$x$}
\end{DGCpicture}
~\right)\\
 \\
 = -\vartheta_{i^{(2)}j}\left(~
\begin{DGCpicture}[scale=0.5]
\DGCstrand[red](0,0)(2,2)[$i$]
\DGCstrand[red](1,0)(0,2)[$i$]\DGCdot{0.5}
\DGCstrand[green](2,0)(1,2)[$j$]
\DGCcoupon(-0.35,2)(2.35,3){$\dif(x)$}
\end{DGCpicture}+
\begin{DGCpicture}[scale=0.5]
\DGCstrand[red](0,0)(2,2)[$i$]
\DGCstrand[red](1,0)(0,2)[$i$]\DGCdot{0.5}
\DGCstrand[green](2,0)(1,2)[$j$]\DGCdot{1.5}
\DGCcoupon(-0.35,2)(2.35,3){$x$}
\end{DGCpicture}+
\begin{DGCpicture}[scale=0.5]
\DGCstrand[red](0,0)(0,2)[$i$]
\DGCstrand[red](1,0)(2,2)[$i$]\DGCdot{0.5}
\DGCstrand[green](2,0)(1,2)[$j$]
\DGCcoupon(-0.35,2)(2.35,3){$x$}
\end{DGCpicture}-2
\begin{DGCpicture}[scale=0.5]
\DGCstrand[red](0,0)(2,2)[$i$]
\DGCstrand[red](1,0)(0,2)[$i$]\DGCdot{0.5}\DGCdot{1.5}
\DGCstrand[green](2,0)(1,2)[$j$]
\DGCcoupon(-0.35,2)(2.35,3){$x$}
\end{DGCpicture}+
\begin{DGCpicture}[scale=0.5]
\DGCstrand[red](0,0)(2,2)[$i$]
\DGCstrand[red](1,0)(0,2)[$i$]\DGCdot{0.5}[r]{$^2$}
\DGCstrand[green](2,0)(1,2)[$j$]
\DGCcoupon(-0.35,2)(2.35,3){$x$}
\end{DGCpicture}
~\right)\\
 \\
 = -\vartheta_{i^{(2)}j}\left(~
\begin{DGCpicture}[scale=0.5]
\DGCstrand[red](0,0)(2,2)[$i$]
\DGCstrand[red](1,0)(0,2)[$i$]\DGCdot{0.5}
\DGCstrand[green](2,0)(1,2)[$j$]
\DGCcoupon(-0.35,2)(2.35,3){$\dif(x)$}
\end{DGCpicture}+
\begin{DGCpicture}[scale=0.5]
\DGCstrand[red](0,0)(2,2)[$i$]
\DGCstrand[red](1,0)(0,2)[$i$]\DGCdot{0.5}
\DGCstrand[green](2,0)(1,2)[$j$]\DGCdot{1.5}
\DGCcoupon(-0.35,2)(2.35,3){$x$}
\end{DGCpicture}-
\begin{DGCpicture}[scale=0.5]
\DGCstrand[red](0,0)(2,2)[$i$]
\DGCstrand[red](1,0)(0,2)[$i$]\DGCdot{0.5}\DGCdot{1.5}
\DGCstrand[green](2,0)(1,2)[$j$]
\DGCcoupon(-0.35,2)(2.35,3){$x$}
\end{DGCpicture}
~\right)\\
 \\
 =
\begin{DGCpicture}[scale=0.5]
\DGCstrand[red](0,0)(2,2)[$i$]
\DGCstrand[red](2,0)(0,2)[$i$]
\DGCstrand[green](1,0)(2,1)(1,2)[$j$]
\DGCcoupon(-0.35,2)(2.35,3){$\dif(x)$}
\end{DGCpicture}-
\begin{DGCpicture}[scale=0.5]
\DGCstrand[red](0,0)(2,2)[$i$]
\DGCstrand[red](2,0)(0,2)[$i$]\DGCdot{1.75}
\DGCstrand[green](1,0)(2,1)(1,2)[$j$]
\DGCcoupon(-0.35,2)(2.35,3){$x$}
\end{DGCpicture}+
\begin{DGCpicture}[scale=0.5]
\DGCstrand[red](0,0)(2,2)[$i$]
\DGCstrand[red](2,0)(0,2)[$i$]
\DGCstrand[green](1,0)(2,1)(1,2)[$j$]\DGCdot{1.75}
\DGCcoupon(-0.35,2)(2.35,3){$x$}
\end{DGCpicture}.
\end{array}
\]
Comparing this with the first equation of (\ref{eqn-dif-on-idempotent-R-nu-special-parameter}) one obtains the claimed intertwining relation. Likewise, on the quotient module,
\[
\begin{array}{l}
\vartheta_{ji^{(2)}} \circ \dif_{P_{ji^{(2)}}} \circ \rho_{ji^{(2)}} \left(~
\begin{DGCpicture}[scale=0.5]
\DGCstrand[red](0,0)(2,2)[$i$]
\DGCstrand[red](2,0)(0,2)[$i$]
\DGCstrand[green](1,0)(0,1)(1,2)[$j$]
\DGCcoupon(-0.35,2)(2.35,3){$x$}
\end{DGCpicture}
~\right)  =
\vartheta_{ji^{(2)}} \circ \dif_{P_{ji^{(2)}}}\left(-
\begin{DGCpicture}[scale=0.5]
\DGCstrand[red](1,0)(2,2)[$i$]
\DGCstrand[red](2,0)(0,2)[$i$]\DGCdot{0.5}
\DGCstrand[green](0,0)(1,2)[$j$]
\DGCcoupon(-0.35,2)(2.35,3){$x$}
\end{DGCpicture}
~\right)\\
 \\
 =  -\vartheta_{ji^{(2)}}\left(~
\begin{DGCpicture}[scale=0.5]
\DGCstrand[red](1,0)(2,2)[$i$]
\DGCstrand[red](2,0)(0,2)[$i$]\DGCdot{0.5}
\DGCstrand[green](0,0)(1,2)[$j$]
\DGCcoupon(-0.35,2)(2.35,3){$\dif(x)$}
\end{DGCpicture}+
\begin{DGCpicture}[scale=0.5]
\DGCstrand[red](1,0)(2,2)[$i$]
\DGCstrand[red](2,0)(0,2)[$i$]\DGCdot{0.5}\DGCdot{1.65}
\DGCstrand[green](0,0)(1,2)[$j$]
\DGCcoupon(-0.35,2)(2.35,3){$x$}
\end{DGCpicture}-
\begin{DGCpicture}[scale=0.5]
\DGCstrand[red](1,0)(2,2)[$i$]
\DGCstrand[red](2,0)(0,2)[$i$]\DGCdot{0.5}\DGCdot{1.05}
\DGCstrand[green](0,0)(1,2)[$j$]
\DGCcoupon(-0.35,2)(2.35,3){$x$}
\end{DGCpicture}-
\begin{DGCpicture}[scale=0.5]
\DGCstrand[red](1,0)(2,2)[$i$]
\DGCstrand[red](2,0)(0,2)[$i$]\DGCdot{0.5}[r]{$^2$}
\DGCstrand[green](0,0)(1,2)[$j$]
\DGCcoupon(-0.35,2)(2.35,3){$x$}
\end{DGCpicture}+
\begin{DGCpicture}[scale=0.5]
\DGCstrand[red](1,0)(2,2)[$i$]
\DGCstrand[red](2,0)(0,2)[$i$]\DGCdot{0.5}[r]{$^2$}
\DGCstrand[green](0,0)(1,2)[$j$]
\DGCcoupon(-0.35,2)(2.35,3){$x$}
\end{DGCpicture}
~\right)\\
 \\
 = -\vartheta_{ji^{(2)}}\left(~
\begin{DGCpicture}[scale=0.5]
\DGCstrand[red](1,0)(2,2)[$i$]
\DGCstrand[red](2,0)(0,2)[$i$]\DGCdot{0.5}
\DGCstrand[green](0,0)(1,2)[$j$]
\DGCcoupon(-0.35,2)(2.35,3){$\dif(x)$}
\end{DGCpicture}
~\right) =
\begin{DGCpicture}[scale=0.5]
\DGCstrand[red](0,0)(2,2)[$i$]
\DGCstrand[red](2,0)(0,2)[$i$]
\DGCstrand[green](1,0)(0,1)(1,2)[$j$]
\DGCcoupon(-0.35,2)(2.35,3){$\dif(x)$}
\end{DGCpicture}.
\end{array}
\]

The case when $i$, $j$ are transposed is entirely similar and left to the reader. We summarize the above discussion into the following result.
\begin{prop}\label{prop-QSR-forces-specialization-parameter-A2}The following equalities of symbols
\[
\left\{
\begin{array}{rcl}
{[ 2 ] [P_{iji}]} & = & [P_{iij}]+[P_{jii}],\\
{[ 2 ] [P_{jij}]} & = & [P_{jji}]+[P_{ijj}],
\end{array}
\right.
\]
hold in the Grothendieck group $K_0(R(\underset{i}{\textcolor[rgb]{1.00,0.00,0.00}{\bullet}}\rightarrow \underset{j}{\textcolor[rgb]{0.00,1.00,0.00}{\bullet}}),\dif(a_i,a_j,r_{ij},r_{ji}))$ if and only if the parameters $(a_i,a_j,r_{ij},r_{ji})$ are given by either
\[
\left\{
\begin{array}{l}
a_i=a_j=1,\\
r_{ij}=r_{ji}=1,
\end{array}
\right. \ \ \ \ \textrm {or} \ \ \ \
\left\{
\begin{array}{l}
a_{i}=a_j=-1,\\
r_{ij}=r_{ji}=0.
\end{array}
\right.
\]
$\hfill\square$
\end{prop}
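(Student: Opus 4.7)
The plan is to prove both directions, with necessity coming from a Cartan-matrix computation and sufficiency from a short exact sequence of $p$-DG modules. I will focus on the relation $[2][P_{iji}] = [P_{iij}] + [P_{jii}]$; the second one follows by transposing the roles of $i$ and $j$, and the case $(a_i,a_j,r_{ij},r_{ji}) = (-1,-1,0,0)$ then follows from $(1,1,1,1)$ by applying one of the algebra (anti-)involutions $\psi$, $\sigma$ of Proposition~\ref{prop-symmetry-intertwines-differentials-KLR}.

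For necessity, I would first observe that the three modules $P_{iij}$, $P_{iji}$, $P_{jii}$ are cofibrant as direct summands of the free $p$-DG module $R(2i+j)$ (Proposition~\ref{prop-easy-properties-cofibrant-modules}), and that $(R(2i+j),\dif)$ has finite-dimensional cohomology by Proposition~\ref{prop-finite-dimensional-homology-R-nu}. Applying $\RHOM(P_\bullet,-)$ to a putative categorified Serre relation yields, via (\ref{eqn-equal-symbol-equal-euler}), the $\mathbb{O}_p$-vector identity (\ref{eqn-imposing-QSR-on-Cartan-matrix}). The entries of the $3{\times}3$ Cartan matrix are computed as in Example~\ref{eg-A2-a-Cartan-entry}: each $\HOM$-space has an explicit $\Bbbk\langle\cdots\rangle$-basis indexed by the two reduced words in $S_3$ together with dot configurations, and filtering by the number of crossings one reads off the slash cohomology as a finite direct sum of modules $\widetilde{V}_k$, giving the list of $\mathbb{O}_p$-dimensions recorded before the proposition. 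Reducing the three relations coming from (\ref{eqn-imposing-QSR-on-Cartan-matrix}) modulo $p$ (using that $\mathbb{O}_p \otimes_{\Z[q,q^{-1}]} \F_p \cong \F_p$ via $q\mapsto 1$) produces the system (\ref{eqn-mod-p-QSR-1}), and the symmetric computation with $i,j$ transposed produces (\ref{eqn-mod-p-QSR-2}). A direct algebraic manipulation of these six polynomial equations in $a,b,r,s$ forces $(a,b,r,s)\in\{(1,1,1,1),(-1,-1,0,0)\}$.

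For sufficiency under $\dif(1,1,1,1)$, I would establish the stronger divided-power form $[P_{iji}] = [P_{i^{(2)}j}] + [P_{ji^{(2)}}]$ from which $[2][P_{iji}] = [P_{iij}]+[P_{jii}]$ follows by induction from the nilHecke subalgebra $R(2i)=\NH_2$. The key input is the short exact sequence (\ref{eqn-ses-QSR-A2}): the computation (\ref{eqn-dif-on-idempotent-R-nu-special-parameter}) shows that $\dif(1,1,1,1)$ sends the submodule generator $\tikz$-element to itself plus terms in $S$, so the filtration is $\dif$-stable, and the induced differential on the quotient $Q$ acts as zero on its generator. By Lemma~\ref{lemma-ses-lead-to-dt-derived-category}, this produces a distinguished triangle in $\mc{D}(R(2i+j),\dif)$, hence the identity $[P_{iji}] = [S] + [Q]$ on Grothendieck groups. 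It then remains to identify $S\cong P_{i^{(2)}j}$ and $Q \cong P_{ji^{(2)}}$ as $p$-DG modules. Here I would use the four maps $\rho_{i^{(2)}j}$, $\vartheta_{i^{(2)}j}$, $\rho_{ji^{(2)}}$, $\vartheta_{ji^{(2)}}$ from (\ref{eqn-maps-in-QSR-abelian-case-I})--(\ref{eqn-maps-in-QSR-abelian-case-II}), which are inverse isomorphisms of ungraded $R(2i+j)$-modules, and verify the intertwining relations $\vartheta_{i^{(2)}j}\circ \dif_{P_{i^{(2)}j}} \circ \rho_{i^{(2)}j} = \dif_S$ and $\vartheta_{ji^{(2)}}\circ \dif_{P_{ji^{(2)}}} \circ \rho_{ji^{(2)}} = \dif_Q$ by a direct diagrammatic computation, expanding the composites using the KLR relations and the formulas (\ref{eqn-dif-on-R-nu-general-2})--(\ref{eqn-dif-on-R-nu-general-4}) for $\dif(1,1,1,1)$, and observing that all correction terms cancel in pairs (as in the sample computation sketched in the excerpt after equation (\ref{eqn-ses-QSR-A2})). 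The compactness/cofibrance of $P_{i^{(2)}j}$ and $P_{ji^{(2)}}$ is guaranteed by Lemma~\ref{lemma-serre-modules-are-cofibrant}, so these symbols are well-defined in $K_0$.

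The main obstacle is the necessity argument: solving (\ref{eqn-mod-p-QSR-1}) and (\ref{eqn-mod-p-QSR-2}) over $\F_p$ requires a careful elimination argument to rule out spurious solutions for small $p$, and tracking the mod-$p$ reductions of the three equations (which involve $(n)_{q^2}$ factors specializing to integers $n$) is the place most prone to computational error. The sufficiency direction, by contrast, reduces to a finite diagrammatic check that factors through the explicit short exact sequence (\ref{eqn-ses-QSR-A2}).
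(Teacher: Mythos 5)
Your proposal follows the paper's own argument essentially step for step: necessity via the $3\times 3$ Cartan matrix of $\RHOM$-pairings, reduction mod $p$ to the systems (\ref{eqn-mod-p-QSR-1})--(\ref{eqn-mod-p-QSR-2}), and solution of those equations; sufficiency via the divided-power form, the short exact sequence (\ref{eqn-ses-QSR-A2}) together with the intertwining relations for $\rho$, $\vartheta$, and cofibrance from Lemma~\ref{lemma-serre-modules-are-cofibrant}; and the passage from $(1,1,1,1)$ to $(-1,-1,0,0)$ by the symmetry $\sigma$. This is the proof in the paper.
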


The discussion of this subsection gives in fact more than just the relations of symbols in the Grothendieck group. If $\dif$ is parameterized by the first system of coefficients, there are short exact sequences of $R(2i+j)_\dif$-modules (resp. $R(2j+i)_\dif$-modules):
\begin{equation}\label{eqn-ses-R-2i-plus-j-divided}
0 \lra
\begin{DGCpicture}[scale=0.65]
\DGCstrand[red](0,0)(2,2)[$i$]
\DGCstrand[red](1,0)(0,2)[$i$]\DGCdot{0.5}
\DGCstrand[green](2,0)(1,2)[$j$]
\DGCcoupon(-0.35,2)(2.35,3){$R(2i+j)$}
\end{DGCpicture}
\xrightarrow{\vartheta_{i^{(2)}j}}
\begin{DGCpicture}[scale=0.65]
\DGCstrand[red](0,0)(0,2)[$i$]
\DGCstrand[red](2,0)(2,2)[$i$]
\DGCstrand[green](1,0)(1,2)[$j$]
\DGCcoupon(-0.35,2)(2.35,3){$R(2i+j)$}
\end{DGCpicture}
\xrightarrow{\rho_{ji^{(2)}}}
\begin{DGCpicture}[scale=0.65]
\DGCstrand[red](1,0)(2,2)[$i$]
\DGCstrand[red](2,0)(0,2)[$i$]\DGCdot{0.5}
\DGCstrand[green](0,0)(1,2)[$j$]
\DGCcoupon(-0.35,2)(2.35,3){$R(2i+j)$}
\end{DGCpicture}
\lra 0.
\end{equation}

\begin{equation}\label{eqn-ses-R-2j-plus-i-divided}
\left(
\mathrm{resp.}\ \
0 \lra
\begin{DGCpicture}[scale=0.65]
\DGCstrand[green](0,0)(2,2)[$j$]
\DGCstrand[green](1,0)(0,2)[$j$]\DGCdot{0.5}
\DGCstrand[red](2,0)(1,2)[$i$]
\DGCcoupon(-0.35,2)(2.35,3){$R(2j+i)$}
\end{DGCpicture}
\xrightarrow{\vartheta_{j^{(2)}i}}
\begin{DGCpicture}[scale=0.65]
\DGCstrand[green](0,0)(0,2)[$j$]
\DGCstrand[green](2,0)(2,2)[$j$]
\DGCstrand[red](1,0)(1,2)[$i$]
\DGCcoupon(-0.35,2)(2.35,3){$R(2j+i)$}
\end{DGCpicture}
\xrightarrow{\rho_{ij^{(2)}}}
\begin{DGCpicture}[scale=0.65]
\DGCstrand[green](1,0)(2,2)[$j$]
\DGCstrand[green](2,0)(0,2)[$j$]\DGCdot{0.5}
\DGCstrand[red](0,0)(1,2)[$i$]
\DGCcoupon(-0.35,2)(2.35,3){$R(2j+i)$}
\end{DGCpicture}
\lra 0.
\right)
\end{equation}
Likewise if $\dif$ is parameterized by the second system of equations, one obtains similar short exact sequences by applying the symmetry $\sigma$ (Proposition \ref{prop-symmetry-intertwines-differentials-KLR}) to the above ones. The short exact sequences (\ref{eqn-ses-R-2i-plus-j-divided}), (\ref{eqn-ses-R-2j-plus-i-divided}) give rise to exact triangles in the homotopy category and derived category (Lemma \ref{lemma-ses-lead-to-dt-homotopy-category} and \ref{lemma-ses-lead-to-dt-derived-category}):
\begin{equation}\label{eqn-derived-qsr-2i-plus-j}
\begin{DGCpicture}[scale=0.65]
\DGCstrand[red](0,0)(2,2)[$i$]
\DGCstrand[red](1,0)(0,2)[$i$]\DGCdot{0.5}
\DGCstrand[green](2,0)(1,2)[$j$]
\DGCcoupon(-0.35,2)(2.35,3){$R(2i+j)$}
\end{DGCpicture}
\xrightarrow{\vartheta_{i^{(2)}j}}
\begin{DGCpicture}[scale=0.65]
\DGCstrand[red](0,0)(0,2)[$i$]
\DGCstrand[red](2,0)(2,2)[$i$]
\DGCstrand[green](1,0)(1,2)[$j$]
\DGCcoupon(-0.35,2)(2.35,3){$R(2i+j)$}
\end{DGCpicture}
\xrightarrow{\rho_{ji^{(2)}}}
\begin{DGCpicture}[scale=0.65]
\DGCstrand[red](1,0)(2,2)[$i$]
\DGCstrand[red](2,0)(0,2)[$i$]\DGCdot{0.5}
\DGCstrand[green](0,0)(1,2)[$j$]
\DGCcoupon(-0.35,2)(2.35,3){$R(2i+j)$}
\end{DGCpicture}
\lra
\left(~
\begin{DGCpicture}[scale=0.65]
\DGCstrand[red](0,0)(2,2)[$i$]
\DGCstrand[red](1,0)(0,2)[$i$]\DGCdot{0.5}
\DGCstrand[green](2,0)(1,2)[$j$]
\DGCcoupon(-0.35,2)(2.35,3){$R(2i+j)$}
\end{DGCpicture}
~\right)[1]~,
\end{equation}

\begin{equation}\label{eqn-derived-qsr-2j-plus-i}
\left(\mathrm{resp.} \ \
\begin{DGCpicture}[scale=0.65]
\DGCstrand[green](0,0)(2,2)[$j$]
\DGCstrand[green](1,0)(0,2)[$j$]\DGCdot{0.5}
\DGCstrand[red](2,0)(1,2)[$i$]
\DGCcoupon(-0.35,2)(2.35,3){$R(2j+i)$}
\end{DGCpicture}
\xrightarrow{\vartheta_{j^{(2)}i}}
\begin{DGCpicture}[scale=0.65]
\DGCstrand[green](0,0)(0,2)[$j$]
\DGCstrand[green](2,0)(2,2)[$j$]
\DGCstrand[red](1,0)(1,2)[$i$]
\DGCcoupon(-0.35,2)(2.35,3){$R(2j+i)$}
\end{DGCpicture}
\xrightarrow{\rho_{ij^{(2)}}}
\begin{DGCpicture}[scale=0.65]
\DGCstrand[green](1,0)(2,2)[$j$]
\DGCstrand[green](2,0)(0,2)[$j$]\DGCdot{0.5}
\DGCstrand[red](0,0)(1,2)[$i$]
\DGCcoupon(-0.35,2)(2.35,3){$R(2j+i)$}
\end{DGCpicture}
\lra
\left(~
\begin{DGCpicture}[scale=0.65]
\DGCstrand[green](0,0)(2,2)[$j$]
\DGCstrand[green](1,0)(0,2)[$j$]\DGCdot{0.5}
\DGCstrand[red](2,0)(1,2)[$i$]
\DGCcoupon(-0.35,2)(2.35,3){$R(2j+i)$}
\end{DGCpicture}
~\right)[1]
\right)~.
\end{equation}
We regard the exact triangles (\ref{eqn-derived-qsr-2i-plus-j}) and (\ref{eqn-derived-qsr-2j-plus-i}) as categorical liftings of the divided power form of the quantum Serre relations
\[
E_{iji}=E_{i^{(2)}j}+E_{ji^{(2)}}, \ \ \ \ (\mathrm{resp}~~E_{jij}=E_{j^{(2)}i}+E_{ij^{(2)}}).
\]
We extend the results to  general simply-laced Cartan data in what follows.

\begin{rmk}[$\mf{sl}_3$ at $\sqrt{-1}$] At a fourth root of unity $\zeta_4=\pm\sqrt{-1}$, the quantum Serre relations $[2]E_{iji}=E_{iij}+E_{jii}$, $[2]E_{jij}=E_{jji}+E_{ijj}$ for the small quantum group $u^+_{\zeta_4}(\mf{sl}_3)$ are redundant, since in this case $[2]=0$, $E_i^2=0$, and $E_j^2=0$. By computing the endomorphism algebras of the $p$-DG modules as we did in Example \ref{eg-A2-a-Cartan-entry}, one finds that $P_{iij}$, $P_{jii}$, $P_{jji}$ and $P_{ijj}$ are all contractible,  while $P_{iji}$, $P_{jij}$ are not. Furthermore, one can also show that the symbols of $P_{jiji}$, $P_{ijij}$, $P_{ijiji}$ etc. are not zero in the Grothendieck group of $(R(A_2),\dif(1,1,1,1))$.
\end{rmk}

\paragraph{Idempotents and $p$-DG filtrations.}Some of the manipulations on the
previous few pages can be restated more intrinsically as follows.
Let $R$ be a ring and $x,y\in R$ be two elements satisfying
\begin{equation}\label{eq-xyx}
xyx=x, \quad \quad   yxy=y.
\end{equation}
Then $xy$ and $yx$ are idempotents in $R$ and the projective left
$R$-modules $Rxy$ and $Ryx$ are isomorphic via the maps that take
$zxy\in Rxy$ to $zxyx=zx\in Ryx$ and $zyx\in Ryx$ to $zyxy=zy\in Rxy$,
respectively:
\begin{equation}\label{eqn-iso-Rxy-Ryx}
\xymatrix{Rxy  & \ltwocell_{\cdot x}^{\cdot y}{'} Ryx.}
\end{equation}
Assume now that $R$ carries a $p$-nilpotent derivation $\dif$. The projective modules
$Rxy$ and $Ryx$ are $\dif$-closed if and only if
\begin{equation}\label{eqn-condition-proj-mod-dif-closed}
\dif(xy)\in Rxy , \quad \quad \dif(yx)\in Ryx.
\end{equation}
The maps in (\ref{eqn-iso-Rxy-Ryx}) will commute with $\dif$ if and only if
\begin{equation}\label{eqn-condition-maps-intertwines-dif}
\dif(xy)x=\dif(xyx), \quad \quad \dif(yx)y=\dif(yxy).
\end{equation}
Expanding via the Leibniz rule, (\ref{eqn-condition-maps-intertwines-dif}) is equivalent to
\begin{equation}\label{eqn-condition-maps-intertwines-dif-2}
xy\dif(x)=0, \quad \quad yx\dif(y)=0.
\end{equation}
The following conditions are equivalent to (\ref{eqn-condition-proj-mod-dif-closed})
and (\ref{eqn-condition-maps-intertwines-dif-2}):
\begin{equation}\label{eqn-cond-iso-dif-proj-mod}
x\dif(y)=0, \quad y\dif(x)=0.
\end{equation}
Indeed, to see that (\ref{eqn-cond-iso-dif-proj-mod}) implies (\ref{eqn-condition-proj-mod-dif-closed}), one computes
\[
\dif(xy)=\dif(x)y+x\dif(y)=\dif(x)y=\dif(x)yxy\in Rxy.
\]
Likewise, (\ref{eqn-cond-iso-dif-proj-mod}) gives $\dif(yx)\in Ryx$.

Suppose that $R$ has, in addition, elements  $x^\prime$, $y^\prime$ such that
\begin{equation}\label{eq-xyxprime}
x^\prime y^\prime x^\prime=x^\prime, \quad \quad y^\prime x^\prime y^\prime =y^\prime
\end{equation}
and
\begin{equation}\label{eqn-orthogonal-idempotents}
y^\prime x=0, \quad \quad y x^\prime=0.
\end{equation}
Then $x^\prime y^\prime$, $y^\prime x^\prime$ are also idempotents in $R$, and
the projective $R$-modules $Rx^\prime y^\prime$ and $Ry^\prime x^\prime$ are isomorphic.
Furthermore, $xy$ and $x^\prime y^\prime$ are mutually orthogonal,
$e:=xy+ x^\prime y^\prime$ is an idempotent, and there is an isomorphism of
projective left $R$-modules
\[Re\cong Rxy \oplus Rx^\prime y^\prime \cong Ryx \oplus R y^\prime x^\prime ,\]
which can be presented via the diagram
\[
\xymatrix{
& Re
\ar[dl]_{\cdot x} \ar[dr]^{\cdot x^\prime} & \\
Ryx\ar[dr]_{\cdot y} && Ry^\prime x^\prime \ar[dl]^{\cdot y^\prime},\\
& Re &
}
\]
with $\cdot x$ given by right multiplication by $x$ which takes $ze$ to $zex\in Ryx$, etc.

It is, in general, too much to hope that $\dif$ would respect this direct sum
decomposition of $Re$. Instead, we would like $Rxy\subset Re$ to be a $p$-DG submodule isomorphic to $Ryx$, and the quotient $Re/Rxy$ to be $p$-DG isomorphic to
$Ry^\prime x^\prime$. Therefore, in addition to (\ref{eqn-condition-proj-mod-dif-closed}),
we want the maps
\begin{equation}\label{eqn-iso-quotient-to-Ryx-prime}
\xymatrix{Re/Rxy  & \ltwocell_{\cdot x^\prime}^{\cdot y^\prime}{'} Ry^\prime x^\prime.}
\end{equation}
to commute with the $p$-differentials. This is equivalent to the condition that $Ry^\prime x^\prime$ is $\dif$-closed
\begin{equation}\label{eqn-quotient-mod-condition}
\dif(y^\prime x^\prime)\in Ry^\prime x^\prime,
\end{equation}
and that
\begin{equation}\label{eqn-intertwiners-quotient-mod}
\dif(e)x^\prime=\dif(ex^\prime), \ \ \ \ \dif(y^\prime x^\prime) y^\prime \equiv \dif(y^\prime x^\prime y^\prime)~(\mathrm{mod}~Rxy).
\end{equation}
A simple computation shows that condition (\ref{eqn-quotient-mod-condition}) is implied by (\ref{eqn-intertwiners-quotient-mod}), the latter in turn being equivalent to the conditions
\begin{equation}\label{eqn-equivalent-formuation-quotient-intertwiners}
y^\prime\dif(x^\prime)=0, \ \ \ \
x^\prime \dif(y^\prime)\in Rxy.
\end{equation}
Furthermore, $Re$ is $\dif$-closed when (\ref{eqn-equivalent-formuation-quotient-intertwiners}) is satisfied. Then, there is an exact sequence of $R_\dif$-modules
\[
\xymatrix{
0\ar[r] & Rxy~ \ar@{^(->}[r]  & Re \ar[r] & Re/Rxy \ar[r] & 0 .\\
& Ryx \ar[u]^{\cong} &&&}
\]
The surjective map $Re\lra Ry^\prime x^\prime$ given by right multiplication by $x^\prime$
has $Rxy$ as its kernel and commutes with $\dif$ since $y^\prime\dif(x^\prime)=0$. Consequently,
there is a short exact sequence of $(R,\dif)$-modules (equivalently, a $p$-DG filtration
on $Re$)
\begin{equation} \label{eq-shortex}
\xymatrix{
0 \ar[r] & Ryx \ar[r]^{\cdot y} & Re \ar[r]^{\cdot x^\prime} & Ry^\prime x^\prime \ar[r] & 0.
}
\end{equation}
If $Ryx$ and $Ry^\prime x^\prime$ are compact, then so is $Re$ and this filtration leads to a relation in $K_0(R,\dif)$
\begin{equation} \label{eq-rel-in-K0}
{[Re]}={[Ryx]}+{[Ry^\prime x^\prime]}.
\end{equation}

In the case of $R=R(2i+j)$ with the differential $\dif(1,1,1,1)$, let
\begin{equation}
x=
\begin{DGCpicture}
\DGCstrand[red](0,0)(1,1)[$i$]
\DGCstrand[red](0.5,0)(0,1)[$i$]
\DGCdot{0.125}
\DGCstrand[green](1,0)(0.5,1)[$j$]
\end{DGCpicture}, \ \ \ \ \
y =
\begin{DGCpicture}
\DGCstrand[red](0,0)(0.5,1)[$i$]
\DGCstrand[green](0.5,0)(1,1)[$j$]
\DGCstrand[red](1,0)(0,1)[$i$]
\end{DGCpicture}, \ \ \ \ \
x^\prime=-
\begin{DGCpicture}
\DGCstrand[green](0,0)(0.5,1)[$j$]
\DGCstrand[red](0.5,0)(1,1)[$i$]
\DGCstrand[red](1,0)(0,1)[$i$]\DGCdot{0.125}
\end{DGCpicture},\ \ \ \ \
y^\prime=
\begin{DGCpicture}
\DGCstrand[red](0,0)(1,1)[$i$]
\DGCstrand[green](0.5,0)(0,1)[$j$]
\DGCstrand[red](1,0)(0.5,1)[$i$]
\end{DGCpicture}.
\end{equation}
An easy computation shows that conditions (\ref{eq-xyx}),
(\ref{eqn-cond-iso-dif-proj-mod}), (\ref{eq-xyxprime}), (\ref{eqn-orthogonal-idempotents}),
(\ref{eqn-equivalent-formuation-quotient-intertwiners}) hold.
Idempotents $yx,$ $y'x',$ and $e$ give rise to compact $R(2i+j)_\dif$-modules
$P_{i^{(2)}j}$, $P_{ji^{(2)}}$ (Lemma \ref{lemma-serre-modules-are-cofibrant}), and $P_{iji}$ respectively, resulting
in the short exact sequence
\begin{equation} \label{eq-res-short}
0 \lra P_{i^{(2)}j} \lra P_{iji} \lra P_{j i^{(2)}} \lra 0
\end{equation}
and in the relation in the Grothendieck group
\begin{equation} \label{eq-in-Groth}
[P_{iji} ] = [P_{i^{(2)}j}] + [P_{j i^{(2)}}].
\end{equation}

\paragraph{The general case.}In general, let $\Gamma$ be a connected simply-laced Cartan datum with vertex set $I$ and an arbitrary orientation. Define $(R(\Gamma),\dif)$ to be the associated $p$-DG algebra with the differential parameter chosen as in equations (\ref{eqn-dif-on-R-nu-general-1}) through (\ref{eqn-dif-on-R-nu-general-6}). For any sequence of colors $\mathbf{i}=i_1i_2\cdots i_n \in I^n$ such that $\sum_{t=1}^n i_t=\nu\in \N[I]$, define the compact cofibrant $p$-DG module over $(R(\Gamma),\dif)$ by
\[
P_{\mathbf{i}}:=R(\nu)1_{\mathbf{i}}=
\left\{
\begin{DGCpicture}
\DGCstrand(0,0)(0,1)[$i_1$]
\DGCstrand(0.5,0)(0.5,1)[$i_2$]
\DGCstrand(2,0)(2,1)[$i_n$]
\DGCcoupon*(0.75,0.25)(1.75,0.75){$\cdots$}
\DGCcoupon(-0.1,1)(2.1,2){$x$}
\end{DGCpicture}~
\Bigg|x\in R(\nu)
\right\}.\]
The local nature of the $p$-DG algebra together with the rank-two cases
(Propositions~\ref{prop-QSR-forces-specialization-parameter-A1-times-A1} and \ref{prop-QSR-forces-specialization-parameter-A2}) applied to each pair of vertices in $\Gamma$ leads to the following.
\begin{flushleft}
\begin{itemize}
\item[$(I)$] If $i$, $k$ are distant vertices in $\Gamma$, and $u_{ik}=u_{ki}=0$,
then there is an isomorphism of left $(R(\Gamma),\dif)$-modules
    \[\eta_{ ki }:P_{\cdots ik\cdots }\lra P_{\cdots ki\cdots },\]
    where $\eta_{ki }$ is given by post-composing any element in $P_{\cdots ik \cdots}$ with
    \[
    \begin{DGCpicture}
    \DGCstrand(0,0)(0,1)[$i_1$]
    \DGCstrand(2,0)(3,1)[$k$]
    \DGCstrand(3,0)(2,1)[$i$]
    \DGCstrand(5,0)(5,1)[$i_n$]
    \DGCcoupon*(0.5,0.25)(1.5,0.75){$\cdots$}
    \DGCcoupon*(3.5,0.25)(4.5,0.75){$\cdots$}
    \end{DGCpicture}.
    \]
    The inverse map $\eta_{ i k }$ of $\eta_{ ki }$ is given by attaching to any element of $P_{\cdots ki\cdots}$ from below the element
    \[
    \begin{DGCpicture}
    \DGCstrand(0,0)(0,1)[$i_1$]
    \DGCstrand(2,0)(3,1)[$i$]
    \DGCstrand(3,0)(2,1)[$k$]
    \DGCstrand(5,0)(5,1)[$i_n$]
    \DGCcoupon*(0.5,0.25)(1.5,0.75){$\cdots$}
    \DGCcoupon*(3.5,0.25)(4.5,0.75){$\cdots$}
    \end{DGCpicture}.
    \]
\item[$(II)$] If $i$, $j$ are vertices in $\Gamma$ connected by the oriented edge $i \rightarrow j$, and the parameters satisfy
    \[a_i=a_j=r_{ij}=r_{ji}=1,\]
    then there are short exact sequences of left $R(\Gamma)_{\dif}$-modules
    \[
    0\longrightarrow P_{\cdots i^{(2)}j\cdots }\xrightarrow{\vartheta_{i^{(2)}j}} P_{\cdots i j i\cdots } \xrightarrow{\rho_{ji^{(2)}}} P_{\cdots ji^{(2)}\cdots } \longrightarrow 0,
    \]
    \[
    0\longrightarrow P_{\cdots j^{(2)}i\cdots }\xrightarrow{\vartheta_{j^{(2)}i}} P_{\cdots jij \cdots } \xrightarrow{\rho_{ij^{(2)}}} P_{\cdots ij^{(2)}\cdots } \longrightarrow 0.
    \]
    Here $\vartheta_{i^{(2)}j}$, $\rho_{ji^{(2)}}$,  $\vartheta_{j^{(2)}i}$ and $\rho_{ij^{(2)}}$ are respectively right multiplication by the elements
    \[
    \begin{DGCpicture}
    \DGCstrand(-1,0)(-1,1)[${i_1}$]
    \DGCstrand(0,0)(1,1)[${i}$]
    \DGCstrand(1,0)(2,1)[${j}$]
    \DGCstrand(2,0)(0,1)[${i}$]
    \DGCstrand(3,0)(3,1)[${i_{n}}$]
    \DGCcoupon*(-0.9,0.25)(-0.1,0.75){$\cdots$}
    \DGCcoupon*(2.1,0.25)(2.9,0.75){$\cdots$}
    \end{DGCpicture}\ , \ \ \ \ \ \
    \begin{DGCpicture}
    \DGCstrand(-1,0)(-1,1)[${i_1}$]
    \DGCstrand(0,0)(1,1)[$j$]
    \DGCstrand(1,0)(2,1)[$i$]
    \DGCstrand(2,0)(0,1)[$i$]\DGCdot{0.25}
    \DGCstrand(3,0)(3,1)[${i_{n}}$]
    \DGCcoupon*(-0.9,0.25)(-0.1,0.75){$\cdots$}
    \DGCcoupon*(2.1,0.25)(2.9,0.75){$\cdots$}
    \end{DGCpicture}\ ,
    \]
    \[
    \begin{DGCpicture}
    \DGCstrand(-1,0)(-1,1)[${i_1}$]
    \DGCstrand(0,0)(1,1)[${j}$]
    \DGCstrand(1,0)(2,1)[${i}$]
    \DGCstrand(2,0)(0,1)[${j}$]
    \DGCstrand(3,0)(3,1)[${i_{n}}$]
    \DGCcoupon*(-0.9,0.25)(-0.1,0.75){$\cdots$}
    \DGCcoupon*(2.1,0.25)(2.9,0.75){$\cdots$}
    \end{DGCpicture}\ , \ \ \ \ \ \
    \begin{DGCpicture}
    \DGCstrand(-1,0)(-1,1)[${i_1}$]
    \DGCstrand(0,0)(1,1)[$i$]
    \DGCstrand(1,0)(2,1)[$j$]
    \DGCstrand(2,0)(0,1)[$j$]\DGCdot{0.25}
    \DGCstrand(3,0)(3,1)[${i_{n}}$]
    \DGCcoupon*(-0.9,0.25)(-0.1,0.75){$\cdots$}
    \DGCcoupon*(2.1,0.25)(2.9,0.75){$\cdots$}
    \end{DGCpicture} \ .
    \]

\item[$(II^\sigma)$]  If $i$, $j$ are vertices in $\Gamma$ connected by the oriented edge $i \rightarrow j$, and the parameters satisfy
    \[
    1+a_i=1+a_j=r_{ij}=r_{ji}=0,
    \]
    then there are short exact sequences of left $R(\Gamma)_{\dif}$-modules
    \[
    0\longrightarrow P_{\cdots ji^{(2)}\cdots }\xrightarrow{\vartheta_{ji^{(2)}}} P_{\cdots i j i\cdots} \xrightarrow{\rho_{i^{(2)}j}} P_{\cdots i^{(2)}j\cdots} \longrightarrow 0,
    \]
    \[
    0\longrightarrow P_{\cdots ij^{(2)}\cdots }\xrightarrow{\vartheta_{ij^{(2)}}} P_{\cdots jij \cdots} \xrightarrow{\rho_{j^{(2)}i}} P_{\cdots j^{(2)}i\cdots} \longrightarrow 0,
    \]
    where $\vartheta_{ji^{(2)}}$, $\rho_{i^{(2)}j}$, $\vartheta_{ij^{(2)}}$ and $\rho_{j^{(2)}i}$ are respectively given by right multiplication with the elements
    \[
    \begin{DGCpicture}
    \DGCstrand(-1,0)(-1,1)[${i_1}$]
    \DGCstrand(0,0)(2,1)[${i}$]
    \DGCstrand(1,0)(0,1)[${j}$]
    \DGCstrand(2,0)(1,1)[${i}$]
    \DGCstrand(3,0)(3,1)[${i_{n}}$]
    \DGCcoupon*(-0.9,0.25)(-0.1,0.75){$\cdots$}
    \DGCcoupon*(2.1,0.25)(2.9,0.75){$\cdots$}
    \end{DGCpicture}\ , \ \ \ \ \ \
    \begin{DGCpicture}
    \DGCstrand(-1,0)(-1,1)[${i_1}$]
    \DGCstrand(0,0)(2,1)[$i$]\DGCdot{0.25}
    \DGCstrand(1,0)(0,1)[$i$]
    \DGCstrand(2,0)(1,1)[$j$]
    \DGCstrand(3,0)(3,1)[${i_{n}}$]
    \DGCcoupon*(-0.9,0.25)(-0.1,0.75){$\cdots$}
    \DGCcoupon*(2.1,0.25)(2.9,0.75){$\cdots$}
    \end{DGCpicture} \ ,
    \]
    \[
    \begin{DGCpicture}
    \DGCstrand(-1,0)(-1,1)[${i_1}$]
    \DGCstrand(0,0)(2,1)[${j}$]
    \DGCstrand(1,0)(0,1)[${i}$]
    \DGCstrand(2,0)(1,1)[${j}$]
    \DGCstrand(3,0)(3,1)[${i_{n}}$]
    \DGCcoupon*(-0.9,0.25)(-0.1,0.75){$\cdots$}
    \DGCcoupon*(2.1,0.25)(2.9,0.75){$\cdots$}
    \end{DGCpicture}\ , \ \ \ \ \ \
    \begin{DGCpicture}
    \DGCstrand(-1,0)(-1,1)[${i_1}$]
    \DGCstrand(0,0)(2,1)[$j$]\DGCdot{0.25}
    \DGCstrand(1,0)(0,1)[$j$]
    \DGCstrand(2,0)(1,1)[$i$]
    \DGCstrand(3,0)(3,1)[${i_{n}}$]
    \DGCcoupon*(-0.9,0.25)(-0.1,0.75){$\cdots$}
    \DGCcoupon*(2.1,0.25)(2.9,0.75){$\cdots$}
    \end{DGCpicture} \ .
    \]
\end{itemize}
\end{flushleft}
$(II^\sigma)$ follows from $(II)$ by applying the symmetry $\sigma$ (see equation (\ref{eqn-KLR-symmetry-sigma})) and using Proposition \ref{prop-symmetry-intertwines-differentials-KLR}. We leave the verification of these statements to the reader. The following definition summarizes the two parameters of differentials for which the categorical quantum Serre relations hold.

\begin{defn}\label{def-correct-differential-on-KLR}Let $\Gamma$ be a connected simply-laced Cartan datum with the set of vertices $I$ and an arbitrary orientation, and $R(\Gamma)$ be its associated KLR algebra.

\begin{itemize}
\item[$(D_{+})$] The $p$-nilpotent local differential $\dif_1$ on $R(\Gamma)$ acts on the one-strand generators by
\[
 \dif_1
\left(~
{\begin{DGCpicture}
\DGCstrand(0,0)(0,1)[$i$]\DGCdot{0.5}
\end{DGCpicture}}
~\right) =
{\begin{DGCpicture}
\DGCstrand(0,0)(0,1)[$i$]
\DGCdot{0.5}[r]{$^2$}
\end{DGCpicture}} \ ,
\]
while on the two-strand generators,
\[
\begin{array}{l}
\dif_1\left(
\begin{DGCpicture}
\DGCstrand(0,0)(1,1)[$i_1$]
\DGCstrand(1,0)(0,1)[$i_2$]
\end{DGCpicture}
\right)  =
\delta_{i_1,i_2}
\begin{DGCpicture}
\DGCstrand(0,0)(0,1)[$i_1$]
\DGCstrand(1,0)(1,1)[$i_2$]
\end{DGCpicture}
-i_1\cdot i_2
\begin{DGCpicture}
\DGCstrand(0,0)(1,1)[$i_1$]
\DGCstrand(1,0)(0,1)[$i_2$]\DGCdot{0.75}
\end{DGCpicture}
\\
\\
= \left\{
\begin{array}{l}
\begin{DGCpicture}
\DGCstrand(0,0)(0,1)[$i_1$]
\DGCstrand(1,0)(1,1)[$i_2$]
\end{DGCpicture}-2
\begin{DGCpicture}
\DGCstrand(0,0)(1,1)[$i_1$]
\DGCstrand(1,0)(0,1)[$i_2$]\DGCdot{0.75}
\end{DGCpicture}=
-
\begin{DGCpicture}
\DGCstrand(0,0)(1,1)[$i_1$]
\DGCstrand(1,0)(0,1)[$i_2$]\DGCdot{0.75}
\end{DGCpicture}
-
\begin{DGCpicture}
\DGCstrand(0,0)(1,1)[$i_1$]
\DGCstrand(1,0)(0,1)[$i_2$]\DGCdot{0.25}
\end{DGCpicture} \quad \textrm{if $i_1=i_2$,}\\
 \\
\quad \quad  0  \ \quad \quad \textrm{if $i_1, i_2$ are distant,}\\
 \\
\ \
\begin{DGCpicture}
\DGCstrand(0,0)(1,1)[$i_1$]
\DGCstrand(1,0)(0,1)[$i_2$]\DGCdot{0.75}
\end{DGCpicture} \quad  \textrm{if $i_1, i_2$ are connected.}
\end{array}
\right.
\end{array}
\]
Here $i,i_1,i_2\in I$ are vertices of $\Gamma$, and $i_1\cdot i_2$ is the Cartan pairing between $i_1,i_2$.

\vspace{0.05in}

\item[$(D_{-})$]The $p$-nilpotent local differential $\dif_{-1}$ on $R(\Gamma)$ acts on the one-strand generators by
\[
\dif_{-1}
\left(~
{\begin{DGCpicture}
\DGCstrand(0,0)(0,1)[$i$]\DGCdot{0.5}
\end{DGCpicture}}
~\right)=
{\begin{DGCpicture}
\DGCstrand(0,0)(0,1)[$i$]
\DGCdot{0.5}[r]{$^2$}
\end{DGCpicture}} \ ,
\]
while on the two-strand generators,
\[
\begin{array}{l}
\dif_{-1}\left(
\begin{DGCpicture}
\DGCstrand(0,0)(1,1)[$i_1$]
\DGCstrand(1,0)(0,1)[$i_2$]
\end{DGCpicture}
\right)  =
-\delta_{i_1,i_2}
\begin{DGCpicture}
\DGCstrand(0,0)(0,1)[$i_1$]
\DGCstrand(1,0)(1,1)[$i_2$]
\end{DGCpicture}
-i_1\cdot i_2
\begin{DGCpicture}
\DGCstrand(0,0)(1,1)[$i_1$]\DGCdot{0.75}
\DGCstrand(1,0)(0,1)[$i_2$]
\end{DGCpicture} \\
  \\
 =  \left\{
\begin{array}{l}=
-\begin{DGCpicture}
\DGCstrand(0,0)(0,1)[$i_1$]
\DGCstrand(1,0)(1,1)[$i_2$]
\end{DGCpicture}
-2
\begin{DGCpicture}
\DGCstrand(0,0)(1,1)[$i_1$]\DGCdot{0.75}
\DGCstrand(1,0)(0,1)[$i_2$]
\end{DGCpicture}=
-
\begin{DGCpicture}
\DGCstrand(0,0)(1,1)[$i_1$]\DGCdot{0.25}
\DGCstrand(1,0)(0,1)[$i_2$]
\end{DGCpicture}
-
\begin{DGCpicture}
\DGCstrand(0,0)(1,1)[$i_1$]\DGCdot{0.75}
\DGCstrand(1,0)(0,1)[$i_2$]
\end{DGCpicture} \quad \textrm{if $i_1=i_2$,}\\
 \\
\quad \quad  \ 0 \quad \quad  \ \textrm{if $i_1, i_2$ are distant,}\\
 \\
\quad \begin{DGCpicture}
\DGCstrand(0,0)(1,1)[$i_1$]\DGCdot{0.75}
\DGCstrand(1,0)(0,1)[$i_2$]
\end{DGCpicture}  \quad \textrm{if $i_1, i_2$ are connected.}
\end{array}
\right.
\end{array}
\]
\end{itemize}
\end{defn}

The next theorem summarizes the discussion in the subsection.

\begin{thm}\label{thm-QSR-holds-only-for-special-paramters} The relations
\[
[P_{\cdots ik\cdots }]=[P_{\cdots ki\cdots}],
\]
where $i,k$ are distant vertices, and
\[
{[ 2 ] [P_{\cdots iji\cdots }}]  =  [P_{\cdots iij\cdots }]+[P_{\cdots jii\cdots }],
\]
where $i,j$ are vertices connected by one edge in $\Gamma$ with any orientation, hold in the Grothendieck group $K_0(R(\Gamma),\dif)$ if and only if $\dif=\dif_{\pm 1}$. The differentials $\dif_{\pm 1}$ are conjugate to each other by the (anti-)automorphisms $\psi$ and $\sigma$ of $R(\Gamma)$. Furthermore, the following statements hold.
\begin{itemize}
\item[(i)] In the derived category $\mc{D}(R(\Gamma),\dif_1)$, there is an isomorphism of $p$-DG modules
    \[
    P_{\cdots ik\cdots }\cong P_{\cdots ki \cdots},
    \]
    if $i$, $k$ are distant vertices. There is an exact triangle if $i$ and $j$ are vertices connected by one edge
    \[
    P_{\cdots i^{(2)}j\cdots} \lra P_{\cdots i j i\cdots } \lra P_{\cdots ji^{(2)}\cdots} \longrightarrow P_{\cdots i^{(2)}j\cdots}[1].
    \]
\item[(ii)]In the derived category $\mc{D}(R(\Gamma),\dif_{-1})$, there is an isomorphism of $p$-DG modules
    \[
    P_{\cdots ik\cdots }\cong P_{\cdots ki\cdots},
    \]
    if $i$, $k$ are distant vertices, while there is an exact triangle
    \[
    P_{\cdots ji^{(2)}\cdots }\lra P_{\cdots i j i\cdots } \lra P_{\cdots i^{(2)}j\cdots } \lra
    P_{\cdots ji^{(2)}\cdots }[1]
    \]
if $i$ and $j$ are connected by one edge.$\hfill\square$
\end{itemize}
\end{thm}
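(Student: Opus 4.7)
The plan is to assemble this theorem from the rank-two analyses already performed and the propagation of their constraints across a connected graph $\Gamma$.

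First I would argue the ``only if'' direction by localization. Given a pair of vertices $i,j$ (with any orientation and any third labels $i_1,\ldots, i_n$ in the sequences), the sought relations among the $[P_{\cdots\,\cdot\cdots}]$ become, after applying the induction/restriction bimodule techniques already developed (or equivalently by looking at the local subalgebra generated by the two relevant strands), precisely the rank-two relations studied on pages above. For a pair of distant vertices, Proposition~\ref{prop-QSR-forces-specialization-parameter-A1-times-A1} forces $u_{ik}=u_{ki}=0$. For a pair of connected vertices, Proposition~\ref{prop-QSR-forces-specialization-parameter-A2} forces either $(a_i,a_j,r_{ij},r_{ji})=(1,1,1,1)$ or $(-1,-1,0,0)$. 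Since $\Gamma$ is connected, the parameter $a_i$ at any vertex propagates along edges: every edge gives the constraint $a_i=a_j\in\{1,-1\}$, so all the $a_i$ coincide and we land either in case $(D_+)$ or in case $(D_-)$ of Definition~\ref{def-correct-differential-on-KLR}. Conversely, given those parameter choices, the statements $(I)$, $(II)$, $(II^\sigma)$ assembled on the preceding pages provide the short exact sequences realizing the Serre relations, giving the ``if'' direction.

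Next, the conjugacy of $\dif_1$ and $\dif_{-1}$ under the (anti-)automorphisms $\psi$ and $\sigma$ is immediate from Proposition~\ref{prop-symmetry-intertwines-differentials-KLR}, which records
\[
\psi\circ\dif(a_i,a_j,r_{ij},r_{ji})\circ\psi = \dif(-a_i,-a_j,1-r_{ij},1-r_{ji}),
\]
and likewise for $\sigma$; substituting $(1,1,1,1)\leftrightarrow(-1,-1,0,0)$ and the distant-vertex parameters $(0,0)\leftrightarrow(0,0)$ gives the asserted conjugation, so one only needs to verify parts (i) and (ii) for a single choice, say $\dif_1$, and apply $\sigma$ to obtain the $\dif_{-1}$ case.

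For the derived-category statements (i), I would first establish the distant-vertex isomorphism: under $\dif_1$ the element $\begin{DGCpicture}[scale=0.5]\DGCstrand(0,0)(1,1)\DGCstrand(1,0)(0,1)\end{DGCpicture}$ connecting distant colors is $\dif$-closed (equations~\eqref{eqn-dif-on-R-nu-general-5}--\eqref{eqn-dif-on-R-nu-general-6} with $u_{ik}=u_{ki}=0$), so the maps $\eta_{ki}$, $\eta_{ik}$ of statement $(I)$ commute with the differential and realize the required $p$-DG isomorphism $P_{\cdots ik\cdots}\cong P_{\cdots ki\cdots}$. For connected vertices, one applies statement $(II)$ to obtain the short exact sequence $0\to P_{\cdots i^{(2)}j\cdots}\to P_{\cdots iji\cdots}\to P_{\cdots ji^{(2)}\cdots}\to 0$ of $p$-DG modules, then invokes Lemma~\ref{lemma-ses-lead-to-dt-derived-category} to convert it into the claimed exact triangle in $\mc{D}(R(\Gamma),\dif_1)$. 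Part (ii) follows by applying $\sigma$ to part (i), using statement $(II^\sigma)$ in place of $(II)$.

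The only real obstacle is the bookkeeping behind reducing a sequence of arbitrary length to the two-color rank-two calculation: one must justify that the ``insertion'' of the three colored strands $i,j,i$ (or $i,k$) into a longer idempotent context does not introduce new constraints. This is where the intrinsic reformulation around equations~\eqref{eq-xyx}--\eqref{eq-in-Groth} is used: verifying the relations \eqref{eqn-cond-iso-dif-proj-mod} and \eqref{eqn-equivalent-formuation-quotient-intertwiners} for $x,y,x',y'$ chosen as in the $A_2$ computation needs only local manipulations on the three crossing strands, since all other strands commute with the differential action up to $R(\Gamma)$-module structure. Once this locality is made explicit, the general case reduces cleanly to the rank-two propositions and the theorem follows.
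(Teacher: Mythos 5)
Your proposal is correct and follows essentially the same route the paper takes: the theorem is marked with $\square$ precisely because its proof is the preceding discussion --- the rank-two Propositions \ref{prop-QSR-forces-specialization-parameter-A1-times-A1} and \ref{prop-QSR-forces-specialization-parameter-A2} supply the ``only if'' direction once one applies them to the two-strand restriction of $\dif$ and uses connectedness of $\Gamma$ to propagate $a_i=\pm 1$ across edges, the general statements $(I)$, $(II)$, $(II^\sigma)$ together with Lemma~\ref{lemma-ses-lead-to-dt-derived-category} supply the ``if'' direction and the derived-category assertions, and Proposition~\ref{prop-symmetry-intertwines-differentials-KLR} handles the conjugacy and reduces $(ii)$ to $(i)$, with locality of $\dif$ justifying the insertion of extra vertical strands exactly as you note. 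One small detail worth making explicit: statement $(II)$ gives the divided-power relation $[P_{\cdots iji\cdots}]=[P_{\cdots i^{(2)}j\cdots}]+[P_{\cdots ji^{(2)}\cdots}]$, so to land on the form in the theorem you still need the two-step $p$-DG filtration of $P_{\cdots iij\cdots}$ by shifted copies of $P_{\cdots i^{(2)}j\cdots}$ (the local $\NH_2$ filtration transported into $R(\Gamma)$, as in the discussion preceding Example~\ref{eg-Grothendieck-group-R-2i-plus-j}), giving $[P_{\cdots iij\cdots}]=[2]\,[P_{\cdots i^{(2)}j\cdots}]$ and likewise for $jii$; with that remark your ``if'' direction is complete.
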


\subsection{Grothendieck groups for  small weights}
In this subsection we compute Grothendieck groups of $p$-DG module categories over $(R(\nu),\dif)$ for certain small weights $\nu$ in rank two. From now on we will specialize the definition of $\dif$ to the first system of parameters ($D_+$) in Definition \ref{def-correct-differential-on-KLR}.

\begin{eg}\label{eg-Grothendieck-group-R-i-plus-k}Let $i,k$ be distant vertices. The $p$-DG algebra $R(i+k)$ has a basis consisting of elements
\[
\left\{
\begin{DGCpicture}
\DGCstrand[red](0,0)(0,1)[$i$]\DGCdot{0.75}[r]{$^{n_1}$}
\DGCstrand[blue](1,0)(1,1)[$k$]\DGCdot{0.75}[r]{$^{n_2}$}
\end{DGCpicture},~
\begin{DGCpicture}
\DGCstrand[blue](0,0)(0,1)[$k$]\DGCdot{0.75}[r]{$^{n_1}$}
\DGCstrand[red](1,0)(1,1)[$i$]\DGCdot{0.75}[r]{$^{n_2}$}
\end{DGCpicture},~
\begin{DGCpicture}
\DGCstrand[red](0,0)(1,1)[$i$]\DGCdot{0.75}[r]{$^{n_2}$}
\DGCstrand[blue](1,0)(0,1)[$k$]\DGCdot{0.75}[r]{$^{n_1}$}
\end{DGCpicture},~
\begin{DGCpicture}
\DGCstrand[blue](0,0)(1,1)[$k$]\DGCdot{0.75}[r]{$^{n_2}$}
\DGCstrand[red](1,0)(0,1)[$i$]\DGCdot{0.75}[r]{$^{n_1}$}
\end{DGCpicture}
\Bigg|n_1, n_2\in \N \right\},
\]
and the differential $\dif$ acts trivially on diagrams without dots. The inclusion of the $2\times 2$ matrix algebra with trivial differential
\[
\left(\begin{array}{ccc}
\begin{DGCpicture}
\DGCstrand[red](0,0)(0,1)[$i$]
\DGCstrand[blue](1,0)(1,1)[$k$]
\end{DGCpicture} &&
\begin{DGCpicture}
\DGCstrand[red](0,0)(1,1)[$i$]
\DGCstrand[blue](1,0)(0,1)[$k$]
\end{DGCpicture}\\
 && \\
\begin{DGCpicture}
\DGCstrand[blue](0,0)(1,1)[$k$]
\DGCstrand[red](1,0)(0,1)[$i$]
\end{DGCpicture} &&
\begin{DGCpicture}
\DGCstrand[blue](0,0)(0,1)[$k$]
\DGCstrand[red](1,0)(1,1)[$i$]
\end{DGCpicture}
\end{array}\right)
\]
into $R(i+k)$ is a quasi-isomorphism of $p$-DG algebras. By Theorem \ref{thm-qis-algebra-equivalence-derived-categories}, this inclusion induces an equivalence of derived categories, and hence Grothendieck groups. From the toy matrix model in Section 2.3, one concludes that $K_0(R(i+k))\cong \mathbb{O}_p$.
\end{eg}

This example generalizes to any weight space of $A_1\times A_1$, using
Theorem~\ref{thm-p-DG-nilHecke-categorifies-small-sl2}. One concludes that the
Grothendieck group of the $p$-DG algebra $(R(\underset{i}{\textcolor[rgb]{1.00,0.00,0.00}{\bullet}}\ \ \ \ \underset{k}{\textcolor[rgb]{0.00,0.00,1.00}{\bullet}}),\dif)$
can be identified with the twisted bialgebra
\[
K_0(R(\underset{i}{\textcolor[rgb]{1.00,0.00,0.00}{\bullet}}\ \ \ \ \underset{k}{\textcolor[rgb]{0.00,0.00,1.00}{\bullet}}),\dif)\cong u^+_{\mathbb{O}_{p}}(\mf{sl}_2\times \mf{sl}_2).
\]

\vspace{0.2in}

In what follows we will focus on the $A_2$ case $\underset{i}{\textcolor[rgb]{1.00,0.00,0.00}{\bullet}}\rightarrow \underset{j}{\textcolor[rgb]{0.00,1.00,0.00}{\bullet}}$.

\begin{eg}\label{eg-Grothendieck-group-R-i-plus-j}The $\Bbbk$-algebra $R(i+j)$ has a basis
\[
\left\{
\begin{DGCpicture}
\DGCstrand[red](0,0)(0,1)[$i$]\DGCdot{0.75}[r]{$^{n_1}$}
\DGCstrand[green](1,0)(1,1)[$j$]\DGCdot{0.75}[r]{$^{n_2}$}
\end{DGCpicture},~
\begin{DGCpicture}
\DGCstrand[green](0,0)(0,1)[$j$]\DGCdot{0.75}[r]{$^{n_1}$}
\DGCstrand[red](1,0)(1,1)[$i$]\DGCdot{0.75}[r]{$^{n_2}$}
\end{DGCpicture},~
\begin{DGCpicture}
\DGCstrand[red](0,0)(1,1)[$i$]\DGCdot{0.75}[r]{$^{n_2}$}
\DGCstrand[green](1,0)(0,1)[$j$]\DGCdot{0.75}[r]{$^{n_1}$}
\end{DGCpicture},~
\begin{DGCpicture}
\DGCstrand[green](0,0)(1,1)[$j$]\DGCdot{0.75}[r]{$^{n_2}$}
\DGCstrand[red](1,0)(0,1)[$i$]\DGCdot{0.75}[r]{$^{n_1}$}
\end{DGCpicture}
\Bigg|n_1, n_2\in \N \right\},
\]
and the differential $\dif$ acts on the generators by
\[
\begin{array}{ll}
\dif\left(
\begin{DGCpicture}
\DGCstrand[red](0,0)(0,1)[$i$]
\DGCstrand[green](1,0)(1,1)[$j$]
\end{DGCpicture}
\right)=0, &
\dif\left(
\begin{DGCpicture}
\DGCstrand[green](0,0)(0,1)[$j$]
\DGCstrand[red](1,0)(1,1)[$i$]
\end{DGCpicture}
\right)=0,\\
&\\
\dif\left(
\begin{DGCpicture}
\DGCstrand[red](0,0)(1,1)[$i$]
\DGCstrand[green](1,0)(0,1)[$j$]
\end{DGCpicture}
\right)=
\begin{DGCpicture}
\DGCstrand[red](0,0)(1,1)[$i$]
\DGCstrand[green](1,0)(0,1)[$j$]\DGCdot{0.75}
\end{DGCpicture}, &
\dif\left(
\begin{DGCpicture}
\DGCstrand[green](0,0)(1,1)[$j$]
\DGCstrand[red](1,0)(0,1)[$i$]
\end{DGCpicture}
\right)=
\begin{DGCpicture}
\DGCstrand[green](0,0)(1,1)[$j$]
\DGCstrand[red](1,0)(0,1)[$i$]\DGCdot{0.75}
\end{DGCpicture}.
\end{array}
\]
Consider the two-sided ideal
\[
J:=
\Bbbk\left\langle
\begin{DGCpicture}
\DGCstrand[red](0,0)(0,1)[$i$]\DGCdot{0.75}[r]{$^{n_1}$}
\DGCstrand[green](1,0)(1,1)[$j$]\DGCdot{0.75}[r]{$^{n_2}$}
\end{DGCpicture},~
\begin{DGCpicture}
\DGCstrand[green](0,0)(0,1)[$j$]\DGCdot{0.75}[r]{$^{n_1}$}
\DGCstrand[red](1,0)(1,1)[$i$]\DGCdot{0.75}[r]{$^{n_2}$}
\end{DGCpicture}\Bigg|~n_1+n_2\geq 1 \right\rangle \bigoplus
\Bbbk\left\langle
\begin{DGCpicture}
\DGCstrand[red](0,0)(1,1)[$i$]\DGCdot{0.75}[r]{$^{n_2}$}
\DGCstrand[green](1,0)(0,1)[$j$]\DGCdot{0.75}[r]{$^{n_1}$}
\end{DGCpicture},~
\begin{DGCpicture}
\DGCstrand[green](0,0)(1,1)[$j$]\DGCdot{0.75}[r]{$^{n_2}$}
\DGCstrand[red](1,0)(0,1)[$i$]\DGCdot{0.75}[r]{$^{n_1}$}
\end{DGCpicture}
\Bigg|
n_1, n_2 \in \N  \right\rangle
.
\]
It is preserved by the differential and forms a contractible $p$-complex.
Therefore, the quotient $p$-DG algebra $(R(i+j)/J,\dif)$ is isomorphic to
\[
\Bbbk
\begin{DGCpicture}
\DGCstrand[red](0,0)(0,1)[$i$]
\DGCstrand[green](1,0)(1,1)[$j$]
\end{DGCpicture}\times
\Bbbk
\begin{DGCpicture}
\DGCstrand[green](0,0)(0,1)[$j$]
\DGCstrand[red](1,0)(1,1)[$i$]
\end{DGCpicture}\cong \Bbbk \times \Bbbk,
\]
with trivial differential, and the natural projection $\pi:R(i+j)\lra R(i+j)/J$
is a quasi-isomorphism. Alternatively, the inclusion of the algebras
$\Bbbk\times\Bbbk \hookrightarrow R(i+j)$ is a quasi-isomorphism of $p$-DG algebras.
By Theorem~\ref{thm-qis-algebra-equivalence-derived-categories}, there is an
equivalence of triangulated categories $\mc{D}(R(i+j)) \cong \mc{D}(\Bbbk\times\Bbbk)$,
implying that $K_0(R(i+j))\cong \mathbb{O}_p \oplus \mathbb{O}_p$.
\end{eg}

To compute the next example we will use the following observation. Let $(R,\dif)$ be a
$p$-DG algebra and $\epsilon_1, \epsilon_2\in R$ be idempotents such that $R\epsilon_t$
is a $p$-DG submodule of $R$ for $t=1,2$. Necessarily $\dif(\epsilon_t)=r_t\epsilon_t$
for some $r_t\in R$. Then under the isomorphism of $\Bbbk$-spaces
$\HOM_R(R\epsilon_1, R\epsilon_2)\cong \epsilon_1 R \epsilon_2$, the natural
$\dif$-action on the $\HOM$-space translates into an action on $\epsilon_1 R \epsilon_2$,
denoted by ``$\diamond$'', as follows. For any $x \in  R $,
\[
\dif\diamond(\epsilon_1 x \epsilon_2)=\epsilon_1\cdot\dif(x\epsilon_2).
\]
Indeed, if $f\in \HOM_R(R\epsilon_1,R\epsilon_2)$, then
$\dif(f) (y\epsilon_1)=\dif(f(y\epsilon_1))-f(\dif(y\epsilon_1))$ for any $y\in R$.
Now if $f$ is a morphism defined by an element $\epsilon_1 x \epsilon_2$,
$f(y\epsilon_1)=y\epsilon_1 x \epsilon_2$, then
$\dif (f)(y\epsilon_1)=\dif(y\epsilon_1 x \epsilon_2)-\dif(y\epsilon_1)\epsilon_1 x
\epsilon_2)=y\epsilon_1(\dif(x\epsilon_2))$ and the claim follows.

\begin{eg}\label{eg-Grothendieck-group-R-2i-plus-j} We now compute the Grothendieck group of the $p$-DG algebra $(R(2i+j),\dif)$, assuming $\textrm{char}(\Bbbk)\geq 3$. By Proposition \ref{prop-derived-category-compactly-generated}, the triangulated category $\mc{D}(R(2i+j))$ is generated by the $p$-DG module $R(2i+j)$, which is isomorphic to the direct sum of three compact cofibrant modules $P_{iij}$, $P_{iji}$ and $P_{jii}$. Furthermore, $P_{iij}$ has a two step filtration induced from the filtration (\ref{eqn-NH2-ses-sym2-submodules}) of $\NH_2$ by its polynomial representation. The subquotients of the filtration are isomorphic to grading shifts of $P_{i^{(2)}j}$. Likewise $P_{jii}$ is filtered by grading shifts of $P_{ji^{(2)}}$. The short exact sequence (\ref{eqn-ses-QSR-A2}) and the $p$-DG enhanced isomorphisms (\ref{eqn-iso-R-2i-plus-j-mod-1}), (\ref{eqn-iso-R-2i-plus-j-mod-2}) show that $P_{iji}$ fits into an exact triangle
\[
P_{i^{(2)}j}\lra P_{iji} \lra P_{ji^{(2)}} \lra P_{i^{(2)}j}[1].
\]
Hence the modules $P_{i^{(2)}j}$, $P_{ji^{(2)}}$ are compact cofibrant generators of $\mc{D}(R(2i+j))$, and Proposition \ref{prop-criterion-derived-equivalence} shows that there is an equivalence between $\mc{D}(R(2i+j))$ and $\mc{D}((\mathrm{END}_{R(2i+j)}(P_{i^{(2)}j}\oplus P_{ji^{(2)}})^{op})$. We compute the differential on this endomorphism algebra using the remarks made before this example.

First off, since
\[
\HOM(P_{iij},P_{iij}) =
\Bbbk\left\langle~
\begin{DGCpicture}[scale=0.65]
\DGCstrand[red](0,0)(0,2)[$i$]\DGCdot{1.75}[ur]{$^{n_1}$}
\DGCstrand[red](1,0)(1,2)[$i$]\DGCdot{1.75}[ur]{$^{n_2}$}
\DGCstrand[green](2,0)(2,2)[$j$]\DGCdot{1.75}[ur]{$^{n_3}$}
\end{DGCpicture},~
\begin{DGCpicture}[scale=0.65]
\DGCstrand[red](0,0)(1,2)[$i$]\DGCdot{1.75}[ur]{$^{n_2}$}
\DGCstrand[red](1,0)(0,2)[$i$]\DGCdot{1.75}[ur]{$^{n_1}$}
\DGCstrand[green](2,0)(2,2)[$j$]\DGCdot{1.75}[ur]{$^{n_3}$}
\end{DGCpicture}
\Bigg|~n_1, n_2, n_3\in \N
\right\rangle,
\]
the endomorphism space of $P_{i^{(2)}j}$  is spanned by
\[
\HOM(P_{i^{(2)}j},P_{i^{(2)}j}) \cong
\Bbbk\left\langle~
\begin{DGCpicture}[scale=0.85]
\DGCstrand[red](0,0)(1,1)(1,2)(0,3)[$i$]\DGCdot{2.25}\DGCdot{1.75}[r]{$^{n_2}$}
\DGCstrand[red](1,0)(0,1)(0,2)(1,3)[$i$]\DGCdot{1.75}[r]{$^{n_1}$}\DGCdot{0.25}
\DGCstrand[green](2,0)(2,3)[$j$]\DGCdot{1.75}[r]{$^{n_3}$}
\end{DGCpicture},~
\begin{DGCpicture}[scale=0.85]
\DGCstrand[red](0,0)(1,1)(0,2)(1,3)[$i$]\DGCdot{1.75}[r]{$^{n_1}$}
\DGCstrand[red](1,0)(0,1)(1,2)(0,3)[$i$]\DGCdot{2.25}\DGCdot{1.75}[r]{$^{n_2}$}\DGCdot{0.25}
\DGCstrand[green](2,0)(2,3)[$j$]\DGCdot{1.75}[r]{$^{n_3}$}
\end{DGCpicture}
\Bigg|~n_1, n_2, n_3\in \N
\right\rangle.
\]
Terms in the second diagram vanish because there is a double crossing between the $i-i$ strands. To simplify terms in of the first diagram, one uses that two-variable polynomials form a rank-two module over $\sym_2$:
\[
\Bbbk\left\langle
\begin{DGCpicture}[scale=0.65]
\DGCstrand[red](0,0)(0,1.5)[$i$]\DGCdot{0.75}[r]{$^{n_1}$}
\DGCstrand[red](1,0)(1,1.5)[$i$]\DGCdot{0.75}[r]{$^{n_2}$}
\end{DGCpicture}
\right\rangle\cong
\Bbbk\left\langle
\begin{DGCpicture}[scale=0.65]
\DGCstrand[red](0,0)(0,1.5)[$i$]
\DGCstrand[red](1,0)(1,1.5)[$i$]
\DGCcoupon(-0.25,0.75)(1.25,1.25){$s_1$}
\end{DGCpicture},~
\begin{DGCpicture}[scale=0.65]
\DGCstrand[red](0,0)(0,1.5)[$i$]\DGCdot{0.35}
\DGCstrand[red](1,0)(1,1.5)[$i$]
\DGCcoupon(-0.25,0.75)(1.25,1.25){$s_1$}
\end{DGCpicture}
~\Bigg|s_1\in \sym_2\right\rangle,
\]
and therefore
\[
\HOM(P_{i^{(2)}j},P_{i^{(2)}j}) \cong
\Bbbk\left\langle~
\begin{DGCpicture}[scale=0.85]
\DGCstrand[red](0,0)(1,1)(1,2)(0,3)[$i$]\DGCdot{2.25}
\DGCstrand[red](1,0)(0,1)(0,2)(1,3)[$i$]\DGCdot{0.25}
\DGCstrand[green](2,0)(2,3)[$j$]\DGCdot{1.5}[r]{$^{n_3}$}
\DGCcoupon(-0.25,1.25)(1.25,1.75){$s_1$}
\end{DGCpicture},~
\begin{DGCpicture}[scale=0.85]
\DGCstrand[red](0,0)(1,1)(1,2)(0,3)[$i$]\DGCdot{2.25}
\DGCstrand[red](1,0)(0,1)(0,2)(1,3)[$i$]\DGCdot{0.25}\DGCdot{1}
\DGCstrand[green](2,0)(2,3)[$j$]\DGCdot{1.5}[r]{$^{n_3}$}
\DGCcoupon(-0.25,1.25)(1.25,1.75){$s_1$}
\end{DGCpicture}
\Bigg| s_1 \in \sym_2, n_3\in \N
\right\rangle.
\]
Using that symmetric polynomials can slide through crossings without obstruction, the $\HOM$-space simplifies to be
\[
\begin{array}{rcl}
\HOM(P_{i^{(2)}j},P_{i^{(2)}j}) & \cong &
\Bbbk\left\langle~
\begin{DGCpicture}[scale=0.85]
\DGCstrand[red](0,0)(1,1)(1,2)(0,3)\DGCdot{2.25}
\DGCstrand[red](1,0)(0,1)(0,2)(1,3)\DGCdot{0.28}
\DGCstrand[green](2,0)(2,3)\DGCdot{0}[r]{$^{k_1}$}
\DGCcoupon(-0.25,-0.25)(1.25,0.12){$s_1$}
\end{DGCpicture}
\Bigg|~s_1\in \sym_2[x_1(i),x_2(i)], k_1 \in \N
\right\rangle\\
& \cong &
\sym_2[x_1(i),x_2(i)]\o \Bbbk[x_3(j)]\cdot
\begin{DGCpicture}[scale=0.85]
\DGCstrand[red](0,0)(1,1)(1,2)(0,3)[$i$]\DGCdot{2.25}
\DGCstrand[red](1,0)(0,1)(0,2)(1,3)[$i$]\DGCdot{0.25}
\DGCstrand[green](2,0)(2,3)[$j$]
\end{DGCpicture}.
\end{array}
\]
On the module generator $\dif$ acts by
\[
\dif\diamond\left(
\begin{DGCpicture}[scale=0.85]
\DGCstrand[red](0,0)(1,1)(1,2)(0,3)[$i$]\DGCdot{2.25}
\DGCstrand[red](1,0)(0,1)(0,2)(1,3)[$i$]\DGCdot{0.25}
\DGCstrand[green](2,0)(2,3)[$j$]
\end{DGCpicture}
\right)=-
\begin{DGCpicture}[scale=0.85]
\DGCstrand[red](0,0)(1,1)(1,2)(0,3)[$i$]\DGCdot{2.25}
\DGCstrand[red](1,0)(0,1)(0,2)(1,3)[$i$]\DGCdot{0.25}\DGCdot{0.75}
\DGCstrand[green](2,0)(2,3)[$j$]
\end{DGCpicture}=0.
\]
Hence the $p$-DG endomorphism algebra $\HOM(P_{i^{(2)}j},P_{i^{(2)}j})$ is isomorphic to the $p$-DG algebra $\sym_2[x_1(i),x_2(i)]\o \Bbbk[x_3(j)]$ with the usual differential, which in turn is quasi-isomorphic to the ground field $\Bbbk$. Likewise, a similar computation shows that $\HOM(P_{ji^{(2)}},P_{ji^{(2)}})$ is also quasi-isomorphic to the ground field as a $p$-DG algebra.

Next, we compute the $\dif$ action on the space $\HOM(P_{ji^{(2)}}, P_{i^{(2)}j})$. As in the previous case, one shows that the $\HOM$-space is spanned by
\[
\begin{array}{rcl}
\HOM(P_{ji^{(2)}},P_{i^{(2)}j}) & \cong &
\Bbbk\left\langle~
\begin{DGCpicture}[scale=0.85]
\DGCstrand[red](0,0)(1,1)(2,2)(1,3)\DGCdot{2.25}
\DGCstrand[red](1,0)(0,1)(1,2)(2,3)\DGCdot{0.28}
\DGCstrand[green](2,0)(2,1)(0,2)(0,3)\DGCdot{0}[r]{$_{n_3}$}
\DGCcoupon(-0.25,-0.25)(1.25,0.12){$s_1$}
\end{DGCpicture},~
\Bigg|~s_1\in \sym_2[x_1(i),x_2(i)],~n_3\in \N
\right\rangle.\\
&\cong & \sym_2[x_1(i),x_2(i)]\o \Bbbk[x_3(j)]\cdot
\begin{DGCpicture}[scale=0.85]
\DGCstrand[red](0,0)(1,1)(2,2)(1,3)[$i$]\DGCdot{2.25}
\DGCstrand[red](1,0)(0,1)(1,2)(2,3)[$i$]\DGCdot{0.25}
\DGCstrand[green](2,0)(2,1)(0,2)(0,3)[$j$]
\end{DGCpicture},
\end{array}
\]
and $\dif$ acts on the generator by
\[
\dif\diamond
\left(
\begin{DGCpicture}[scale=0.85]
\DGCstrand[red](0,0)(1,1)(2,2)(1,3)[$i$]\DGCdot{2.25}
\DGCstrand[red](1,0)(0,1)(1,2)(2,3)[$i$]\DGCdot{0.25}
\DGCstrand[green](2,0)(2,1)(0,2)(0,3)[$j$]
\end{DGCpicture}
\right)=2~
\begin{DGCpicture}[scale=0.85]
\DGCstrand[red](0,0)(1,1)(2,2)(1,3)[$i$]\DGCdot{2.25}
\DGCstrand[red](1,0)(0,1)(1,2)(2,3)[$i$]\DGCdot{0.25}
\DGCstrand[green](2,0)(2,1)(0,2)(0,3)[$j$]\DGCdot{0.25}
\end{DGCpicture}.
\]
Thus, as a $p$-DG module over $\sym_2[x_1(i),x_2(i)]\o \Bbbk[x_3(i)]$,
$\HOM(P_{ji^{(2)}},P_{i^{(2)}j})$ is isomorphic to the left submodule (ideal)
generated by $1\o x_3^2(j)$ inside $\sym_2[x_1(i),x_2(i)]\o \Bbbk[x_3(i)]$.
Therefore it is quasi-isomorphic to the $p$-complex
\[\Bbbk\o (x_3^2(j)\lra x_3^3(j) \lra \cdots \lra x_3^p(j))\cong V_2\{2\},\]
since $\sym_2$ is quasi-isomorphic to $\Bbbk$.

Lastly, we compute the space $\HOM(P_{i^{(2)}j},P_{ji^{(2)}})$ with the induced $\dif$-action.
\[
\begin{array}{rcl}
\HOM(P_{i^{(2)}j},P_{ji^{(2)}}) & \cong &
\Bbbk\left\langle~
\begin{DGCpicture}[scale=0.85]
\DGCstrand[green](0,0)(0,1)(2,2)(2,3)\DGCdot{0}[r]{$_{k_1}$}
\DGCstrand[red](1,0)(2,1)(1,2)(0,3)\DGCdot{2.25}
\DGCstrand[red](2,0)(1,1)(0,2)(1,3)\DGCdot{0.28}
\DGCcoupon(0.75,-0.25)(2.25,0.12){$s_1$}
\end{DGCpicture},~
\Bigg|~s_1\in \sym_2[x_2(i),x_3(i)],~k_1\in \N
\right\rangle.\\
&\cong & \Bbbk[x_1(j)] \o \sym_2[x_2(i),x_3(i)]\cdot
\begin{DGCpicture}[scale=0.85]
\DGCstrand[green](0,0)(0,1)(2,2)(2,3)[$j$]
\DGCstrand[red](1,0)(2,1)(1,2)(0,3)[$i$]\DGCdot{2.25}
\DGCstrand[red](2,0)(1,1)(0,2)(1,3)[$i$]\DGCdot{0.25}
\end{DGCpicture},
\end{array}
\]
while $\dif$ acts on the module generator by
\[
\dif\diamond\left(
\begin{DGCpicture}[scale=0.85]
\DGCstrand[green](0,0)(0,1)(2,2)(2,3)[$j$]
\DGCstrand[red](1,0)(2,1)(1,2)(0,3)[$i$]\DGCdot{2.25}
\DGCstrand[red](2,0)(1,1)(0,2)(1,3)[$i$]\DGCdot{0.25}
\end{DGCpicture}
\right)=
\begin{DGCpicture}[scale=0.85]
\DGCstrand[green](0,0)(0,1)(2,2)(2,3)[$j$]
\DGCstrand[red](1,0)(2,1)(1,2)(0,3)[$i$]\DGCdot{2.25}
\DGCstrand[red](2,0)(1,1)(0,2)(1,3)[$i$]\DGCdot{0.25}
\end{DGCpicture}\cdot
\left(~
\begin{DGCpicture}[scale=0.85]
\DGCstrand[green](0,0)(0,3)[$j$]
\DGCstrand[red](1,0)(1,3)[$i$]\DGCdot{1.5}
\DGCstrand[red](2,0)(2,3)[$i$]
\end{DGCpicture}+
\begin{DGCpicture}[scale=0.85]
\DGCstrand[green](0,0)(0,3)[$j$]
\DGCstrand[red](1,0)(1,3)[$i$]
\DGCstrand[red](2,0)(2,3)[$i$]\DGCdot{1.5}
\end{DGCpicture}
~\right).
\]
As a $p$-DG module over $\Bbbk[x_1(j)]\o\sym_2[x_2(i),x_3(i)]$, $\HOM(P_{i^{(2)}j},P_{ji^{(2)}})$ is isomorphic to the left submodule in $\Bbbk[x_1(j)]\o\sym_2[x_2(i),x_3(i)]$ generated by the element $1\o e_2(x_2(i),x_3(i))=1\o(x_2(i)x_3(i))$. We now show that this submodule is contractible. Indeed it suffices to check that, under the usual differential on $\sym_2$, $e_2=x_1x_2\in \sym_2$ generates a contractible submodule. Note that $e_2\sym_2$ fits into a short exact sequence
\[
0\lra e_2\sym_2\lra \sym_2\lra \sym_1 \lra 0,
\]
where $\sym_1\cong\Bbbk[e_1]$ has the quotient differential structure $\dif(e_1)=e_1^2$. Therefore the surjection $\sym_2\twoheadrightarrow \sym_1$ of $p$-DG algebras, both quasi-isomorphic to the ground field, is a quasi-isomorphism. The contractibility of the kernel follows.

Given the above computation, it is more convenient to study the endomorphism algebra of the cofibrant module $(P_{i^{(2)}j}\o \widetilde{V}_{p-2}\{-p\})\oplus P_{ji^{(2)}}\cong P_{i^{(2)}j}[1]\oplus P_{ji^{(2)}}$, which after all is also a compact generator. Combining the previous observations, we see that the endomorphism algebra is quasi-isomorphic to the quiver algebra with the trivial differential
\[
\HOM(P_{i^{(2)}j}[1]\oplus P_{ji^{(2)}},P_{i^{(2)}j}[1]\oplus P_{ji^{(2)}})\cong \Bbbk(\bullet\lra {\bullet}).
\]
Here the arrow has degree one and the dots stands for the (quasi-isomorphic) one-dimensional endomorphism spaces spanned by $\Id_{P_{i^{(2)}j}[1]}$ and $\Id_{P_{ji^{(2)}}}$ respectively. By Proposition \ref{prop-criterion-derived-equivalence}, we conclude that $\mc{D}(R(2i+j))\cong \mc{D}(\Bbbk(\bullet\lra {\bullet}))$. It follows from Proposition \ref{prop-K0-of-smooth-basic-artinian-algebras} that
$K_0(R(2i+j))\cong \mathbb{O}_p\oplus\mathbb{O}_p$.
\end{eg}

\paragraph{Further remarks.}Let $\Gamma$ be a connected simply-laced Cartan datum.
So far we have seen that, under the differentials given by the special parameters
of Definition~\ref{def-correct-differential-on-KLR}, shadows of the small
quantum groups appear on the categorified level: relations $E_i^p=0$ are categorified
into the triviality of the derived categories at weights $pi$
(Proposition~\ref{prop-NHp-contractible}), quantum Serre relations hold
(Theorem~\ref{thm-QSR-holds-only-for-special-paramters}), and some direct comparison
with the weight spaces in the simplest cases is shown in the above examples.
We are tempted to propose the following conjecture.

\vspace{0.06in}

\begin{conj} \label{conj-cat}
Let $\Gamma$ be a Cartan datum of ADE type. With the parameters of
the differential $\dif$ specified as in
Definition~\ref{def-correct-differential-on-KLR}, the $p$-DG algebra
$(R(\Gamma),\dif)$ categorifies an $\mathbb{O}_p$-integral form of the positive
half of the small quantum group associated with $\Gamma$.
\end{conj}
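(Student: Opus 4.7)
The plan is to promote the constructions of Section~\ref{sec-nilHecke} from the $\mathfrak{sl}_2$ case to an arbitrary simply-laced $\Gamma$, and then to identify the resulting twisted bialgebra structure on $K_0(R(\Gamma),\dif_{\pm 1})$ with $u^+_{\mathbb{O}_p}(\mathfrak{g}_\Gamma)$. First I would generalize the induction and twisted restriction functors: for any $\nu_1,\nu_2 \in \N[I]$ the sideways inclusion $R(\nu_1) \otimes R(\nu_2) \hookrightarrow R(\nu_1+\nu_2)$ commutes with $\dif_{\pm 1}$ by locality and yields an induction functor on derived categories. A twisted version of the restriction functor must be built by analogy with Definition~\ref{defn-twisted-restriction}: twist the $(R(\nu_1)\otimes R(\nu_2),\,R(\nu_1+\nu_2))$-bimodule $R(\nu_1+\nu_2)1_{\nu_1,\nu_2}$ by multiplication with an appropriate central element of $R(\nu_1)\otimes R(\nu_2)$ so that the induced $\dif$-action on the bimodule generator implements the bialgebra twist of $u^+_{\mathbb{O}_p}$. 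One then checks, as in the $\mathfrak{sl}_2$ case, that these operations descend to an $\mathbb{O}_p$-algebra and coalgebra structure on $\bigoplus_\nu K_0(R(\nu),\dif_{\pm 1})$, and that the coproduct is an algebra map with respect to the natural grading twist.

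Next, I would construct a homomorphism $\Phi\colon u^+_{\mathbb{O}_p}(\mathfrak{g}_\Gamma) \to \bigoplus_\nu K_0(R(\nu),\dif_{\pm 1})$ by sending $E_i^{(n)}$ to the class of the divided-power cofibrant module $[\bpol_n^+]$ supported at weight $ni$. Theorem~\ref{thm-p-DG-nilHecke-categorifies-small-sl2} shows that $\Phi$ is well-defined and bialgebra-compatible on each $\mathfrak{sl}_2$-subalgebra; Proposition~\ref{prop-QSR-forces-specialization-parameter-A1-times-A1} together with the distant-commutation isomorphism takes care of disconnected pairs of vertices; and Theorem~\ref{thm-QSR-holds-only-for-special-paramters} (in its divided-power form) together with $E_i^p = 0$ from Proposition~\ref{prop-NHp-contractible} verifies the remaining quantum Serre relations. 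Hence $\Phi$ descends to a homomorphism of twisted $\mathbb{O}_p$-bialgebras.

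Surjectivity of $\Phi$ requires showing that, for every weight $\nu$, the classes of induced products $\Ind(\bpol_{n_1}^+(i_1) \boxtimes \cdots \boxtimes \bpol_{n_k}^+(i_k))$ over all colored sequences $(i_1^{(n_1)},\dots,i_k^{(n_k)})$ of total weight $\nu$ generate $K_0(R(\nu),\dif_{\pm 1})$ as an $\mathbb{O}_p$-module. The natural strategy is to equip $R(\nu)$ as a left $p$-DG module over itself with a finite filtration by $p$-DG bimodules whose subquotients are grading shifts of such induced products --- a categorical lift of the PBW-type bases for $\mathbf{f}_{\Gamma,\Z}$ constructed in \cite{KL1,KL2,VV}. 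Here one expects to exploit, color by color, the cofibrant-filtration arguments of Proposition~\ref{prop-NHn-column-module-compact} and the quasi-isomorphism of $R(\nu)$ with a smash product of a polynomial ring and a sequence-indexed matrix piece, compatibly with the special differentials of Definition~\ref{def-correct-differential-on-KLR}.

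The hard part will be injectivity, that is, ruling out unexpected $p$-DG relations in $K_0(R(\Gamma),\dif_{\pm 1})$ beyond those imposed by $u^+_{\mathbb{O}_p}$. Over $\Z[q,q^{-1}]$ this is classically achieved either geometrically (via Nakajima quiver varieties, following \cite{VV}) or algebraically (via an orthogonal basis with respect to a bilinear form on $\mathbf{f}_{\Gamma,\Z}$, following \cite{KL1,KL2}), but transporting either route is delicate: one works in a triangulated rather than abelian setting, the ring $\mathbb{O}_p$ is not a domain and factors as $\mc{O}_p\times\mc{O}_{2p}$ after inverting $2$, and the relevant cohomology is the bigraded slash cohomology of Section~\ref{sec-base-category}. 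A plausible route is to define an $\mathbb{O}_p$-valued Shapovalov-type pairing on $K_0$ by $([M],[N]) := [\RHOM(M,N)]$, match it with a pairing on $u^+_{\mathbb{O}_p}$ for which the divided-power basis is semi-orthogonal, and show that $\Phi$ is an isometry after which injectivity follows from non-degeneracy of the pairing on $u^+_{\mathbb{O}_p}$. Examples~\ref{eg-Grothendieck-group-R-i-plus-k}--\ref{eg-Grothendieck-group-R-2i-plus-j} illustrate both the feasibility of the cohomological computation in small rank and the substantial combinatorial bookkeeping a general proof will demand.
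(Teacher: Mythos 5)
This statement is not proved in the paper: it is explicitly labeled Conjecture~\ref{conj-cat}, and the authors themselves remark immediately afterward that ``this conjecture is vague, as we do not describe a particular integral form of the quantum group over the ring $\mathbb{O}_p$ which $(R(\Gamma),\dif)$ is expected to categorify,'' and that only one small piece of the conjecture (the K\"unneth-type isomorphism for $K_0$ of tensor products) can even be stated precisely. What the paper supplies is evidence, not proof: the $\mathfrak{sl}_2$ case (Theorem~\ref{thm-p-DG-nilHecke-categorifies-small-sl2}), the categorical quantum Serre relations (Theorem~\ref{thm-QSR-holds-only-for-special-paramters}), vanishing at weights $\geq pi$ (Proposition~\ref{prop-NHp-contractible}), and the small-rank calculations in Examples~\ref{eg-Grothendieck-group-R-i-plus-k}--\ref{eg-Grothendieck-group-R-2i-plus-j}. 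So there is nothing in the paper for your argument to be compared against.

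Your proposal is a sensible roadmap that tracks the paper's own supporting evidence, but it is not a proof, and you correctly flag the open parts. Three genuine gaps remain. First, you would need to actually specify the $\mathbb{O}_p$-integral form $u^+_{\mathbb{O}_p}(\mathfrak{g}_\Gamma)$ before a homomorphism $\Phi$ can even be defined; for $\mathfrak{sl}_2$ the paper gives this explicitly in Section~\ref{subsec-gr-ring}, but in higher rank the correct analogue (which divided-power monomials span it, over $\mathbb{O}_p$ rather than $\mc{O}_{2p}$) is not pinned down. Second, your surjectivity step posits a finite $p$-DG filtration of $R(\nu)$ by induced divided-power modules compatible with $\dif_{\pm 1}$—a categorical PBW theorem—but no such filtration is constructed in the paper and the compatibility with the differential is exactly the hard content; the paper's Example~\ref{eg-Grothendieck-group-R-2i-plus-j} shows that even for $R(2i+j)$ the required manipulations (identifying contractible pieces, passing to a smaller endomorphism $p$-DG algebra) are already delicate and carried out ad hoc. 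Third, your injectivity step via a Shapovalov-type pairing runs into the problem you yourself raise: $\mathbb{O}_p$ is not a domain, so ``non-degeneracy'' needs to be formulated and proven separately over each factor of $\mathbb{O}_p[\frac{1}{2}] \cong \mc{O}_p[\frac{1}{2}] \times \mc{O}_{2p}[\frac{1}{2}]$, and the geometric and algebraic proofs you cite for the generic-$q$ case do not transport automatically to the triangulated setting with the twisted differential. In short, the proposal is an honest sketch of a program, consistent with why the paper leaves the statement as a conjecture rather than a theorem.
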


\vspace{0.06in}

This conjecture is vague, as we do not describe a particular integral
form of the quantum group over the ring $\mathbb{O}_p$
which $(R(\Gamma),\dif)$ is expected to categorify. One part of the
conjecture states that the natural map
 $$ K_0(R(\nu), \dif) \otimes_{\mathbb{O}_p} K_0(R(\nu'), \dif) \lra
 K_0( R(\nu) \otimes_{\Bbbk} R(\nu'), \dif ) $$
is an isomorphism for any weights $\nu,\nu'$.

It is unclear how to extend the differentials to nonsimply-laced KLR algebras, since
dots there have varying degrees depending on colors.

\newstrandstyle{Red}{type=ribbon, style=solid,ribboncolor=lightred,color=red}

\vspace{0.06in}

The Webster algebras~\cite{Web} have a diagrammatic description analogous to that of the KLR
algebras and their cyclotomic quotients. The diagrams are generated by red and
black strands, such that red strands are labeled by positive weights and never
cross each other, while black strands can carry dots and satisfy relations similar
to nilHecke algebras. For instance, in the $A_1$ case, there are the following
red-black crossings for the weight $k\in \N$:
\[
\begin{DGCpicture}
\DGCstrand(1,0)(0,1)
\DGCstrand[Red](0,0)(1,1)
\DGCdot.{0.15}[dl]{$_k$}
\end{DGCpicture}, \ \ \ \ \ \ \ \
\begin{DGCpicture}
\DGCstrand(0,0)(1,1)
\DGCstrand[Red](1,0)(0,1)
\DGCdot.{0.15}[dl]{$_k$}
\end{DGCpicture}.
\]
subject to some local relations, for instance:
\[
\begin{DGCpicture}
\DGCstrand(1,0)(0,1)(1,2)
\DGCstrand[Red](0,0)(1,1)(0,2)
\DGCdot.{0.15}[dl]{$_k$}
\end{DGCpicture}
\ = \
\begin{DGCpicture}
\DGCstrand(1,0)(1,2)
\DGCdot{1}[r]{$k$}
\DGCstrand[Red](0,0)(0,2)
\DGCdot.{0.15}[dl]{$_k$}
\end{DGCpicture}, \ \ \ \
\begin{DGCpicture}
\DGCstrand(0,0)(1,1)(0,2)
\DGCstrand[Red](1,0)(0,1)(1,2)
\DGCdot.{0.15}[dl]{$_k$}
\end{DGCpicture}
\ = \
\begin{DGCpicture}
\DGCstrand[Red](1,0)(1,2)
\DGCdot.{0.15}[dl]{$_k$}
\DGCstrand(0,0)(0,2)\DGCdot{1}[r]{$k$}
\end{DGCpicture}
\]
We refer the reader to (\cite[Section 2]{Web}) for the details.

One readily sees that, for any $\lambda\in \F_p$, the following differential on the
new generators, together with the differential $\dif_1$ on the nilHecke algebra
(equations (\ref{eqn-dif-dot}), (\ref{eqn-dif-crossing})) defines a $p$-nilpotent
local derivation on the Webster algebra.
\[
\dif\left(
\begin{DGCpicture}
\DGCstrand(1,0)(0,1)
\DGCstrand[Red](0,0)(1,1)
\DGCdot.{0.15}[dl]{$_k$}
\end{DGCpicture}
\right)=k\lambda
\begin{DGCpicture}
\DGCstrand(1,0)(0,1)\DGCdot{0.75}
\DGCstrand[Red](0,0)(1,1)
\DGCdot.{0.15}[dl]{$_k$}
\end{DGCpicture} \ , \ \ \
\dif\left(
\begin{DGCpicture}
\DGCstrand(0,0)(1,1)
\DGCstrand[Red](1,0)(0,1)
\DGCdot.{0.15}[dl]{$_k$}
\end{DGCpicture}
\right)=k(1-\lambda)
\begin{DGCpicture}
\DGCstrand(0,0)(1,1)\DGCdot{0.75}
\DGCstrand[Red](1,0)(0,1)
\DGCdot.{0.15}[dl]{$_k$}
\end{DGCpicture} \ .
\]



\vspace{0.1in}

\noindent
{\sl \small Mikhail Khovanov, Department of Mathematics, Columbia University, New York, NY 10027}

\noindent
{\tt \small email: khovanov@math.columbia.edu}

\noindent
{\sl \small You Qi, Department of Mathematics, Columbia
University, New York, NY 10027}

\noindent
{\tt \small email: yq2121@math.columbia.edu}

\end{document}